\documentclass[10pt, reqno]{amsart}
\usepackage{graphicx, amssymb, amsmath, amsthm}
\numberwithin{equation}{section}
\usepackage{xcolor}
\usepackage{amscd}
\usepackage{amssymb}
\usepackage{latexsym}

\usepackage{amsmath}
\usepackage{amscd}
\usepackage{cite}
\usepackage{color}
\usepackage{enumerate}
\usepackage{amsfonts}
\usepackage{graphicx}
\usepackage{mathrsfs}
\usepackage{setspace}
\usepackage{hyperref}
\def\pa{\partial}

\let\Re=\undefined\DeclareMathOperator*{\Re}{Re}
\let\Im=\undefined\DeclareMathOperator*{\Im}{Im}

\newcommand{\R}{\mathbb{R}}
\newcommand{\C}{\mathbb{C}}

\newcommand{\var}{\varepsilon}

\newtheorem{theorem}{Theorem}[section]

\newtheorem{lemma}[theorem]{Lemma}

\newtheorem{corollary}[theorem]{Corollary}

\newtheorem{proposition}[theorem]{Proposition}

\theoremstyle{definition}

\newtheorem{remark}[theorem]{Remark}
\newtheorem{assumption}[theorem]{Assumption}

\makeatletter
\newcommand{\Extend}[5]{\ext@arrow0099{\arrowfill@#1#2#3}{#4}{#5}}
\makeatother

\begin{document}
\title[Rough log--log blowup in $d\geq3$]{Rough log-log blowup solutions to mass-critical NLS in higher dimensions $d\geq 3$}

\author[Z. Ma]{Zuyu Ma}
	\address{Zuyu Ma
		\newline \indent Institute of Applied Physics and Computational Mathematics, Beijing, 100088, China}
	\email{mazuyu23@gscaep.ac.cn}
    
 \author[C. Miao]{Changxing Miao}
\address{Changxing Miao 
\newline \indent School of Mathematics and Physics, University of Science and Technology Beijing, Beijing 100083, China}
\email{miao\_changxing@ustb.edu.cn, miao\_changxing@iapcm.ac.cn}

\author[Y. Song]{Yilin Song}
	\address{Yilin Song
		\newline \indent Institute of Applied Physics and Computational Mathematics, Beijing, 100088, China}
	\email{songyilin21@gscaep.ac.cn}

	\author[J. Zheng]{Jiqiang Zheng}
	\address{Jiqiang Zheng
		\newline Institute of Applied Physics and Computational Mathematics, Beijing, 100088, China.
		\newline National Key Laboratory of Computational Physics, Beijing, 100088, China}
	\email{zheng\_jiqiang@iapcm.ac.cn, zhengjiqiang@gmail.com}

\subjclass[2020]{35Q55, 35R60}
\keywords{Nonlinear Schr\"odinger equation, mass-critical, log-log blowup, low regularity, stability}

\begin{abstract}

We study the stability of the log--log blow-up regime for the
focusing mass-critical nonlinear Schr\"odinger equation under small
$H^s$ perturbations. Previously, stability was established for every
$0<s<1$ in dimension two by Colliander and Rapha\"el
[Math. Ann. (2009)] and subsequently extended to dimensions $d\geq3$
under the restriction $s>1/(1+\min\{1,4/d\})$ by Sun and the fourth
author [J. Math. Pures Appl. (2020)]. We remove this restriction and
establish stability throughout the full subcritical range $0<s<1$.
\end{abstract}

 \maketitle

\section{Introduction}

We consider finite-time singularity formation for the focusing
mass-critical nonlinear Schr\"odinger equation
\begin{equation}\label{equ:nls}\tag{NLS}
\left\{
\begin{aligned}
&i\partial_tu+\Delta u+|u|^{\frac4d}u=0,
\qquad (t,x)\in\mathbb R\times\mathbb R^d,\\
&u(0,x)=u_0(x).
\end{aligned}
\right.
\end{equation}
Its scaling symmetry is
\begin{equation}\label{equ:scalingintro}
u_\lambda(t,x)=\lambda^{\frac d2}u(\lambda^2t,\lambda x),
\qquad \lambda>0,
\end{equation}
which leaves both \eqref{equ:nls} and the $L^2$ norm invariant. Thus
$L^2$ is the critical space for the equation, while $H^s$ is subcritical
for every $s>0$. The Cauchy problem is locally well-posed in $H^s$ for
$0\leq s\leq1$; see \cite{CaW,GV79}.

In the energy space, the flow conserves the mass, energy, and momentum
\begin{align}
M(u)&=\int_{\mathbb R^d}|u(t,x)|^2\,dx,\label{equ:massintro}\\
E(u)&=\frac12\int_{\mathbb R^d}|\nabla u(t,x)|^2\,dx
-\frac{d}{2(d+2)}\int_{\mathbb R^d}|u(t,x)|^{2+\frac4d}\,dx,
\label{equ:energyintro}\\
P(u)&=\Im\int_{\mathbb R^d}\nabla u(t,x)\,\overline{u(t,x)}\,dx.
\label{equ:momentumintro}
\end{align}
The equation \eqref{equ:nls} admits the standing wave $e^{it}Q$, where the \emph{ground
state} $Q$ is known as the unique positive radial solution of
\begin{equation}\label{equ:groundstate}
\Delta Q-Q+Q^{1+\frac4d}=0
\end{equation}
(see \cite{Kw}).
If $u_0\in H^1$ satisfies
$\|u_0\|_{L^2}<\|Q\|_{L^2}$, then the corresponding solution is global in
$H^1$. This sharp criterion follows from conservation of mass and energy
and the sharp Gagliardo--Nirenberg inequality \cite{wen82},
\begin{equation}\label{equ:GNintro}
\|f\|_{L^{2+\frac4d}}^{2+\frac4d}
\leq \frac{d+2}{d}
\left(\frac{\|f\|_{L^2}}{\|Q\|_{L^2}}\right)^{\frac4d}
\|\nabla f\|_{L^2}^2.
\end{equation}
Global well-posedness and scattering in $L^2$ under the same mass condition
were subsequently established in \cite{KVZ2008,Dodson15}.

On the other hand, the equation is not globally well-posed for
arbitrary initial data. Indeed, the focusing nonlinearity allows
the energy to be negative, and for
$u_0\in\dot H^1(\mathbb R^d)\cap L^2(|x|^2dx)$ with $E(u_0)<0$,
the virial identity
\begin{equation}
\frac{d^2}{dt^2}\int_{\mathbb R^d}|x|^2|u(t,x)|^2\,dx
=16E(u_0)<0
\end{equation}
implies finite-time blow-up. This argument, however, does not yield any specific quantitative rate of blow-up.

At the threshold mass $M(Q)$, the pseudoconformal transform of the standing wave
$e^{it}Q$ gives the explicit solution
\begin{equation}\label{equ:pseudoconformalintro}
S(t,x)=\frac1{|t|^{\frac d2}}Q\left(\frac{x}{t}\right)
\exp\left(i\frac{|x|^2}{4t}-\frac{i}{t}\right),
\end{equation}
which blows up at $t=0$ with
$\|\nabla S(t)\|_{L^2}\sim |t|^{-1}$. The minimal-mass finite-time blow-up
solutions in $H^1$ were classified in \cite{Merle92,Merle93}, and the
corresponding $L^2$ classification in dimensions $d\leq15$ was obtained in
\cite{D2,D1}.

\subsection{Log--log blow-up in the energy space}
The dynamics above the threshold mass are much richer. For $\alpha^*>0$, set
\begin{equation}\label{loglog-initial}
\mathcal B_{\alpha^*}
:=\left\{u_0\in H^1(\mathbb R^d):
\|Q\|_{L^2}^2<\|u_0\|_{L^2}^2
<\|Q\|_{L^2}^2+\alpha^*\right\}.
\end{equation}
For $\alpha^*>0$ sufficiently small, two distinct blow-up mechanisms are known
for initial data in $\mathcal B_{\alpha^*}$:
\begin{itemize}
\item In dimensions $d\in\{1,2\}$, a family of blow-up solutions of
pseudoconformal type was constructed in \cite{Bourgain-Wang}, with blow-up
speed
\[
\|\nabla u(t)\|_{L^2}\sim\frac1{T-t},
\]
where $T$ is the blow-up time. This blow-up regime is unstable, as shown in
\cite{MRS13AJM}. 
\item A second family, suggested by the numerical simulations in
\cite{numerical}, exhibits the log--log blow-up speed. Its existence and
$H^1$ stability were first obtained in dimension one in \cite{Perelman}.
\end{itemize}
Later, in a series of works, Merle and Rapha\"el
\cite{MR05annmath,MR03GAFA,MR04Invemath,MR06JAMS} identified a stable
blow-up regime whose leading profile is $Q$ and whose scale obeys the
log--log law.

The log--log analysis is based on a coercivity property of the linearized
flow around $Q$. We use the following standard formulation from
\cite{MR05annmath}. Set $\Lambda=\frac d2+y\cdot\nabla$.

\begin{assumption}[Spectral property]\label{spectral-property}
Let $d\geq1$. Consider
\[
L_1=-\Delta+\frac2d\left(\frac4d+1\right)
Q^{\frac4d-1}y\cdot\nabla Q,
\qquad
L_2=-\Delta+\frac2dQ^{\frac4d-1}y\cdot\nabla Q,
\]
and
\[
H(\varepsilon,\varepsilon)
=(L_1\varepsilon_1,\varepsilon_1)
+(L_2\varepsilon_2,\varepsilon_2),
\qquad \varepsilon=\varepsilon_1+i\varepsilon_2\in H^1(\mathbb R^d).
\]
There exists $\delta_1>0$ such that, whenever
\[
(\varepsilon_1,Q)=(\varepsilon_1,\Lambda Q)
=(\varepsilon_1,yQ)=(\varepsilon_2,\Lambda Q)
=(\varepsilon_2,\Lambda^2Q)=(\varepsilon_2,\nabla Q)=0,
\]
one has
\[
H(\varepsilon,\varepsilon)
\geq\delta_1\int_{\mathbb R^d}
\left(|\nabla\varepsilon|^2+|\varepsilon|^2e^{-|y|}\right)dy.
\]
\end{assumption}

This property was proved in dimension one in \cite{MR05annmath}, verified
with numerical assistance in dimensions $2\leq d\leq4$ in \cite{FMR},
and established in dimensions $5\leq d\leq10$, as well as for radial
functions in dimensions $d=11,12$, in \cite{YRZ}. For the further discussion of such spectral property, we refer the reader
to the very recent work \cite{LiYang2026}. 

Based on the works \cite{MR05annmath,MR03GAFA,MR04Invemath,MR06JAMS,MR05CMP,FMR,Ra05Mathann}, we now have:

\begin{theorem}[Existence of a stable log--log regime]\label{thm:dynamssnls}
Let $d\geq1$ and assume the spectral property in dimension $d$. There exists a universal constant $\alpha^*>0$ such
that the following holds true.

\begin{enumerate}[(i)]
\item \underline{Existence and description of the log--log regime}: for any
initial data $u_0\in H^1$ with
\begin{equation}\label{equ:H1CRmass}
\|Q\|_{L^2}^2\leq\|u_0\|_{L^2}^2<\|Q\|_{L^2}^2+\alpha^*
\end{equation}
and
\[
E^G(u_0):=E(u_0)
-\frac12\frac{|P(u_0)|^2}{\|u_0\|_{L^2}^2}<0,
\]
then, the corresponding solution to
\eqref{equ:nls} blows up in finite time $0<T<+\infty$ according to the
following blowup dynamics: there exist geometrical parameters
$(\lambda(t),x(t),\gamma(t))\in(0,\infty)\times\mathbb R^d\times\mathbb R$
and an asymptotic residual profile $u^*\in L^2$ such that:
\[
u(t)-\frac1{\lambda^{\frac d2}(t)}
Q\left(\frac{x-x(t)}{\lambda(t)}\right)e^{-i\gamma(t)}
\to u^*\quad\text{in }L^2.
\]
The blowup point converges at blowup time:
\[
x(t)\to x(T)\in\mathbb R^d\quad\text{as }t\to T
\]
and the blowup speed is given by the log--log law:
\begin{equation}\label{loglog-speed}
\lambda(t)\sqrt{\frac{\log|\log(T-t)|}{T-t}}
\to\sqrt{2\pi}\quad\text{as }t\to T.
\end{equation}

\item \underline{$H^1$ stability of the log--log regime}: more generally,
the set of initial data satisfying \eqref{equ:H1CRmass} and such that the
corresponding solution to \eqref{equ:nls} blows up in finite time with the
log--log law \eqref{loglog-speed} is open in $H^1$.

\item \underline{Nonsmoothness of the asymptotic profile}: the residual
profile $u^*\in L^2$ satisfies:
\begin{equation}\label{equ:H1CRnonsmooth}
u^*\notin L^p,\quad\forall\,p>2.
\end{equation}
\end{enumerate}
\end{theorem}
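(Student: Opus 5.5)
This theorem collects known results, so the plan is to reconstruct the Merle--Rapha\"el argument rather than produce anything new. First I would reduce to zero momentum. Using the Galilean transform $u\mapsto u(t,x-\beta t)e^{i(\beta\cdot x/2-|\beta|^2t/4)}$ with $\beta$ proportional to $P(u_0)/\|u_0\|_{L^2}^2$, the momentum vanishes and $E^G(u_0)$ becomes the energy of the transformed data. So it suffices to treat $E(u_0)<0$ and $P(u_0)=0$. The variational characterization of $Q$ (sharp Gagliardo--Nirenberg \eqref{equ:GNintro} plus uniqueness up to symmetries), applied to $u(t)$ with small mass excess and negative energy, shows that $u(t)$ stays $L^2$-close to the orbit of $Q$ under scaling, translation and phase. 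I would then decompose
\[
u(t,x)=\lambda^{-d/2}(\tilde Q_b+\varepsilon)\big(t,(x-x(t))/\lambda\big)e^{i\gamma(t)},
\]
where $\tilde Q_b$ is the localized self-similar profile $Q_b\approx Qe^{-ib|y|^2/4}$. The modulation parameters are fixed by orthogonality conditions matching the six directions in Assumption~\ref{spectral-property}, and I would pass to rescaled time $ds/dt=\lambda^{-2}$.

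The core step is the \emph{local virial estimate}:
\[
b_s\ge \delta_1\Big(\int|\nabla\varepsilon|^2+\int|\varepsilon|^2e^{-|y|}\Big)-\Gamma_b^{1-C\eta},
\]
where $\Gamma_b\sim e^{-\pi/b}$ is the flux of mass radiated by $Q_b$. This follows from differentiating the orthogonality relations, using energy conservation and $P=0$ to cancel the leading terms, and then invoking Assumption~\ref{spectral-property} to get coercivity of the quadratic form. Because this estimate alone cannot exclude $b$ collapsing, it has to be paired with a \emph{Lyapunov functional}. I would build the mixed energy/localized-mass quantity $\mathcal J$, controlling the radiation $\zeta$ outgoing from $\tilde Q_b$ through a dispersive $L^2$ flux computation. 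This gives
\[
\frac{d}{ds}\mathcal J\le -Cb\Big(\Gamma_b+\int_{A\le|y|\le 2A}|\varepsilon|^2+\int|\nabla\tilde\varepsilon|^2e^{-|y|}\Big).
\]
I expect this to be the main obstacle: it is the delicate algebra of Merle--Rapha\"el, and in $d\ge3$ it requires the localized $L^2$ flux to dominate the nonlinear terms.

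With these two estimates in hand, a bootstrap on an open trapped set of initial data closes the argument. The bootstrap yields $b\to0$ with $b_s\approx -\Gamma_b$, and $-\lambda_s/\lambda\approx b$. Integrating gives $b(s)\sim\pi/\log s$, and from there $\lambda(t)\sim\sqrt{2\pi(T-t)/\log|\log(T-t)|}$, which is \eqref{loglog-speed}. The sharp constant comes from refining $\Gamma_b$ asymptotics (as in \cite{MR05CMP}). Convergence of $x(t)$ follows from $|x_s|/\lambda\lesssim$ (a small quantity) integrated against $\lambda$. Existence of the $L^2$ limit $u^*$ follows from the dispersion of $\varepsilon$ outside the singular region.

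Part (ii) comes from the bootstrap itself: the trapped set is open in $H^1$, its conditions are stable under $H^1$-continuous dependence, and any log--log solution eventually enters it. Part (iii) follows \cite{MR05CMP}: the $L^2$ mass radiated at the parabolic scale $|x-x(T)|\sim\sqrt{T-t}$ yields the lower bound $\int_{|x-x(T)|\le R}|u^*|^2\gtrsim(\log|\log R|)^{-2}$. This is incompatible with $u^*\in L^p$ for $p>2$, since Hölder's inequality would force polynomial decay in $R$.
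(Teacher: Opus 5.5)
This theorem is not proved in the paper but quoted from the Merle--Rapha\"el series and Rapha\"el's $H^1$ stability paper, and your outline is that same argument: Galilean reduction, modulation around $Q_b$, the local virial estimate via the spectral property, the radiation-corrected Lyapunov functional with the $L^2$ flux, the trapped-region bootstrap, then the log--log law, openness, and the $(\log|\log R|)^{-2}$ mass lower bound for $u^*$. This is also the skeleton that Section~\ref{sec:Loglog} reruns for $I_{N\lambda}\varepsilon$, so your approach is essentially the paper's. Two corrections: it is the local virial estimate that keeps $b$ from collapsing, since it gives $b\gtrsim 1/\log s$ (the upper bound on the blow-up rate); the Lyapunov functional is what supplies pointwise-in-time control of $\varepsilon$ and $b\lesssim 1/\log s$ (hence $b\to0$ and the profile $Q$). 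Also, strong $L^2$ convergence to $u^*$ needs, besides dispersion away from $x(T)$, a separate nonconcentration argument for the residual mass at $x(T)$.
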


\subsection{The low-regularity problem}

In Theorem \ref{thm:dynamssnls}, the log--log blowup set is
\emph{open} in $H^1$. Since \eqref{equ:nls} is $L^2$-critical, it is
natural to ask whether this stability persists under $H^s$
perturbations for $0\leq s<1$. Indeed, part of the
Merle--Rapha\"el analysis relies only on local $L^2$ interactions,
and the blowup mechanism itself is highly localized, suggesting
that such stability may hold at lower regularity; see
\cite{Planchon-Raphael} for further discussion of this localization.

This question was first resolved in dimension two by Colliander and
Rapha\"el \cite{CR09}. By combining the Merle--Rapha\"el blowup analysis
with the $I$-method \cite{CKSTT02}, they replaced the energy by the
modified energy $E(I_Nu)$ and proved stability under $H^s$ perturbations
for every $s>0$. The key observation in \cite{CR09} is that the
energy enters the estimates for the log--log dynamics with a factor of
$\lambda(t)^2$. The relation between
$\lambda(t)$ and the radiation terms allows for growth in the modified
energy, provided that the error after rescaling remains \emph{negligible}.
They therefore choose the frequency scale in terms of $\lambda(t)$ and
apply the $I$-method within a bootstrap for the log--log regime. 

More precisely, Colliander and Rapha\"el  introduced a nonnegative correction $\Xi(t)$.
Their almost conservation estimate takes the form
\[
\left|E(I_{N(t)}u(t))+\frac{\Xi(t)}{\lambda(t)^2}\right|
\leq\lambda(t)^{-2(1-\alpha_1)},\qquad \alpha_1=\alpha_1(s)>0;
\]
see \cite[Proposition~3.1 and Remark~3.2]{CR09}. The positive power of
$\lambda(t)$ gained after rescaling makes the error in this estimate
negligible in the log--log dynamics. Together with the virial estimates
and the $L^2$ flux calculation, this gives them the control of the excess
of mass and the geometrical parameters needed to close the bootstrap. The same modified-energy strategy was used in \cite{FanMendelson2024} to construct log--log solutions  from structured random $L^2$ perturbations in dimension two.

The argument in \cite{CR09}, however, depends strongly on the frequency
decomposition underlying the $I$-method. For the cubic nonlinearity, the
increment of the modified energy can be expanded into a finite sum of
multilinear expressions, and each frequency interaction can be estimated
separately. In dimensions $d\geq3$, the mass-critical nonlinearity
$$
F(u)=|u|^{\frac4d}u
$$
is non-algebraic.  The multilinear expansion used for
the cubic nonlinearity in \cite{CR09} does not apply directly in this
setting. To treat such nonlinearities within the $I$-method, Vi\c{s}an and Zhang \cite{VZ07} used Bony's paraproduct calculus to estimate the commutator between $I_N$ and $F$. These estimates were subsequently combined with the log--log blow-up analysis in \cite{SunZheng} to prove stability under $H^s$ perturbations for $s>1/(1+\nu)$.

The purpose of this paper is to prove that the log--log regime is stable in
$H^s$ for every $0<s<1$ and every $d\geq3$ for which the spectral property
holds. This extends the range $s>1/(1+\nu)$ in \cite{SunZheng} to the full
range $0<s<1$.

\subsection{Statement of the main result}
Now, we state our main result.
\begin{theorem}[$H^s$ stability of the log--log regime]\label{thm:main}
Let $d\geq3$ and assume the spectral property in dimension $d$. Fix
$0<s<1$ and let $u_0\in H^1(\mathbb R^d)$ generate a log--log blow-up
solution as in Theorem \ref{thm:dynamssnls}. There exists
$\varepsilon=\varepsilon(d,s,u_0)>0$ such that, whenever
$v_0\in H^s(\mathbb R^d)$ satisfies
\[
\|v_0-u_0\|_{H^s(\mathbb R^d)}<\varepsilon,
\]
the solution with initial data $v_0$ blows up at a finite time $T$. There
exist parameters
$(\lambda(t),x(t),\gamma(t))\in(0,+\infty)\times\mathbb R^d\times\mathbb R$
and a profile $v^\ast\in L^2(\mathbb R^d)$ with
$v^\ast\notin L^p$ for every $p>2$ such that
\begin{align}
v(t)-\frac1{\lambda(t)^{\frac d2}}
Q\left(\frac{\cdot-x(t)}{\lambda(t)}\right)e^{-i\gamma(t)}
&\longrightarrow v^\ast
\quad\hbox{in }L^2,\label{equ:convt}\\
x(t)&\longrightarrow x(T)\in\mathbb R^d,\label{equ:xtconx}\\
\lambda(t)\sqrt{\frac{\log|\log(T-t)|}{T-t}}
&\longrightarrow\sqrt{2\pi}.\label{equ:lamdts}
\end{align}
as $t\to T.$
\end{theorem}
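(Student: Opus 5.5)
The plan follows the Colliander--Rapha\"el scheme \cite{CR09}: run the Merle--Rapha\"el bootstrap for the log--log regime, but replace the conserved energy $E(v)$ by the modified energy $E(I_{N(t)}v(t))$. The frequency cutoff $N(t)$ is tied to the blow-up scale, $N(t)=N_0\lambda(t)^{-1-\beta}$ for a small $\beta=\beta(s)>0$.

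\textbf{Step 1: setup.} Split $v_0=u_0+(v_0-u_0)$. Since $u_0\in H^1$ and $\|v_0-u_0\|_{H^s}<\varepsilon$, we can choose $N_0$ large so that $I_{N_0}v_0$ lies in the $H^1$-open log--log set of Theorem \ref{thm:dynamssnls}(ii), with controlled $H^1$ norm. In particular $E^G(I_{N_0}v_0)<0$ up to an error that is $o(1)$ as $\varepsilon\to0$, and the excess of mass is below $\alpha^*$.

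\textbf{Step 2: bootstrap.} On $[0,t_1]$ we decompose
\[
v=\lambda^{-d/2}(Q_b+\varepsilon)(\tfrac{x-x}{\lambda})e^{-i\gamma},
\]
imposing the usual orthogonality conditions. We assume the log--log bootstrap bounds on $b,\lambda$, together with $\|\nabla I_Nv\|_{L^2}\lesssim\lambda^{-1}$. The Merle--Rapha\"el machinery (local virial inequality under Assumption \ref{spectral-property}, $L^2$ dispersion/flux computation, Lyapunov functional) uses the energy only through $\lambda^2E$. It therefore closes as long as
\[
\lambda(t)^2\bigl|E(I_{N(t)}v(t))+\Xi(t)\lambda(t)^{-2}\bigr|\le\lambda(t)^{2\alpha_1}
\]
for some $\alpha_1>0$. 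The high-frequency part $(1-I_N)v$ is harmless: it has $L^2$ size $\lesssim N^{-s}\|v\|_{H^s}$ and affects the modulation equations only through controlled errors, after rescaling by $\lambda$, because $N\lambda\gg1$. Once the bootstrap closes, \eqref{equ:convt}--\eqref{equ:lamdts}, convergence of $x(t)$, and the non-$L^p$ property of $v^*$ follow exactly as in the $H^1$ theory and \cite{CR09,SunZheng}.

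\textbf{Step 3: almost conservation (the main obstacle).} We must bound $\frac{d}{dt}E(I_Nv)$, the commutator $\Re\int\overline{\partial_tI_Nv}\,[F(I_Nv)-I_NF(v)]$, on short Strichartz intervals of length $\sim\lambda^2$. The bound must decay like $N^{-\theta}$ with $\theta$ large enough that, summed over the log--log time intervals and multiplied by $\lambda^2$, it still gives a positive power of $\lambda$, for every $s>0$. The restriction $s>1/(1+\nu)$ in \cite{SunZheng} comes from the Vi\c{s}an--Zhang paraproduct commutator estimate, which gains only $N^{-1+}$ times a factor with limited H\"older regularity $\nu=\min\{1,4/d\}$ of $F'$.

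To avoid this loss, I would not estimate the commutator in one step. Instead I would use a dyadic refinement. Write $v=P_{\le N/K}v+P_{>N/K}v$ and expand
\[
F(I_Nv)=F(P_{\le N/K}v)+F'(P_{\le N/K}v)\,(\cdot)+\dots
\]
only to first order. The low part is smooth at scale $N/K$, so its commutator gains a full derivative, $N^{-1}$. The remainder is at least quadratic in the high frequencies, which each gain $N^{-(1-s)}$ in $\dot H^1$ relative to $I_N$ (via $\|P_{>N/K}I_Nv\|_{L^2}\lesssim N^{-1}\|\nabla I_Nv\|$). Using H\"older continuity of $F'$ of order $\nu$ only on this quadratic remainder gives a total gain $N^{-1-\nu(1-\dots)}$ plus terms that can be iterated.

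Alternatively, I would exploit the extra freedom that the log--log regime grants: $\lambda$ decreases super-exponentially in the rescaled time $s$. This allows us to take $\beta$ close to $0$ and only requires the increment to be $\lesssim N^{-\theta}\lambda^{-2}$ for an arbitrarily small $\theta=\theta(s)>0$, per unit rescaled time. The multiplier $I_N$ costs $N^{1-s}$, and $N^{1-s}\lambda^{1-s}\sim\lambda^{-\beta(1-s)}$ is only a mild loss. Any positive gain $\theta$ from the commutator then suffices provided $\beta(1-s)<\theta$. I expect this balance to be the decisive point: the commutator must yield some fixed positive power of $N$ uniformly in $s$, using only fractional Leibniz/chain rule estimates in $L^2$-based Strichartz spaces. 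Establishing that gain for non-algebraic $F$ is the hard step.
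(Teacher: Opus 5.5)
Your proposal leaves open exactly the step that constitutes the theorem. Step~3 is left unresolved (``establishing that gain \dots\ is the hard step''), while Steps~1--2 only restate the framework already used in \cite{CR09,SunZheng}. Moreover, the route you sketch would not give what is needed. In the Littlewood--Paley splitting, the low part does not ``gain a full derivative''. The function $F(P_{\le N/K}v)$ is not frequency localized, and extracting $N^{-1}$ from $(1-I_N)F(P_{\le N/K}v)$ at the level of $\nabla$ requires two derivatives of $F$. For $d\geq5$ one only has $F\in C^{1,4/d}$, since $F''\sim|z|^{4/d-1}$ is singular. Likewise, the first-order Taylor remainder is of order $1+\nu$ in the high part, not quadratic. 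This is essentially the paraproduct analysis of \cite{VZ07}, which is what led to the restriction $s>1/(1+\nu)$ in \cite{SunZheng}.

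Your criterion ``any fixed gain $N^{-\theta}$ suffices'' is also not the right one; what matters is homogeneity. Suppose the gain $N^{-\theta}$ comes with an excess power $\kappa>\theta$ of $\|\langle\nabla\rangle I_Nu\|\sim\lambda^{-1}$. Then two requirements both force $N\gg\lambda^{-\kappa/\theta}$: absorbing the commutator in the modified local theory on $H^s$-intervals of length $\sim\lambda^2$, and making $\lambda^2\times$(summed energy increments) small. On the other hand, the Colliander--Rapha\"el summation over doubling intervals is run with $N^{1-s}\lambda\to0$ (in the paper $N=\lambda^{-1/\beta}$ with $\beta=1-\frac{2s}{3}>1-s$). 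Together these force $s>1-\theta/\kappa$.

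The missing idea is the choice of smoothing operator. The paper replaces the sharp $I$-multiplier by $I_N=(1-\vartheta)\sum_{j\ge0}\vartheta^jS_{2^jN}$, where $S_M=e^{\Delta/M^2}$ and $\vartheta=2^{-(1-s)}$. Its symbol still behaves like $(N/|\xi|)^{1-s}$ at high frequency. Its kernels satisfy $\int K_M=1$ and $\int\nabla K_M=0$, with $\nabla K_M\in L^1$ and finite moments. This gives the exact identity $S_MF(u)-F(S_Mu)=\int K_M(y)\,R\bigl(S_Mu,u(\cdot-y)-S_Mu\bigr)\,dy$, and the same with $\nabla K_M$ for the gradient, where $|R(a,b)|\lesssim(|a|+|b|)^{4/d-\nu}|b|^{1+\nu}$. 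The recursion $I_N=(1-\vartheta)S_N+\vartheta I_{2N}$ makes $I_{N_j}u$ the barycenter of $S_{N_j}u$ and $I_{2N_j}u$. Hence the cross-scale error $(1-\vartheta)F(S_{N_j}u)+\vartheta F(I_{2N_j}u)-F(I_{N_j}u)$ is again a $C^{1,\nu}$ remainder. The outcome, Lemma~\ref{commutor}, is
$\|\nabla\mathcal C_N(u)\|_{L_t^2L_x^{2d/(d+2)}}\lesssim N^{-\nu}\|u\|_{L_t^\infty L_x^2}^{4/d-\nu}\|\langle\nabla\rangle I_Nu\|_{L_t^\infty L_x^2}^{\nu}\|\langle\nabla\rangle I_Nu\|_{L_t^2L_x^{2d/(d-2)}}$.
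Here the excess power equals the gain $\nu$, and the rest is carried by the conserved mass. So the local theory closes under the scale-invariant condition $N^{-1}\|\langle\nabla\rangle I_Nu\|_{L^2}\ll1$, and after multiplying by $\lambda^2$ the summed increment is $\lesssim|\log\lambda|(N\lambda)^{-\nu}$.

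The same lemma controls the errors $F(s)$ in the modulation equations, the momentum, and the $L^2$ flux while working directly with $I_Nu$, which avoids the extra restriction $s>1/2$ of \cite{SunZheng}. Your claim in Step~2 that $(1-I_N)v$ is harmless therefore also rests on this lemma.

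Finally, Step~1 does not start the bootstrap. Knowing that $I_{N_0}v_0$ lies in the $H^1$ log--log set says nothing about the rough flow $v$, and the trapped regime needs $b$ small and $\lambda\le e^{-e^{2\pi/(3b)}}$. As in \cite{CR09}, one runs the $H^1$ flow of $u_0$ to a time $t_0$ near blow-up. Then $H^s$ continuous dependence on $[0,t_0]$ gives $\|v(t_0)-u(t_0)\|_{H^s}\le\lambda(t_0)^{10}$.
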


\begin{remark}
We expect that the tools and techniques developed in this paper will also
improve low-regularity global well-posedness and scattering results for the
defocusing NLS with a non-algebraic nonlinearity.
\end{remark}

\subsection{Strategy of the proof}

Our main new ingredient is a smoothing operator $I_N$ constructed
as a weighted sum of heat operators at different scales. This operator retains the frequency profile of the usual $I$-multiplier,
but yields \emph{stronger} estimates for the nonlinear commutator
$I_NF(u)-F(I_Nu)$. These estimates then provide the control of the
modified-energy increment needed to close the log--log bootstrap for every $0<s<1$.

To motivate the construction, we first consider the commutator
of $F$ with a convolution operator. Write $F(u)=|u|^{4/d}u$
and $\nu=\min\{1,4/d\}$. Let $S_Mf=K_M*f$ be a
convolution operator with a real-valued kernel such that
$K_M,\nabla K_M\in L^1(\mathbb R^d)$ and
\[
\int_{\mathbb R^d}K_M(y)\,dy=1,
\qquad
\int_{\mathbb R^d}\nabla K_M(y)\,dy=0.
\]
We set
\[
H_M=S_MF(u)-F(S_Mu).
\]
Using these properties, we obtain
\begin{align}
H_M(t,x)
={}&\int_{\mathbb R^d}K_M(y)
\bigl[F(u(t,x-y))-F(S_Mu(t,x))\bigr]\,dy\notag\\
&-\left[\int_{\mathbb R^d}K_M(y)
\bigl[u(t,x-y)-S_Mu(t,x)\bigr]\,dy\right]
\cdot F^\prime(S_Mu(t,x))\notag\\
={}&\int_{\mathbb R^d}K_M(y)
R\bigl(S_Mu(t,x),u(t,x-y)-S_Mu(t,x)\bigr)\,dy,
\label{equ:introheatdefect}
\end{align}
and similarly,
\begin{equation}\label{equ:introheatdefectgrad}
\nabla H_M(t,x)=\int_{\mathbb R^d}\nabla K_M(y)
R\bigl(S_Mu(t,x),u(t,x-y)-S_Mu(t,x)\bigr)\,dy,
\end{equation}
where\footnote{See \eqref{nota1} for the definition of $b\cdot F^{\prime}(a)$}
\[
R(a,b)=F(a+b)-F(a)-b\cdot F^\prime(a).
\]
By Lemma~\ref{lem:nonltermest} below, this remainder satisfies
\[
|R(a,b)|\lesssim_d(|a|+|b|)^{\frac4d-\nu}|b|^{1+\nu}.
\]

These identities play a central role in the refined commutator estimates
of Section~\ref{sec:ACL}. Heuristically, they reduce the commutator bounds to estimates
for the Taylor remainder, which in turn depend on suitable weighted bounds
for the kernel. The kernel of the usual $I$-operator does not have the
required properties; in particular, its gradient is not integrable. We
therefore use the Gaussian kernel as the basic building block of our
construction, taking advantage of its localization in both physical and
frequency space.

Noticing that the rapid decay of Gaussian kernel prevents it
from satisfying the comparison estimates needed in the $I$-method. To recover the required frequency profile, we take $S_M=e^{\Delta/M^2}$ and form a weighted sum of heat operators at different scales. More precisely, set
\[
\alpha=1-s,
\qquad
\vartheta=2^{-\alpha},
\]
and define
\begin{equation}\label{equ:introheatI}
I_N=(1-\vartheta)\sum_{j\geq0}\vartheta^jS_{2^jN}.
\end{equation}
We call \eqref{equ:introheatI} the \emph{heat-kernel $I$-operator}. This
definition is motivated in physical space, but it also has precisely the
expected frequency profile. Indeed, let $m_N$ denote the symbol of $I_N$ and
suppose that $|\xi|\sim2^kN$ with $k\geq0$. The exponential decay of the heat symbol
suppresses the terms with $j\ll k$, so the principal contribution comes from
the indices $j\gtrsim k$. The geometric weights then give
\[
(1-\vartheta)\sum_{j\gtrsim k}\vartheta^j
\simeq\vartheta^k
\simeq\left(\frac{N}{|\xi|}\right)^{1-s}.
\]
Thus, the multiplier in \eqref{equ:introheatI} has order $s-1$ at high
frequency, as does the classical $I$-multiplier introduced by \cite{CKSTT02}. Taking this weighted sum of heat operators introduces an additional difficulty, since the nonlinearity couples the different scales. We observe, however, that the geometric average in
\eqref{equ:introheatI} satisfies the exact recursion
\begin{equation}\label{equ:introrecursion}
I_N=(1-\vartheta)S_N+\vartheta I_{2N}.
\end{equation}
This recursion provides the cancellation needed to control the nonlinear
error introduced by the weighted sum in \eqref{equ:introheatI}. We define
\begin{equation}\label{equ:commutator-def}
\mathcal{C}_N(u):=I_NF(u)-F(I_Nu).
\end{equation}
Setting $N_j=2^jN$ and iterating \eqref{equ:introrecursion}, we obtain the
decomposition
\[
\mathcal{C}_N(u)=\mathcal A_N(u)+\mathcal B_N(u),
\]
where
\begin{align*}
\mathcal A_N(u)
&=(1-\vartheta)\sum_{j\geq0}\vartheta^j
  \bigl[S_{N_j}F(u)-F(S_{N_j}u)\bigr],\\
\mathcal B_N(u)
&=\sum_{j\geq0}\vartheta^j
  \bigl[(1-\vartheta)F(S_{N_j}u)
  +\vartheta F(I_{2N_j}u)-F(I_{N_j}u)\bigr].
\end{align*}
The term $\mathcal A_N$ is the sum of the fixed-scale heat commutator in
\eqref{equ:introheatdefect}. Thus,  every summand already enjoys the heat-flow
cancellation. The term $\mathcal B_N$ is more subtle. It is precisely the nonlinear error introduced by this weighted sum. The recursion
expresses this contribution in terms of adjacent scales and gives
\[
I_{N_j}u=(1-\vartheta)S_{N_j}u+\vartheta I_{2N_j}u,
\]
so that $I_{N_j}u$ is the exact barycenter of the two adjacent scales. If both
nonlinear terms are expanded about this common center, their weighted
first-order contributions cancel identically. Each summand of
$\mathcal B_N$ is again a $C^{1,\nu}$ remainder, and it satisfies
\begin{align*}
&\left|(1-\vartheta)F(S_{N_j}u)
+\vartheta F(I_{2N_j}u)-F(I_{N_j}u)\right|\\
&\qquad\lesssim_{d,s}
(|S_{N_j}u|+|I_{2N_j}u|)^{\frac4d-\nu}
|(S_{N_j}-I_{2N_j})u|^{1+\nu}.
\end{align*}
There are thus two cancellations in the argument: one within each heat
scale, and one between adjacent scales. Both remove the linear Taylor term
before the individual frequency contributions are estimated.

The same $C^{1,\nu}$ remainder controls both the fixed-scale terms in
$\mathcal A_N$ and the cross-scale terms in $\mathcal B_N$. The resulting
estimates, for $d\geq3$, $0<s<1$, $N\geq1$, and a time interval $J$, are
\begin{align*}
\|\nabla\mathcal{C}_N(u)\|_{L_t^2L_x^{\frac{2d}{d+2}}(J\times\mathbb R^d)}
&\lesssim_{d,s}N^{-\nu}
\|u\|_{L_t^\infty L_x^2}^{\frac4d-\nu}
\|\langle\nabla\rangle I_Nu\|_{L_t^\infty L_x^2}^{\nu}
\|\langle\nabla\rangle I_Nu\|_{L_t^2L_x^{\frac{2d}{d-2}}},\\
\|\mathcal{C}_N(u)\|_{L_t^2L_x^{\frac{2d}{d+2}}(J\times\mathbb R^d)}
&\lesssim_{d,s}N^{-(1+\nu)}
\|u\|_{L_t^\infty L_x^2}^{\frac4d-\nu}
\|\langle\nabla\rangle I_Nu\|_{L_t^\infty L_x^2}^{\nu}
\|\langle\nabla\rangle I_Nu\|_{L_t^2L_x^{\frac{2d}{d-2}}}.
\end{align*}
Here all norms on the right hand side are taken over $J\times\mathbb R^d$.

These commutator estimates are the key to treating the full range
$0<s<1$. More precisely, they allow us to obtain control on local constancy
intervals analogous to that available in the usual $I$-method for
algebraic nonlinearities. Such control does not follow from the standard
commutator estimates in the non-algebraic case. They also yield
sufficiently strong bounds for the modified-energy increment to ensure
that its contribution to the log-log dynamics remains perturbative.
Moreover, the same estimates play an important role in the Lyapunov
analysis, since they allow us to work directly with the regularized
solution.\footnote{Similar ideas have already been used in
\cite{CR09,FanMendelson2024}.} In contrast, the weaker commutator estimates
available in \cite{SunZheng} lead the authors to derive the corresponding
bounds separately from the modulation equations for the unregularized
solution. The localized momentum flux arising in this argument requires
the restriction $s>1/2$, see the proof of Lemma~3.9 in \cite{SunZheng}.

Finally, our approach also applies to the mass-critical NLS in dimensions $d=1,2$, where the nonlinearity is algebraic. Indeed, as observed in \cite{CR09}, the log-log blowup mechanism
does not require particularly strong estimates for the modified-energy
increment. The estimates obtained here, without decomposing the
nonlinearity into individual frequency interactions, are sufficient for
this purpose. 

\subsection{Setting of the bootstrap}

We next incorporate the preceding estimates into the geometrical
decomposition in \cite{MR03GAFA,MR04Invemath}. We begin with the localized
self-similar profiles.

\begin{proposition}[Localized self-similar profiles, \cite{MR03GAFA,MR04Invemath}]\label{pro:self-similar}
There exist universal constants $C>0$ and $\eta^\ast>0$ with the following
property. For each $0<\eta<\eta^\ast$, there exist parameters
$\varepsilon^\ast(\eta)>0$ and $b^\ast(\eta)>0$ tending to zero with
$\eta$, such that for all $|b|<b^\ast(\eta)$ there exists a unique radial solution
$\tilde Q_b$ to the following elliptic equation
\begin{equation}
\begin{cases}
\Delta\tilde{Q}_b-\tilde{Q}_b+ib\Lambda\tilde{Q}_b+\tilde{Q}_b|\tilde{Q}_b|^\frac4d=0,\\
P_b=\tilde{Q}_b e^{i\frac{b|y|^2}4}>0~\text{in}~B_{R_b},\\
|\tilde{Q}_b(0)-Q(0)|<\varepsilon^\ast(\eta),~\tilde{Q}_b(R_b)=0,
\end{cases}
\end{equation}
where $R_b=\frac2{|b|}\sqrt{1-\eta}$ and
$B_{R_b}=\{y\in\R^d:|y|\leq R_b\}$. Let $\phi_b$ be a smooth spherically symmetric
function satisfying
\begin{equation*}
  \phi_b(y)=\begin{cases}
  1 \quad \text{if}\quad |y|\leq R_b^-=\sqrt{1-\eta}R_b,\\
  0\quad \text{if}\quad |y|\geq R_b,
  \end{cases} ~0\leq\phi_b\leq1,
\end{equation*}
and $\|\nabla\phi_b\|_{L^\infty}+\|\Delta\phi_b\|_{L^\infty}\to0$ when
$|b|\to0$. Denote by
\begin{equation}\label{def:qb}
Q_b(r)=\tilde{Q}_b(r)\phi_b(r).
\end{equation}
Then
\begin{align}\label{equ:qbclsq}
\big\|e^{Cr}(Q_b-Q)\big\|_{H^{10}\cap C^2}\to0\quad\text{as}\quad |b|\to0,\\\label{equ:paqbest}
\big\|e^{Cr}\big(\tfrac{\pa Q_b}{\pa b}+i\tfrac{|y|^2}4Q\big)\big\|_{C^2}\to0\quad \text{as}\quad |b|\to0,\\\label{equ:eqb}
|E(Q_b)|\leq e^{-\frac{C}{|b|}}.
\end{align}
Moreover, the profile $Q_b$ has  supercritical mass in the sense that
\begin{equation}\label{equ:qbcrimass}
0<\frac{d}{d(b^2)}\Big(\int|Q_b|^2\Big)\Big|_{b^2=0}=d_0<+\infty.
\end{equation}

\end{proposition}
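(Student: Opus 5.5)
The plan is to follow the construction of Merle--Rapha\"el \cite{MR03GAFA,MR04Invemath}, with the exponent $1+\frac4d$ and $d\geq3$ (the paper cites the result, so the task is to recall how it goes). \textbf{Existence.} Work with $P_b=\tilde Q_b e^{ib|y|^2/4}$. Substituting into the equation removes the $ib\Lambda$ term at the price of a potential:
\[
\Delta P_b-P_b+\frac{b^2|y|^2}{4}P_b+P_b^{1+\frac4d}=0 .
\]
This is a real radial ODE in $r$. For $|y|<R_b=\frac2{|b|}\sqrt{1-\eta}$ the effective potential $1-\frac{b^2r^2}{4}\geq\eta$ stays positive. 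So on $B_{R_b}$ the problem is a small perturbation of the ground-state equation \eqref{equ:groundstate}, whose solution decays exponentially.

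We then pose a shooting problem in the initial value $P_b(0)$, which must equal $Q(0)$ up to $\varepsilon^\ast(\eta)$, and impose the Dirichlet condition $P_b(R_b)=0$ with $P_b>0$ on $B_{R_b}$. To solve it:
\begin{itemize}
\item Linearize around $Q$. The operator $L_+=-\Delta+1-(1+\frac4d)Q^{4/d}$, restricted to radial functions on the ball with Dirichlet data, is invertible: its radial kernel is trivial by nondegeneracy, and the truncation error is exponentially small in $1/|b|$.
\item Apply a fixed point (implicit function) argument in exponentially weighted spaces. This gives existence, uniqueness near $Q$, and positivity.
\item Positivity can alternatively be obtained by an ODE comparison with the barrier $e^{-\sqrt\eta r}$.
\end{itemize}

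\textbf{Estimates.} We define $Q_b=\tilde Q_b\phi_b$. The cutoff acts only where $r\geq R_b^-\sim 1/|b|$, and there $\tilde Q_b$ is $O(e^{-c/|b|})$.
\begin{itemize}
\item \eqref{equ:qbclsq}: the fixed-point bound gives $\|e^{Cr}(P_b-Q)\|\to0$ for $C<\sqrt\eta$. Combining this with the phase $e^{-ib|y|^2/4}$ and elliptic regularity upgrades the bound to $H^{10}\cap C^2$.
\item \eqref{equ:paqbest}: differentiate the fixed point in $b$. Since $\partial_bP_b=O(b)$ in weighted norms, the derivative of the phase factor gives the leading term $-i\frac{|y|^2}{4}Q$.
\item \eqref{equ:eqb}: use the Pohozaev-type identity obtained by pairing the $\tilde Q_b$ equation with $\Lambda\tilde Q_b$, following the derivation of $E(Q)=0$. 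The resulting expression for $E$ is a boundary integral at $r=R_b$ plus cutoff terms. Since $\tilde Q_b$ and its derivative are $O(e^{-C/|b|})$ there, $|E(Q_b)|\leq e^{-C/|b|}$.
\end{itemize}

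\textbf{Supercritical mass \eqref{equ:qbcrimass}.} This is the main obstacle. Formally, write $P_b=Q+b^2T_1+o(b^2)$, where $T_1$ is defined by $L_+T_1=\frac{|y|^2}{4}Q$. Then
\[
\frac{d}{d(b^2)}\int|Q_b|^2\Big|_{b^2=0}=2(Q,T_1)=2\Big(L_+^{-1}Q,\tfrac{|y|^2}{4}Q\Big).
\]
Using $L_+\Lambda Q=-2Q$ we get $L_+^{-1}Q=-\frac12\Lambda Q$. Integrating by parts, $-(\Lambda Q,|y|^2Q)=\int|y|^2Q^2>0$, so
\[
d_0=\frac14\int|y|^2Q^2>0 .
\]

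The difficulty is justifying the formal expansion uniformly as $R_b\to\infty$. The potential $b^2|y|^2$ is not small at radius $r\sim 1/|b|$. We handle this by using the exponential localization: the correction on the region $r\gtrsim|b|^{-1/2}$ contributes $o(b^2)$, while on the complement the potential is $O(|b|)$ and the linearization around $Q$ applies. Finiteness of $d_0$ follows from the same expansion.
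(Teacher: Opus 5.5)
The paper gives no proof of this proposition; it imports it from Merle--Rapha\"el. Your outline follows their standard route: conjugation by $e^{ib|y|^2/4}$, a construction of $P_b$ near $Q$ on $B_{R_b}$, a Pohozaev identity obtained by pairing with $\Lambda$, and the relation $L_+\Lambda Q=-2Q$. Your value $d_0=2(Q,T_1)=\frac14\int|y|^2Q^2$ is correct.

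The genuine gap is in the decay of $P_b$, not in the mass expansion you flagged. Your only decay mechanism is the barrier $e^{-\sqrt\eta r}$, equivalently weights $e^{Cr}$ with $C<\sqrt\eta$. This makes the constant in \eqref{equ:qbclsq}--\eqref{equ:paqbest} depend on $\eta$. The cutoff terms then give only $|E(Q_b)|\lesssim e^{-c\sqrt\eta/|b|}$. Both bounds degenerate as $\eta\to0$, whereas the proposition fixes $C$ universally \emph{before} $\eta$ is chosen.

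The missing observation is that $V=1-\frac{b^2r^2}{4}$ is of size $\eta$ only in a thin shell near $R_b$. Once your crude construction gives $P_b^{4/d}\ll\eta$ for $r\geq r_0$, compare $P_b$ on $r_0\leq r\leq R_b$ with the WKB supersolution $w=e^{-\sigma\theta(|b|r)/|b|}$, where $\theta$ is the same function as in Lemma~\ref{lem:linoutrad}. A direct computation gives
\[
(-\Delta+V-P_b^{4/d})w=\Big((1-\sigma^2)V-\frac{\sigma b^2r}{4\sqrt V}+\frac{\sigma(d-1)\sqrt V}{r}-P_b^{4/d}\Big)w>0,
\]
which holds for $|b|$ small depending on $\eta$ and $\sigma$, because $V\geq\eta$ there. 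Since $P_b(R_b)=0<w(R_b)$, the maximum principle yields $P_b\lesssim w$.

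Because $\theta$ is concave with $\theta(2)=\pi/2$, this gives $P_b\lesssim e^{-\sigma\pi r/4}$, which is a universal rate. Taking $\sigma=1-\eta$, it also gives $|E(Q_b)|\leq e^{-(1-C\eta)\pi/|b|}$ on $[R_b^-,R_b]$. That is exactly the size needed later, where $2E(Q_b)$ must be absorbed into $\Gamma_b^{1-C\eta}$ in \eqref{eq:5}.

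Two smaller steps also need repair.

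\emph{Invertibility.} The linearization of the $P_b$-problem at $Q$ is $L_+-\frac{b^2|y|^2}{4}$, not $L_+$. Near $|y|\sim R_b$ this is not a small perturbation of $L_+$, so exponentially small truncation errors do not give uniform invertibility. Uniform invertibility instead comes from localization: radial nondegeneracy of $L_+$ on compact sets, together with the coercivity $-\Delta+V\geq-\Delta+\eta$ wherever $Q^{4/d}$ is negligible.

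\emph{Positivity and uniqueness.} Closeness to $Q$ in $e^{Cr}$-weighted norms with $C<1$ says nothing about the sign of $P_b$ where $Q\ll e^{-Cr}$. An upper barrier cannot give positivity either. Use the strong maximum principle for $-\Delta+(V-|P_b|^{4/d})$, whose zeroth-order coefficient is positive in that region, together with $P_b(R_b)=0$. The same argument, combined with continuous dependence of the radial ODE on $P_b(0)$, is also needed to pass from implicit-function uniqueness in a weighted ball to the uniqueness stated under $|\tilde Q_b(0)-Q(0)|<\varepsilon^\ast(\eta)$.
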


\begin{remark}\label{Rem:qbequ}
The localized profile $Q_b$ is an approximate self-similar profile. We
define its error $\Psi_b$ by
\begin{equation}\label{def:psib}
\Delta Q_b-Q_b+ib\Lambda Q_b+Q_b|Q_b|^\frac4d=-\Psi_b.
\end{equation}
Here
\[
-\Psi_b=2\nabla\tilde{Q}_b\cdot\nabla \phi_b
+\tilde{Q}_b\Delta\phi_b+ib\tilde{Q}_b\,y\cdot\nabla\phi_b
+(\phi_b^{1+\frac4d}-\phi_b)|\tilde{Q}_b|^\frac4d\tilde{Q}_b.
\]
\end{remark}

The error $\Psi_b$ generates the outgoing radiation used in the refined
virial estimate.

\begin{lemma}[Linear outgoing radiation, \cite{MR04Invemath}]\label{lem:linoutrad}
There exist universal constants $C>0$ and $\eta^\ast>0$ such that the
following holds. For any $0<\eta<\eta^\ast$, there is $b^\ast>0$ such
that for $|b|<b^\ast$ there exists a unique solution $\zeta_b$ of
\begin{equation}
\begin{cases}
\Delta \zeta_b-\zeta_b+ib\Lambda\zeta_b=\Psi_b,\\
\int|\nabla \zeta_b(y)|^2\, dy<+\infty.
\end{cases}
\end{equation}
If
\begin{equation}\label{equ:gammab}
\Gamma_b:=\lim_{|y|\to+\infty}|y|^d|\zeta_b(y)|^2,
\end{equation}
then
\begin{equation}\label{equ:geb}
e^{-(1+C\eta)\frac{\pi}{|b|}}\leq\Gamma_b\leq e^{-(1-C\eta)\frac{\pi}{|b|}}.
\end{equation}
Moreover,
\[
\left\||y|^{\frac d2}(|\zeta_b|+|y||\nabla\zeta_b|)\right\|_{L^\infty(|y|\geq R_b)}
\leq\Gamma_b^{\frac12-C\eta},
\qquad
\int|\nabla\zeta_b|^2\leq\Gamma_b^{1-C\eta}.
\]
Define
\[
\theta(r)=
\begin{cases}
\displaystyle\int_0^r\sqrt{1-\frac{z^2}{4}}\,dz,
&0\leq r\leq2,\\[2mm]
\displaystyle\frac{\pi}{2},
&r\geq2.
\end{cases}
\]
For every $\sigma\in(0,5)$, there exists $\eta_\sigma>0$ such that, for any
$0<\eta<\eta_\sigma$, there exists $b^{\ast\ast}(\eta)>0$ such that for any $0<|b|<b^{\ast\ast}(\eta)$, it holds
\[
\left\|\zeta_be^{-\sigma\theta(|b||y|)/|b|}\right\|_{C^2(|y|\leq R_b)}
\leq\Gamma_b^{\frac12+\sigma/10}.
\]
Moreover, $\zeta_b$ is differentiable with respect to $b$ with the bound
\[
\|\partial_b\zeta_b\|_{C^1}\leq\Gamma_b^{\frac12-C\eta}.
\]
\end{lemma}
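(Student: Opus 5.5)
This lemma is quoted from \cite{MR04Invemath}, so the plan is to follow the Merle--Rapha\"el construction in general dimension $d$, which is essentially an ODE analysis of a radial linear problem. The steps are: existence, then the asymptotic flux $\Gamma_b$, then the exponential bounds, then the $b$-dependence.

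\textbf{Existence and uniqueness.} Since $\Psi_b$ is radial and supported in the annulus $R_b^-\le|y|\le R_b$ (where $\phi_b$ is nonconstant), I would work with radial functions. I would conjugate by the phase, $\zeta_b=e^{-ib|y|^2/4}w$, which turns $\Delta-1+ib\Lambda$ into
\[
\Delta-1+\tfrac{b^2|y|^2}{4}
\]
up to a constant imaginary shift. This is a radial Schr\"odinger operator with an inverted harmonic potential, and it has a turning point at $|y|=2/|b|$. Existence of a solution with $\nabla\zeta_b\in L^2$ then reduces to selecting the outgoing solution at infinity. Its uniqueness comes from an energy identity: multiply the homogeneous equation by $\bar\zeta$, take the imaginary part, and integrate by parts. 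This gives the flux identity $\Im\int\nabla\zeta\cdot\bar\zeta\ldots$, which forces the homogeneous solution to vanish.

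\textbf{Asymptotics and bounds on $\Gamma_b$.} For $|y|\gg 1/|b|$ the equation is dominated by $ib\Lambda\zeta$, so $\zeta\sim c\,|y|^{-d/2-i/b}$. This shows that the limit $\Gamma_b=|c|^2$ exists, and it also gives the weighted $L^\infty$ bound on $|y|\ge R_b$ and the bound $\int|\nabla\zeta_b|^2\le\Gamma_b^{1-C\eta}$.

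The two-sided estimate \eqref{equ:geb} is the \textbf{main obstacle}. It is a WKB tunnelling computation. In the classically forbidden region $|y|<2/|b|$, the solutions behave like $e^{\pm\theta(|b||y|)/|b|}$, and $\theta(2)=\pi/2$. The source $\Psi_b$ has size about $e^{-(1-C\eta)\theta(|b|R_b)/|b|}$, coming from the decay of $\tilde Q_b$, and it sits near $R_b=2\sqrt{1-\eta}/|b|$. So the outgoing amplitude is the product of the source size and the tunnelling factor. Squaring gives $e^{-\pi/|b|}$ up to $e^{\pm C\eta/|b|}$ errors.

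To make this rigorous I would use a variation-of-constants formula built from a WKB basis. I would control that basis with Gronwall-type bounds away from the turning point and with Airy comparison near it. The lower bound additionally requires the explicit nondegeneracy of $\Psi_b$, namely a lower bound on $\tilde Q_b$ near $R_b^-$ inherited from $P_b>0$.

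\textbf{The $C^2$ bound inside $B_{R_b}$.} The weighted estimate with $e^{-\sigma\theta/|b|}$ follows from the same WKB representation. Inside $B_{R_b}$ the solution is $\Gamma_b^{1/2}$ times the growing mode $e^{\theta/|b|}$ normalized at the turning point. Comparing the exponents gives the factor $\Gamma_b^{1/2+\sigma/10}$ for $\sigma<5$, once $\eta$ is small.

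\textbf{Differentiability in $b$.} I would differentiate the equation in $b$:
\[
(\Delta-1+ib\Lambda)\partial_b\zeta_b=\partial_b\Psi_b-i\Lambda\zeta_b .
\]
I would then apply the same resolvent bounds, which lose at most polynomial factors in $1/|b|$. These factors are absorbed by adjusting $C\eta$.
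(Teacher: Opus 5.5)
The paper gives no proof of this lemma; it quotes it from Merle--Rapha\"el \cite{MR04Invemath}. Your framework is the right one, and the asymptotics at infinity, the upper bound on $\Gamma_b$ and the weighted bound on $B_{R_b}$ go through essentially as you sketch. By the framework I mean: conjugation to a radial inverted-oscillator problem, selection of the outgoing mode by the $\dot H^1$ condition, the flux identity for uniqueness, and WKB through the forbidden region $|y|<2/|b|$ with $\theta(2)=\pi/2$. The gap is in the step you single out as the main obstacle, the lower bound $\Gamma_b\geq e^{-(1+C\eta)\pi/|b|}$. The nondegeneracy you invoke, a pointwise lower bound on $\tilde Q_b$ near $R_b^-$, does not control the quantity that actually fixes the outgoing amplitude.

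Write $\zeta_b=e^{-ib|y|^2/4}w_b$ as you do. The conjugation is exact, with no imaginary shift: $L_bw_b=\widehat\Psi_b$, where $L_b=\Delta-1+\frac{b^2|y|^2}{4}$ and
\[
\widehat\Psi_b:=e^{i\frac{b|y|^2}{4}}\Psi_b=-\Bigl[2\nabla P_b\cdot\nabla\phi_b+P_b\Delta\phi_b+\bigl(\phi_b^{1+\frac4d}-\phi_b\bigr)P_b^{1+\frac4d}\Bigr]
\]
is real. In your variation-of-constants formula, the outgoing amplitude is proportional to the pairing $\int\rho_b\widehat\Psi_b$. Here $\rho_b$ is the regular solution of $L_b\rho_b=0$ with $\rho_b(0)=1$. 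Since $\Delta\phi_b$ changes sign and your bounds on $P_b$ are only accurate up to factors $e^{\pm C\eta/|b|}$, nothing in your plan rules out cancellation in this pairing. The missing observation is that $L_bP_b=-P_b^{1+4/d}$ on $B_{R_b}$, so $\widehat\Psi_b=-L_b(P_b\phi_b)-(P_b\phi_b)^{1+4/d}$. Green's identity then gives
\[
\int\rho_b\,\widehat\Psi_b\,dy=-\int\rho_b\,(P_b\phi_b)^{1+\frac4d}\,dy .
\]
Moreover $\rho_b\geq1$ on $B_{2/|b|}$, because $(r^{d-1}\rho_b')'=r^{d-1}(1-\frac{b^2r^2}{4})\rho_b$ there. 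Together with $P_b>0$ and $P_b\to Q$ locally, this makes the pairing sign-definite and of order one. So the nondegeneracy does come from $P_b>0$, but on all of $B_{R_b}$ and through the nonlinear term, not from the size of $\tilde Q_b$ near $R_b^-$. Next, $\Im w_b$ solves the real homogeneous equation and is regular at the origin, so $\Im w_b=\mu\rho_b$. Combining this with the flux identity gives, for $b>0$,
\[
\Gamma_b=\frac{4}{b^2|\mathbb S^{d-1}|^2A_b^2}\Bigl(\int\rho_b\,(P_b\phi_b)^{1+\frac4d}\,dy\Bigr)^2 ,
\]
where $\rho_b(y)\sim A_b|y|^{-d/2}\cos\bigl(\Phi(|y|)+\alpha_b\bigr)$ at infinity and $\Phi(r)=\frac{br^2}{4}-\frac{\log r}{b}$. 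Both bounds on $\Gamma_b$ therefore reduce to $A_b=e^{\pi/(2|b|)}$ up to factors $e^{\pm C\eta/|b|}$. That is a purely linear turning-point estimate, and it is where your WKB/Airy comparison belongs.

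A smaller point concerns the $b$-derivative. In the differentiated equation the new source $-i\Lambda\zeta_b\approx-\frac1b\zeta_b$ is not localized and is resonant with the outgoing mode. It produces a term $\frac{ic}{b^2}\log|y|\,|y|^{-d/2-i/b}$ in $\partial_b\zeta_b$, so resolvent bounds proved for annulus-supported sources do not apply verbatim. The $C^1$ bound still holds, because $|y|^{-d/2}\log|y|$ is bounded on $|y|\geq R_b$.
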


The heat-kernel $I$-operator connects the $H^s$ solution to the energy-space
modulation theory. By Proposition \ref{highcont}, the operator $I_N$ defined in
\eqref{equ:introheatI} maps $H^s$ to $H^1$ and satisfies
\begin{equation}\label{equ:equcont-intro}
\|u\|_{H^s}\lesssim_s\|\langle\nabla\rangle I_Nu\|_{L^2}\lesssim_sN^{1-s}\|u\|_{H^s}.
\end{equation}
If $\delta_\lambda f(x)=f(x/\lambda)$, then
\begin{equation}\label{equ:scaio-intro}
I_N\delta_\lambda=\delta_\lambda I_{N\lambda}.
\end{equation}

The case $s=1$ is contained in the $H^1$ theory. For fixed $0<s<1$, we
reduce Theorem~\ref{thm:main} to the following proposition, as in
\cite[Section~2.2]{CR09}.

\begin{proposition}[Description of the blow-up set]\label{prop:main}
Let $0<s<1$ and write
\begin{equation}\label{equ:iniassum}
u_0=G(0)+H(0),~G(0)\in H^1,~H(0)\in H^s.
\end{equation}
Suppose that $G(0)$ has the geometrical decomposition
\begin{equation}\label{equ:g0x}
G(0,x)=\frac1{\lambda(0)^\frac d2}\big(Q_{b(0)}+g(0)\big)\Big(\frac{x-x(0)}{\lambda(0)}\Big)e^{-i\gamma(0)},
\end{equation}
and the following bounds hold.

$(i)$ The scaling parameters satisfy
\begin{equation}\label{equ:b0l0}
0<b(0)\ll1,~~0<\lambda(0)\leq e^{-e^\frac{2\pi}{3b(0)}},
\end{equation}

$(ii)$ The excess mass satisfies
\begin{equation}\label{equ:excmass}
\|g(0)\|_{L^2}+\|H(0)\|_{L^2}\ll1.
\end{equation}

$(iii)$ The rough component satisfies
\begin{equation}\label{equ:h0hs}
\|H(0)\|_{H^s}\leq \lambda(0)^{10}.
\end{equation}

$(iv)$ The energy-space remainder satisfies
\begin{equation}\label{equ:g0small}
\int|\nabla g(0)|^2 +\int |g(0)|^2e^{-|y|}\leq \Gamma_{b(0)}^\frac34.
\end{equation}

$(v)$ The energy and momentum of $G(0)$ satisfy
\begin{align}\label{equ:eg0}
|E(G(0))|\leq&\frac1{\sqrt{\lambda(0)}},\\\label{equ:gg0}
|P(G(0))|\leq&\frac1{\sqrt{\lambda(0)}}.
\end{align}
Then the $H^s$ solution to \eqref{equ:nls} with initial data $u_0$ blows up in finite time in
the log--log regime and satisfies all conclusions of Theorem \ref{thm:main}.

\end{proposition}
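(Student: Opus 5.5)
The plan is to follow the Colliander--Rapha\"el bootstrap \cite{CR09}, replacing the cubic $I$-method with the heat-kernel $I$-operator \eqref{equ:introheatI} and its commutator estimates. Write $u(t)$ for the $H^s$ solution and, on each time interval where the solution exists, let $N(t)$ be a frequency scale tied to the blow-up scale, for instance $N(t)\sim\lambda(t)^{-A}$ with $A=A(s,d)>1$ large. We then work with the regularized solution $I_{N(t)}u$, which lies in $H^1$ by \eqref{equ:equcont-intro}. Its geometrical decomposition
\[
I_{N}u(t,x)=\lambda(t)^{-d/2}\bigl(Q_{b(t)}+\varepsilon(t)\bigr)\Bigl(\frac{x-x(t)}{\lambda(t)}\Bigr)e^{-i\gamma(t)}
\]
is fixed by the orthogonality conditions of Assumption~\ref{spectral-property}, using the implicit function theorem near $Q_b$. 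The data $G(0)+H(0)$ satisfy these hypotheses at $t=0$ thanks to \eqref{equ:h0hs} and \eqref{equ:g0small}.

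\textbf{Bootstrap.} On $[0,T_1)$ we assume the standard log--log bounds. These are: $b(t)>0$ small; $\lambda(t)\le e^{-e^{\pi/(10b)}}$; the dispersion bound $\int|\nabla\varepsilon|^2+\int|\varepsilon|^2e^{-|y|}\le\Gamma_b^{1/2}$; almost monotonicity of $\lambda$ in the sense $\lambda(t_2)\le 2\lambda(t_1)$ for $t_2\ge t_1$; and an $H^s$-type bound $\|\langle\nabla\rangle I_{N(t)}u\|_{L^2}\lesssim \lambda(t)^{-1}$.

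\textbf{Improving the bounds.} Each is improved in four steps.

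\emph{(a) Local theory.} We first prove a local well-posedness/constancy lemma for the modified equation satisfied by $I_Nu$. On intervals of length $\sim\lambda^2$, the Strichartz norm $\|\langle\nabla\rangle I_Nu\|_{L^2_tL^{2d/(d-2)}_x}$ is controlled by $\lambda^{-1}$. This uses the commutator bounds on $\mathcal C_N$ stated in the strategy section, which carry the gain $N^{-\nu}$, together with scaling \eqref{equ:scaio-intro}.

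\emph{(b) Almost conservation.} Next we prove an almost conservation law. Since
\[
\partial_tE(I_Nu)=\Re\int\overline{\partial_t I_Nu}\,\mathcal C_N(u),
\]
plus terms from the time dependence of $N(t)$, the commutator estimates give
\[
\bigl|E(I_{N(t)}u(t))\bigr|\le \lambda(t)^{-2+\alpha_1},
\]
and similarly for the momentum. To make the $\dot N$ contributions negligible, we choose $N(t)$ piecewise constant, increasing dyadically when $\lambda$ halves. The change $E(I_{2N}u)-E(I_Nu)$ is then bounded by $N^{-(1-s)}$ times powers of $\lambda^{-1}$, which is summable along the sequence of dyadic times. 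Summing over the $\sim\lambda^{-2}$-length intervals, whose number is controlled via $\int\lambda^{-2}dt\lesssim|\log\lambda|$-type log--log counting, the error rescaled by $\lambda^2$ stays $\le\lambda^{\alpha_1/2}\ll\Gamma_b^{10}$.

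\emph{(c) Modulation and Lyapunov analysis.} We then run the Merle--Rapha\"el machinery \cite{MR04Invemath} for $\varepsilon$: the modulation equations, the local virial inequality for $b_s$, and the refined virial inequality with the radiation $\zeta_b$ from Lemma~\ref{lem:linoutrad}. The only new source terms are the commutator $\lambda^{2}\mathcal C_N$ in rescaled variables, which is $O(\lambda^{\nu(A-1)})$, and the energy/momentum terms, handled by (b). Both are negligible against $\Gamma_b$. Mass conservation for $u$ together with $\|u-I_Nu\|_{L^2}\lesssim N^{-s}\|u\|_{H^s}$ controls the excess mass $\|\varepsilon\|_{L^2}$, and the $L^2$ flux computation bounds the radiation.

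\emph{(d) Conclusion.} The log--log ODE argument then yields finite-time blow-up and the law \eqref{equ:lamdts}. Convergence of $x(t)$ follows from the modulation equation. The strong $L^2$ limit $v^*$ exists because $u-I_Nu\to0$ in $L^2$ and the $H^1$ argument applies to the regularized part. The property $v^*\notin L^p$ follows as in \cite{MR05CMP}.

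\textbf{Main obstacle.} I expect step (b), combined with the control of $N(t)$, to be the hardest. One must choose $A$ so that the growth factor $N^{1-s}$ in \eqref{equ:equcont-intro} is dominated by the commutator gain $N^{-\nu}$, uniformly over the whole bootstrap time. Following \cite{CR09}, I would first try the Lyapunov functional built directly from $I_Nu$, avoiding the momentum-flux argument that forced $s>1/2$ in \cite{SunZheng}.
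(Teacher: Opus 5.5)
Your overall architecture matches the paper: a Colliander--Rapha\"el bootstrap \cite{CR09} with $N(t)$ tied to $\lambda(t)$, and commutator bounds feeding the modulation and virial estimates. However, step (b) is false as stated, and it is exactly where the difficulty lies.

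\textbf{The claimed bound fails at $t=0$.} The estimate $|E(I_{N(t)}u(t))|\le\lambda(t)^{-2+\alpha_1}$ already fails initially for admissible data. Hypothesis (iv) bounds $\|\nabla g(0)\|_{L^2}^2$ by $\Gamma_{b(0)}^{3/4}$ but says nothing about which frequencies carry this $\dot H^1$ mass, while (v) makes $E(G(0))$ negligible. So Lemma~\ref{lem:einit} gives, for some $c>0$,
\[
\lambda(0)^2E(I_{N(0)}u(0))=-\Xi(0)+O(\lambda(0)^{c}),\qquad
\Xi(0)=\tfrac12\int\bigl(|\nabla(Q_{b(0)}+g(0))|^2-|\nabla I_{N(0)\lambda(0)}(Q_{b(0)}+g(0))|^2\bigr).
\]
Here $\Xi(0)$ is of order $\Gamma_{b(0)}^{3/4}$ if $\nabla g(0)$ sits at frequencies $\gg N(0)\lambda(0)$. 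That is far larger than $\lambda(0)^{\alpha_1}$, and it destroys the negligibility against $\Gamma_b$ that you use in (c).

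\textbf{The dyadic jumps are not small.} The same mechanism defeats your jump bookkeeping. Up to small potential terms, $E(I_{2N}u)-E(I_Nu)=\frac12\int|\xi|^2(m_{2N}^2-m_N^2)|\widehat u|^2\geq0$, which is the kinetic energy of $I_Nu$ at frequencies $\gtrsim N$. After multiplying by $\lambda^2$, the $G(0)$ part alone can make this of size $(\lambda/\lambda(0))^2\Gamma_{b(0)}^{3/4}$. From $\|u(t)\|_{H^s}\sim\lambda^{-s}$ you only get the growing bound $(N\lambda)^{2(1-s)}$. Nothing in your bootstrap makes the jump $N^{-(1-s)}$-small and summable. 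Moreover, since you modulate $I_Nu$ rather than $u$, the parameters and $\varepsilon$ would themselves jump at these times.

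\textbf{The missing ideas.} The paper uses two devices.
\begin{enumerate}
\item The frequency is never changed along the flow. For each target time $T^\ast$ one freezes $N_*=N(T^\ast)$ on all of $[0,T^\ast]$ and sums the increments of Lemma~\ref{lem:conincre}. On earlier local intervals, $I_{N_*}u$ is controlled via \eqref{equ:multcompare}: $\|\langle\nabla\rangle I_{N_*}u\|_{S^0}\lesssim(N_*/N(t_k))^{1-s}\lambda(t_k)^{-1}$.
\item Only $\lambda^2E(I_{N(t)}u(t))+\Xi(t)$ is small (Proposition~\ref{prop:almostcons}), with $\Xi\geq0$ defined in \eqref{equ:thetadef}. $\Xi$ is not treated as an error. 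It is carried through \eqref{eq:5}, \eqref{eq:14}, \eqref{eq:19} and \eqref{eq:31}, where it has the favorable sign: $\frac14\|yQ\|_{L^2}^2\,b_s=2\Xi+H(I_{N\lambda}\varepsilon,I_{N\lambda}\varepsilon)+\cdots$.
\end{enumerate}

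\textbf{$A$ cannot be large.} Write $\lambda_*=\lambda(T^\ast)$. The frozen-frequency scheme needs $\lambda_*^2N_*^{2(1-s)}\to0$. This is already required for the rough initial term, $\lambda_*^2\|\nabla I_{N_*}H(0)\|_{L^2}^2\lesssim\lambda_*^2N_*^{2(1-s)}\lambda(0)^{20}$. In addition, the geometric series over $k$ is dominated by its last term only when $\beta=1/A>1-s$. Together with $A>1$ (so that $N\lambda\to\infty$), this forces $1<A<\frac1{1-s}$; the paper takes $A=\frac3{3-2s}$. The relevant competition is therefore $\lambda^2$ against $N^{2(1-s)}$, not the commutator gain $N^{-\nu}$ against $N^{1-s}$.

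\textbf{The commutator errors are not pointwise.} The commutator is controlled only in $L^2_tL^{2d/(d+2)}_x$ on local intervals. It therefore gives a time-integrable majorant $F(s)$ with $\int_{s_1}^{s^+}F\,ds\lesssim\lambda(s_1)^{\alpha_2}$, not a pointwise $O(\lambda^{\nu(A-1)})$. The ODE and Lyapunov arguments must then be run with $b\pm\int F$.
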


\begin{proof}[Proof of Theorem~\ref{thm:main} assuming
Proposition~\ref{prop:main}]
Let $G(t)$ be the $H^1$ solution with initial data $u_0$ in
Theorem~\ref{thm:main}. Following \cite[Section~2.2]{CR09}, we choose
$t_0$ sufficiently close to its blowup time so that $G(t_0)$ admits the
decomposition
\[
G(t_0,x)=\frac1{\lambda(t_0)^{\frac d2}}
\bigl(Q_{b(t_0)}+g(t_0)\bigr)
\left(\frac{x-x(t_0)}{\lambda(t_0)}\right)e^{-i\gamma(t_0)},
\]
with the controls \eqref{equ:b0l0} and \eqref{equ:g0small} at $t_0$,
and with $\|g(t_0)\|_{L^2}\ll1$. These properties follow from the
$H^1$ log--log analysis in \cite{MR06JAMS}. By conservation of energy
and momentum, and by taking $t_0$ closer to blowup if necessary, we also
have
\[
|E(G(t_0))|+|P(G(t_0))|
=|E(u_0)|+|P(u_0)|\leq\frac1{\sqrt{\lambda(t_0)}}.
\]
By continuous dependence of the flow in $H^s$ on the fixed interval
$[0,t_0]$, we may choose $\varepsilon(d,s,u_0)>0$ so that, for
$\|v_0-u_0\|_{H^s}<\varepsilon(d,s,u_0)$, the corresponding solution
$v(t)$ exists on $[0,t_0]$ and satisfies
\[
\|v(t_0)-G(t_0)\|_{H^s}\leq\lambda(t_0)^{10}.
\]
We set $H(t_0)=v(t_0)-G(t_0)$ and translate $t_0$ to zero. The initial
data then satisfy the hypotheses of Proposition~\ref{prop:main}, which
gives the conclusions of Theorem~\ref{thm:main}.
\end{proof}

The rest of the paper is devoted to the proof of
Proposition~\ref{prop:main}. Let $u_0$ satisfy its hypotheses. We rewrite
\eqref{equ:iniassum} as
\begin{equation}\label{equ:reu0}
u(0,x)=\frac1{\lambda(0)^\frac d2}\big(Q_{b(0)}+\varepsilon(0)\big)\Big(\frac{x-x(0)}{\lambda(0)}\Big)e^{-i\gamma(0)}
\end{equation}
with $\varepsilon(0)=g(0)+h(0)$ and
\begin{equation}\label{equ:defh0}
H(0,x)=\frac1{\lambda(0)^\frac d2}h\Big(0,\frac{x-x(0)}{\lambda(0)}\Big)e^{-i\gamma(0)}.
\end{equation}
By scaling and \eqref{equ:h0hs},
\begin{equation}\label{equ:h0small}
\|h(0)\|_{\dot{H}^s}=\lambda(0)^s\|H(0)\|_{\dot{H}^s}\leq \lambda(0)^{10+s}.
\end{equation}
Combining this with \eqref{equ:excmass} and \eqref{equ:g0small}, we obtain
\begin{equation}\label{equ:vesma}
\|\varepsilon(0)\|_{H^s}\leq\|g(0)\|_{H^s}+\|h(0)\|_{H^s}\ll1.
\end{equation}
We next derive a frequency-localized version of \eqref{equ:g0small} for
$\varepsilon(0)$. Set
\begin{equation}\label{equ:n0def}
N(0)=\Big(\frac1{\lambda(0)}\Big)^\frac1{\beta}, 
\end{equation}
where $\beta=1-\frac{2s}{3}$. Then
\begin{equation}\label{equ:nlamd}
1\ll\Big(\frac1{\lambda(0)}\Big)^\frac{1-\beta}{\beta}=N(0)\lambda(0).
\end{equation}
Using \eqref{equ:geb}, \eqref{equcont}, \eqref{equ:b0l0}, and
\eqref{equ:h0small}, we obtain
\begin{equation*}
\int|I_{N(0)\lambda(0)}\nabla h(0)|^2\lesssim (N(0)\lambda(0))^{2(1-s)}\|h(0)\|_{\dot{H}^s}^2\lesssim \lambda(0)^{10}\leq\Gamma_{b(0)}^{10}.
\end{equation*}
This together with \eqref{equ:h0hs} and \eqref{equ:g0small} implies
\begin{equation}\label{equ:ncsma}
\int|I_{N(0)\lambda(0)}\nabla \varepsilon(0)|^2+\int|\varepsilon(0)|^2e^{-|y|}\leq\Gamma_{b(0)}^\frac34.
\end{equation}

Let $[0,T)$ be the maximal forward existence interval of the $H^s$
solution $u(t)$. We now extend the decomposition \eqref{equ:reu0} to
$u(t)$. By \eqref{equ:vesma} and continuity of the flow in $H^s$, we may consider
the evolution on a small time interval on which the remainder
$\varepsilon(t)$ remains small in $H^s$. We then apply the implicit
function theorem, as in \cite[Section~2.2]{CR09}, to introduce a
geometrical decomposition of the solution. We fix the geometrical
parameters $\lambda(t),b(t),\gamma(t),x(t)$ by imposing the orthogonality
conditions below. More precisely, we have the following modulation lemma.
\begin{lemma}[Nonlinear modulation theory, \cite{MR05annmath,MR03GAFA,MR06JAMS}]\label{lem:moduthe}
There exist $0<T_0<T$ and continuous functions
\[
(\lambda,\gamma,x,b):[0,T_0]\longrightarrow
(0,+\infty)\times\R\times\R^d\times\R
\]
such that
\begin{equation}\label{equ:gemdec}
\varepsilon(t,y)=e^{i\gamma(t)}\lambda(t)^\frac d2u\big(t,\lambda(t)y+x(t)\big)-Q_{b(t)}(y)
\end{equation}
satisfies the following orthogonality conditions. Writing
$Q_{b(t)}=\Sigma+i\Theta$ and $\varepsilon=\varepsilon_1+i\varepsilon_2$, one has
\begin{align}
(\varepsilon_1(t),|y|^2\Sigma)+(\varepsilon_2(t),|y|^2\Theta)&=0,
\label{equ:cy2}\\
(\varepsilon_1(t),y\Sigma)+(\varepsilon_2(t),y\Theta)&=0,
\label{equ:cy1}\\
-(\varepsilon_1(t),\Lambda\Theta)+(\varepsilon_2(t),\Lambda\Sigma)&=0,
\label{equ:cth}\\
-(\varepsilon_1(t),\Lambda^2\Theta)+(\varepsilon_2(t),\Lambda^2\Sigma)&=0.
\label{equ:cth2}
\end{align}

\end{lemma}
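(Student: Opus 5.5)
We prove Lemma~\ref{lem:moduthe} with the implicit function theorem, applied in the $L^2$-based topology in which the orthogonality conditions are defined. The conditions \eqref{equ:cy2}--\eqref{equ:cth2} consist of one scalar condition for $|y|^2$, $d$ conditions for $y$, one for $\Lambda$ and one for $\Lambda^2$. That is $d+3$ scalar conditions, matching the unknowns $(\lambda,x,\gamma,b)\in(0,\infty)\times\R^d\times\R\times\R$. Every pairing involves $\Sigma,\Theta$ and their images under $y$, $|y|^2$, $\Lambda$, $\Lambda^2$. Since $Q_b$ is compactly supported in $B_{R_b}$ and smooth, these weights are Schwartz-class. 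So the conditions depend only on $\varepsilon$ in $L^2$, and in fact in weighted $L^2$. This is why the decomposition makes sense for $H^s$ data, and indeed for $L^2$ data, without needing $H^1$.

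\textbf{Step 1: the map.} Fix $t$ near $0$ and write $v=u(t)$. Define
\[
\Phi(v;\lambda,x,\gamma,b)\in\R^{d+3}
\]
as the vector of the left-hand sides of \eqref{equ:cy2}--\eqref{equ:cth2}, evaluated at
\[
\varepsilon=e^{i\gamma}\lambda^{d/2}v(\lambda y+x)-Q_b .
\]
By \eqref{equ:reu0}, at $v=u_0$ the point $(\lambda(0),x(0),\gamma(0),b(0))$ gives $\varepsilon=\varepsilon(0)$. This $\varepsilon(0)$ is small in $L^2$ by \eqref{equ:excmass} and small in the local norm by \eqref{equ:ncsma}, so $\Phi$ is small there but not necessarily zero.

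To handle this, first rescale using the symmetries: translation, phase and scaling act isometrically on $L^2$. It therefore suffices to work near $(\lambda,x,\gamma)=(1,0,0)$ with the rescaled function $w=e^{i\gamma(0)}\lambda(0)^{d/2}v(\lambda(0)\cdot+x(0))$. The differentiability of $\Phi$ in the parameters follows by moving the group action onto the smooth test functions, for example
\[
\bigl(e^{i\gamma}\lambda^{d/2}w(\lambda\cdot+x),f\bigr)=\bigl(w,e^{-i\gamma}\lambda^{-d/2}f((\cdot-x)/\lambda)\bigr).
\]
The dependence on $b$ is smooth by \eqref{equ:paqbest}.

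\textbf{Step 2: the Jacobian at $\varepsilon=0$, $b\to0$.} Differentiate with respect to $\lambda$, $x$, $\gamma$, $b$. The derivatives produce respectively $-\Lambda Q_b$, $-\nabla Q_b$, $iQ_b$ and $-\partial_bQ_b\approx i\frac{|y|^2}{4}Q$, each plus $O(\|\varepsilon\|_{L^2_{loc}})$ corrections. At $b=0$ we have $\Sigma=Q$ and $\Theta=0$.

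The Jacobian is then block structured:
\begin{itemize}
\item The $x$ block against the $y$ conditions is $-(\partial_jQ,y_kQ)=\tfrac12\|Q\|_{L^2}^2\delta_{jk}$.
\item The $\lambda$ derivative tested against $|y|^2Q$ gives $-(\Lambda Q,|y|^2Q)=\|yQ\|_{L^2}^2\neq0$.
\item The $\gamma$ derivative gives $(Q,\Lambda Q)=0$ in the $\Lambda$ condition and $(Q,\Lambda^2Q)$ in the $\Lambda^2$ condition.
\item The $b$ derivative gives $\tfrac14(|y|^2Q,\Lambda Q)$ and $\tfrac14(|y|^2Q,\Lambda^2Q)$.
\end{itemize}
The $2\times2$ $(\gamma,b)$ block therefore has determinant
\[
-\tfrac14(Q,\Lambda^2Q)(|y|^2Q,\Lambda Q)=\tfrac14\|\Lambda Q\|_{L^2}^2\,\|yQ\|_{L^2}^2\neq0,
\]
using $(Q,\Lambda^2Q)=-\|\Lambda Q\|^2$ and $(|y|^2Q,\Lambda Q)=-\|yQ\|^2$. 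Radial symmetry decouples the $x$ directions from the rest. The full Jacobian is thus invertible, uniformly for $|b|$ small and $\|\varepsilon\|_{L^2(e^{-|y|}dy)}$ small, with continuity in $b$ given by \eqref{equ:qbclsq}.

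\textbf{Step 3: solving and propagating in time.} Apply the quantitative implicit function theorem near $(w,\lambda,x,\gamma,b)=(Q_{b(0)},1,0,0,b(0))$. For $w$ within $\delta$ of $Q_{b(0)}$ in $L^2$, this gives unique parameters within $O(\delta)$ of the base point with $\Phi=0$. Here we need $\delta$ uniform in $b(0)$ small, and this is exactly the point I expect to require the most care. The smallness of $\varepsilon(0)$ from \eqref{equ:vesma} places $w=u_0$-rescaled in that neighbourhood. This may readjust the parameters at time $0$ slightly; the authors' convention assumes the decomposition \eqref{equ:reu0} already satisfies the conditions.

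The $H^s$ flow is continuous into $C([0,T),H^s)\subset C([0,T),L^2)$. Hence $t\mapsto w(t)$ stays in the neighbourhood for $t\in[0,T_0]$ with some $T_0>0$, and the parameters are continuous in $t$ as compositions of continuous maps. Positivity of $\lambda(t)$ holds since $\lambda$ stays close to $\lambda(0)$.
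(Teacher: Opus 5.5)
Your proposal is correct and is essentially the standard implicit-function-theorem argument that the paper invokes without details (via \cite[Section~2.2]{CR09} and the Merle--Rapha\"el references). As you say, the conditions see $\varepsilon$ only through pairings with uniformly exponentially decaying profiles built from $Q_b$, so local $L^2$ smallness and the block-diagonal Jacobian at $b=0$ give the decomposition. That Jacobian stays uniformly invertible for small $b$ by \eqref{equ:qbclsq}--\eqref{equ:paqbest}, and continuity of the $H^s$ flow in $L^2$ then propagates the decomposition to $[0,T_0]$.

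There is one small arithmetic slip: the $(\gamma,b)$ determinant equals $-\tfrac14\|\Lambda Q\|_{L^2}^2\|yQ\|_{L^2}^2$, not $+$. This does not affect invertibility. Also, if you run the theorem in the weighted norm $\int|\varepsilon|^2e^{-|y|}$, then \eqref{equ:ncsma} shows that the time-$0$ readjustment of the parameters is exponentially small in $b(0)$; this is exactly what the paper's remark uses.
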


\begin{remark}
The decomposition follows from local $L^2$ smallness and the smoothness and
spatial decay of $Q_b$. By \eqref{equ:ncsma}, the change in the parameters of
\eqref{equ:reu0} needed to impose the orthogonality conditions is exponentially
small in $b(0)$. We use the same notation for the adjusted parameters at
$t=0$.
\end{remark}

We now turn to the bootstrap argument for Proposition~\ref{prop:main}. We have established the initial
estimates above and fixed the geometrical parameters by the orthogonality
conditions \eqref{equ:cy2}--\eqref{equ:cth2}. Then as observed in \cite[Section~2.2]{CR09}, we claim that
these initial controls determine a \emph{trapped dynamical region}. More
precisely, we shall assume uniform bounds on the geometrical parameters
and the remainder on a time interval $[0,T^+]$, and then prove in
Lemma~\ref{lem:bootstrap} that all these bounds can be improved. We
will then use continuity to extend the bounds for as long as the
solution exists.

Consider $0<T^+<T$ such that the solution $u$ admits the decomposition
on $[0,T^+]$
\begin{equation}\label{equ:decass}
u(t,x)=\frac1{\lambda(t)^\frac d2}\big(Q_{b(t)}(\cdot)+\var(t,\cdot)\big)\Big(\frac{x-x(t)}{\lambda(t)}\Big)e^{-i\gamma(t)},~t\in[0,T^+]
\end{equation}
with the orthogonality conditions \eqref{equ:cy2}--\eqref{equ:cth2}. Assume
the following bounds hold on $[0,T^+]$.

$(i)$ {\bf Control of $b(t)$ and the $L^2$ mass of the remainder:}
\begin{equation}\label{equ:btvart}
b(t)>0\quad\text{and}\quad b(t)+\|\var(t)\|_{L^2}\leq10\big(b(0)+\|\var(0)\|_{L^2}\big);
\end{equation}

$(ii)$ {\bf Control and monotonicity of the scaling parameter:}
\begin{equation}\label{equ:ltass}
\lambda(t)\leq e^{-e^\frac{\pi}{100b(t)}}
\end{equation}
and almost monotonicity:
\begin{equation}\label{equ:almons}
\forall~0<t_1\leq t_2\leq T^+,~\lambda(t_2)\leq \frac32\lambda(t_1).
\end{equation}
Choose integers $k_0\leq k^+$ such that
\begin{equation}\label{equ:l0k0}
\frac1{2^{k_0}}\leq\lambda(0)\leq\frac1{2^{k_0-1}},~\frac1{2^{k^+}}\leq\lambda(T^+)\leq\frac1{2^{k^+-1}},
\end{equation}
For $k_0\leq k\leq k^+$, let $t_k$ be the first time such that
\begin{equation}\label{equ:ltk}
\lambda(t_k)=\frac1{2^k},
\end{equation}
and assume that the scale-doubling intervals satisfy
\begin{equation}\label{equ:dotime}
t_{k+1}-t_k\leq k\lambda(t_k)^2.
\end{equation}

$(iii)$ {\bf Frequency-localized control of the remainder:} Set
\begin{equation}\label{equ:ntass}
N(t)=\Big(\frac1{\lambda(t)}\Big)^\frac1{\beta},
\end{equation}
and assume
\begin{equation}\label{equ:vartass}
\int|I_{N(t)\lambda(t)}\nabla \var(t)|^2+\int|\var(t)|^2e^{-|y|}\leq \Gamma_{b(t)}^\frac14.
\end{equation}

The central step is the simultaneous improvement of these bootstrap bounds.
\begin{lemma}[Bootstrap lemma]\label{lem:bootstrap}
Under the preceding assumptions, the following estimates hold on $[0,T^+]$:
\begin{align}\label{equ:btimp}
b(t)>0\quad\text{and}\quad b(t)+\|\var(t)\|_{L^2}\leq&5\big(b(0)+\|\var(0)\|_{L^2}\big),\\\label{equ:lamtimp}
\lambda(t)\leq& e^{-e^\frac{\pi}{10b(t)}},\\\label{equ:lmt2imp}
\forall~0<t_1\leq t_2\leq T^+,~\lambda(t_2)\leq& \frac54\lambda(t_1),\\\label{equ:tktk-1}
t_{k+1}-t_k\leq& \sqrt{k}\lambda(t_k)^2,\\\label{equ:lamtimpse}
\int|I_{N(t)\lambda(t)}\nabla \var(t)|^2+\int|\var(t)|^2e^{-|y|}\leq& \Gamma_{b(t)}^\frac23.
\end{align}
\end{lemma}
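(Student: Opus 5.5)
The plan is to follow the Merle--Rapha\"el bootstrap as adapted to the $I$-method by Colliander--Rapha\"el, replacing the energy by the modified energy $E(I_{N(t)}u(t))$. The new input is the commutator bounds for $\mathcal C_N(u)$ built from the heat-kernel $I$-operator.

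\textbf{Step 1: almost conservation of the modified energy and momentum.} I would partition $[0,T^+]$ using the scale-doubling times $t_k$ from \eqref{equ:ltk}. On each $[t_k,t_{k+1}]$ I would further subdivide into intervals of length $\sim\lambda(t_k)^2$. There are at most $k$ of these by \eqref{equ:dotime}.

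On each small interval I would rescale by $\lambda(t_k)$, using \eqref{equ:scaio-intro}. The rescaled solution is then controlled in $H^1$ after applying $I_{N\lambda}$, with frequency parameter $N(t)\lambda(t)=\lambda^{-(1-\beta)/\beta}\gg1$. This lets me run a local Strichartz theory for $I_{N\lambda}v$, with the nonlinear error absorbed through the commutator estimates for $\nabla\mathcal C_N$ and $\mathcal C_N$.

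The increment of $E(I_Nu)$ equals a pairing of $\partial_t I_Nu$ with $\mathcal C_N(u)$. In rescaled variables this gives a gain $(N\lambda)^{-\nu}$ per interval. Summing over the $\lesssim k^2$ intervals up to scale $k$, and undoing the scaling, should yield
\[
\lambda(t)^2\bigl|E(I_{N(t)}u(t))\bigr|+\lambda(t)\bigl|P(I_{N(t)}u(t))\bigr|\lesssim \lambda(t)^{\alpha_1},
\qquad \alpha_1>0.
\]
Here \eqref{equ:eg0}--\eqref{equ:gg0} control the initial values, and $k\sim|\log\lambda|$ loses only logarithms. The constraint $\beta=1-\tfrac{2s}{3}$ should be exactly what makes $(N\lambda)^{2(1-s)}$ times the gain $(N\lambda)^{-\nu}$-type losses and the initial $\lambda^{-1/2}$ net a positive power of $\lambda$. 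I would also use the corrective $\Xi(t)$ from \cite{CR09} if a sign-definite piece appears.

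\textbf{Step 2: modulation equations and the local virial for the regularized remainder.} I would write $\tilde\varepsilon$ for the remainder of $I_{N\lambda}$ applied to the rescaled solution. The difference $\varepsilon-\tilde\varepsilon$ is small locally in $L^2$ with weight $e^{-|y|}$, since $I_{N\lambda}$ is close to the identity at frequencies $\ll N\lambda$.

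With this, I would derive the modulation equations as in \cite{MR04Invemath}. I would then derive the local virial inequality
\[
b_s\geq \delta_1\Bigl(\int|\nabla\tilde\varepsilon|^2+\int|\tilde\varepsilon|^2e^{-|y|}\Bigr)-\Gamma_b^{1-C\eta}-C\lambda^2|E(I_Nu)|-C\lambda|P(I_Nu)|,
\]
and the refined virial/Lyapunov inequality involving $\zeta_b$ and the $L^2$ flux. Lemma~\ref{lem:linoutrad}, the spectral Assumption~\ref{spectral-property}, and Step~1 make the modified-energy terms negligible relative to $\Gamma_b$, since $\lambda^{\alpha_1}\ll\Gamma_b^{10}$ by \eqref{equ:ltass} and \eqref{equ:geb}. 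Working directly with $\tilde\varepsilon$ avoids the localized momentum flux that forced $s>1/2$ in \cite{SunZheng}.

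\textbf{Step 3: closing the bootstrap.} Each estimate of the lemma would come from a standard step:
\begin{itemize}
\item \eqref{equ:btimp} from mass conservation together with the Lyapunov functional, using \eqref{equ:qbcrimass}.
\item $b>0$ and the improved bound \eqref{equ:lamtimpse} by integrating the virial laws as in \cite{MR06JAMS}. The $H^s$ tail of $\varepsilon$ is handled by \eqref{equ:equcont-intro}.
\item The log--log relation $-\lambda_s/\lambda\approx b$ with $b_s\approx -e^{-\pi/b}$, which gives \eqref{equ:lamtimp}, the almost monotonicity \eqref{equ:lmt2imp}, and \eqref{equ:tktk-1} through $t_{k+1}-t_k\approx\lambda(t_k)^2/b(t_k)$ with $b\sim 1/\log k$.
\end{itemize}

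The main obstacle I expect is Step 1: the local well-posedness for $I_{N\lambda}v$ on unit rescaled intervals, and the summation over intervals with a net positive power of $\lambda$. I would try first to close a Strichartz bootstrap on $\|\langle\nabla\rangle I_{N\lambda}v\|_{L^2_tL^{2d/(d-2)}_x}$ using the stated commutator bounds, which carry the factor $(N\lambda)^{-\nu}$. The worry is whether $\|\langle\nabla\rangle I_{N\lambda}v\|_{L^\infty L^2}\lesssim\Gamma_b^{1/8}+1$, coming from \eqref{equ:vartass}, keeps these bounds uniform.
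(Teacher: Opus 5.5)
Your overall scheme matches the paper's: LWP intervals inside doubling intervals, almost conservation, virial and radiative virial on the regularized remainder, $L^2$ flux, a Lyapunov functional, and log--log integration. But Step~1 asserts an estimate that is false, and Step~2 is built on it.

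\textbf{The modified energy is not small.} The initial modified energy is not controlled by \eqref{equ:eg0}. $G(0)$ is only in $H^1$, and applying $I_{N(t)}$ destroys the kinetic/potential cancellation behind \eqref{equ:eg0}.
\begin{itemize}
\item The potential part and the momentum are fine: $1-I_N$ gains $N^{-2/(d+2)}$ from $\dot H^1$ to $\dot H^{d/(d+2)}$, and $N^{-1/2}$ to $\dot H^{1/2}$.
\item The kinetic loss $\frac12\int(|\nabla G(0)|^2-|\nabla I_{N(t)}G(0)|^2)$ has no rate in $N$. After rescaling it can be as large as $\int|\nabla g(0)|^2\sim\Gamma_{b(0)}^{3/4}$, astronomically larger than $\lambda(0)^{\alpha_1}$.
\end{itemize}
Relatedly, the increment must be computed with a frozen $N_*=N(t)$ on all of $[0,t]$, paying $(N_*/N(t_k))^{1-s}$ through \eqref{equ:multcompare} on earlier intervals. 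A per-interval parameter $N(t_k)\lambda(t_k)$ would not work: switching $I_{N(t_k)}\to I_{N(t_{k+1})}$ between intervals produces exactly such unquantified kinetic jumps.

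What is true is only
$|\lambda^2E(I_{N(t)}u(t))+\Xi(t)|\le\lambda^{2\alpha_1}$, with $\Xi\ge0$ given by \eqref{equ:thetadef}. So your virial inequality with error $-C\lambda^2|E(I_Nu)|$ cannot be used. That error is not $\ll\Gamma_b$, and your justification via $\lambda^{\alpha_1}\ll\Gamma_b^{10}$ does not apply to it.

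The missing idea is that $-2\lambda^2E(I_Nu)=2\Xi+O(\lambda^{2\alpha_1})$ enters the $b_s$ identity \eqref{eq:14} with the favorable sign. So $\Xi$ must be carried as a coercive quantity next to $\int|\nabla I_{N\lambda}\varepsilon|^2$ throughout:
\begin{itemize}
\item in the virial estimate \eqref{eq:19};
\item in the Lyapunov bounds \eqref{eq:31};
\item in the pointwise bound \eqref{eq:33}, with $\Xi(s_0)\lesssim\Gamma_{b(0)}^{3/4}$ supplied by \eqref{equ:ncsma}.
\end{itemize}
This is not an optional device to use ``if a sign-definite piece appears''; it is forced.

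\textbf{Time-averaged errors.} Your Step~2 inequalities are pointwise in $s$, but at low regularity several error terms are not.
\begin{itemize}
\item Smallness of $\varepsilon-I_{N\lambda}\varepsilon$ in $L^2$ handles the linear terms only.
\item The nonlinear terms $(R_\ell(\varepsilon),\phi)$ in the modulation equations are not even controlled by $\|\varepsilon\|_{H^s}$ for small $s$. For $d=3$ they involve $\int|\varepsilon|^{7/3}|\phi|$, and $H^s\not\subset L^{7/3}$ for $s<3/14$.
\item The flux computation produces the moving-frequency term $\widetilde I_Nu$ and the commutator $\mathcal C_N(u)$.
\end{itemize}
The paper splits off $R_\ell(I_{N\lambda}\varepsilon)$ and controls the remainder only on LWP intervals, through Strichartz and Lemma~\ref{lem:xianchafa}. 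Consequently every differential inequality carries an error $F(s)\ge0$ with $\int_{s_1}^{s^+}F\lesssim\lambda(s_1)^{\alpha_2}$, as in \eqref{eq:9}. The integration arguments are then run on $b+\int_{s_0}^sF$ and $b-\int_s^{s^+}F$ rather than on $b$ itself.
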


\begin{remark}\label{rem:uths}
By \eqref{equ:equcont-intro} and the bootstrap assumptions
\eqref{equ:btvart}, \eqref{equ:ntass}, and \eqref{equ:vartass}, we have
\begin{equation}\label{equ:varsmall}
\|\var(t)\|_{H^s}
\lesssim
\|I_{N(t)\lambda(t)}\var(t)\|_{H^1}
\ll 1.
\end{equation}
Together with the geometrical decomposition \eqref{equ:decass}
and the approximation \eqref{equ:qbclsq}, this yields
\begin{equation}\label{equ:uthslam}
\|u(t)\|_{\dot H^s}
=
\frac{\|Q_{b(t)}+\var(t)\|_{\dot H^s}}{\lambda(t)^s}
\sim
\frac{1}{\lambda(t)^s}.
\end{equation}
By the conservation of mass and \eqref{equ:almons}, if $\lambda(0)\ll1$, then we also have
\[
\|u(t)\|_{H^s}\sim\frac{1}{\lambda(t)^s}.
\]
\end{remark}

Lemma \ref{lem:bootstrap} closes the trapped bootstrap and proves Proposition
\ref{prop:main}, which yields Theorem \ref{thm:main}.

\subsection*{Organization of the paper}

In Section~\ref{sec:ACL}, we develop the analytic estimates associated with the heat-kernel
$I$-operator. We first establish its multiplier and commutator
properties, and then derive the local well-posedness estimates and the
almost conservation laws needed in the bootstrap argument. In Section~\ref{sec:Loglog},
we prove Lemma \ref{lem:bootstrap}. We begin with the control of the
geometrical parameters, then establish the virial dispersion estimates,
and finally close the bootstrap argument and complete the proof of
Proposition \ref{prop:main}.

\medskip

\subsection*{Acknowledgements}

C. Miao was supported by the National Key R\&D Program of China under
Grant 2022YFA1005700 and by NSFC Grants 12371095 and 12531005.
J. Zheng was supported by the National Key R\&D Program of China under
Grant 2021YFA1002500 and by NSFC Grant 12671284.

\section{Notation and almost conservation law}\label{sec:ACL}

Throughout the rest of the paper we fix a dimension $d\geq3$ and a regularity exponent $0<s<1$. We set
\begin{equation}\label{equ:alphaa}\alpha=1-s,\,\, \vartheta=2^{-\alpha},\,\, \beta=1-\frac{2s}{3}\,\, \mbox{and } \nu=\min\left\{1,\frac4d\right\}.\end{equation}
Thus $0<\alpha<1$, $\frac13<\beta<1$, and $0<\nu\leq1$.  The endpoint $s=1$ is already covered by the $H^1$ theory.

\subsection{Some notation}
For nonnegative quantities $X$ and $Y$, we write $X\lesssim Y$ to denote the estimate $X\leq CY$ for some $C>0$.  If $X\lesssim Y\lesssim X$, we write $X\sim Y$.  Dependence of implicit constants on the regularity exponent will be displayed by a subscript when needed.

For a spacetime slab $I\times\R^d$, we write $L_t^qL_x^r(I\times\R^d)$ for the usual mixed Lebesgue space, and abbreviate $L_t^qL_x^q$ by $L_{t,x}^q$.  The Fourier transform is given by
\[
\widehat f(\xi)=(2\pi)^{-\frac d2}\int_{\R^d}e^{-ix\cdot\xi}f(x)\,dx.
\]
We define $|\nabla|^\rho$ and $\langle\nabla\rangle^\rho$ by the symbols $|\xi|^\rho$ and $\langle\xi\rangle^\rho=(1+|\xi|^2)^{\rho/2}$, respectively, and use the standard homogeneous and inhomogeneous Sobolev norms.

In this paper, we denote by
\begin{equation}\label{equ:Fdef}
 F(z)=|z|^{\frac4d}z,\qquad z\in\C\simeq\R^2.
\end{equation}
Then,
$$
F_z(z)=\frac{\partial F}{\partial z}=\frac{d+2}{d}|z|^\frac4d,\,\,F_{\bar z}(z)=\frac{\partial F}{\partial \bar z}(z)=\frac2d|z|^\frac4d\frac{z}{\bar z}.
$$
We write $F^\prime$ for the vector $(F_z,F_{\bar z})$ and adopt the notation
\begin{align}\label{nota1}
    w\cdot F^\prime(z)=wF_z(z)+\bar w F_{\bar z}(z).
\end{align}

The following elementary regularity statement will be used both in the
 commutator argument and in the modulation estimates. 
\begin{lemma}\label{lem:nonltermest}
For all $z,w\in\C$,
\begin{align}
\bigl|F'(z)-F'(w)\bigr|
&\lesssim_{d} (|z|+|w|)^{\frac{4}{d}-\nu}\,|z-w|^{\nu}
 \label{equ:DFholderall}\\
 \bigl|F(z+w)-F(z)-w\cdot F'(z)\bigr|
&\lesssim_{d} (|z|+|w|)^{\frac{4}{d}-\nu}\,|w|^{1+\nu}.
 \label{equ:Rpoint}
\end{align}
In particular, $F\in C^{2,\frac13}$ in $d=3$ and $F\in C^{1,\frac4d}$ in $d\geq4$.
\end{lemma}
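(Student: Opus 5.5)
The plan is to prove \eqref{equ:DFholderall} first by a homogeneity argument, and then deduce \eqref{equ:Rpoint} from it via the integral form of Taylor's formula.

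\textbf{Step 1: the Hölder bound \eqref{equ:DFholderall}.} The components of $F'$ are
\[
F_z=\tfrac{d+2}{d}|z|^{4/d},\qquad F_{\bar z}=\tfrac2d|z|^{4/d}\,z/\bar z .
\]
Both are positively homogeneous of degree $p:=4/d$, and both are smooth on $\C\setminus\{0\}$. By symmetry we may assume $|w|\le |z|$ and $z\neq0$.

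\emph{Case $|z-w|\ge |z|/2$.} Here the trivial bound suffices:
\[
|F'(z)-F'(w)|\lesssim |z|^{p}\lesssim (|z|+|w|)^{p-\nu}|z-w|^{\nu},
\]
since $|z|\lesssim |z-w|$ and $p-\nu\ge0$.

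\emph{Case $|z-w|<|z|/2$.} The segment $[w,z]$ stays in the region $\{|\zeta|\ge |z|/2\}$, where $|D F'(\zeta)|\lesssim |\zeta|^{p-1}\lesssim |z|^{p-1}$. The mean value theorem gives
\[
|F'(z)-F'(w)|\lesssim |z|^{p-1}|z-w| = |z|^{p-\nu}|z|^{\nu-1}|z-w|\le |z|^{p-\nu}|z-w|^{\nu},
\]
where the last step uses $\nu\le1$ and $|z-w|\le |z|$.

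When $d\ge4$ we have $\nu=p$, and this is just the global $p$-Hölder continuity of a degree-$p$ homogeneous function that is smooth off the origin. When $d=3$ we have $\nu=1$ and the bound is the local Lipschitz estimate with weight $(|z|+|w|)^{1/3}$.

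\textbf{Step 2: the Taylor remainder \eqref{equ:Rpoint}.} Write
\[
F(z+w)-F(z)-w\cdot F'(z)=\int_0^1 w\cdot\bigl(F'(z+\tau w)-F'(z)\bigr)\,d\tau,
\]
which is valid since $F\in C^1(\mathbb R^2)$ for $p>0$. Apply Step 1 with the pair $z+\tau w$, $z$. The prefactor satisfies $(|z+\tau w|+|z|)^{p-\nu}\lesssim (|z|+|w|)^{p-\nu}$, using again $p-\nu\ge0$. The difference satisfies $|\tau w|^{\nu}\le|w|^{\nu}$. Multiplying by $|w|$ and integrating in $\tau$ gives the claim.

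\textbf{Step 3: the regularity statement.} For $d\ge4$, the bound \eqref{equ:DFholderall} with $\nu=4/d$ is exactly $F'\in C^{0,4/d}$, so $F\in C^{1,4/d}$. For $d=3$, the second derivatives $D^2F$ are homogeneous of degree $1/3$ and smooth off the origin. Repeating the Step 1 argument one derivative higher gives $|D^2F(z)-D^2F(w)|\lesssim|z-w|^{1/3}$. Hence $F\in C^{2,1/3}$, at least locally, with global Hölder seminorm.

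The only delicate point is the near-diagonal case in Step 1, where one must ensure the segment avoids the singularity at the origin. The choice of threshold $|z|/2$ handles this.
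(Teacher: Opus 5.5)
Your proof is correct and complete. The paper does not prove this lemma; it calls it an ``elementary regularity statement'' and moves on, so there is no competing route to compare against, and your argument fills the omitted step. The three ingredients are the standard ones and each is used correctly:
\begin{itemize}
\item the case split by homogeneity (size bound $|F'(\zeta)|\lesssim|\zeta|^{4/d}$ far from the diagonal, mean value theorem on a segment avoiding the origin near it);
\item the integral Taylor formula;
\item the sign condition $p-\nu\geq0$, which lets you replace $|z|^{p-\nu}$ by $(|z|+|w|)^{p-\nu}$ in Step~1 and bound the prefactor $(|z+\tau w|+|z|)^{p-\nu}$ in Step~2.
\end{itemize}

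One small point should be tightened. In the near-diagonal case you justify $|DF'(\zeta)|\lesssim|z|^{p-1}$ only through the lower bound $|\zeta|\geq|z|/2$. That suffices when $p-1\leq0$, i.e.\ $d\geq4$, but not when $p-1=\tfrac13>0$, i.e.\ $d=3$. There you also need the upper bound, which holds by convexity: $|\zeta|\leq(1-t)|w|+t|z|\leq|z|$ on the segment.

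In fact no case split is needed for $d=3$. Here $F\in C^2(\mathbb{R}^2)$, and $|D^2F(\zeta)|\lesssim|\zeta|^{1/3}\leq(|z|+|w|)^{1/3}$ holds along the entire segment $[w,z]$, even if it passes through the origin. This gives \eqref{equ:DFholderall} directly. It also gives the form $|R(z,w)|\lesssim(|z|+|w|)^{1/3}|w|^2$ of \eqref{equ:Rpoint} that the paper invokes later in the $L_t^1L_x^2$ commutator bound.
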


\subsection{The heat-kernel $I$-operator}
For $M>0$, let $S_M$ be the heat semigroup
$S_M=e^{\frac{\Delta}{M^2}}$. First, we denote the radial symbol by
\[
m(r)=(1-\vartheta)\sum_{j\ge 0}\vartheta^j e^{-4^{-j}r^2}.
\]
We use the following positive average of heat operators:
\begin{equation}\label{equ:Iheatdef}
I_N=(1-\vartheta)\sum_{j=0}^{\infty}\vartheta^jS_{2^jN}.
\end{equation}
Thus $I_N$ is the Fourier multiplier with symbol
\begin{equation}\label{equ:mheat}
m_N(\xi)=m\Big(\frac{|\xi|}{N}\Big).
\end{equation}
The advantage of \eqref{equ:Iheatdef} is the exact recursion
\begin{equation}\label{equ:Irecursion}
I_N=(1-\vartheta)S_N+\vartheta I_{2N},
\end{equation}
which will encode the second-order cancellation in the nonlinearity.

To show several boundednesses of operators associated with $I_N$, we recall the following classic multiplier lemma:
\begin{lemma}[Mikhlin multiplier lemma, \cite{Stein70}]\label{lem:Mikhlin}
    Let $a$ be a smooth Fourier symbol away from the origin and satisfies the bound
    \begin{align*}
        \big|\partial_\xi^\gamma a(\xi)\big|\lesssim A|\xi|^{-|\gamma|}, \,\,\xi\neq0,\,\,|\gamma|\leq L_d
    \end{align*}
    where $L_d$ is any fixed integer strictly larger than $\frac d2$ and $\gamma\in\mathbb N^d$ is a multi-index. Then the Fourier multiplier is bounded on $L^p$, $1<p<\infty$ with operator norm $O_{d,p}(A)$.
\end{lemma}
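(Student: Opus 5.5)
The plan is the classical Calder\'on--Zygmund route. By scaling, $a(\xi)\mapsto a(\xi)/A$, we may assume $A=1$. The $L^2$ bound is immediate from Plancherel, since the case $\gamma=0$ gives $\|a\|_{L^\infty}\lesssim 1$. The task is to show that the kernel $K=\check a$ (a tempered distribution) coincides away from the origin with a function satisfying the H\"ormander condition
\[
\sup_{y\neq0}\int_{|x|\geq2|y|}|K(x-y)-K(x)|\,dx\lesssim_d 1.
\]
Calder\'on--Zygmund theory then gives weak type $(1,1)$. Marcinkiewicz interpolation with the $L^2$ bound yields $L^p$ boundedness for $1<p\leq2$. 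Duality yields the range $2\leq p<\infty$, since the adjoint multiplier $\bar a$ satisfies the same hypotheses.

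To obtain the kernel bound, I would take a smooth dyadic partition of unity $1=\sum_{k\in\mathbb Z}\psi(2^{-k}\xi)$ on $\xi\neq0$, with $\psi$ supported in $\{1/2\leq|\xi|\leq2\}$. Set $a_k=a\,\psi(2^{-k}\cdot)$ and $K_k=\check a_k$. By the Leibniz rule and the hypothesis, $|\partial^\gamma a_k|\lesssim 2^{-k|\gamma|}$ for $|\gamma|\leq L_d$, on a set of measure $\sim2^{kd}$. Write $L=L_d>d/2$. Plancherel applied to $x^\gamma K_k$ and $x^\gamma\nabla K_k$, with $|\gamma|\leq L$, gives
\[
\int|x|^{2L}|K_k|^2\lesssim 2^{k(d-2L)},\qquad \int|x|^{2L}|\nabla K_k|^2\lesssim 2^{k(d+2-2L)},
\]
together with the corresponding bounds without weights.

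The main obstacle is converting these $L^2$ bounds into weighted $L^1$ bounds, and this is exactly where the hypothesis $L>d/2$ enters. By Cauchy--Schwarz,
\[
\int_{|x|\geq R}|K_k|\leq\Bigl(\int_{|x|\geq R}|x|^{-2L}\Bigr)^{1/2}\Bigl(\int|x|^{2L}|K_k|^2\Bigr)^{1/2}\lesssim (2^kR)^{d/2-L}.
\]
Combining the weighted and unweighted bounds in the same way gives $\int|\nabla K_k|\lesssim 2^k$.

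It then remains to sum over $k$ for $|y|\sim 2^{-j}$ and $R=2|y|$:
\begin{itemize}
\item For $2^k|y|\leq1$, use the mean value theorem: the contribution is at most $|y|\int|\nabla K_k|\lesssim 2^k|y|$.
\item For $2^k|y|>1$, bound $K_k(x-y)$ and $K_k(x)$ separately: the contribution is $\lesssim(2^k|y|)^{d/2-L}$.
\end{itemize}
Both are geometric series summing to $O(1)$, which gives the H\"ormander condition. This also identifies $K=\sum_kK_k$ as a locally integrable function off the origin, which justifies the Calder\'on--Zygmund representation $Tf(x)=\int K(x-y)f(y)\,dy$ for $x\notin\operatorname{supp}f$.

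Finally, the constants depend only on $d$, $p$ and the $L_d$ derivative bounds. After undoing the normalization, the operator norm is $O_{d,p}(A)$.
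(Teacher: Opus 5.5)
Your argument is correct. It is essentially the classical proof in \cite{Stein70}, which the paper cites rather than reproduces: dyadic pieces $a_k$, weighted Plancherel bounds using $L_d>d/2$ to get $\int_{|x|\geq R}|K_k|\lesssim(2^kR)^{d/2-L_d}$ and $\int|\nabla K_k|\lesssim 2^k$, the H\"ormander condition, Calder\'on--Zygmund weak type $(1,1)$, and then interpolation and duality.

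The one point you leave implicit is that $\sum_kK_k$ represents $\check a$ away from the origin. This is routine, because the tail $k\to-\infty$ converges uniformly since $\|K_k\|_{L^\infty}\leq\|a_k\|_{L^1}\lesssim 2^{kd}$. Alternatively, you can bypass it by proving uniform bounds for the truncated multipliers $\sum_{|k|\leq M}a_k$ and letting $M\to\infty$.
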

Now, we are in a position to prove the following boundedness of $I_N$.
\begin{proposition}[Properties of $I_N$]\label{highcont}
Let $1<p<\infty$ and $N\geq1$.  Then:

\noindent $(i)$ 
\begin{equation}\label{equ:Icontract} 
\|I_Nf\|_{L^p}\leq\|f\|_{L^p}.
\end{equation}

\noindent $(ii)$ For every multi-index $\gamma$,
\begin{equation}\label{equ:msymbol}
|\partial_\xi^\gamma m_N(\xi)|\lesssim_{s,\gamma}(N+|\xi|)^{-|\gamma|}m_N(\xi),
\end{equation}
and
\begin{equation}\label{equ:masymp}
m_N(\xi)\sim_s
\begin{cases}
1,&|\xi|\leq N,\\
\left(\dfrac{N}{|\xi|}\right)^{1-s},&|\xi|\geq N.
\end{cases}
\end{equation}
Consequently,
\begin{equation}\label{equcont}
\|f\|_{H^s}\lesssim_s\|\langle\nabla\rangle I_Nf\|_{L^2}\lesssim_sN^{1-s}\|f\|_{H^s}.
\end{equation}

\noindent $(iii)$ If $M\geq N$, then
\begin{equation}\label{equ:multcompare}
\|\langle\nabla\rangle I_Mf\|_{L^p}\lesssim_s\left(\frac{M}{N}\right)^{1-s}\|\langle\nabla\rangle I_Nf\|_{L^p}.
\end{equation}
For every real $k\geq0$ and $0\leq\rho\leq2$,
\begin{equation}\label{equ:profileapprox}
\begin{aligned}
\|(1-I_N)f\|_{\dot H^k}
&\lesssim_{s,\rho}N^{-\rho}\|f\|_{\dot H^{k+\rho}},\\
\|(1-I_N)f\|_{H^k}
&\lesssim_{s,\rho}N^{-\rho}\|f\|_{H^{k+\rho}}.
\end{aligned}
\end{equation}
Moreover, for every $M\geq1$,
\begin{align}
\|(1-I_M)f\|_{L^p}+\|(1-S_M)f\|_{L^p}
&\lesssim_sM^{-1}\|\langle\nabla\rangle I_Mf\|_{L^p},\label{equ:IMSMdiff}\\
\|\nabla S_Mf\|_{L^p}+\|\nabla I_Mf\|_{L^p}
&\lesssim_s\|\langle\nabla\rangle I_Mf\|_{L^p}.\label{equ:heatDM}
\end{align}
In particular,
\begin{equation}\label{equ:L2tailI}
\|(1-I_N)f\|_{L^2}\lesssim_sN^{-1}\|\langle\nabla\rangle I_Nf\|_{L^2}.
\end{equation}

\noindent $(iv)$ Let $\dot I_N$ denote the multiplier with symbol $N\partial_Nm_N$.  Then
\begin{equation}\label{equ:Idot}
\|\dot I_Nf\|_{L^2}\lesssim_sN^{-1}\|\langle\nabla\rangle I_Nf\|_{L^2}.
\end{equation}
Finally, if $\delta_\lambda f(x)=f(x/\lambda)$, then
\begin{equation}\label{equ:scaio}
I_N\delta_\lambda=\delta_\lambda I_{N\lambda}.
\end{equation}
\end{proposition}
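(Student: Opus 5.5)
The plan is to work almost entirely with the one-variable symbol $m(r)=(1-\vartheta)\sum_{j\ge0}\vartheta^j e^{-4^{-j}r^2}$ and then rescale, since $m_N(\xi)=m(|\xi|/N)$. For (i) we use that each $S_{2^jN}$ is convolution with a positive Gaussian of unit mass, so $I_N$ is convolution with a positive kernel of total mass $(1-\vartheta)\sum\vartheta^j=1$; Minkowski/Young then gives $\|I_Nf\|_{L^p}\le\|f\|_{L^p}$ for all $1\le p\le\infty$. The scaling identity \eqref{equ:scaio} follows directly from $\widehat{\delta_\lambda f}(\xi)=\lambda^d\hat f(\lambda\xi)$ together with $m_N(\xi)=m(|\xi|/N)$.

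For (ii) we first prove the asymptotics \eqref{equ:masymp}. For $r\le1$, every term satisfies $e^{-4^{-j}r^2}\ge e^{-1}$, so $m\sim1$. For $r\sim2^k$ with $k\ge1$, we split the sum at $j=k$. The terms with $j\ge k$ contribute $\gtrsim\vartheta^k$, since their exponentials are bounded below. The terms with $j<k$ contribute $\sum_{j<k}\vartheta^j e^{-4^{k-j}}$, which is $\lesssim\vartheta^k$ because $e^{-4^{i}}$ decays faster than $\vartheta^{-i}=2^{\alpha i}$ grows. Hence $m(r)\sim\vartheta^k=(2^k)^{-(1-s)}\sim r^{-(1-s)}$.

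The same splitting gives the derivative bounds. Each term satisfies $|\partial_r^\ell e^{-4^{-j}r^2}|\lesssim_\ell r^{-\ell}e^{-c4^{-j}r^2}$, and for $r\le1$ the derivatives are simply bounded. Since the kernel is Gaussian, we pass from radial derivatives to $\partial_\xi^\gamma$ by the chain rule, working with $e^{-4^{-j}|\xi|^2/N^2}$ directly, whose $\xi$-derivatives are polynomials in $4^{-j}\xi/N^2$ and $4^{-j}/N^2$ times the exponential. This yields $|\partial^\gamma_\xi m_N|\lesssim (N+|\xi|)^{-|\gamma|}m_N$. The \emph{main obstacle} is to carry out this step carefully: the relative bound with the factor $m_N$ (and not merely $N^{-|\gamma|}$) requires the same tail comparison as in \eqref{equ:masymp}, now with the polynomial weights $(4^{-j}r^2)^\ell$ absorbed into a slightly weaker Gaussian $e^{-c4^{-j}r^2}$. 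Once \eqref{equ:masymp} holds, \eqref{equcont} is immediate from Plancherel, since $\langle\xi\rangle m_N(\xi)\gtrsim\langle\xi\rangle^s$ and $\langle\xi\rangle m_N\lesssim N^{1-s}\langle\xi\rangle^s$ for $N\ge1$.

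For (iii) and (iv), each estimate reduces to Lemma \ref{lem:Mikhlin} applied to a quotient symbol, whose Mikhlin bounds follow from \eqref{equ:msymbol}, and for $L^2$ statements simply to pointwise bounds with Plancherel. The quotient symbols are as follows.
\begin{itemize}
\item For \eqref{equ:multcompare}, use $m_M/m_N$, which is $\lesssim(M/N)^{1-s}$ by \eqref{equ:masymp} and satisfies the symbol estimates by \eqref{equ:msymbol}.
\item For \eqref{equ:profileapprox}, use $1-m(r)=(1-\vartheta)\sum\vartheta^j(1-e^{-4^{-j}r^2})\lesssim\min(1,r^2)\le r^\rho$, applied at $r=|\xi|/N$.
\item For \eqref{equ:IMSMdiff} and \eqref{equ:heatDM}, use the symbols $(1-m_M)/(\langle\xi\rangle m_M)\cdot M$, $(1-e^{-|\xi|^2/M^2})M/(\langle\xi\rangle m_M)$ and $\xi e^{-|\xi|^2/M^2}/(\langle\xi\rangle m_M)$. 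Each is bounded, because in the region $|\xi|\ge M$ the Gaussian kills any power, and the factor $1/m_M\lesssim(|\xi|/M)^{1-s}$ is harmless against the bound $M/|\xi|$. They are Mikhlin by \eqref{equ:msymbol}.
\item For \eqref{equ:Idot}, use $N\partial_N m_N=-r m'(r)$ with $r=|\xi|/N$, which is bounded by $\min(r^2,1)\,m(r)$ by the derivative bound. Then \eqref{equ:Idot} follows from Plancherel, since $\min(r^2,1)\lesssim r/\langle\xi\rangle\cdot\langle\xi\rangle/N$ in both regimes.
\end{itemize}
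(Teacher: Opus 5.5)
Your proposal is correct and follows essentially the same route as the paper: dyadic splitting of the Gaussian sum gives \eqref{equ:masymp} and the relative symbol bounds, and Plancherel or the Mikhlin lemma is then applied to the quotient symbols $m_M/m_N$, $(1-m_M)/(\langle\xi\rangle m_M)$ and $(1-e^{-|\xi|^2/M^2})/(\langle\xi\rangle m_M)$, with only cosmetic differences: you differentiate each Gaussian in Cartesian coordinates for all $\xi$ (the paper switches to the radial formula when $|\xi|\geq N$), and you read the Mikhlin bounds off \eqref{equ:msymbol} rather than from $(r\partial_r)^q$ bounds on $R$ and $h^j$. One small slip: in the $\dot I_N$ bullet the inequality you need is $\min(r^2,1)\le r=|\xi|/N\le\langle\xi\rangle/N$, whereas the product you wrote simplifies to $r/N$, which does not bound $\min(r^2,1)$ from above.
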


\begin{proof}Fix $s\in(0,1)$ and set
$
\alpha=1-s,\,\,\vartheta=2^{-\alpha}.
$
By  definition, we have the $L^p$ boundedness of $I_N$,
\begin{align*}
    \|I_N f\|_{L^p}\leq (1-\vartheta)\sum_{j=0}^\infty\vartheta^j\|S_{2^jN}f\|_{L^p}\leq\|f\|_{L^p},
\end{align*}
which implies \eqref{equ:Icontract}.

Next, we  show \eqref{equ:msymbol}. First, we prove the following derivative bound
\begin{equation}\label{derivative-m(r)}
\big|(r\partial_r)^q m(r)\big|\lesssim_{s,q}m(r),\qquad r>0,\quad q\in\Bbb N.
\end{equation}
Denote the radial derivative by  $\mathcal D=r\partial_r$, for any $q\in\Bbb N$,
\[
 \mathcal D^q e^{-4^{-j}r^2}=P_q(4^{-j}r^2)e^{-4^{-j}r^2},
\]
where $P_q$ is a fixed polynomial. Hence
$$
|\mathcal D^q e^{-4^{-j}r^2}|\lesssim_q e^{-c4^{-j}r^2}.
$$
Using exactly the same dyadic partition as above, the sum
\[
\sum_{j\ge 0}\vartheta^j e^{-c4^{-j}r^2}\lesssim_{s,q}m(r)
\]
is bounded by  $m(r)$ up to a constant depending on $s,q$. Hence, we have proved the claim \eqref{derivative-m(r)}.

Now, we use this derivative bound to deduce the estimate of $m(|\xi|/N)$. Let $k=|\gamma|$. First assume $\rho=|\xi|\le 2N$. By the direct calculation, we have
\[
\partial_\xi^\gamma e^{-\rho^2/(2^{2j}N^2)}
=(2^jN)^{-k}P_\gamma\!\left(\frac{\xi}{2^jN}\right)
e^{-\rho^2/(2^{2j}N^2)},
\]
where $P_\gamma$ is a fixed polynomial satisfying
$$
\left|P_\gamma\!\left(\frac{\xi}{A}\right)\right|
\lesssim_\gamma
\left(1+\frac{|\xi|}{A}\right)^{|\gamma|}.
$$
Therefore
\begin{align*}
|\partial_\xi^\gamma m_N(\xi)|
&\lesssim_\gamma
(1-\vartheta)\sum_{j\ge 0}\vartheta^j(2^jN)^{-k}\Big(1+\frac{\rho}{2^jN}\Big)^ke^{-\rho^2/(2^{2j}N^2)}
\lesssim_{s,\gamma} N^{-k}m_N(\xi).
\end{align*}

Now assume $\rho\ge N$. Since $m_N$ is radial, we use the radial differentiation formula
\[
\partial_\xi^\gamma m_N(\xi)
=
\sum_{q=1}^{k}\rho^{-k}P_{\gamma,q}\!\left(\frac{\xi}{\rho}\right)
\big[(r\partial_r)^q m(r)\big]_{r=\rho/N},
\]
where $P_{\gamma,q}$ are smooth  functions on $\mathbb S^{d-1}$. Together with \eqref{derivative-m(r)}, we get
\[
|\partial_\xi^\gamma m_N(\xi)|
\lesssim_{s,\gamma}\rho^{-k}m_N(\xi)
\lesssim_{s,\gamma}(N+\rho)^{-k}m_N(\xi),
\]
which implies \eqref{equ:msymbol}.

\medskip
\underline{\emph{Proof of  \eqref{equ:masymp}  and \eqref{equcont}}.}
By scaling, it is sufficient to estimate $m(r)$.
For $r\leq1$, the upper bound of $m(r)\lesssim1$ is trivial.  Now, it remains to show the lower bound. For arbitrary $j\geq0$, it holds $4^{-j}r^2\leq1$. Thus, we have
\begin{align*}
    m(r)\geq(1-\vartheta)e^{-1}\sum_{j=0}^\infty\vartheta^j=e^{-1}\sim_s1,
\end{align*}
which implies the lower bound.

For $r\ge 1$, choose $k\in\mathbb Z$ such that
$
2^k\le r<2^{k+1}.
$
For $j\ge k+2$, 
notice that $e^{-4^{-j}r^2}\ge e^{-1/4}$, we obtain
\[
m(r)\ge (1-\vartheta)e^{-1/4}\sum_{j\ge k+2}\vartheta^j
=(1-\vartheta)e^{-1/4}\frac{\vartheta^{k+2}}{1-\vartheta}\gtrsim_s r^{-\alpha}.
\]
For the upper bound, splitting the sum into $j\ge k-1$ and $j\le k-2$. For the first part, we have
\[\sum_{j\geq k-1}\vartheta^je^{-4^{-j}r^2}
\leq\sum_{j\ge k-1}\vartheta^j\lesssim \vartheta^k.
\]
For $j=k-\ell\le k-2$ with $\ell\ge 2$, we have
$
4^{-j}r^2\ge 4^{\ell}.
$
Therefore, we get
\[
\vartheta^j e^{-4^{-j}r^2}
\le \vartheta^k\vartheta^{-\ell}e^{-4^\ell}
\lesssim_s \vartheta^k.
\]
 Summing over all $\ell\ge 2$ gives
$
m(r)\lesssim_s r^{-\alpha}.
$
This proves \eqref{equ:masymp} for $r\geq1$. Putting two parts together, we finish the proof of  \eqref{equ:masymp}.

 Moreover, \eqref{equ:masymp} implies the pointwise
bound
\[
\langle\xi\rangle^s\lesssim_s
\langle\xi\rangle m_N(\xi)
\lesssim_s N^{1-s}\langle\xi\rangle^s.
\]
Therefore, by using  Plancherel's theorem, we have  \eqref{equcont}.

\medskip
\underline{\emph{Proof of \eqref{equ:multcompare} and \eqref{equ:profileapprox}}}

Let $M\ge N$ and set
\[
\lambda=\frac MN\ge 1,\qquad 
R(r)=\frac{m(r/\lambda)}{m(r)},\qquad r>0.
\]
Then
\[
\frac{m_M(\xi)}{m_N(\xi)}=R\!\left(\frac{|\xi|}{N}\right).
\]
We claim that for every $q\ge 0$,
\begin{align}
|(r\partial_r)^q R(r)|\lesssim_{s,q}\lambda^{\alpha},\qquad r>0.
\label{claim}
\end{align}

To prove this estimate,  we first claim the following estimate
\begin{align}\label{new-2.28}
|(r\partial_r)^q m(r)^{-1}|\lesssim_{s,q}m(r)^{-1}.
\end{align}
Indeed, the Leibniz rule gives the following identity
$$\mathcal D^q(fg) = \sum_{a=0}^{q}\binom{q}{a}(\mathcal D^a f)(\mathcal D^{q-a}g),$$
where  $\mathcal D=r\partial_r$.
Next, we prove \eqref{new-2.28} by induction. For $q=0$, the desired estimate holds automatically. For $q=1$, using \eqref{derivative-m(r)}, we have
$
    |\mathcal D[m^{-1}]| = m^{-2}|\mathcal Dm| \lesssim_s m^{-2}\cdot m = m^{-1}. 
$
Now, we assume that for $0\le b\le q-1$, it holds
$$|\mathcal D^b[m^{-1}]| \lesssim_{s,b}\, m^{-1}.$$
We need to show that $|\mathcal D^q[m^{-1}]|\lesssim_{s,q} m^{-1}$. By definition, one has
$$0 = \mathcal D^q[m\cdot m^{-1}] = \sum_{a=0}^{q}\binom{q}{a}(\mathcal D^a m)(\mathcal D^{q-a}[m^{-1}]).$$
Notice that 
$$m\cdot \mathcal D^q[m^{-1}] = -\sum_{a=1}^{q}\binom{q}{a}(\mathcal D^a m)\bigl(\mathcal D^{q-a}[m^{-1}]\bigr)$$
and taking $b = q-a$, 
 we have the pointwise estimate
$$|\mathcal D^q[m^{-1}]| \le m^{-1}\sum_{b=0}^{q-1}\binom{q}{q-b}\bigl|\mathcal D^{q-b}m\bigr|\cdot\bigl| \mathcal D^b[m^{-1}]\bigr|.$$
Next, we consider the  terms in the summation. If $b=0$,  $|\mathcal D^b[m^{-1}]|=m^{-1}$. Since $q-b\ge1$, by \eqref{derivative-m(r)}, we get 
$$|\mathcal D^q m|\cdot|m^{-1}| \lesssim m\cdot m^{-1} = 1.$$
If $1\le b\le q-1$, we have $q-b\ge 1$, by \eqref{derivative-m(r)} we have $|\mathcal D^{q-b}m|\lesssim_{s,q-b} m$. For the term containing the $b$-th order derivative, it can be estimated by the induction assumption $|\mathcal D^b[m^{-1}]|\lesssim_{s,b} m^{-1}$. Hence we have
$$|\mathcal D^{q-b}m|\cdot|\mathcal D^b[m^{-1}]| \lesssim m\cdot m^{-1} = 1.$$
Incorporating the summation, we obtain the desired estimate \eqref{new-2.28}.

We now go back to show  \eqref{claim}. The Leibniz rule therefore provides that 
\[
|(r\partial_r)^q R(r)|
\lesssim_{s,q}
\sum_{a+b=q}
|(r\partial_r)^a m(r/\lambda)|\,|(r\partial_r)^b m(r)^{-1}|
\lesssim_{s,q}
\frac{m(r/\lambda)}{m(r)}
= R(r).
\]
The direct computation yields the pointwise bound
\begin{align*}
R(r)\lesssim_s
\begin{cases}
1, & 0<r\leq1,\\
r^\alpha, & 1\leq r\leq\lambda,\\
\lambda^\alpha, & r\geq\lambda.
\end{cases}
\end{align*}
Hence, we have proved the claim \eqref{claim}. 

To obtain \eqref{equ:multcompare}, by Lemma \ref{lem:Mikhlin}, it remains to  provide the derivative bound of $a(\xi):=R(\frac{|\xi|}{N})$. 
Applying the radial differentiation formula to $a(\xi)$, for $|\gamma|=k\ge 1$, it holds
\begin{align}\label{radial}
\partial_\xi^\gamma a(\xi)
=
\sum_{q=1}^{k}|\xi|^{-k}P_{\gamma,q}\!\left(\frac{\xi}{|\xi|}\right)
(r\partial_r)^qR(r)\big|_{r=\frac{|\xi|}{N}}.
\end{align}
Using \eqref{claim}, we obtain
\[
|\partial_\xi^\gamma a(\xi)|
\lesssim_{s,\gamma}\lambda^{\alpha}|\xi|^{-|\gamma|}.
\]
The symbol $a$ is smooth at the origin. Lemma \ref{lem:Mikhlin}  gives \eqref{equ:multcompare}.

Now, we turn to  prove \eqref{equ:profileapprox}. Fix $0\leq\sigma\leq2$.  Since
$
1-e^{-x}\lesssim_\sigma x^{\sigma/2}
$ with $x\geq0$,
we obtain
\begin{align}
0\leq1-m_N(\xi)
&=(1-\vartheta)\sum_{j\geq0}\vartheta^j
\left(1-e^{-4^{-j}|\xi|^2/N^2}\right)\nonumber\\
&\lesssim_{s,\sigma}
\left(\frac{|\xi|}{N}\right)^\sigma
(1-\vartheta)\sum_{j\geq0}\vartheta^j2^{-j\sigma}
\lesssim_{s,\sigma}
\left(\frac{|\xi|}{N}\right)^\sigma.
\label{equ:1minusm}
\end{align}

For every real $k\geq0$,
\begin{align*}
\|(1-I_N)f\|_{\dot H^k}^2
&=\int_{\R^d}|\xi|^{2k}|1-m_N(\xi)|^2|\widehat f(\xi)|^2\,d\xi\\
&\lesssim_{s,\sigma}N^{-2\sigma}
\int_{\R^d}|\xi|^{2(k+\sigma)}|\widehat f(\xi)|^2\,d\xi,
\end{align*}
which is corresponding to the estimate with the homogeneous norm.  Since
$|\xi|^\sigma\leq\langle\xi\rangle^\sigma$, the same pointwise bound also gives
\begin{align*}
\|(1-I_N)f\|_{H^k}^2
&=\int_{\R^d}\langle\xi\rangle^{2k}
|1-m_N(\xi)|^2|\widehat f(\xi)|^2\,d\xi
\lesssim_{s,\sigma}N^{-2\sigma}
\int_{\R^d}\langle\xi\rangle^{2(k+\sigma)}
|\widehat f(\xi)|^2\,d\xi.
\end{align*}
Therefore, we complete the proof of \eqref{equ:profileapprox}.

\medskip
\underline{\emph{Proof of \eqref{equ:IMSMdiff}--\eqref{equ:L2tailI}}}
Let $M\ge 1$, $\rho=|\xi|$, $r=\rho/M$.  By the direct computation, we have 
\begin{gather*}
[(I-I_M)f]^\wedge(\xi)=\frac{1-m_M(\rho)}{\langle\rho\rangle m_M(\rho)}\cdot \langle\rho\rangle m_M(\rho)\hat{f}(\xi),\\
[(I-S_M)f]^\wedge(\xi)=\frac{1-e^{-|\xi|^2/M^2}}{\langle\rho\rangle m_M(\rho)}\cdot \langle\rho\rangle m_M(\rho)\hat{f}(\xi).
\end{gather*}
Then the estimate \eqref{equ:IMSMdiff}
 is reduced to 
\begin{gather*}
\|(1-I_M)f\|_{L^p}
=
\|T_{1}(\langle\nabla\rangle I_M f)\|_{L^p},\\
\|(1-S_M)f\|_{L^p}
=
\|T_{2}(\langle\nabla\rangle I_M f)\|_{L^p},
\end{gather*}
where $T_1$ and $T_2$ are both Fourier multipliers with symbol 
$$b_M^1:=\frac{1-m_M(\rho)}{\langle\rho\rangle m_M(\rho)},\,\,b_M^2:=\frac{1-e^{-\rho^2/M^2}}{\langle\rho\rangle m_M(\rho)}$$ respectively. Now, it remains to  verify that $b_M^1$ and $b_M^2$ satisfy the condition in Lemma \ref{lem:Mikhlin}. For convenience, we also denote by 
\begin{align*}
    h_M^1(\rho)=1-m_M(\rho),\,\,h_M^2(\rho)=1-e^{-\rho^2/M^2}.
\end{align*}
We first establish that for both choices of $h_M^j$ with $j=1,2,$
\begin{align}\label{claim-2}
|(\rho\partial_\rho)^q h_M^j(\rho)|=|(r\partial_r)^qh^j(r)|
\lesssim_q \min(r^2,1),\qquad r>0,
\end{align}
where $h^1(r)=1-m(r)$ and $h^2(r)=1-e^{-r^2}$.
Under the change of variable $r=\rho/M$, it holds
$$h_M^1(\rho)=h^1(r),\qquad h_M^2(\rho)=h^2(r).$$
For $h^2(r)$, this follows from Taylor expansion when $r\le 1$ while the Gaussian is dominating the polynomial  when $r\ge 1$.

For $h^1(r)$, by definition, we deduce that 
\[
1-m(r)=(1-\vartheta)\sum_{j\ge 0}\vartheta^j(1-e^{-4^{-j}r^2}).
\]
For each $j$, we have
\[
|(r\partial_r)^q(1-e^{-4^{-j}r^2})|
\lesssim_q \min(4^{-j}r^2,1).
\]
If $r\le 1$, then $4^{-j}r^2\le 1$ for all $j\ge 0$, and hence
\[
|(r\partial_r)^q(1-m(r))|
\lesssim_q
(1-\vartheta)\sum_{j\ge 0}\vartheta^j4^{-j}r^2
\lesssim_s r^2.
\]
If $r\ge 1$, the same dyadic partition as in the proof of \eqref{equ:masymp} shows that the sum is bounded by a constant depending only on $s,q$. Thus \eqref{claim-2} holds. As a consequence, it holds
\[
\rho^k\bigl(|\partial_\xi^\gamma h_M^1(\xi)|
+|\partial_\xi^\gamma h_M^2(\xi)|\bigr)
\lesssim_{s,\gamma}
\min\left\{\frac{\rho^2}{M^2},1\right\}.
\]
Now, we are in the position to prove the $L^p$-boundedness of $T_1$ and $T_2$ respectively.
Recall that from \eqref{new-2.28}, we have
\[
|(\rho\partial_\rho)^c m_M(\rho)^{-1}|
\lesssim_{s,c} m_M(\rho)^{-1}.
\]
Also,
\[
|(\rho\partial_\rho)^b \langle\rho\rangle^{-1}|
\lesssim_b \langle\rho\rangle^{-1}.
\]
Using the Leibniz rule,
\[
|(\rho\partial_\rho)^q b_M(\rho)|
\lesssim_{q}
\sum_{a+b+c=q}
|(\rho\partial_\rho)^a h_M(\rho)|
|(\rho\partial_\rho)^b\langle\rho\rangle^{-1}|
|(\rho\partial_\rho)^c m_M(\rho)^{-1}|.
\]
Therefore
\[
|(\rho\partial_\rho)^q b_M(\rho)|
\lesssim_{s,q}
\frac{\min(r^2,1)}{\langle\rho\rangle m_M(\rho)}.
\]
We now show that the right-hand side is bounded by $C_{s,q}M^{-1}$.

If $r\ge 1$,  it then follows from \eqref{equ:masymp} that 
$m_M(\rho)\gtrsim_s r^{-\alpha}.$
Hence
\[
\frac{1}{\langle\rho\rangle m_M(\rho)}
\lesssim_s
\frac{r^{\alpha}}{\rho}
\le M^{-1}.
\]

If $r\le 1$, then $\min(r^2,1)=r^2$ and $m_M(\rho)\gtrsim_s 1$. If $\rho=Mr\le 1$, then
\[
\frac{r^2}{\langle\rho\rangle m_M(\rho)}
\lesssim_s r^2
=\frac{\rho^2}{M^2}
\le M^{-2}
\le M^{-1}.
\]
If $1\le \rho\le M$,  we have
\[
\frac{r^2}{\langle\rho\rangle m_M(\rho)}
\lesssim_s
\frac{r^2}{\rho}
=
\frac{\rho}{M^2}
\le M^{-1}.
\]
Thus in both cases, we obtain
\[
|(\rho\partial_\rho)^q b_M(\rho)|\lesssim_{s,q}M^{-1}.
\]

Since $b_M$ is radial, the radial differentiation formula \eqref{radial} gives
\[
|\partial_\xi^\gamma b_M(\xi)|
\lesssim_{s,\gamma}M^{-1}|\xi|^{-|\gamma|},
\qquad \xi\ne 0.
\]
The symbol is smooth at the origin. The Mikhlin multiplier theorem (Lemma \ref{lem:Mikhlin}) therefore proves
\[
\|(1-I_M)f\|_{L^p}
\lesssim_s M^{-1}\|\langle\nabla\rangle I_M f\|_{L^p},
\]
and using the same argument with $h_M^2$ gives
\[
\|(1-S_M)f\|_{L^p}
\lesssim_s M^{-1}\|\langle\nabla\rangle I_M f\|_{L^p}.
\] 
Therefore, we complete the proof of  \eqref{equ:IMSMdiff}.

It remains to verify the gradient multipliers.
For $\nabla I_M$, the multiplier relative to $\langle\nabla\rangle I_M$ is
\[
a_0(\xi)=\frac{i\xi}{\langle\xi\rangle}.
\]

For $\nabla S_M$, the multiplier relative to $\langle\nabla\rangle I_M$ is
\[
a_S(\xi)=\frac{i\xi\,e^{-|\xi|^2/M^2}}{\langle\xi\rangle m_M(\xi)}.
\]
Using the similar strategy as in the proof of \eqref{equ:IMSMdiff}, one can  prove \eqref{equ:heatDM}.  Taking $p=2$ in \eqref{equ:IMSMdiff} gives
\eqref{equ:L2tailI}.

\medskip
\underline{\emph{Proof of \eqref{equ:Idot} and \eqref{equ:scaio}}}

We observe that the direct differentiation of \eqref{equ:mheat} gives
\[
|N\partial_Nm_N(\xi)|\lesssim_s
\begin{cases}
(\frac{|\xi|}{N})^2,&|\xi|\leq N,\\
m_N(\xi),&|\xi|\geq N.
\end{cases}
\]
Consequently,
\[
\frac{|N\partial_Nm_N(\xi)|}
{\langle\xi\rangle m_N(\xi)}
\lesssim_s N^{-1}.
\]
Indeed, when $|\xi|\leq N$, this follows from $m_N\sim_s1$ and
$|\xi|^2/(N^2\langle\xi\rangle)\leq N^{-1}$, and when
$|\xi|\geq N$ from $\langle\xi\rangle^{-1}\leq N^{-1}$.
Plancherel's theorem now proves \eqref{equ:Idot}.  The scaling identity \eqref{equ:scaio} follows directly from the Fourier transform. 
\end{proof}

The following estimate is the main replacement for the paraproduct commutator estimate used in \cite{SunZheng}.  It uses the $C^{1,\min\{1,\frac4d\}}$ structure of the mass-critical nonlinearity.

\begin{lemma}[Nonlinear commutator]\label{commutor}
Let $J$ be a time interval, let $N\geq1$, and let $u$ be a smooth function on
$\bar J\times\R^d$.  Let $\mathcal{C}_N(u)$ be defined as in
\eqref{equ:commutator-def}. Then
\begin{align}
&\|\nabla\mathcal{C}_N(u)\|_{L_t^2L_x^{\frac{2d}{d+2}}(J\times\R^d)}\notag\\
\lesssim&_{d,s}N^{-\nu}
\|u\|_{L_t^\infty L_x^2(J\times\R^d)}^{\frac4d-\nu}
\|\langle\nabla\rangle I_Nu\|_{L_t^\infty L_x^2(J\times\R^d)}^{\nu}
\|\langle\nabla\rangle I_Nu\|_{L_t^2L_x^{\frac{2d}{d-2}}(J\times\R^d)},\label{equ:commgrad}\\
&\|\mathcal{C}_N(u)\|_{L_t^2L_x^{\frac{2d}{d+2}}(J\times\R^d)}\notag\\
\lesssim&_{d,s}N^{-(1+\nu)}
\|u\|_{L_t^\infty L_x^2(J\times\R^d)}^{\frac4d-\nu}
\|\langle\nabla\rangle I_Nu\|_{L_t^\infty L_x^2(J\times\R^d)}^{\nu}
\|\langle\nabla\rangle I_Nu\|_{L_t^2L_x^{\frac{2d}{d-2}}(J\times\R^d)}.\label{equ:commzero}
\end{align}
In dimension $d=3$, one also has
\begin{align}
&\|\mathcal{C}_N(u)\|_{L_t^1L_x^2(J\times\R^3)}
\lesssim_sN^{-2}
\|u\|_{L_t^\infty L_x^2(J\times\R^3)}^{\frac13}
\|\langle\nabla\rangle I_Nu\|_{L_t^2L_x^6(J\times\R^3)}^2.\label{equ:commL1L2}
\end{align}
For $d\geq4$,
\begin{align}
&\big\||\nabla|^{1-\frac4d}\mathcal{C}_N(u)\big\|_{L_t^2L_x^{\frac{2d}{d+2}}(J\times\R^d)}\notag\\
\lesssim&_{d,s}N^{-\frac8d}
\|\langle\nabla\rangle I_Nu\|_{L_t^\infty L_x^2(J\times\R^d)}^{\frac4d}
\|\langle\nabla\rangle I_Nu\|_{L_t^2L_x^{\frac{2d}{d-2}}(J\times\R^d)}.\label{equ:commfractional}
\end{align}
\end{lemma}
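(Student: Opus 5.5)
The plan follows the decomposition $\mathcal C_N(u)=\mathcal A_N(u)+\mathcal B_N(u)$ from the introduction. We estimate each summand pointwise via Lemma~\ref{lem:nonltermest}, then sum the geometric series in $j$ using $N_j=2^jN$ and the comparison \eqref{equ:multcompare}.

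\emph{The term $\mathcal A_N$.} For the fixed-scale commutator $H_{M}=S_MF(u)-F(S_Mu)$ with $M=N_j$, we use the identities \eqref{equ:introheatdefect} and \eqref{equ:introheatdefectgrad}. These hold because the Gaussian kernel $K_M$ has unit mass and $\nabla K_M$ has zero mean. Bound \eqref{equ:Rpoint} then gives
\[
|\nabla H_M(x)|\lesssim\int|\nabla K_M(y)|\,(|S_Mu(x)|+|u(x-y)|)^{\frac4d-\nu}\,|u(x-y)-S_Mu(x)|^{1+\nu}\,dy .
\]
We split $u(x-y)-S_Mu(x)=[u(x-y)-u(x)]+[(1-S_M)u](x)$. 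We also write $u=I_Mu+(1-I_M)u$, so that differences are controlled by $\nabla I_Mu$ together with the tail $(1-I_M)u$, which is bounded by \eqref{equ:IMSMdiff}. The kernel satisfies $\int|\nabla K_M(y)|\,|y|^{a}\,dy\sim M^{1-a}$ for $a\le 1+\nu\le 2$. Hence, via Minkowski's inequality, the mean-value bound $\|u(\cdot-y)-u\|\le|y|\,\|\nabla I_Mu\|+2\|(1-I_M)u\|$, and Hölder's inequality with exponents distributing $L^2$ (power $\frac4d-\nu$), $L^2$ of $\langle\nabla\rangle I_Mu$ (power $\nu$) and $L^{2d/(d-2)}$ (power $1$), we obtain $\|\nabla H_M\|_{L^{2d/(d+2)}}\lesssim M^{-\nu}\|u\|_{L^2}^{4/d-\nu}\|\langle\nabla\rangle I_Mu\|^{\nu}_{L^2}\|\langle\nabla\rangle I_Mu\|_{L^{2d/(d-2)}}$. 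The Lebesgue exponents check out since $\frac{d+2}{2d}=\frac{4/d-\nu}{2}+\frac{\nu}{2}+\frac{d-2}{2d}$.

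The difficulty here is that $|b|^{1+\nu}$ must be split as $|b|^\nu\cdot|b|$, with $|b|^\nu$ in $L^2$ and $|b|$ in $L^{2d/(d-2)}$. Estimating $|b|$ in $L^{2d/(d-2)}$ through $|y|\,\nabla I_M u$ costs one factor $M^{-1}$ from the kernel moment, while the $\nu$-power gives $M^{-\nu}$ via \eqref{equ:IMSMdiff}. This yields $M^{-\nu}$ for the gradient (one $M$ lost to $\nabla K_M$) and $M^{-1-\nu}$ for the zeroth-order bound. Then \eqref{equ:multcompare} converts $\|\langle\nabla\rangle I_{N_j}u\|\lesssim 2^{j(1-s)}\|\langle\nabla\rangle I_Nu\|$, with total power $(1+\nu)(1-s)$ in $j$. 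Against the weights $\vartheta^j 2^{-j\nu}$ and $\vartheta=2^{-(1-s)}$, the sum converges iff $\nu>\nu(1-s)$, which holds because $s>0$.

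\emph{The term $\mathcal B_N$.} Since $I_{N_j}u$ is the barycenter, $(1-\vartheta)F(a)+\vartheta F(c)-F(\bar z)$ with $\bar z=(1-\vartheta)a+\vartheta c$ is a sum of two Taylor remainders about $\bar z$, and \eqref{equ:Rpoint} applies with $|a-c|=|(S_{N_j}-I_{2N_j})u|$. For the gradient we differentiate, obtaining $(1-\vartheta)[F'(a)-F'(\bar z)]\nabla a+\vartheta[F'(c)-F'(\bar z)]\nabla c$. Using $\nabla a-\nabla \bar z=\vartheta\nabla(a-c)$ and cancelling the linear term again, \eqref{equ:DFholderall} gives the bound $|a-c|^\nu(|\nabla a|+|\nabla c|)$ times the power of $|u|$. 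Both $(S_{N_j}-I_{2N_j})u=(1-I_{2N_j})u-(1-S_{N_j})u$ and the gradients are controlled by \eqref{equ:IMSMdiff} and \eqref{equ:heatDM} at scale $N_j$, with gain $N_j^{-\nu}$ (respectively $N_j^{-1-\nu}$ without the gradient, placing the full $|a-c|^{1+\nu}$ in $L^2\cdot L^{2d/(d-2)}$). The pointwise weight $(|a|+|c|)^{4/d-\nu}$ is bounded in $L^2$ by $\|u\|_{L^2}$ using \eqref{equ:Icontract}. Summation in $j$ is as before.

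\emph{The remaining estimates.} The $d=3$ bound \eqref{equ:commL1L2} follows from the same remainders with $\nu=1$, placing both difference factors in $L^6$ and the weight $|u|^{1/3}$ in $L^2$ (indeed $\frac12=\frac16+\frac16+\frac16$); the gain is $N^{-2}$ and Hölder in time gives $L^2_tL^2_t\to L^1_t$. For \eqref{equ:commfractional} we interpolate between \eqref{equ:commzero} and \eqref{equ:commgrad}, with powers $N^{-(1+4/d)\cdot 4/d}N^{-(4/d)(1-4/d)}=N^{-8/d}$. Here $\nu=4/d$, the $\|u\|_{L^2}$ factor has exponent zero, and the summation in $j$ is done before interpolating. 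The main obstacle I expect is the bookkeeping of the $y$-moments against the Hölder exponents in $\mathcal A_N$, in particular handling $|u(x-y)|^{4/d-\nu}$ at shifted points via Minkowski's inequality.
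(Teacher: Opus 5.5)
Your proposal is correct and follows essentially the same route as the paper. It uses the $\mathcal A_N+\mathcal B_N$ splitting, the Taylor-remainder representation of the fixed-scale heat commutators with Gaussian kernel moments, the barycentric cancellation for the cross-scale terms $B_j$, and the same summation in $j$ (net factors $2^{-s\nu j}$ and $2^{-(1+s\nu)j}$). It also matches the paper's $L^6\cdot L^6\cdot L^6$ H\"older argument for $d=3$ and its interpolation for the fractional bound.

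The only differences are minor. You split $u(x-y)-S_Mu(x)$ through $u(x-y)-u(x)$ and $I_M$, instead of through $(1-S_M)u(x-y)$ and $S_Mu(x-y)-S_Mu(x)$; this is equivalent. You also take the identity $\mathcal B_N=\sum_j\vartheta^jB_j$ from the introduction, whereas the paper justifies it by noting that the terminal term $\vartheta^{L+1}F(I_{2N_L}u)$ of the telescoped sum vanishes.
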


\begin{proof}
Without loss of generality, we prove the estimates for
$u\in C_c^\infty(J\times\R^d)$.  The general case follows by a standard
approximation and Fatou's lemma.  Throughout the proof $F$ is regarded as a
real map from $\R^2$ to $\R^2$.

Let $R(z,w)$ be the Taylor remainder 
\begin{align}\label{taylor-remainder}
 R(z,w)=F(z+w)-F(z)-w\cdot F^\prime(z).
\end{align}
Then by  \eqref{equ:Rpoint} and Lemma \ref{lem:nonltermest}, we obtain
\begin{equation}\label{equ:Rall}
 |R(z,w)|\lesssim_d(|z|+|w|)^{\frac4d-\nu}|w|^{1+\nu}.
\end{equation}

To treat the commutator, we need a further decomposition.
Set $N_j=2^jN$.
From \eqref{equ:Iheatdef}, we get
\begin{align}
\mathcal{C}_N(u)=&\sum_{j\geq0}c_j\{S_{N_j}F(u)-F(S_{N_j}u)\}\notag\\\notag
&+\left\{\sum_{j\geq0}c_jF(S_{N_j}u)-F(I_Nu)\right\}\\\label{equ:commAB}
=&: \mathcal A_N+\mathcal B_N,
\end{align}
where $c_j=(1-\vartheta)\vartheta^j$.

\textbf{Step 1: The estimate of $\mathcal A_N$.}For $M>0$, write the heat semigroup as
\begin{align}\label{convolution}
S_Mf(x)=\int_{\mathbb R^d}K_M(y)f(x-y)\,dy,
\end{align}
where $K_M$ is the positive Gaussian heat kernel satisfying
\begin{align}\label{heat-property}
\int_{\mathbb R^d}K_M(y)\,dy=1,\qquad \int_{\mathbb R^d}\nabla K_M(y)\,dy=0.
\end{align}
We  denote the commutator by
\[
 H_M=S_MF(u)-F(S_Mu).
\]
For fixed $(t,x)$, notice that $\int_{\R^d}K_M(y)dy=1$, then we have
\begin{equation}\label{equ:centeredheat}
 \int_{\R^d}K_M(y)\big[u(t,x-y)-S_Mu(x)\big]\,dy=0.
\end{equation}
Consequently,
\begin{align*}
\int_{\R^d}K_M(y)R(S_Mu,h_y)dy
={}&\int_{\R^d}K_M(y)F(u(t,x-y))dy-F(S_Mu)\\
&-\left[\int_{\R^d}K_M(y)h_y\,dy\right]\cdot F^\prime(S_Mu)\\
={}&S_MF(u)(t,x)-F(S_Mu(t,x)),
\end{align*}
where \begin{equation}\label{hy}
    h_y=u(t,x-y)-S_Mu(x).
\end{equation}
Hence, by using \eqref{taylor-remainder}, one has
\begin{equation}\label{equ:heatdefectall}
 H_M(t,x)=\int_{\R^d}K_M(y)R(S_Mu,h_y)dy.
\end{equation}
We next derive the differentiated identity directly from the commutator.
By \eqref{convolution},
we have
\[
 \nabla S_Mf(x)=\int_{\R^d}\nabla K_M(y)f(x-y)dy.
\]
Thus
\begin{align}\label{HM}
 \nabla H_M(t,x)
 ={}&\int_{\R^d}\nabla K_M(y)F(u(t,x-y))dy\\
 &-\left[\int_{\R^d}\nabla K_M(y)u(t,x-y)dy\right]\cdot F^\prime(S_Mu).
\end{align}
By \eqref{heat-property}, one has 
\begin{gather*}
\int_{\mathbb R^d}\nabla K_M(y)F(u(t,x-y))\,dy
=
\int_{\mathbb R^d}\nabla K_M(y)\bigl[F(u(t,x-y))-F(S_Mu)\bigr]\,dy,\\
\int_{\mathbb R^d}\nabla K_M(y)u(t,x-y)\,dy
=
\int_{\mathbb R^d}\nabla K_M(y)\bigl[u(t,x-y)-S_Mu\bigr]\,dy.
\end{gather*}Inserting the above identities into \eqref{HM}, we have
\begin{equation}\label{equ:heatdefectgradall}
 \nabla H_M(t,x)=\int_{\R^d}\nabla K_M(y)R(S_Mu,h_y)dy.
\end{equation}

We next claim a difference estimate.  For every $1<p<\infty$, $1\le q\le\infty$, and $M\ge N$, it holds
\begin{equation}\label{eq:hy-bound}
\|u(\cdot-y)-S_Mu\|_{L_t^qL_x^p}
\lesssim_{d,s}
N^{-(1-s)}M^{-s}(1+M|y|)
\|\langle\nabla\rangle I_Nu\|_{L_t^qL_x^p}.
\end{equation}
Indeed, writing
\[
u(x-y)-S_Mu(x)
=(1-S_M)u(x-y)+\bigl[S_Mu(x-y)-S_Mu(x)\bigr]
\]
and using Proposition~\ref{highcont}, we get
\[
\|(1-S_M)u\|_{L_t^qL_x^p}
\lesssim M^{-1}\|\langle\nabla\rangle I_Mu\|_{L_t^qL_x^p}.
\]
By the mean value theorem,
\[
\|S_Mu(\cdot-y)-S_Mu\|_{L_t^qL_x^p}
\lesssim |y|\,\|\nabla S_Mu\|_{L_t^qL_x^p}
\lesssim |y|\,\|\langle\nabla\rangle I_Mu\|_{L_t^qL_x^p}.
\]
Thus
$$
\|u(\cdot-y)-S_Mu\|_{L_t^qL_x^p}
\lesssim (M^{-1}+|y|)\|\langle\nabla\rangle I_Mu\|_{L_t^qL_x^p}.
$$
Since $M\ge N$, \eqref{equ:multcompare} gives
\[
\|\langle\nabla\rangle I_Mu\|_{L_t^qL_x^p}
\lesssim_s \left(\frac MN\right)^{1-s}
\|\langle\nabla\rangle I_Nu\|_{L_t^qL_x^p}.
\]
Therefore
$$
(M^{-1}+|y|)\left(\frac MN\right)^{1-s}
=M^{-1}\left(\frac MN\right)^{1-s}(1+M|y|)
=N^{-(1-s)}M^{-s}(1+M|y|),
$$
which proves \eqref{eq:hy-bound}.

By using H\"older's inequality, we deduce
\begin{align*}
\|R(S_Mu,h_y)\|_{L_t^2L_x^{\frac{2d}{d+2}}}
\lesssim_d{}&
 \|u\|_{L_t^\infty L_x^2}^{\frac4d-\nu}
 \|h_y\|_{L_t^\infty L_x^2}^{\nu}
 \|h_y\|_{L_t^2L_x^{\frac{2d}{d-2}}}.
\end{align*}
Here all spacetime norms are taken over $J\times\R^d$.
Using \eqref{eq:hy-bound}, we obtain
\begin{align}\label{equ:Rendpointall}
\|R(S_Mu,h_y)\|_{L_t^2L_x^{\frac{2d}{d+2}}}
\lesssim_{d,s}{}&
 \|u\|_{L_t^\infty L_x^2}^{\frac4d-\nu}
 \big[N^{-(1-s)}M^{-s}(1+M|y|)\big]^{1+\nu}\notag\\
&\times
 \|\langle\nabla\rangle I_Nu\|_{L_t^\infty L_x^2}^{\nu}
 \|\langle\nabla\rangle I_Nu\|_{L_t^2L_x^{\frac{2d}{d-2}}}.
\end{align}
On the other hand, the truncated heat kernel satisfies
\begin{equation}\label{equ:heatkernelmomentsall}
 \int K_M(y)(1+M|y|)^{1+\nu}dy\lesssim_d1,
 \qquad
 \int|\nabla K_M(y)|(1+M|y|)^{1+\nu}dy\lesssim_dM.
\end{equation}
Taking $M=N_j=2^jN$ in
\eqref{equ:heatdefectall}--\eqref{equ:heatdefectgradall} gives
\begin{align}
\|H_{N_j}\|_{L_t^2L_x^{\frac{2d}{d+2}}}
&\lesssim_{d,s}N^{-(1+\nu)}2^{-s(1+\nu)j}
 \|u\|_{L_t^\infty L_x^2}^{\frac4d-\nu}
 \|\langle\nabla\rangle I_Nu\|_{L_t^\infty L_x^2}^{\nu}
 \|\langle\nabla\rangle I_Nu\|_{L_t^2L_x^{\frac{2d}{d-2}}},
\label{equ:Hjendpointzero}\\
\|\nabla H_{N_j}\|_{L_t^2L_x^{\frac{2d}{d+2}}}
&\lesssim_{d,s}N^{-\nu}2^{[1-s(1+\nu)]j}
 \|u\|_{L_t^\infty L_x^2}^{\frac4d-\nu}
 \|\langle\nabla\rangle I_Nu\|_{L_t^\infty L_x^2}^{\nu}
 \|\langle\nabla\rangle I_Nu\|_{L_t^2L_x^{\frac{2d}{d-2}}}.
\label{equ:Hjendpointgrad}
\end{align}
Since $c_j=(1-\vartheta)\vartheta^j\lesssim_s2^{-(1-s)j}$, we have
\[
c_j2^{-s(1+\nu)j}
\lesssim_s 2^{-(1+s\nu)j},
\,\,
c_j2^{[1-s(1+\nu)]j}
\lesssim_s 2^{-s\nu j}.
\]
Therefore, both series are summable since $s>0$ and $\nu>0$. 
This proves the contribution of $\mathcal A_N$ to
\eqref{equ:commgrad}--\eqref{equ:commzero}.

\textbf{Step 2: The estimate of $\mathcal B_N$.}
The recursion \eqref{equ:Irecursion} gives
\begin{equation}\label{equ:Wrecall}
 I_{N_j}u=(1-\vartheta)S_{N_j}u+\vartheta I_{2N_j}u.
\end{equation}
Define
\begin{equation}\label{equ:Bjdefall}
 B_j=(1-\vartheta)F(S_{N_j}u)+\vartheta F(I_{2N_j}u)-F(I_{N_j}u).
\end{equation}
For every integer $L\geq0$,
\begin{align}\label{equ:Bfiniteall}
 \sum_{j=0}^L\vartheta^jB_j
 ={}&\sum_{j=0}^L(1-\vartheta)\vartheta^jF(S_{N_j}u)-F(I_Nu)
 +\vartheta^{L+1}F(I_{2N_L}u).
\end{align}
  Since $u$ is compactly
supported, $I_{2^{L+1}N}u$ is uniformly bounded in every mixed
Sobolev norm used below.  Hence the terminal term, and its first spatial
derivative when required, tend to zero after multiplication by
$\vartheta^{L+1}$.  Letting $L\to\infty$ gives
$
 \mathcal B_N=\sum_{j\geq0}\vartheta^jB_j.$
 Now, it remains to estimate $B_j$. Since
\begin{equation}\label{equ:XYZall}
S_{N_j}u=I_{N_j}u+\vartheta (S_{N_j}u-I_{2N_j}u),\qquad
 I_{2N_j}u=I_{N_j}u-(1-\vartheta)(S_{N_j}u-I_{2N_j}u),
\end{equation}
Taylor's formula and the cancellation of the first-order terms give
\begin{align*}
B_j
=&
\vartheta(1-\vartheta)
\int_0^1(S_{N_j}u-I_{2N_j}u)\cdot
\Bigl[
F^\prime\bigl(I_{N_j}u+\tau\vartheta (S_{N_j}u-I_{2N_j}u)\bigr)
\\
&-
F^\prime\bigl(I_{N_j}u-\tau(1-\vartheta)(S_{N_j}u-I_{2N_j}u)\bigr)
\Bigr]\,d\tau.
\end{align*}  Lemma \ref{lem:nonltermest} and
\eqref{equ:XYZall} therefore yield
\begin{equation}\label{equ:Bjpointall}
|B_j|\lesssim_d
(|S_{N_j}u|+|I_{2N_j}u|)^{\frac4d-\nu}
|(S_{N_j}-I_{2N_j})u|^{1+\nu}.
\end{equation}
Differentiating \eqref{equ:Bjdefall} directly and using
$\nabla I_{N_j}u=(1-\vartheta)\nabla S_{N_j}u+\vartheta\nabla I_{2N_j}u$, we obtain
\begin{align*}
 \nabla B_j={}&(1-\vartheta)\nabla S_{N_j}u\cdot[F^\prime(S_{N_j}u)-F^\prime(I_{N_j}u)]\\
 &+\vartheta\nabla I_{2N_j}u\cdot[F^\prime(I_{2N_j}u)-F^\prime(I_{N_j}u)].
\end{align*}
Thus \eqref{equ:DFholderall} and \eqref{equ:XYZall} give
\begin{equation}\label{equ:Bjgradpointall}
 |\nabla B_j|\lesssim_d
 (|S_{N_j}u|+|I_{2N_j}u|)^{\frac4d-\nu}|(S_{N_j}-I_{2N_j})u|^\nu
 (|\nabla S_{N_j}u|+|\nabla I_{2N_j}u|).
\end{equation}

Next, we aim to give the estimates of $(S_{N_j}-I_{2N_j})u,S_{N_j}u$ and $I_{2N_j}u$. First,
\eqref{equ:Irecursion} gives
\[
 (S_{N_j}-I_{2N_j})u
 =\vartheta^{-1}(S_{N_j}-I_{N_j})u.
\]
Consequently, by Proposition \ref{highcont}, for every $1<p<\infty$, it holds
\begin{align}
 \| (S_{N_j}-I_{2N_j})u\|_{L^p}
 &\lesssim_{d,s}N^{-1}2^{-sj}
 \|\langle\nabla\rangle I_Nu\|_{L^p},\label{equ:Zjzeroall}\\
 \|\nabla S_{N_j}u\|_{L^p}+\|\nabla I_{2N_j}u\|_{L^p}
 &\lesssim_{d,s}2^{(1-s)j}
 \|\langle\nabla\rangle I_Nu\|_{L^p}.\label{equ:XYgradall}
\end{align}
Moreover,
\begin{equation}\label{equ:XYcontractall}
 \|S_{N_j}u\|_{L_t^qL_x^p}+\|I_{2N_j}u\|_{L_t^qL_x^p}
 \lesssim\|u\|_{L_t^qL_x^p}.
\end{equation}
Putting \eqref{equ:Bjpointall}--\eqref{equ:XYcontractall} together and  using H\"older's inequality, we have
\begin{align}
\|B_j\|_{L_t^2L_x^{\frac{2d}{d+2}}}
&\lesssim_{d,s}N^{-(1+\nu)}2^{-s(1+\nu)j}
 \|u\|_{L_t^\infty L_x^2}^{\frac4d-\nu}
 \|\langle\nabla\rangle I_Nu\|_{L_t^\infty L_x^2}^{\nu}
 \|\langle\nabla\rangle I_Nu\|_{L_t^2L_x^{\frac{2d}{d-2}}},
\label{equ:Bjendpointzero}\\
\|\nabla B_j\|_{L_t^2L_x^{\frac{2d}{d+2}}}
&\lesssim_{d,s}N^{-\nu}2^{[(1-s)-s\nu]j}
 \|u\|_{L_t^\infty L_x^2}^{\frac4d-\nu}
 \|\langle\nabla\rangle I_Nu\|_{L_t^\infty L_x^2}^{\nu}
 \|\langle\nabla\rangle I_Nu\|_{L_t^2L_x^{\frac{2d}{d-2}}}.
\label{equ:Bjendpointgrad}
\end{align}
Since   both series are summable, we have proved
 the contribution of $\mathcal B_N$ to
\eqref{equ:commgrad}--\eqref{equ:commzero}.

It remains to prove \eqref{equ:commL1L2} and \eqref{equ:commfractional}.  First, we consider the case $d=3$. In this case, we have $F(z)=|z|^\frac43z\in C^{2,\frac13}$.  Then
\begin{equation}\label{equ:Rpointd3again}
 |R(x,y)|\lesssim(|x|^{\frac13}+|y|^{\frac13})|y|^2.
\end{equation}
For the fixed variable $y$, the H\"older inequality implies 
\[
\|R(S_Mu,h_y)\|_{L_x^2}
\lesssim
\||S_Mu|+|h_y|\|_{L_x^2}^{\frac13}
\|h_y\|_{L_x^6}^2,
\]
where $h_y$ is defined in \eqref{hy}. By invoking the $L^2$ contraction of the heat semigroup, one has
$$\|R(S_Mu,h_y)\|_{L^1_tL^2_x}\lesssim \|u\|^{\frac13}_{L^\infty_tL^2_x}\|h_y\|^2_{L^2_tL^6_x}.$$
Furthermore, by using \eqref{eq:hy-bound}
 and \eqref{equ:Rpointd3again}, we get 
\begin{align*}
\|H_{N_j}\|_{L_t^1L_x^2}
\lesssim_s{}&N^{-2}2^{-2sj}
 \|u\|_{L_t^\infty L_x^2}^{\frac13}
 \|\langle\nabla\rangle I_Nu\|_{L_t^2L_x^6}^2.
\end{align*}
Applying \eqref{equ:Bjpointall} with $d=3$ provides that
\begin{align*}
\|B_j\|_{L_t^1L_x^2}
\lesssim_s{}&N^{-2}2^{-2sj}
 \|u\|_{L_t^\infty L_x^2}^{\frac13}
 \|\langle\nabla\rangle I_Nu\|_{L_t^2L_x^6}^2.
\end{align*}
Summing over  $c_j$ and $\vartheta^j$, we have
\eqref{equ:commL1L2} in $d=3$.

Next, we assume that $d\geq4$ and taking   $\nu=\frac4d$.  Indeed, when $d=4$,
\eqref{equ:commfractional} degenerates to \eqref{equ:commzero}.  Suppose that $d\geq5$,
\eqref{equ:commzero} and
\eqref{equ:commgrad} give
$$
\|\mathcal{C}_N(u)\|_{L_t^2L_x^{\frac{2d}{d+2}}}
\lesssim_{d,s}
N^{-(1+\frac4d)}\|\langle\nabla\rangle I_Nu\|_{L_t^\infty L_x^2}^{\frac4d}
\|\langle\nabla\rangle I_Nu\|_{L_t^2L_x^{\frac{2d}{d-2}}},
$$
and
\[
\|\nabla\mathcal{C}_N(u)\|_{L_t^2L_x^{\frac{2d}{d+2}}}
\lesssim_{d,s}
N^{-\frac4d}\|\langle\nabla\rangle I_Nu\|_{L_t^\infty L_x^2}^{\frac4d}
\|\langle\nabla\rangle I_Nu\|_{L_t^2L_x^{\frac{2d}{d-2}}}.
\]
For $0\leq\sigma\leq1$, the interpolation implies that
\begin{align*}
&\big\||\nabla|^\sigma\mathcal{C}_N(u)\big\|_{L_t^2L_x^{\frac{2d}{d+2}}}\notag\\
&\qquad\lesssim_{d,s}N^{-(1+\frac4d-\sigma)}
 \|\langle\nabla\rangle I_Nu\|_{L_t^\infty L_x^2}^{\frac4d}
 \|\langle\nabla\rangle I_Nu\|_{L_t^2L_x^{\frac{2d}{d-2}}}.
\end{align*}
Taking $\sigma=1-\frac4d$ proves \eqref{equ:commfractional} and concludes the
proof.
\end{proof}

\begin{remark}\label{remark1}
Lemma \ref{commutor} is proved for a multiplier with time-independent symbol.  However, since all the estimates are pointwise in the frequency parameter, the same estimates hold uniformly on any local interval such that $N\sim N(t)$.
\end{remark}

\begin{remark}\label{rem:commutatornorms}
We record how the three commutator bounds will be used below.  The
 estimate \eqref{equ:commzero} will be used in the control of the momentum increment (Lemma \ref{lem:conincre}), the localized $L^2$ flux, and the  strong $L^2$ convergence.
The estimate \eqref{equ:commL1L2} is used only for controlling the potential-energy 
in Lemma \ref{lem:conincre}.  
\end{remark}

\subsection{Strichartz estimates and local well-posedness}
A pair $(q,r)$ is called admissible in dimension $d$ if $q,r\geq2$ and
\begin{equation}\label{def1}
\frac2q=d\left(\frac12-\frac1r\right),
\end{equation}
with the usual exclusion of $(q,r,d)=(2,\infty,2)$.  For a time interval $I$, we define
\[
\|u\|_{S^0(I)}=\sup_{(q,r)\text{ admissible}}\|u\|_{L_t^qL_x^r(I\times\R^d)},
\]
and denote the dual Strichartz space by $N^0(I)$.

\begin{lemma}[Strichartz estimate, \cite{GiV85b,KeT98}]\label{lem22}
Let $u$ solve $iu_t+\Delta u+h=0$ on $I\times\R^d$.  Then, for $\sigma\geq0$ and $t_0\in I$,
\begin{equation}\label{dispers}
\||\nabla|^\sigma u\|_{S^0(I)}\lesssim_d\||\nabla|^\sigma u(t_0)\|_{L^2}+\||\nabla|^\sigma h\|_{N^0(I)}.
\end{equation}
\end{lemma}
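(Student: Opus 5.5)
The statement immediately above is the Strichartz estimate (Lemma \ref{lem22}), a classical result, and I would prove it by the standard $TT^*$ route; the fractional derivative version then follows by commuting $|\nabla|^\sigma$ with the linear flow.

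\textbf{Step 1: reduction to $\sigma=0$.} Writing the solution by Duhamel's formula,
\[
u(t)=e^{i(t-t_0)\Delta}u(t_0)+i\int_{t_0}^t e^{i(t-\tau)\Delta}h(\tau)\,d\tau,
\]
and noting that $|\nabla|^\sigma$ is a Fourier multiplier commuting with $e^{it\Delta}$, the function $|\nabla|^\sigma u$ solves the same equation with data $|\nabla|^\sigma u(t_0)$ and forcing $|\nabla|^\sigma h$. So it suffices to treat $\sigma=0$.

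\textbf{Step 2: the dispersive estimate.} From the explicit kernel $(4\pi i t)^{-d/2}e^{i|x-y|^2/4t}$ one gets
\[
\|e^{it\Delta}f\|_{L^\infty}\lesssim |t|^{-d/2}\|f\|_{L^1}.
\]
Unitarity gives the $L^2\to L^2$ bound. Riesz--Thorin interpolation between the two yields
\[
\|e^{it\Delta}f\|_{L^{r}}\lesssim|t|^{-d(\frac12-\frac1r)}\|f\|_{L^{r'}}.
\]

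\textbf{Step 3: homogeneous and inhomogeneous estimates away from the endpoint.} For non-endpoint admissible pairs ($q>2$), the $TT^*$ argument applies to $T f=e^{it\Delta}f$. We have $TT^*F=\int e^{i(t-\tau)\Delta}F(\tau)\,d\tau$. Combining the decay from Step 2 with Hardy--Littlewood--Sobolev in time (exponent $2/q<1$) gives
\[
\|TT^*F\|_{L^q_tL^r_x}\lesssim\|F\|_{L^{q'}_tL^{r'}_x},
\]
and hence the homogeneous estimate. The retarded inhomogeneous estimate for two possibly different admissible pairs then follows by duality, combined with the Christ--Kiselev lemma. Alternatively, one can use a direct Hardy--Littlewood--Sobolev bound for the truncated operator when the pairs coincide, and interpolate with the energy-type $L^\infty_tL^2_x$ bound.

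\textbf{Step 4: the endpoint $q=2$, $r=\frac{2d}{d-2}$, $d\geq3$.} This is the main obstacle, since Hardy--Littlewood--Sobolev fails there. I would follow Keel--Tao \cite{KeT98}:
\begin{itemize}
\item decompose the bilinear form $\iint\langle e^{-is\Delta}F(s),e^{-it\Delta}G(t)\rangle\,ds\,dt$ dyadically in $|t-s|\sim2^j$;
\item prove for each piece a family of bounds in $L^{a'}\times L^{b'}$ exponents near $r'$, using bilinear interpolation between the $L^1\to L^\infty$ dispersive bound and the $L^2$ energy bound;
\item sum the dyadic pieces via real (bilinear) interpolation, which produces the $\ell^1$ summability that a direct sum cannot.
\end{itemize}
The inhomogeneous endpoint is handled by the same decomposition restricted to $s<t$.

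Taking the supremum over admissible pairs on both sides completes the proof of \eqref{dispers}.
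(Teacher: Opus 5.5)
Your outline is the standard argument from the cited works: the $TT^*$/Hardy--Littlewood--Sobolev proof for $q>2$, Christ--Kiselev for the retarded term, and Keel--Tao's dyadic bilinear interpolation for the endpoint $q=2$. The paper itself gives no proof and simply cites \cite{GiV85b,KeT98}, so your approach is correct and coincides with it. One small tidy-up: the supremum defining $S^0$ needs constants uniform in $(q,r)$, which the HLS bounds do not give (they degenerate as $q\to2$) but which follow by H\"older-interpolating between the endpoint and $L^\infty_tL^2_x$; and on the right-hand side you should bound the $N^0$ norm through any single dual pair, i.e.\ take an infimum rather than a supremum.
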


Throughout this subsection, mass conservation fixes $M(u(t))=M(u_0)$.  Whenever this fixed quantity is absorbed into an implicit constant, we write the dependence explicitly as $\lesssim_{d,M(u)}$ or $\lesssim_{d,s,M(u)}$.

\begin{lemma}[$H^s$-LWP]\label{lem:hslwp}
Let $d\geq3$, $0<s<1$, let $u_0\in H^s(\R^d)$ satisfy $\|u_0\|_{L^2}\leq M_*$, and set
\begin{equation}\label{equ:tlwp}
T_{\rm LWP}=c_{d,s}\|u_0\|_{H^s}^{-2/s}.
\end{equation}
For $c_{d,s}=c(d,s,M_*)$ sufficiently small, the solution to \eqref{equ:nls} exists on $[0,T_{\rm LWP}]$ and satisfies
\begin{align}
\|u\|_{S^0([0,T_{\rm LWP}])}&\lesssim_d\|u_0\|_{L^2},\label{equ:uest}\\
\|\langle\nabla\rangle^su\|_{S^0([0,T_{\rm LWP}])}&\lesssim_{d,s}\|u_0\|_{H^s},\label{equ:uest123}\\
\|u\|_{L_{t,x}^{\frac{2(d+2)}d}([0,T_{\rm LWP}]\times\R^d)}&\leq\delta_{d,s},\label{equ:smallcritical}
\end{align}
where $\delta_{d,s}>0$ can be made arbitrarily small by decreasing $c_{d,s}$.
\end{lemma}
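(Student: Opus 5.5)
The plan is the standard contraction argument in the Strichartz space at regularity $H^s$, with the time of existence dictated by subcriticality. Write $X(I)=\{u:\ \|\langle\nabla\rangle^s u\|_{S^0(I)}\le 2C_0\|u_0\|_{H^s},\ \|u\|_{S^0(I)}\le 2C_0\|u_0\|_{L^2}\}$ with the metric $\|u-v\|_{S^0(I)}$, where $C_0$ is the constant of Lemma~\ref{lem22}. Let $\Phi(u)(t)=e^{it\Delta}u_0+i\int_0^te^{i(t-t')\Delta}F(u(t'))\,dt'$. I will show that $\Phi$ maps $X([0,T])$ into itself and is a contraction there once $T=T_{\rm LWP}$.

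\textbf{Step 1: the smallness of the critical norm.} Set $q_0=\frac{2(d+2)}{d}$. By Strichartz, the linear flow satisfies $\|e^{it\Delta}u_0\|_{L^{q_0}_{t,x}}\lesssim\|u_0\|_{L^2}$, but this gives no smallness. Instead I use Sobolev embedding in $x$ together with H\"older in time, working with the pair $(q,r)$ that is admissible and satisfies $\dot W^{s,r}\hookrightarrow L^{q_0}$, i.e. $\frac1r=\frac1{q_0}+\frac sd$. Then $\frac2q=\frac d2-\frac dr=\frac2{q_0}-s$, so $q>q_0$, and H\"older in time gives
\[
\|u\|_{L^{q_0}_{t,x}([0,T])}\lesssim T^{\frac1{q_0}-\frac1q}\,\||\nabla|^s u\|_{L^q_tL^r_x}=T^{\frac s2}\,\||\nabla|^s u\|_{L^q_tL^r_x}.
\]
For $u\in X$ this is at most $C\,T^{s/2}\|u_0\|_{H^s}=C\,c_{d,s}^{s/2}$ when $T=c_{d,s}\|u_0\|_{H^s}^{-2/s}$, which is exactly the scaling in \eqref{equ:tlwp}. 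Choosing $c_{d,s}$ small makes this quantity at most $\delta_{d,s}$, which yields \eqref{equ:smallcritical}.

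\textbf{Step 2: the nonlinear estimates.} At the $L^2$ level, H\"older gives
\[
\|F(u)\|_{L^{q_0'}_{t,x}}\le\|u\|_{L^{q_0}}^{1+4/d},
\]
and the difference estimate $|F(u)-F(v)|\lesssim(|u|^{4/d}+|v|^{4/d})|u-v|$ gives $\|F(u)-F(v)\|_{N^0}\lesssim\delta^{4/d}\|u-v\|_{S^0}$.

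At the $H^s$ level, I use the fractional chain rule (Christ--Weinstein, valid here since $0<s<1$ and $F\in C^1$) to get
\[
\||\nabla|^sF(u)\|_{L^{q_0'}}\lesssim\|u\|_{L^{q_0}}^{4/d}\||\nabla|^s u\|_{L^{q_0}}\lesssim\delta^{4/d}\||\nabla|^su\|_{S^0}.
\]

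\textbf{Step 3: closing the argument.} With $\delta_{d,s}$ small, which depends on the mass bound $M_*$ only through the constants, $\Phi$ is a self-map of $X$ and a contraction in the $S^0$ metric. Closed balls of $X$ are complete in that metric by weak-$*$ lower semicontinuity. The fixed point then satisfies \eqref{equ:uest} and \eqref{equ:uest123}, and uniqueness and continuity are standard.

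\textbf{Main obstacle.} The main delicate point is Step 1. The smallness must come from $T$ through Sobolev embedding at regularity $s$ rather than from the data, and one has to check that the pair $(q,r)$ is admissible with $r<\infty$ and $q\geq 2$. Since $q>q_0>2$ and $\frac1r=\frac1{q_0}+\frac sd>0$, both conditions hold for $d\geq3$. The fractional chain rule for the non-smooth $F$ is only needed with $s<1$, so it is harmless.
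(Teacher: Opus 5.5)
Your overall scheme is the same as the paper's: a Strichartz contraction, with smallness of $\|u\|_{L^{2(d+2)/d}_{t,x}}$ extracted from $T$ via Sobolev in $x$ and H\"older in $t$, and the Christ--Weinstein chain rule at level $s$. However, Step 1 fails on part of the range $0<s<1$. With $\frac1r=\frac1{q_0}+\frac sd$ you get $\frac2q=\frac{d}{d+2}-s$. Admissibility requires $r\geq2$, i.e. $\frac sd\leq\frac12-\frac1{q_0}=\frac1{d+2}$, i.e. $s\leq\frac d{d+2}$. For $\frac d{d+2}<s<1$ (for instance $d=3$, $s>\frac35$) one has $r<2$ and $\frac2q<0$. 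So no admissible pair exists, and the bound $\||\nabla|^su\|_{L^q_tL^r_x}\lesssim\|u_0\|_{H^s}$ has no meaning.

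Your check in the last paragraph tests the wrong conditions. The claim $q>q_0$ was deduced from $\frac2q<\frac2{q_0}$, which silently assumes $\frac2q>0$. The binding constraint is not $r<\infty$ but the lower bound $r\geq2$. Put differently, along admissible pairs you can convert at most $\frac d{d+2}$ derivatives into $L^{q_0}_x$ integrability, attained at $(q,r)=(\infty,2)$. You cannot spend all $s$ derivatives on the embedding.

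The fix, which is what the paper does, is to spend only $\sigma$ derivatives with $0<\sigma\leq\min\{s,\frac d{d+2}\}$; the paper takes $\sigma=\frac s2$. Then the pair $\frac1r=\frac1{q_0}+\frac\sigma d$, $\frac2q=\frac d{d+2}-\sigma$ is admissible, and H\"older gives $\|u\|_{L^{q_0}_{t,x}}\lesssim T^{\sigma/2}\||\nabla|^\sigma u\|_{L^q_tL^r_x}$. The $\sigma$-level quantity is controlled by interpolating between the mass and the $H^s$ bound. The paper uses $\|u_0\|_{H^{s/2}}\leq\|u_0\|_{L^2}^{1/2}\|u_0\|_{H^s}^{1/2}$. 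Inside your space $X$ you can instead use $\||\nabla|^\sigma u\|_{L^q_tL^r_x}\lesssim\|u\|_{L^q_tL^r_x}^{1-\sigma/s}\||\nabla|^su\|_{L^q_tL^r_x}^{\sigma/s}$. Either way this yields $\|u\|_{L^{q_0}_{t,x}}\lesssim c_{d,s}^{\sigma/2}M_*^{1-\sigma/s}$.

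This is exactly why the statement lets $c_{d,s}$ depend on $M_*$. Your version produced an $M_*$-independent bound, which is only available when $s\leq\frac d{d+2}$. With Step 1 modified in this way, your Steps 2 and 3 go through unchanged.
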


\begin{proof}
The first two estimates are the standard subcritical local theory.  The same argument also gives, for every $0\leq\sigma\leq s$,
\begin{equation}\label{equ:intermediatelwp}
\|\langle\nabla\rangle^\sigma u\|_{S^0([0,T_{\rm LWP}])}
\lesssim_{d,s}\|u_0\|_{H^\sigma}.
\end{equation}
Since
\[
 (q,r)=\left(\frac{4(d+2)}{2d-s(d+2)},
 \frac{2d(d+2)}{d^2+s(d+2)}\right),
\]
is the admissible pair,
the Sobolev embedding
\[
 W^{s/2,\,\frac{2d(d+2)}{d^2+s(d+2)}}(\R^d)
 \hookrightarrow L^{\frac{2(d+2)}d}(\R^d)
\]
together with the H\"older inequality provides that
\begin{align*}
\|u\|_{L_{t,x}^{\frac{2(d+2)}d}([0,T_{\rm LWP}]\times\R^d)}
&\lesssim_d T_{\rm LWP}^{s/4}
\|\langle\nabla\rangle^{s/2}u\|_{L_t^{\frac{4(d+2)}{2d-s(d+2)}}
L_x^{\frac{2d(d+2)}{d^2+s(d+2)}}}\lesssim_{d,s}T_{\rm LWP}^{s/4}\|u_0\|_{H^{s/2}}.
\end{align*}
By interpolation and the definition of $T_{\rm LWP}$,
\[
\|u\|_{L_{t,x}^{\frac{2(d+2)}d}([0,T_{\rm LWP}]\times\R^d)}
\lesssim_{d,s}c_{d,s}^{s/4}M_*^{\frac{1}{2}},
\]
which proves \eqref{equ:smallcritical} after choosing $c_{d,s}$ sufficiently small.
\end{proof}

The following corollary provides the control of the Strichartz norm for the modified solution $Iu$ on the same local interval as obtained in Lemma \ref{lem:hslwp}. Our commutator estimates improve on those in \cite{VZ07}, allowing us to obtain a stronger local theory than that in \cite{SunZheng}.

\begin{corollary}\label{cor:modlwp}
Let $J=[t_0,t_1]$ be an interval given by Lemma \ref{lem:hslwp}.  Suppose
that $N\geq1$, and assume
\begin{equation}\label{equ:smallZN}
 N^{-1}\|\langle\nabla\rangle I_Nu(t_0)\|_{L^2}\leq c_{d,s}.
\end{equation}
Then
\begin{equation}\label{equ:mlwpes}
 \|\langle\nabla\rangle I_Nu\|_{S^0(J)}
 \lesssim_{d,s,M(u)}\|\langle\nabla\rangle I_Nu(t_0)\|_{L^2}.
\end{equation}
\end{corollary}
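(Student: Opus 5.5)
We apply $\langle\nabla\rangle I_N$ to the equation and run a Strichartz bootstrap on $J$. Write $v=I_Nu$. Then
\[
i\partial_t v+\Delta v+F(v)=-\mathcal C_N(u),
\]
where $\mathcal C_N(u)=I_NF(u)-F(I_Nu)$ is the commutator of Lemma~\ref{commutor}. Set
\[
Z(J')=\|\langle\nabla\rangle I_Nu\|_{S^0(J')}
\]
for subintervals $J'=[t_0,\tau]\subset J$. Lemma~\ref{lem22} with $\sigma=0$ and $\sigma=1$ gives
\[
Z(J')\lesssim_d \|\langle\nabla\rangle I_Nu(t_0)\|_{L^2}
+\|\langle\nabla\rangle F(I_Nu)\|_{L_t^2L_x^{\frac{2d}{d+2}}(J')}
+\|\langle\nabla\rangle \mathcal C_N(u)\|_{L_t^2L_x^{\frac{2d}{d+2}}(J')}.
\]

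\emph{Main nonlinear term.} We use $|\nabla F(v)|\lesssim |v|^{4/d}|\nabla v|$ and H\"older with exponents
\[
\tfrac{d+2}{2d}=\tfrac{2}{d+2}+\tfrac{d}{2(d+2)}, \qquad \tfrac12=\tfrac{2}{d+2}+\tfrac{d}{2(d+2)}
\]
in $x$ and $t$ respectively. This gives
\[
\|\langle\nabla\rangle F(v)\|_{N^0(J')}\lesssim \|v\|^{4/d}_{L^{\frac{2(d+2)}d}_{t,x}}\,\|\langle\nabla\rangle v\|_{L^{\frac{2(d+2)}d}_{t,x}}.
\]
Here $\|v\|_{L^{\frac{2(d+2)}d}_{t,x}}\leq \|u\|_{L^{\frac{2(d+2)}d}_{t,x}}\leq\delta_{d,s}$ by \eqref{equ:Icontract} (positivity of the heat average applied pointwise in $t$) and \eqref{equ:smallcritical}. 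So this term is at most $\delta_{d,s}^{4/d}Z(J')$, which is absorbable.

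\emph{Commutator term.} We apply \eqref{equ:commgrad} and \eqref{equ:commzero} on $J'$, using $\|u\|_{L^\infty_tL^2_x}=M(u)^{1/2}$. This gives
\[
\|\langle\nabla\rangle\mathcal C_N(u)\|_{L_t^2L_x^{\frac{2d}{d+2}}(J')}\lesssim_{d,s,M(u)} N^{-\nu}\,Z(J')^{\nu}\,Z(J').
\]
Together we obtain
\[
Z(J')\leq C\|\langle\nabla\rangle I_Nu(t_0)\|_{L^2}+C\delta_{d,s}^{4/d}Z(J')+C\bigl(N^{-1}Z(J')\bigr)^{\nu}Z(J').
\]

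\emph{Continuity argument.} Set $A=\|\langle\nabla\rangle I_Nu(t_0)\|_{L^2}$ and choose $\delta_{d,s}$ small so that $C\delta_{d,s}^{4/d}\le\frac14$. Suppose $Z(J')\le 4CA$. Then \eqref{equ:smallZN} gives $N^{-1}Z(J')\le 4Cc_{d,s}$, and taking $c_{d,s}$ small makes the last term at most $\frac14 Z(J')$. Hence $Z(J')\le 2CA$, which improves the assumption. Since $u$ is an $H^s$ solution, $\langle\nabla\rangle I_Nu\in C_tL^2_x$ by \eqref{equcont}, so $\tau\mapsto Z([t_0,\tau])$ is continuous and tends to $A$ as $\tau\to t_0$. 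Continuity therefore yields \eqref{equ:mlwpes} on all of $J$. To justify applying Lemma~\ref{commutor}, which is stated for smooth $u$, we approximate by smooth solutions and pass to the limit using continuous dependence in $H^s$.

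The main obstacle is the commutator step. It works only because \eqref{equ:commgrad} carries a factor $N^{-\nu}$ paired exactly with $\|\langle\nabla\rangle I_Nu\|_{L^\infty L^2}^{\nu}$. This turns the smallness hypothesis \eqref{equ:smallZN} into absorbability with no restriction on $s$. Some care is needed to check that the $L^\infty_tL^2_x$ norm of $\langle\nabla\rangle I_Nu$ is part of the $S^0$ norm being bootstrapped, which it is, since $(\infty,2)$ is admissible.
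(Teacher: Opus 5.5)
Your proposal follows the paper's proof essentially step for step. It uses the same equation for $w=I_Nu$ with forcing $-\mathcal C_N(u)$, and the same smallness of $\|I_Nu\|_{L^{2(d+2)/d}_{t,x}}$ via \eqref{equ:Icontract} and \eqref{equ:smallcritical}. It also uses the bounds \eqref{equ:commgrad}--\eqref{equ:commzero} to produce the factor $(N^{-1}\|\langle\nabla\rangle I_Nu\|_{S^0})^{\nu}$, and closes with a continuity argument based on \eqref{equ:smallZN}.

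There is one slip. Your H\"older identities are false: both right-hand sides equal $\frac{d+4}{2(d+2)}$. So $|v|^{4/d}\nabla v$ lies in $L^{\frac{2(d+2)}{d+4}}_{t,x}$, not in $L_t^2L_x^{\frac{2d}{d+2}}$.

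Your final $N^0$ bound is still correct, because $L^{\frac{2(d+2)}{d+4}}_{t,x}$ is the dual diagonal Strichartz space. This is exactly where the paper places $\langle\nabla\rangle F(I_Nu)$. You should measure that term there in your displayed Strichartz inequality. In $L_t^2L_x^{\frac{2d}{d+2}}$ it would cost $\|v\|_{L_t^\infty L_x^2}^{4/d}$, which is not small and could not be absorbed.
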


\begin{proof}
For convenience, we let $w=I_Nu$.  Then, we can write
\begin{equation}\label{equ:weq}
 iw_t+\Delta w+F(w)=-\mathcal{C}_N(u).
\end{equation}
By \eqref{equ:Icontract} and \eqref{equ:smallcritical},
\[
 \|w\|_{L_{t,x}^{\frac{2(d+2)}d}(J\times\R^d)}\leq\delta_{d,s}.
\]
The  chain rule therefore gives
\begin{align*}
\|\langle\nabla\rangle F(w)\|_{L_{t,x}^{\frac{2(d+2)}{d+4}}(J\times\R^d)}\lesssim_d
 \|w\|_{L_{t,x}^{\frac{2(d+2)}d}(J\times\R^d)}^{\frac4d}
 \|\langle\nabla\rangle w\|_{L_{t,x}^{\frac{2(d+2)}d}(J\times\R^d)}.
\end{align*}
On the other hand, Lemma \ref{commutor} and mass conservation give
\begin{align*}
&\|\mathcal{C}_N(u)\|_{L_t^2L_x^{\frac{2d}{d+2}}(J\times\R^d)}
 +\|\nabla\mathcal{C}_N(u)\|_{L_t^2L_x^{\frac{2d}{d+2}}(J\times\R^d)}\\
\lesssim&_{d,s,M(u)}
 (N^{-\nu}+N^{-(1+\nu)})
 \|\langle\nabla\rangle I_Nu\|_{S^0(J)}^{1+\nu}.
\end{align*}
Both displayed spacetime norms belong to $N^0(J)$.  Applying Lemma
\ref{lem22} to \eqref{equ:weq}, we obtain
\begin{align*}
\|\langle\nabla\rangle I_Nu\|_{S^0(J)}
\lesssim{}&\|\langle\nabla\rangle I_Nu(t_0)\|_{L^2}
 +\delta_{d,s}^{\frac4d}\|\langle\nabla\rangle I_Nu\|_{S^0(J)}\\
&+(N^{-\nu}+N^{-(1+\nu)})
 \|\langle\nabla\rangle I_Nu\|_{S^0(J)}^{1+\nu}.
\end{align*}
Choose first $\delta_{d,s}$ sufficiently small.  On a continuity bootstrap
where the left-hand side is a fixed multiple of its initial value, the last
line is bounded by that left-hand side times
\[
 C_{d,s,M(u)}
 \left(N^{-1}\|\langle\nabla\rangle I_Nu(t_0)\|_{L^2}\right)^\nu.
\]
It is therefore absorbed after decreasing $c_{d,s}$ in
\eqref{equ:smallZN}.  This proves \eqref{equ:mlwpes}.
\end{proof}

Let $\{t_k\}_{k_0\leq k\leq k^+}$ be the doubling times defined in \eqref{equ:ltk}.  We cover $[t_k,t_{k+1}]$ by standard local intervals $[\tau_k^j,\tau_k^{j+1}]$ of Lemma \ref{lem:hslwp}. Let $J_k$ denote the number of intervals in this covering. Then we have:

\begin{lemma}\label{lem:numberintervals}
Let $T^\ast\geq t_{k+1}$ and set $N_\ast=N(T^\ast)$.  Then
\begin{equation}\label{equ:jk}
J_k\lesssim_{d,s} k,
\end{equation}
and on every $[\tau_k^j,\tau_k^{j+1}]$,
\begin{equation}\label{equ:dius}
\|\langle\nabla\rangle I_{N_\ast}u\|_{S^0([\tau_k^j,\tau_k^{j+1}])}
\lesssim_{d,s,M(u)}\left(\frac{N_\ast}{N(t_k)}\right)^{1-s}\frac1{\lambda(t_k)}.
\end{equation}
\end{lemma}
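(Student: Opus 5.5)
The plan is to control the length of each local interval from below by $\lambda(t_k)^2$, using the $H^s$ size of the solution from Remark~\ref{rem:uths}, and then to feed the bootstrap bound \eqref{equ:vartass}, rescaled and transported to the frequency $N_\ast$ through \eqref{equ:multcompare}, into Corollary~\ref{cor:modlwp}.

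\emph{Step 1: size of $\lambda$ on $[t_k,t_{k+1}]$.} Since $t_{k+1}$ is the \emph{first} time with $\lambda=2^{-(k+1)}$, continuity of $\lambda$ gives $\lambda(t)>2^{-(k+1)}=\frac12\lambda(t_k)$ for $t\in[t_k,t_{k+1}]$. The almost monotonicity \eqref{equ:almons} gives $\lambda(t)\le\frac32\lambda(t_k)$. Hence $\lambda(t)\sim\lambda(t_k)$, and consequently $N(t)\sim N(t_k)$, uniformly on the interval.

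\emph{Step 2: counting intervals.} By Remark~\ref{rem:uths}, $\|u(t)\|_{H^s}\sim\lambda(t)^{-s}$. We build the covering greedily: starting from $\tau_k^j$, Lemma~\ref{lem:hslwp} applied with data $u(\tau_k^j)$ (mass fixed, so $c_{d,s}$ is uniform) produces an interval of length
\[
T_{\rm LWP}=c_{d,s}\|u(\tau_k^j)\|_{H^s}^{-2/s}\gtrsim_{d,s}\lambda(\tau_k^j)^2\gtrsim\lambda(t_k)^2 .
\]
Combining this with \eqref{equ:dotime}, $t_{k+1}-t_k\le k\lambda(t_k)^2$, gives $J_k\lesssim_{d,s}k$, which is \eqref{equ:jk}.

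\emph{Step 3: the modified Strichartz bound.} Fix $\tau=\tau_k^j$. The scaling identity \eqref{equ:scaio}, applied to the decomposition \eqref{equ:decass}, gives
\[
\|\nabla I_Nu(\tau)\|_{L^2}=\lambda(\tau)^{-1}\|\nabla I_{N\lambda(\tau)}(Q_b+\varepsilon)\|_{L^2}.
\]
With $N=N(\tau)$, the bootstrap bound \eqref{equ:vartass} together with \eqref{equ:qbclsq} gives $\|\langle\nabla\rangle I_{N(\tau)}u(\tau)\|_{L^2}\lesssim\lambda(\tau)^{-1}$; the $L^2$ part is harmless by mass conservation.

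To pass to $N_\ast$, note that $T^\ast\ge t_{k+1}\ge\tau$, so \eqref{equ:almons} gives $\lambda(T^\ast)\le\frac32\lambda(\tau)$, i.e.\ $N_\ast\gtrsim N(\tau)$. Then \eqref{equ:multcompare} applies (if $N_\ast<N(\tau)$, the ratio is bounded and we use $\|\langle\nabla\rangle I_{N_\ast}f\|_{L^2}\le\|\langle\nabla\rangle I_{N(\tau)}f\|_{L^2}$, by monotonicity of $m$). Using Step 1, this yields
\[
\|\langle\nabla\rangle I_{N_\ast}u(\tau)\|_{L^2}\lesssim\Big(\frac{N_\ast}{N(t_k)}\Big)^{1-s}\frac1{\lambda(t_k)}.
\]

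It then remains to verify the smallness hypothesis \eqref{equ:smallZN} with $N=N_\ast$. With $N=N(t_k)$ and $\lambda=\lambda(t_k)$, we have
\[
N_\ast^{-1}\Big(\frac{N_\ast}{N}\Big)^{1-s}\lambda^{-1}=N_\ast^{-s}N^{-(1-s)}\lambda^{-1}\lesssim N^{-1}\lambda^{-1}=\lambda^{\frac1\beta-1}\ll1,
\]
since $\beta<1$ and $\lambda\le\lambda(0)\ll1$. Corollary~\ref{cor:modlwp} then gives \eqref{equ:dius} on $[\tau_k^j,\tau_k^{j+1}]$.

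\emph{Main obstacle.} The delicate point is Step 3: we must make sure that $N_\ast$ is not smaller than $N(\tau)$ by more than a harmless constant, and that the smallness \eqref{equ:smallZN} holds uniformly in the potentially large ratio $N_\ast/N(t_k)$. The computation above shows that this ratio only helps, through the factor $N_\ast^{-s}$.
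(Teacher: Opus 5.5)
Your proposal is correct and follows the paper's proof essentially step for step. The bound $\|u(t)\|_{H^s}\sim\lambda(t_k)^{-s}$ gives local intervals of length $\sim\lambda(t_k)^2$, and \eqref{equ:dotime} then yields $J_k\lesssim k$. The rescaled bootstrap bound \eqref{equ:vartass} is transported to $N_\ast$ through \eqref{equ:multcompare}, the smallness \eqref{equ:smallZN} is checked (your $\lambda(t_k)^{\frac1\beta-1}=N(t_k)^{-\frac{2s}{3}}$ plays the role of the paper's $N_\ast^{-\frac{2s}{3}}$), and Corollary~\ref{cor:modlwp} concludes.

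You also treat the possibility $N_\ast<N(\tau_k^j)$ via the monotonicity of $m$ in $L^2$, a point the paper's direct appeal to \eqref{equ:multcompare} passes over.
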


\begin{proof}
From \eqref{equcont}, the almost monotonicity \eqref{equ:almons} and the geometrical decomposition give
\[
\|u(t)\|_{H^s}\sim\lambda(t_k)^{-s}
\]
for $t\in[t_k,t_{k+1}]$. By Lemma \ref{lem:hslwp}, the standard local interval   has length
\[
T_{LWP}(\tau_k^j)=c_{d,s}\|u(\tau_k^j)\|_{H^s}^{-2/s}
\sim_{d,s}\lambda(\tau_k^j)^2\sim_{d,s}\lambda(t_k)^2.
\]
 The bootstrap assumption
\eqref{equ:dotime} gives $t_{k+1}-t_k\le k\lambda(t_k)^2$, so the number $J_k$ of local
intervals needed to cover $[t_k,t_{k+1}]$ satisfies
\[
J_k\lesssim_{d,s}\frac{t_{k+1}-t_k}{\lambda(t_k)^2}+1\lesssim k,
\]
which is \eqref{equ:jk}.
We claim that
\begin{equation}\label{eq:ref}
\|\langle\nabla\rangle I_{N(\tau_k^j)}u(\tau_k^j)\|_{L^2}
\lesssim_{M(u)}\lambda(\tau_k^j)^{-1}. 
\end{equation} Indeed, it follows from the geometrical decomposition
\[
I_{N(\tau)}u(\tau,x)
=\lambda(\tau)^{-\frac d2}\big[I_{N(\tau)\lambda(\tau)}(Q_b(\tau)+\varepsilon(\tau))\big]
\Big(\frac{x-x(\tau)}{\lambda(\tau)}\Big)e^{-i\gamma(\tau)},
\]
and bootstrap assumption \eqref{equ:vartass} and Proposition \ref{pro:self-similar}. Notice that by the definition of $N(t)$ and almost monotonicity, we have $N(t)\sim N(\tau_k^j)$ and $\lambda(t_k)\sim\lambda(\tau_k^j)$.
By \eqref{equ:multcompare} and the bootstrap bound,
\[
\|\langle\nabla\rangle I_{N_\ast}u(\tau_k^j)\|_{L^2}
\lesssim_{d,s}\left(\frac{N_\ast}{N(t_k)}\right)^{1-s}\lambda(t_k)^{-1}.
\]
Moreover, using $N(t)=\lambda(t)^{-\frac{1}{\beta}}$, we obtain
\begin{align*}
N_\ast^{-1}\left(\frac{N_\ast}{N(t_k)}\right)^{1-s}\lambda(t_k)^{-1}
&=N_\ast^{-s}N(t_k)^{\beta-(1-s)}\\
&=N_\ast^{-s}N(t_k)^{\frac{s}{3}}\leq N_\ast^{-\frac{2s}{3}}.
\end{align*}
This together with Corollary \ref{cor:modlwp} implies \eqref{equ:dius}.
\end{proof}

We shall also need  the commutator estimate with the dynamical frequency scale. On each local interval, $N(t)$ is comparable to $N(t_k)$.

\begin{lemma}\label{lem:xianchafa}
Let $[\tau_k^j,\tau_k^{j+1}]$ be one of the above local intervals, and let
$\mathcal{C}_N$ be defined as in \eqref{equ:commutator-def}. Then
\begin{align}
\|\mathcal{C}_{N(t_k)}(u)\|_{L_t^2L_x^{\frac{2d}{d+2}}([\tau_k^j,\tau_k^{j+1}]\times\R^d)}&\lesssim_{d,s,M(u)}[N(t_k)\lambda(t_k)]^{-(1+\nu)},
\label{equ:xianchafa}\\
\|\nabla\mathcal{C}_{N(t_k)}(u)\|_{L_t^2L_x^{\frac{2d}{d+2}}([\tau_k^j,\tau_k^{j+1}]\times\R^d)}
&\lesssim_{d,s,M(u)}\lambda(t_k)^{-1}
 [N(t_k)\lambda(t_k)]^{-\nu}.
\label{equ:xianchafagrad}
\end{align}
\end{lemma}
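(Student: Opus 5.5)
The plan is to apply Lemma \ref{commutor} on the interval $J=[\tau_k^j,\tau_k^{j+1}]$ with $N=N(t_k)$. We then feed in the Strichartz control of $\langle\nabla\rangle I_{N(t_k)}u$ furnished by Lemma \ref{lem:numberintervals}. The mass factor $\|u\|_{L_t^\infty L_x^2}^{\frac4d-\nu}$ is absorbed into the $M(u)$-dependence by mass conservation.

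For the Strichartz bound, take $T^\ast=t_k$ in \eqref{equ:dius}, so that $N_\ast=N(t_k)$. Strictly, \eqref{equ:dius} requires $T^\ast\ge t_{k+1}$. Instead we argue directly as in its proof. By \eqref{eq:ref}, almost monotonicity gives $N(\tau_k^j)\sim N(t_k)$, and \eqref{equ:multcompare} (applied in whichever direction is needed, with constants depending only on $s$ since the ratio is bounded) then gives
\[
\|\langle\nabla\rangle I_{N(t_k)}u(\tau_k^j)\|_{L^2}\lesssim_{d,s,M(u)}\lambda(t_k)^{-1}.
\]
The smallness hypothesis \eqref{equ:smallZN} reads $N(t_k)^{-1}\lambda(t_k)^{-1}\ll 1$. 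This holds since $N(t_k)\lambda(t_k)=\lambda(t_k)^{1-1/\beta}\gg1$ (because $\beta<1$ and $\lambda\ll1$). Corollary \ref{cor:modlwp} then yields
\[
\|\langle\nabla\rangle I_{N(t_k)}u\|_{S^0(J)}\lesssim_{d,s,M(u)}\lambda(t_k)^{-1},
\]
and in particular the same bound holds in both $L_t^\infty L_x^2$ and $L_t^2L_x^{\frac{2d}{d-2}}$.

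Inserting this into \eqref{equ:commzero} gives
\[
\|\mathcal C_{N(t_k)}(u)\|_{L_t^2L_x^{\frac{2d}{d+2}}}\lesssim N(t_k)^{-(1+\nu)}\lambda(t_k)^{-\nu}\lambda(t_k)^{-1}=[N(t_k)\lambda(t_k)]^{-(1+\nu)},
\]
which is \eqref{equ:xianchafa}. Likewise \eqref{equ:commgrad} gives
\[
N(t_k)^{-\nu}\lambda(t_k)^{-(1+\nu)}=\lambda(t_k)^{-1}[N(t_k)\lambda(t_k)]^{-\nu},
\]
which is \eqref{equ:xianchafagrad}. The scaling bookkeeping is exact: the powers $1+\nu$ of $\lambda^{-1}$ exactly pair with the powers of $N^{-1}$.

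The only delicate point is the Strichartz step. Specifically, we must check that the initial bound \eqref{eq:ref} is uniform at each left endpoint $\tau_k^j$ within $[t_k,t_{k+1}]$, which uses the bootstrap assumption \eqref{equ:vartass} together with \eqref{equ:qbclsq} and \eqref{equ:scaio}. We must also check that the frequency $N(t_k)$, fixed on the interval, is comparable to $N(t)$ there, as in Remark \ref{remark1}. Both points follow from \eqref{equ:almons} and the definition \eqref{equ:ntass}. No further analysis beyond these substitutions is needed.
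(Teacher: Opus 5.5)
Your proposal is correct and follows the paper's proof. Corollary~\ref{cor:modlwp} at the fixed frequency $N(t_k)$ gives $\|\langle\nabla\rangle I_{N(t_k)}u\|_{S^0}\lesssim\lambda(t_k)^{-1}$ on each local interval, and inserting this into \eqref{equ:commzero} and \eqref{equ:commgrad} yields both bounds with exactly your scaling bookkeeping. Your explicit check of \eqref{equ:smallZN} via $N(t_k)\lambda(t_k)=\lambda(t_k)^{1-1/\beta}\gg1$, and your observation that \eqref{equ:dius} cannot be quoted verbatim with $T^\ast=t_k$, fill in details the paper leaves implicit.
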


\begin{proof}
Corollary \ref{cor:modlwp} at the current frequency gives
\[
 \|\langle\nabla\rangle I_{N(t_k)}u\|_{S^0([\tau_k^j,\tau_k^{j+1}])}
 \lesssim_{d,s,M(u)}\lambda(t_k)^{-1}.
\]
The result follows directly from \eqref{equ:commgrad} and
\eqref{equ:commzero}, with the conserved mass absorbed into the implicit
constant.
\end{proof}
\begin{remark}\label{remark2}
The estimates of Lemma \ref{lem:xianchafa} remain valid with $N(t_k)$
replaced by $N(t)$ on $[\tau_k^j,\tau_k^{j+1}]$, since
$N(t)\sim N(t_k)$ there.
\end{remark}

\subsection{Almost conservation law}
In this subsection, we prove the almost conservation estimates with the
correction $\Xi(t)$. We first estimate the increments of the modified
energy and momentum on a local interval.
\begin{lemma}[Energy and momentum increment]\label{lem:conincre}
Let $[\tau_k^j,\tau_k^{j+1}]$ be a local interval and let
$T^\ast\geq\tau_k^{j+1}$. Set $N=N(T^\ast)$. Then
\begin{align}
|E(I_Nu(\tau_k^{j+1}))-E(I_Nu(\tau_k^j))|
&\lesssim_{d,s,M(u)}N^{-\nu}
 \|\langle\nabla\rangle I_Nu\|_{S^0([\tau_k^j,\tau_k^{j+1}])}^{2+\nu},
\label{equ:modenerlwp}\\
|P(I_Nu(\tau_k^{j+1}))-P(I_Nu(\tau_k^j))|
&\lesssim_{d,s,M(u)}N^{-(1+\nu)}
 \|\langle\nabla\rangle I_Nu\|_{S^0([\tau_k^j,\tau_k^{j+1}])}^{2+\nu}.
\label{equ:modenerlwp123}
\end{align}
\end{lemma}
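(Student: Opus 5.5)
We differentiate the modified energy and momentum along the flow of $w=I_Nu$, which by \eqref{equ:weq} satisfies $iw_t+\Delta w+F(w)=-\mathcal C_N(u)$, with $N=N(T^\ast)$ fixed on the whole interval (Lemma \ref{commutor} applies since the symbol does not depend on time). The standard computation gives
\[
\frac{d}{dt}E(w)=-\Re\int\overline{w_t}\,\bigl(\Delta w+F(w)\bigr)\,dx
=\Re\int \overline{\mathcal C_N(u)}\;w_t\,dx\cdot(\pm 1),
\]
and, after substituting $w_t=i(\Delta w+F(w)+\mathcal C_N(u))$ and using that $\Re\int\overline{\mathcal C_N}\,i\,\mathcal C_N=0$,
\[
\frac{d}{dt}E(w)=\mp\Im\int\overline{\mathcal C_N(u)}\bigl(\Delta w+F(w)\bigr)\,dx .
\]
Integrating over $[\tau_k^j,\tau_k^{j+1}]$, I will split this into a kinetic part $\int\nabla\overline{\mathcal C_N}\cdot\nabla w$ and a potential part $\int\overline{\mathcal C_N}F(w)$.

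\emph{Kinetic part.} By H\"older in space-time this part is bounded by
\[
\|\nabla\mathcal C_N(u)\|_{L_t^2L_x^{\frac{2d}{d+2}}}\,\|\nabla w\|_{L_t^2L_x^{\frac{2d}{d-2}}}.
\]
Applying \eqref{equ:commgrad} together with mass conservation, and bounding every Strichartz-type norm by $\|\langle\nabla\rangle I_Nu\|_{S^0}$, gives exactly $N^{-\nu}\|\langle\nabla\rangle I_Nu\|_{S^0}^{2+\nu}$.

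\emph{Potential part.} This is $\int\int|\mathcal C_N|\,|w|^{1+4/d}$. For $d=3$, I use \eqref{equ:commL1L2}: the bound $\|\mathcal C_N\|_{L^1_tL^2_x}\lesssim N^{-2}\|\langle\nabla\rangle I_Nu\|_{L^2_tL^6_x}^2$ is paired with $\|F(w)\|_{L^\infty_tL^2_x}=\|w\|_{L^\infty_tL^{14/3}_x}^{7/3}$. By Sobolev embedding the latter is $\lesssim\|\langle\nabla\rangle w\|_{L^\infty_tL^2_x}^{7/3}$, which appears to give too high a power. Instead I will interpolate: $\|w\|_{L^{14/3}}\lesssim\|w\|_{L^2}^{a}\|\nabla w\|_{L^2}^{1-a}$ with $(1-a)\cdot\frac73=\frac43$, so the $L^2$ mass absorbs the excess. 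The result is a bound $N^{-2}\|\cdot\|_{S^0}^{2+4/3}$. This overshoots the power $2+\nu=3$, so it must be reconciled using the smallness $N^{-1}\|\langle\nabla\rangle I_Nu\|_{S^0}\lesssim 1$ coming from Lemma \ref{lem:numberintervals}. Trading one factor $N^{-1}\|\cdot\|$ is harmless since $N^{-2}\le N^{-\nu}$; I expect this bookkeeping with the power to be the main obstacle. For $d\ge4$, I will place $\mathcal C_N$ in $L_t^2L_x^{\frac{2d}{d+2}}$ via \eqref{equ:commzero} (or \eqref{equ:commfractional}) and put $F(w)$ in $L_t^2L_x^{\frac{2d}{d-2}}$. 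The latter follows from
\[
\|F(w)\|_{L^2_tL^{\frac{2d}{d-2}}_x}\le\|w\|_{L^\infty_tL^{\frac{2d}{d-2}\cdot}}\cdots,
\]
more concretely by Sobolev embedding $\|F(w)\|_{L^{2d/(d-2)}}\lesssim\||\nabla|F(w)\|_{L^2}$, followed by the chain rule with $|w|^{4/d}\in L^{d/2}_x$ (mass) and $\nabla w\in L^{2d/(d-2)}_x$. This again yields $N^{-(1+\nu)}\|\cdot\|^{2+\nu}\le N^{-\nu}\|\cdot\|^{2+\nu}$.

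\emph{Momentum.} Here $\frac{d}{dt}P(w)=2\Re\int\overline{\mathcal C_N}\,\nabla w\cdot(\ldots)$; more precisely, the nonlinear and Laplacian fluxes vanish and only $\int\Re(\overline{\mathcal C_N}\nabla w)$-type terms remain. Integrating by parts once so that no derivative falls on $\mathcal C_N$, H\"older with \eqref{equ:commzero} and $\nabla w\in L_t^2L_x^{\frac{2d}{d-2}}$ gives $N^{-(1+\nu)}\|\cdot\|_{S^0}^{2+\nu}$, which is \eqref{equ:modenerlwp123}.
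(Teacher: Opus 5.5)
Your plan follows the paper's proof closely. It uses the same split of $\frac{d}{dt}E(w)$ into a kinetic and a potential part, and the same kinetic estimate via \eqref{equ:commgrad}. In $d=3$ it uses the same pairing $\|F(w)\|_{L^\infty_tL^2_x}\|\mathcal C_N(u)\|_{L^1_tL^2_x}$ with \eqref{equ:commL1L2}, closed by the smallness $N^{-1}\|\langle\nabla\rangle I_Nu\|_{S^0}\lesssim N^{-2s/3}$. The momentum argument with \eqref{equ:commzero} is also the same. One harmless slip in $d=3$: scaling forces $\|w\|_{L^{14/3}}^{7/3}\lesssim\|w\|_{L^2}^{1/3}\|\nabla w\|_{L^2}^{2}$, not power $4/3$. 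So the potential term is $N^{-2}\|\langle\nabla\rangle I_Nu\|_{S^0}^{4}$, and trading one factor $N^{-1}\|\langle\nabla\rangle I_Nu\|_{S^0}$ still reduces it to $N^{-1}\|\cdot\|^3$.

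The genuine gap is the potential term for $d\geq5$. First, your H\"older step is miscounted. Since $\frac2d+\frac{d-2}{2d}=\frac{d+2}{2d}$, the product $|w|^{4/d}\nabla w$ with $w\in L^2$ and $\nabla w\in L^{2d/(d-2)}$ lies in $L^{2d/(d+2)}$, not $L^2$. Second, the pairing itself cannot be repaired. We have $\|F(w)\|_{L^2_tL^{2d/(d-2)}_x}=\|w\|^{1+4/d}_{L^{2(d+4)/d}_tL^{2(d+4)/(d-2)}_x}$, and this norm of $w$ is $\dot H^{\sigma}$-critical with $\sigma=\frac{2d}{d+4}$. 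Since $\sigma>1$ once $d\geq5$, it is not controlled by $\|\langle\nabla\rangle w\|_{S^0}$. A derivative count already flags this: the energy increment has order $2$, while your claimed bound $N^{-(1+\nu)}\|\cdot\|^{2+\nu}$ has order $1$.

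The missing idea is to move $1-\frac4d$ derivatives from $F(w)$ onto the commutator. That is what \eqref{equ:commfractional} is for; you mention it only parenthetically and without the matching negative derivative on $F(w)$. The paper bounds the potential term by $\||\nabla|^{-(1-\frac4d)}F(w)\|_{L^2_tL^{\frac{2d}{d-2}}_x}\||\nabla|^{1-\frac4d}\mathcal C_N(u)\|_{L^2_tL^{\frac{2d}{d+2}}_x}$. By Hardy--Littlewood--Sobolev, the first factor is at most $\|F(w)\|_{L^2_tL^p_x}$ with $\frac1p=\frac{d-2}{2d}+\frac{d-4}{d^2}$. That is $\|w\|^{1+\frac4d}$ in $L^{\frac{2(d+4)}{d}}_tL^{\frac{2d(d+4)}{d^2-8}}_x$, which is exactly $\dot H^1$-level: it follows by Sobolev from $\nabla w\in L^{\frac{2(d+4)}{d}}_tL^{\frac{2(d+4)}{d+2}}_x$. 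The result is $N^{-\frac8d}\|\langle\nabla\rangle I_Nu\|_{S^0}^{2+\frac8d}$, which the same smallness absorbs into the kinetic bound $N^{-\frac4d}\|\cdot\|^{2+\frac4d}$.

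In $d=4$ your pairing does work, since $\|F(w)\|_{L^2_tL^4_x}=\|w\|^2_{L^4_tL^8_x}\lesssim\|\nabla w\|^2_{L^4_tL^{8/3}_x}$. But it gives $N^{-2}\|\cdot\|^4$, not $N^{-2}\|\cdot\|^3$. So there too you need the smallness, not the trivial comparison $N^{-(1+\nu)}\le N^{-\nu}$.
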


\begin{proof}
Let $I=[\tau_k^j,\tau_k^{j+1}]$ and $w=I_Nu$. Rewrite
\[
 i\partial_tw+\Delta w+F(w)=-\mathcal{C}_N(u),
\]
where 
$\mathcal{C}_N(u)$ is defined as in \eqref{equ:commutator-def}.
Differentiating the modified energy
\[
 E(w)=\frac12\|\nabla w\|_{L^2}^2
 -\frac{d}{2(d+2)}\|w\|_{L^{2+\frac4d}}^{2+\frac4d},
\]
we have
\begin{align*}
E(w(\tau_k^{j+1}))-E(w(\tau_k^j))
&=\int_I\frac{d}{dt}E(w(t))\,dt\\
&=\operatorname{Re}\int_I\int_{\R^d}
 \overline{\partial_tw}\,[-\Delta w-F(w)]\,dx\,dt\\
&=\operatorname{Re}\int_I\int_{\R^d}
 \overline{\partial_tw}\,\mathcal{C}_N(u)\,dx\,dt\\
&=\operatorname{Im}\int_I\int_{\R^d}
 \big[-\overline{\nabla w}\cdot\nabla\mathcal{C}_N(u)
       +\overline{F(w)}\,\mathcal{C}_N(u)\big]\,dx\,dt.
\end{align*}
Thus
\begin{align}\label{equ:energydiffall}
\left|\frac d{dt}E(w)\right|
\leq{}&\int_{\R^d}|\nabla w|\,|\nabla\mathcal{C}_N(u)|dx
 +\int_{\R^d}|F(w)|\,|\mathcal{C}_N(u)|dx.
\end{align}

We estimate the two terms in the energy increment separately for $d=3$
and $d\geq4$. For $d=3$, H\"older's inequality gives
\begin{align}
\int_I\int_{\R^3}|\nabla w|\,|\nabla\mathcal{C}_N(u)|dxdt
&\leq \|\nabla w\|_{L_t^2L_x^6(I)}
 \|\nabla\mathcal C_N(u)\|_{L_t^2L_x^{6/5}(I)}
\notag\\
&\lesssim_{s,M(u)}N^{-1}
 \|\langle\nabla\rangle I_Nu\|_{S^0(I)}^3,
\label{equ:kineticincrementd3}\\
\int_I\int_{\R^3}|F(w)|\,|\mathcal{C}_N(u)|dxdt
&\leq \|F(w)\|_{L_t^\infty L_x^2(I)}
 \|\mathcal C_N(u)\|_{L_t^1L_x^2(I)}
\notag\\
&\lesssim_{s,M(u)}N^{-2}
 \|\langle\nabla\rangle I_Nu\|_{S^0(I)}^4.
\label{equ:potentialincrementd3}
\end{align}
Here we used \eqref{equ:commgrad}, \eqref{equ:commL1L2}, mass
conservation, and the Gagliardo--Nirenberg inequality.  Moreover, Lemma
\ref{lem:numberintervals}, \eqref{equ:multcompare}, and \eqref{equ:almons}
give
\begin{equation}\label{equ:ZNsmallincrementd3}
 \frac{\|\langle\nabla\rangle I_Nu\|_{S^0(I)}}N
 \lesssim_{s,M(u)}N^{-s}N(t_k)^{\frac{s}{3}}
 \lesssim_{s,M(u)}N^{-\frac{2s}{3}}\ll1.
\end{equation}
Thus \eqref{equ:potentialincrementd3} is bounded by
\eqref{equ:kineticincrementd3}, which proves \eqref{equ:modenerlwp} for
$d=3$.

The increment of the modified momentum may be estimated in a similar way.  Integration by parts and translation invariance give
\[
 \frac d{dt}P(w)
 =2\Re\int_{\R^3}\nabla\mathcal C_N(u)\,\overline w\,dx
 =-2\Re\int_{\R^3}\mathcal C_N(u)\,\nabla\overline w\,dx.
\]
It follows from H\"older's inequality and \eqref{equ:commzero} that
\begin{align*}
|P(w(\tau_k^{j+1}))-P(w(\tau_k^j))|
&\lesssim
 \|\mathcal C_N(u)\|_{L_t^2L_x^{6/5}(I)}
 \|\nabla w\|_{L_t^2L_x^6(I)}\\
&\lesssim_{s,M(u)}N^{-2}
 \|\langle\nabla\rangle I_Nu\|_{S^0(I)}^3.
\end{align*}
This proves \eqref{equ:modenerlwp123} for $d=3$.

We now turn to the case $d\geq4$.  By using the H\"older inequality, we have
\begin{align}
\int_I\int_{\R^d}|\nabla w|\,|\nabla\mathcal{C}_N(u)|dxdt
&\leq \|\nabla w\|_{L_t^2L_x^{\frac{2d}{d-2}}(I)}
 \|\nabla\mathcal C_N(u)\|_{L_t^2L_x^{\frac{2d}{d+2}}(I)}
\notag\\
&\lesssim_{d,s,M(u)}N^{-\frac4d}
 \|\langle\nabla\rangle I_Nu\|_{S^0(I)}^{2+\frac4d},
\label{equ:kineticincrementhigh}\\
\left|\int_I\int_{\R^d}F(w)\overline{\mathcal{C}_N(u)}dxdt\right|
&\leq
 \big\||\nabla|^{-(1-\frac4d)}F(w)\big\|_{L_t^2L_x^{\frac{2d}{d-2}}}
\notag\\
&\quad\times
 \big\||\nabla|^{1-\frac4d}\mathcal C_N(u)\big\|_{L_t^2L_x^{\frac{2d}{d+2}}}
\notag\\
&\lesssim_{d,s}N^{-\frac8d}
 \|\langle\nabla\rangle I_Nu\|_{S^0(I)}^{2+\frac8d}.
\label{equ:potentialincrementhigh}
\end{align}
In the above inequality, we use \eqref{equ:commgrad}, \eqref{equ:commfractional}, the
Hardy--Littlewood--Sobolev inequality, with the identity operator for
$d=4$, and Sobolev embedding.  Moreover, Lemma
\ref{lem:numberintervals}, \eqref{equ:multcompare}, and \eqref{equ:almons}
give
\begin{equation}\label{equ:ZNsmallincrementhigh}
 \frac{\|\langle\nabla\rangle I_Nu\|_{S^0(I)}}N
 \lesssim_{d,s,M(u)}N^{-s}N(t_k)^{\frac{s}{3}}
 \lesssim_{d,s,M(u)}N^{-\frac{2s}{3}}\ll1.
\end{equation}
It follows that \eqref{equ:potentialincrementhigh} is bounded by
\eqref{equ:kineticincrementhigh}, which proves \eqref{equ:modenerlwp} for
$d\geq4$.

The increment of the modified momentum may be calculated using the
equation.  Integration by parts and translation invariance give
\[
 \frac d{dt}P(w)
 =2\Re\int_{\R^d}\nabla\mathcal C_N(u)\,\overline w\,dx
 =-2\Re\int_{\R^d}\mathcal C_N(u)\,\nabla\overline w\,dx.
\]
It follows from H\"older's inequality and \eqref{equ:commzero} that
\begin{align*}
|P(w(\tau_k^{j+1}))-P(w(\tau_k^j))|
&\lesssim
 \|\mathcal C_N(u)\|_{L_t^2L_x^{\frac{2d}{d+2}}(I)}
 \|\nabla w\|_{L_t^2L_x^{\frac{2d}{d-2}}(I)}\\
&\lesssim_{d,s,M(u)}N^{-(1+\frac4d)}
 \|\langle\nabla\rangle I_Nu\|_{S^0(I)}^{2+\frac4d},
\end{align*}
which proves \eqref{equ:modenerlwp123} for $d\geq4$.
\end{proof}

We now estimate the modified energy and momentum of the initial data.
Recall that $u_0=G(0)+H(0)$, where $G(0)\in H^1$ and $H(0)\in H^s$.

\begin{lemma}\label{lem:einit}
Let
\begin{equation}\label{equ:thetadef}
\Xi(t)=\frac{\lambda(t)^2}{2}
\int\left(|\nabla G(0)|^2-|\nabla I_{N(t)}G(0)|^2\right)dx.
\end{equation}
Then, for $t\in[0,T^+]$,
\begin{align}
\left|E(I_{N(t)}u(0))+\frac{\Xi(t)}{\lambda(t)^2}\right|
&\lesssim_{d,s}N(t)^{2(1-s)}
+\lambda(t)^{-2}(N(t)\lambda(t))^{-\frac{2}{d+2}},
\label{equ:eint0}\\
|P(I_{N(t)}u(0))|
&\lesssim_{d,s}N(t)^{1-s}
+\lambda(t)^{-1}(N(t)\lambda(t))^{-\frac12}
+|P(G(0))|.
\label{equ:pu0}
\end{align}
\end{lemma}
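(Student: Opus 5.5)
We write $N=N(t)$, $u_0=G+H$ with $G=G(0)$, $H=H(0)$, and expand $E(I_Nu_0)$ around $E(G)$. Since $I_N$ is a Fourier multiplier,
\[
E(I_Nu_0)=\tfrac12\|\nabla I_NG\|_{L^2}^2+\operatorname{Re}\int\nabla I_NG\cdot\overline{\nabla I_NH}+\tfrac12\|\nabla I_NH\|_{L^2}^2-\tfrac{d}{2(d+2)}\|I_Nu_0\|_{L^{2+\frac4d}}^{2+\frac4d}.
\]
By the definition of $\Xi(t)$, $\tfrac12\|\nabla I_NG\|_2^2+\Xi/\lambda^2=\tfrac12\|\nabla G\|_2^2$. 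Hence
\[
E(I_Nu_0)+\frac{\Xi}{\lambda^2}=E(G)+\mathrm{(cross)}+\tfrac12\|\nabla I_NH\|_2^2+\tfrac{d}{2(d+2)}\Big(\|G\|_{2+\frac4d}^{2+\frac4d}-\|I_Nu_0\|_{2+\frac4d}^{2+\frac4d}\Big).
\]
We estimate each term in turn.

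For the terms involving $H$, \eqref{equcont} and \eqref{equ:h0hs} give $\|\nabla I_NH\|_2\lesssim N^{1-s}\lambda(0)^{10}$. Moreover $N(0)\le N(t)$ up to a constant by almost monotonicity, and $\lambda(0)\ll 1$, so $\tfrac12\|\nabla I_NH\|_2^2\lesssim N^{2(1-s)}$. For the cross term we use Cauchy--Schwarz and $\|\nabla I_NG\|_2\le\|\nabla G\|_2\lesssim\lambda(0)^{-1}$, which follows from the decomposition \eqref{equ:g0x} and \eqref{equ:g0small}. This bounds it by $N^{1-s}\lambda(0)^{9}\lesssim N^{1-s}$. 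Finally, \eqref{equ:eg0} gives $|E(G)|\le\lambda(0)^{-1/2}\le\lambda(t)^{-1/2}$, up to the harmless factor from \eqref{equ:almons}. This is dominated by $\lambda(t)^{-2}(N\lambda)^{-2/(d+2)}$, because $N\lambda=\lambda^{1-1/\beta}$ and so the right side equals $\lambda^{-2+\frac{2}{d+2}(1/\beta-1)}$, a negative power of $\lambda$ at least as large as $\lambda^{-1/2}$. One checks that $\tfrac{2}{d+2}(1/\beta-1)<3/2$, since $1/\beta<3$; if that fails for some $d$, we would instead absorb $E(G)$ into the $N^{2(1-s)}$ term.

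The main step is the potential-energy difference. We split it as $\|G\|^p-\|I_NG\|^p$ plus $\|I_NG\|^p-\|I_Nu_0\|^p$, where $p=2+\frac4d$. For the second piece we use the mean value bound together with Gagliardo--Nirenberg and \eqref{equcont}: $\|I_NH\|_{L^p}\lesssim\|H\|_{H^s}$ is tiny (as $s>0$, use the Sobolev embedding for small $s$ after interpolating with $\|\langle\nabla\rangle I_NH\|$). This makes it $\lesssim N^{1-s}\lambda(0)^{9}$. For the first piece,
\[
\big|\|G\|_p^p-\|I_NG\|_p^p\big|\lesssim(\|G\|_p+\|I_NG\|_p)^{p-1}\|(1-I_N)G\|_p .
\]
By scaling, $\|G\|_p\sim\lambda^{-d/(d+2)}$, using the bound in $\lambda(0)$ and then rescaling. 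We interpolate $\|(1-I_N)G\|_p\le\|(1-I_N)G\|_2^{1-\frac{d}{d+2}}\|(1-I_N)G\|_{2d/(d-2)}^{\frac{d}{d+2}}$ and apply \eqref{equ:L2tailI}/\eqref{equ:IMSMdiff}. This gives $\|(1-I_N)G\|_2\lesssim N^{-1}\lambda(0)^{-1}$ and $\|(1-I_N)G\|_{\dot H^1}\lesssim\lambda(0)^{-1}$. Combining, the first piece is $\lesssim\lambda^{-2}(N\lambda)^{-2/(d+2)}$, after working in the rescaled variable $N\lambda$ at time $0$ and using monotonicity of the bound in $t$. This exponent bookkeeping, in particular converting between $\lambda(0)$ and $\lambda(t)$ so that the factor $(N(t)\lambda(t))^{-2/(d+2)}$ appears, is where I expect the main care to be needed. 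The natural route is to carry everything in the rescaled frame $y=(x-x(0))/\lambda(0)$ with frequency $N(t)\lambda(0)\ge N(t)\lambda(t)$, using \eqref{equ:scaio}.

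For the momentum bound \eqref{equ:pu0}, we write
\[
P(I_Nu_0)-P(G)=\big[P(I_NG)-P(G)\big]+2\operatorname{Im}\int\nabla I_NG\,\overline{I_NH}+P(I_NH).
\]
The terms with $H$ are bounded by $\|\langle\nabla\rangle I_NH\|_2\|u_0\|_2\lesssim N^{1-s}$. For the first bracket, we estimate $\|\nabla G\|_2\|(1-I_N)G\|_2\lesssim\lambda^{-1}\cdot N^{-1}\lambda^{-1}$, interpolated with the trivial bound $\lambda^{-1}$, giving $\lambda^{-1}(N\lambda)^{-1/2}$. Adding $|P(G(0))|$ gives \eqref{equ:pu0}.
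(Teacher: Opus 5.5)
Your proposal is correct and follows essentially the paper's argument. $\Xi$ absorbs the kinetic defect; a mean-value bound on the $L^{2+\frac4d}$ norms, combined with $\|(1-I_N)G(0)\|_{L^{2+\frac4d}}\lesssim N^{-\frac{2}{d+2}}\lambda(0)^{-1}$ (which you obtain by H\"older interpolation between $L^2$ and $L^{\frac{2d}{d-2}}$ instead of via $\dot H^{\frac{d}{d+2}}$ and \eqref{equ:profileapprox}), gives the $\lambda(t)^{-2}(N(t)\lambda(t))^{-\frac{2}{d+2}}$ term once you use $\lambda(t)\lesssim\lambda(0)$; and the $H(0)$-terms cost $N^{1-s}\lambda(0)^{10}$.

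Two small remarks. First, $\|I_NH\|_{L^{2+\frac4d}}\lesssim\|H\|_{H^s}$ is not available uniformly in $N$ when $s<\frac{d}{d+2}$, so bound this term by Gagliardo--Nirenberg with $\|\nabla I_NH(0)\|_{L^2}\lesssim N^{1-s}\lambda(0)^{10}$, as your parenthetical suggests. Second, your momentum bound $|P(G)-P(I_NG)|\lesssim\|\nabla G\|_{L^2}\|(1-I_N)G\|_{L^2}$ already yields $\lambda^{-1}(N\lambda)^{-1}$, which is slightly stronger than the paper's $\dot H^{1/2}$ estimate, so the extra interpolation step is unnecessary.
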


\begin{proof}
Writing $u(0)=G(0)+H(0)$, we have
\begin{align}
\left|E(I_{N(t)}u(0))+\frac{\Xi(t)}{\lambda(t)^2}\right|
&\leq
\left|E(I_{N(t)}G(0))+\frac{\Xi(t)}{\lambda(t)^2}\right|
\label{target-1}\\
&\quad+
\left|E(I_{N(t)}(G(0)+H(0)))-E(I_{N(t)}G(0))\right|.
\label{target-2}
\end{align}
By the definition of $\Xi$,
\[
E(I_{N(t)}G(0))+\frac{\Xi(t)}{\lambda(t)^2}
=E(G(0))+\frac{d}{2(d+2)}
\int\left(|G(0)|^{2+\frac4d}
-|I_{N(t)}G(0)|^{2+\frac4d}\right)dx.
\]
By \eqref{equ:profileapprox} and Sobolev embedding,
\[
\|(1-I_{N(t)})G(0)\|_{\dot H^{\frac{d}{d+2}}}
\lesssim_{d,s}N(t)^{-\frac{2}{d+2}}
\|G(0)\|_{\dot H^1}.
\]
The geometrical decomposition and interpolation give
\[
\|G(0)\|_{\dot H^1}\lesssim\lambda(0)^{-1},
\qquad
\|G(0)\|_{\dot H^{\frac d{d+2}}}
\lesssim\lambda(0)^{-\frac d{d+2}}.
\]
Since $I_{N(t)}$ is an $L^{2+4/d}$ contraction, it follows that
\begin{align*}
&\left|\|G(0)\|_{L^{2+\frac4d}}^{2+\frac4d}
-\|I_{N(t)}G(0)\|_{L^{2+\frac4d}}^{2+\frac4d}\right|\\
&\quad\lesssim_d
\|(1-I_{N(t)})G(0)\|_{L^{2+\frac4d}}
\left(
\|G(0)\|_{L^{2+\frac4d}}^{1+\frac4d}
+\|I_{N(t)}G(0)\|_{L^{2+\frac4d}}^{1+\frac4d}
\right)\\
&\quad\lesssim_{d,s}
N(t)^{-\frac2{d+2}}
\|G(0)\|_{\dot H^1}
\|G(0)\|_{\dot H^{\frac d{d+2}}}^{1+\frac4d}\\
&\quad\lesssim_{d,s}
N(t)^{-\frac2{d+2}}
\lambda(0)^{-\frac{2(d+3)}{d+2}}
\lesssim_{d,s}
\lambda(t)^{-2}(N(t)\lambda(t))^{-\frac2{d+2}}.
\end{align*}
This together with \eqref{equ:eg0} implies
\[
\left|E(I_{N(t)}G(0))+\frac{\Xi(t)}{\lambda(t)^2}\right|
\lesssim_d
\lambda(t)^{-2}(N(t)\lambda(t))^{-\frac{2}{d+2}},
\]
which proves the required bound for \eqref{target-1}.

We next estimate \eqref{target-2}.
By \eqref{equ:h0hs} and Proposition \ref{highcont},
\[
\|I_{N(t)}H(0)\|_{L^2}\leq\lambda(0)^{10},
\qquad
\|\nabla I_{N(t)}H(0)\|_{L^2}
\lesssim_s N(t)^{1-s}\lambda(0)^{10}.
\]
The difference estimate used in \cite[(2.42)]{SunZheng} and the
Gagliardo--Nirenberg inequality give
\begin{align*}
&|E(I_{N(t)}(G(0)+H(0)))-E(I_{N(t)}G(0))|\\
\lesssim&_d
\|\nabla I_{N(t)}G(0)\|_2
\|\nabla I_{N(t)}H(0)\|_2
+\|\nabla I_{N(t)}H(0)\|_2^2\\
&+
\|I_{N(t)}G(0)\|_{L^{2+\frac4d}}^{1+\frac4d}
\|I_{N(t)}H(0)\|_{L^{2+\frac4d}}
+\|I_{N(t)}H(0)\|_{L^{2+\frac4d}}^{2+\frac4d}
\lesssim_{d,s}N(t)^{2(1-s)}.
\end{align*}
This proves \eqref{equ:eint0}.

For the momentum,
\begin{align}\label{target-3}
|P(I_{N(t)}u(0))|
\leq{}&
|P(I_{N(t)}G(0)+I_{N(t)}H(0))-P(I_{N(t)}G(0))|
\notag\\
&+|P(I_{N(t)}G(0))-P(G(0))|+|P(G(0))|.
\end{align}
As in \cite[proof of (2.35)]{SunZheng}, for
$f,g\in\dot H^{1/2}(\R^d)$,
\[
|P(f+g)-P(f)|
\lesssim
\|g\|_{\dot H^{1/2}}
\left(\|f\|_{\dot H^{1/2}}+\|g\|_{\dot H^{1/2}}\right).
\]
By interpolation and \eqref{equ:profileapprox},
\begin{align*}
\|I_{N(t)}H(0)\|_{\dot H^{1/2}}
&\lesssim_s
\lambda(0)^{10}N(t)^{\frac{1-s}{2}},\\
\|I_{N(t)}G(0)\|_{\dot H^{1/2}}
&\lesssim\lambda(0)^{-\frac12},\\
\|(1-I_{N(t)})G(0)\|_{\dot H^{1/2}}
&\lesssim_s N(t)^{-1/2}\lambda(0)^{-1},\\
|P(I_{N(t)}G(0)+I_{N(t)}H(0))-P(I_{N(t)}G(0))|
&\lesssim_s N(t)^{1-s},\\
|P(I_{N(t)}G(0))-P(G(0))|
&\lesssim_s
\lambda(t)^{-1}(N(t)\lambda(t))^{-1/2}.
\end{align*}
Inserting these estimates in \eqref{target-3} proves \eqref{equ:pu0}.
\end{proof}
\begin{remark}
The control of $E(G(0))$ relies on a cancellation between the kinetic and potential energies, which need not survive the application of $I_N$ as $G(0)$ has only $H^1$ regularity. We therefore introduce the nonnegative
correction $\Xi(t)$ to compensate for this loss and recover
the cancellation in $E(G(0))$. Rather than estimating $\Xi$
as a small error, we retain it in the subsequent virial
estimates, where it has a favorable sign. See section \ref{sec:Loglog} for details.
\end{remark}
\begin{proposition}[Almost conservation laws]\label{prop:almostcons}
Let $d\geq3$, $0<s<1$, and let $\beta$ be given by \eqref{equ:alphaa}.  There exists
\begin{equation}\label{equ:alpha1def}
\alpha_1=\frac{s}{(d+2)(3-2s)}>0
\end{equation}
such that, on $[0,T^+]$,
\begin{align}
\left|E(I_{N(t)}u(t))+\frac{\Xi(t)}{\lambda(t)^2}\right|&\leq\lambda(t)^{-2(1-\alpha_1)},\label{equ:modiener}\\
|P(I_{N(t)}u(t))|&\leq\lambda(t)^{-1+\alpha_1}.
\label{equ:modimom}
\end{align}
Consequently, after taking $b(0)$ sufficiently small, we have
\begin{align}
|\lambda(t)^2E(I_{N(t)}u(t))+\Xi(t)|&\leq\lambda(t)^{2\alpha_1}\leq\Gamma_{b(t)}^{10},\label{equ:lamenermod}\\
\lambda(t)|P(I_{N(t)}u(t))|&\leq\lambda(t)^{\alpha_1}\leq\Gamma_{b(t)}^{10}.
\label{equ:lammonmod}
\end{align}
\end{proposition}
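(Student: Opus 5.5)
Fix $t\in[0,T^+]$ and freeze the frequency $N=N(t)$; the argument compares $E(I_Nu(t))$ with $E(I_Nu(0))$ using the \emph{same} multiplier throughout. Write $t\in[t_K,t_{K+1}]$ for some $k_0\leq K\leq k^+$. Cover $[0,t]$ by the local intervals $[\tau_k^j,\tau_k^{j+1}]$, $k_0\leq k\leq K$, of Lemma~\ref{lem:hslwp}. Lemma~\ref{lem:numberintervals} applies with $T^\ast=t$, $N_\ast=N$: there are $J_k\lesssim k$ intervals in each doubling block, and on each of them
\[
\|\langle\nabla\rangle I_Nu\|_{S^0}\lesssim\Big(\frac{N}{N(t_k)}\Big)^{1-s}\lambda(t_k)^{-1}.
\]
Summing the increment bounds \eqref{equ:modenerlwp} of Lemma~\ref{lem:conincre} gives
\[
|E(I_Nu(t))-E(I_Nu(0))|\lesssim N^{-\nu}\sum_{k_0\le k\le K}k\,\Big(\frac{N}{N(t_k)}\Big)^{(1-s)(2+\nu)}\lambda(t_k)^{-(2+\nu)} .
\]
Since $N(t_k)=2^{k/\beta}$, $\lambda(t_k)=2^{-k}$, and the exponent of $2^k$ in the summand is $(2+\nu)(1-(1-s)/\beta)$, which is positive because $\beta>1-s$, the sum is geometric. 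It is therefore dominated by its last term, with a factor $K\lesssim|\log\lambda(t)|$. Using $N\sim N(t_K)$ and $\lambda(t)\sim\lambda(t_K)$ by \eqref{equ:almons}, the total is
\[
\lesssim |\log\lambda(t)|\,\lambda(t)^{-2}\,(N\lambda(t))^{-\nu}\,(N\lambda(t))^{-?}\cdots ;
\]
more simply, it is bounded by $|\log\lambda|\,N^{-\nu}\lambda^{-(2+\nu)}=|\log\lambda|\,\lambda^{-2}(N\lambda)^{-\nu}$. Here $N\lambda=\lambda^{-(1-\beta)/\beta}=\lambda^{-2s/(3-2s)}$, so this error is $\lambda^{-2}\cdot\lambda^{2s\nu/(3-2s)}|\log\lambda|$.

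For the initial term, Lemma~\ref{lem:einit} gives
\[
\Big|E(I_Nu(0))+\frac{\Xi(t)}{\lambda(t)^2}\Big|\lesssim N^{2(1-s)}+\lambda^{-2}(N\lambda)^{-\frac{2}{d+2}} .
\]
The quantity $N^{2(1-s)}=\lambda^{-2(1-s)/\beta}$ has exponent $2(1-s)\cdot 3/(3-2s)<2$, so it is also $\le\lambda^{-2+c}$. The second term equals $\lambda^{-2}\lambda^{\frac{2}{d+2}\cdot\frac{2s}{3-2s}}$. Among all three errors the worst gain is $\lambda^{-2}\lambda^{4s/((d+2)(3-2s))}$, since $\frac{2}{d+2}\le\nu$ for $d\ge3$; the $N^{2(1-s)}$ gain $2-\frac{6(1-s)}{3-2s}=\frac{2s}{3-2s}$ is larger. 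This leaves room for $\alpha_1=\frac{s}{(d+2)(3-2s)}$, half the available power, which absorbs both the logarithm and the implicit constants once $\lambda(t)\le\lambda(0)$ is tiny, i.e. $b(0)$ small via \eqref{equ:b0l0}, \eqref{equ:ltass}. This proves \eqref{equ:modiener}.

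The momentum \eqref{equ:modimom} follows identically. Sum \eqref{equ:modenerlwp123} to get $|\log\lambda|\,\lambda^{-1}(N\lambda)^{-(1+\nu)}\lambda^{-(1+\nu)}\lambda^{1+\nu}$-type bounds, i.e. at most $\lambda^{-1}(N\lambda)^{-\nu}|\log\lambda|$ after the same geometric summation. Then add \eqref{equ:pu0}, using $N^{1-s}\le\lambda^{-1+c}$ and $|P(G(0))|\le\lambda(0)^{-1/2}\le\lambda(t)^{-1/2}$ by almost monotonicity. Finally, \eqref{equ:lamenermod}--\eqref{equ:lammonmod} follow by multiplying by $\lambda^2$, resp. $\lambda$, and using \eqref{equ:ltass}: $\lambda^{\alpha_1}\le e^{-\alpha_1e^{\pi/(100b)}}\le\Gamma_b^{10}$ by \eqref{equ:geb}, for $b\le b(0)$ small. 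The main obstacle is the bookkeeping in the geometric summation over doubling blocks: confirming that the summand grows in $k$ so that the last block dominates, and that the factor $k$ from \eqref{equ:jk} costs only a logarithm.
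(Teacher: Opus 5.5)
Your proposal is correct and is essentially the paper's proof. You freeze $N=N(t)$ and sum the increments of Lemma~\ref{lem:conincre} over the $J_k\lesssim k$ local intervals, using the bound of Lemma~\ref{lem:numberintervals}. The series is geometric and dominated by its last block up to a $|\log\lambda|$ loss, which gives $|\log\lambda|\lambda^{-2}(N\lambda)^{-\nu}$ for the energy and $|\log\lambda|\lambda^{-1}(N\lambda)^{-(1+\nu)}$ for the momentum. You then add Lemma~\ref{lem:einit}, whose term $\lambda^{-2}(N\lambda)^{-2/(d+2)}=\lambda^{-2+4\alpha_1}$ is the binding one. This is exactly what the paper does.

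Two small cleanups. Delete the abandoned half-display before ``more simply''. Also write $\lambda(0)^{-1/2}\lesssim\lambda(t)^{-1/2}$ rather than $\le$, since almost monotonicity only gives $\lambda(t)\le\frac32\lambda(0)$; this changes nothing in the argument.
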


\begin{proof}
We may assume $t=T^+$ and set
$N_\ast=N(T^+)$ and $\lambda_\ast=\lambda(T^+)$.

\medskip
\noindent\emph{\bf Case $d=3$.}
By Lemmas \ref{lem:conincre}, \ref{lem:numberintervals}, and
\ref{lem:einit},
\begin{align*}
&\left|E(I_{N_\ast}u(T^+))+\frac{\Xi(T^+)}{\lambda_\ast^2}\right|\\
&\quad\lesssim_{s,M(u)}N_\ast^{2(1-s)}
+\lambda_\ast^{-2}(N_\ast\lambda_\ast)^{-\frac25}
+\sum_{k=k_0}^{k^+}kN_\ast^{-1}
\left[\left(\frac{N_\ast}{N(t_k)}\right)^{1-s}
\lambda(t_k)^{-1}\right]^3.
\end{align*}
Since $\lambda(t_k)^{-1}=N(t_k)^\beta$ and
$\beta-(1-s)=s/3$, we now sum up the geometric series, as in
\cite[Step~4]{CR09}, to get
\begin{align*}
&\sum_{k=k_0}^{k^+}k
\left[\left(\frac{N_\ast}{N(t_k)}\right)^{1-s}
\lambda(t_k)^{-1}\right]^3
=N_\ast^{3(1-s)}
\sum_{k=k_0}^{k^+}kN(t_k)^s\\
&\hspace{35mm}\lesssim_s|\log\lambda_\ast|N_\ast^{3\beta}.
\end{align*}
Multiplying by $\lambda_\ast^2=N_\ast^{-2\beta}$ and using
$\alpha_1=s/[5(3-2s)]$, we obtain
\begin{align*}
&\left|\lambda_\ast^2E(I_{N_\ast}u(T^+))+\Xi(T^+)\right|\\
&\quad\lesssim_{s,M(u)}
\lambda_\ast^{\frac{2s}{3-2s}}
+\lambda_\ast^{4\alpha_1}
+|\log\lambda_\ast|\lambda_\ast^{\frac{2s}{3-2s}}\\
&\quad=\lambda_\ast^{10\alpha_1}
+\lambda_\ast^{4\alpha_1}
+|\log\lambda_\ast|\lambda_\ast^{10\alpha_1}.
\end{align*}
Therefore, after taking $b(0)$ sufficiently small,
\[
\left|\lambda_\ast^2E(I_{N_\ast}u(T^+))+\Xi(T^+)\right|
\leq\lambda_\ast^{2\alpha_1}.
\]

For the momentum, Lemmas \ref{lem:conincre},
\ref{lem:numberintervals}, and \ref{lem:einit} give
\begin{align*}
|P(I_{N_\ast}u(T^+))|\lesssim_{s,M(u)}{}&N_\ast^{1-s}
+\lambda_\ast^{-1}(N_\ast\lambda_\ast)^{-\frac12}
+|P(G(0))|\\
&+\sum_{k=k_0}^{k^+}kN_\ast^{-2}
\left[\left(\frac{N_\ast}{N(t_k)}\right)^{1-s}
\lambda(t_k)^{-1}\right]^3.
\end{align*}
By \eqref{equ:gg0} and \eqref{equ:almons},
\(\lambda_\ast|P(G(0))|\lesssim\lambda_\ast^{1/2}\).  The geometric
series above therefore yields
\begin{align*}
\lambda_\ast|P(I_{N_\ast}u(T^+))|
\lesssim_{s,M(u)}{}&
\lambda_\ast^{\frac{s}{3-2s}}
+\lambda_\ast^{\frac12}
+|\log\lambda_\ast|
\lambda_\ast^{\frac{4s}{3-2s}}.
\end{align*}
Since
\[
\frac{s}{3-2s}>\alpha_1,\qquad
\frac12>\alpha_1,\qquad
\frac{4s}{3-2s}>\alpha_1,
\]
we obtain, after taking $b(0)$ sufficiently small,
\[
\lambda_\ast|P(I_{N_\ast}u(T^+))|
\leq\lambda_\ast^{\alpha_1}.
\]
This proves \eqref{equ:modiener}--\eqref{equ:modimom} for $d=3$.

\medskip
\noindent\emph{\bf Case $d\geq4$.}
By Lemmas \ref{lem:conincre}, \ref{lem:numberintervals}, and
\ref{lem:einit},
\begin{align*}
&\left|E(I_{N_\ast}u(T^+))+\frac{\Xi(T^+)}{\lambda_\ast^2}\right|\\
&\quad\lesssim_{d,s,M(u)}N_\ast^{2(1-s)}
+\lambda_\ast^{-2}(N_\ast\lambda_\ast)^{-\frac2{d+2}}
+\sum_{k=k_0}^{k^+}kN_\ast^{-\frac4d}
\left[\left(\frac{N_\ast}{N(t_k)}\right)^{1-s}
\lambda(t_k)^{-1}\right]^{2+\frac4d}.
\end{align*}
Since $\lambda(t_k)^{-1}=N(t_k)^\beta$ and
$\beta-(1-s)=s/3$, we now sum up the geometric series, as in
\cite[proof of Proposition~2.16]{SunZheng}, to get
\begin{align*}
\sum_{k=k_0}^{k^+}k
\left[\left(\frac{N_\ast}{N(t_k)}\right)^{1-s}
\lambda(t_k)^{-1}\right]^{2+\frac4d}=&N_\ast^{(1-s)(2+\frac4d)}
\sum_{k=k_0}^{k^+}kN(t_k)^{\frac{s}{3}(2+\frac4d)}\\
\lesssim&_{d,s}|\log\lambda_\ast|
N_\ast^{\beta(2+\frac4d)}.
\end{align*}
Multiplying by $\lambda_\ast^2=N_\ast^{-2\beta}$ gives
\begin{align*}
&\left|\lambda_\ast^2E(I_{N_\ast}u(T^+))+\Xi(T^+)\right|\\
&\quad\lesssim_{d,s,M(u)}
\lambda_\ast^{\frac{2s}{3-2s}}
+\lambda_\ast^{4\alpha_1}
+|\log\lambda_\ast|
\lambda_\ast^{\frac{8s}{d(3-2s)}}\\
&\quad=
\lambda_\ast^{2(d+2)\alpha_1}
+\lambda_\ast^{4\alpha_1}
+|\log\lambda_\ast|
\lambda_\ast^{\frac{8s}{d(3-2s)}}.
\end{align*}
Since
\[
2(d+2)\alpha_1>2\alpha_1,\qquad
4\alpha_1>2\alpha_1,\qquad
\frac{8s}{d(3-2s)}>2\alpha_1,
\]
we obtain, after taking $b(0)$ sufficiently small,
\[
\left|\lambda_\ast^2E(I_{N_\ast}u(T^+))+\Xi(T^+)\right|
\leq\lambda_\ast^{2\alpha_1}.
\]

For the momentum, Lemmas \ref{lem:conincre},
\ref{lem:numberintervals}, and \ref{lem:einit} give
\begin{align*}
|P(I_{N_\ast}u(T^+))|\lesssim_{d,s,M(u)}{}&N_\ast^{1-s}
+\lambda_\ast^{-1}(N_\ast\lambda_\ast)^{-\frac12}
+|P(G(0))|\\
&+\sum_{k=k_0}^{k^+}kN_\ast^{-(1+\frac4d)}
\left[\left(\frac{N_\ast}{N(t_k)}\right)^{1-s}
\lambda(t_k)^{-1}\right]^{2+\frac4d}.
\end{align*}
By \eqref{equ:gg0} and \eqref{equ:almons},
\(\lambda_\ast|P(G(0))|\lesssim\lambda_\ast^{1/2}\).  Multiplying by
$\lambda_\ast=N_\ast^{-\beta}$ and using the geometric series above,
we obtain
\begin{align*}
\lambda_\ast|P(I_{N_\ast}u(T^+))|
\lesssim_{d,s,M(u)}{}&
\lambda_\ast^{\frac{s}{3-2s}}
+\lambda_\ast^{\frac12}+|\log\lambda_\ast|
\lambda_\ast^{\frac{2s(1+\frac4d)}{3-2s}}.
\end{align*}
Therefore, after taking $b(0)$ sufficiently small,
we obtain that 
\[
\lambda_\ast|P(I_{N_\ast}u(T^+))|
\leq\lambda_\ast^{\alpha_1}.
\]
Finally, \eqref{equ:ltass} gives
\[
\lambda_\ast^{2\alpha_1}\leq\Gamma_{b(T^+)}^{10},
\qquad
\lambda_\ast^{\alpha_1}\leq\Gamma_{b(T^+)}^{10},
\]
which proves \eqref{equ:modiener}--\eqref{equ:modimom} for $d\geq4$ and
\eqref{equ:lamenermod}--\eqref{equ:lammonmod} in both cases.
\end{proof}

\section{The proof of Theorem \ref{thm:main}}\label{sec:Loglog}
In this section, we will prove Lemma~\ref{lem:bootstrap} and then Proposition~\ref{prop:main},
and hence we conclude the proof of Theorem~\ref{thm:main}. The argument follows the
analysis of the log--log regime in
\cite{MR05annmath,MR03GAFA,MR06JAMS,Ra05Mathann,CR09}.

\subsection{Control of the geometrical parameters}
Consider the geometrical decomposition \eqref{equ:decass}. The rescaled
time is defined by
\begin{equation}\label{eq:1}
 \frac{ds}{dt}=\frac1{\lambda(s)^2}\quad\text{with}\quad
 s_0=s(0)=e^{\frac{5\pi}{9b(0)}},\quad s_+=s(T_+).
\end{equation}
Then, $\varepsilon$ satisfies the equation  on $[s_0,s_+]$:
\begin{equation}\label{eq:2}
\begin{aligned}
 \frac{\partial\Sigma}{\partial s}+\partial_s\varepsilon_1-M_-(\varepsilon)+b\Lambda\varepsilon_1
 &=\left(\frac{\lambda_s}\lambda+b\right)\Lambda\Sigma
    +\widetilde\gamma_s\Theta+\frac{x_s}\lambda\cdot\nabla\Sigma\\
 &\quad+\left(\frac{\lambda_s}\lambda+b\right)\Lambda\varepsilon_1
    +\widetilde\gamma_s\varepsilon_2+\frac{x_s}\lambda\cdot\nabla\varepsilon_1\\
 &\quad+\operatorname{Im}(\Psi_b)-R_2(\varepsilon)
\end{aligned}
\end{equation}
and
\begin{equation}\label{eq:3}
\begin{aligned}
 \frac{\partial\Theta}{\partial s}+\partial_s\varepsilon_2+M_+(\varepsilon)+b\Lambda\varepsilon_2
&=\left(\frac{\lambda_s}\lambda+b\right)\Lambda\Theta
    -\widetilde\gamma_s\Sigma+\frac{x_s}\lambda\cdot\nabla\Theta\\
 &\quad+\left(\frac{\lambda_s}\lambda+b\right)\Lambda\varepsilon_2
    -\widetilde\gamma_s\varepsilon_1+\frac{x_s}\lambda\cdot\nabla\varepsilon_2\\
 &\quad-\operatorname{Re}(\Psi_b)+R_1(\varepsilon),
\end{aligned}
\end{equation}
with $\widetilde\gamma(s)=-s-\gamma(s)$, $\varepsilon=\varepsilon_1+i\varepsilon_2$ and
$Q_{b(t)}=\Sigma+i\Theta$ in terms of real and imaginary parts. The linear operator close to $Q_b$ is now a deformation of the linear operator $L$ close to $Q$ and is $M=(M_+,M_-)$ with
\begin{align*}
M_+(\var)=&-\Delta\var_1+\var_1-\Big(\frac{4\Sigma^2}{d|Q_b|^2}+1\Big)|Q_b|^\frac4d\var_1-\Big(\frac{4\Sigma\Theta}{d|Q_b|^2}|Q_b|^\frac4d\Big)\var_2\\
M_-(\var)=&-\Delta\var_2+\var_2-\Big(\frac{4\Theta^2}{d|Q_b|^2}+1\Big)|Q_b|^\frac4d\var_2-\Big(\frac{4\Sigma\Theta}{d|Q_b|^2}|Q_b|^\frac4d\Big)\var_1.
\end{align*}
The formally quadratic in $\varepsilon$ interaction terms are:
\begin{align*}
R_1(\varepsilon)
 &= (\varepsilon_1+\Sigma)|\varepsilon+Q_b|^{\frac4d}
       -\Sigma|Q_b|^{\frac4d}\\
 &\quad-\left(\frac{4\Sigma^2}{d|Q_b|^2}+1\right)|Q_b|^{\frac4d}\varepsilon_1
       -\left(\frac{4\Sigma\Theta}{d|Q_b|^2}|Q_b|^{\frac4d}\right)\varepsilon_2,\\
R_2(\varepsilon)
 &= (\varepsilon_2+\Theta)|\varepsilon+Q_b|^{\frac4d}
       -\Theta|Q_b|^{\frac4d}\\
 &\quad-\left(\frac{4\Theta^2}{d|Q_b|^2}+1\right)|Q_b|^{\frac4d}\varepsilon_2
       -\left(\frac{4\Sigma\Theta}{d|Q_b|^2}|Q_b|^{\frac4d}\right)\varepsilon_1.
\end{align*}
Recall the linearized operator
\begin{equation}\label{eq:4}
 L_+=-\Delta+1-\left(1+\frac4d\right)Q^{4/d},\qquad
 L_-=-\Delta+1-Q^{4/d},
\end{equation}
We claim the following estimates on $\varepsilon$ and the geometrical parameters $\lambda,b,x,\gamma$.

\begin{lemma}[Control of the geometrical parameters]\label{lem:4.1}
For all $s\in[s_0,s^+]$, there holds:\nopagebreak[4]

\noindent(i) Estimates induced by the almost conservation of energy and momentum:\nopagebreak[4]
\begin{align}\label{eq:5}
 &\Big|\big[2(\varepsilon_1,\Sigma+b\Lambda\Theta-\operatorname{Re}(\Psi_b))
       +2(\varepsilon_2,\Theta-b\Lambda\Sigma-\operatorname{Im}(\Psi_b))\big]
       -\Big[2\Xi(s)+\int|I_{N\lambda}\nabla\varepsilon|^2\notag\\
 &\quad-\left(1+\frac4d\right)\int Q^{4/d}(I_{N(s)\lambda(s)}\varepsilon_1(s))^2
       -\int Q^{4/d}(I_{N(s)\lambda(s)}\varepsilon_2(s))^2\Big]\Big|\notag\\
 &\quad\leq\delta_0\left(\int|\nabla I_{N(s)\lambda(s)}\varepsilon(s)|^2
                      +\int|\varepsilon(s)|^2e^{-|y|}\right)+\Gamma_{b(s)}^{1-C\eta},
\end{align}
\begin{equation}\label{eq:6}
 |(\varepsilon_2,\nabla Q)|\leq\delta_0\left(\int|I_{N(s)\lambda(s)}\nabla\varepsilon(s)|^2\right)^{1/2}
   +\Gamma_{b(s)}^{10}.
\end{equation}
\noindent(ii) Estimates on the modulation parameters:
\begin{equation}\label{eq:7}
\begin{aligned}
 \left|\frac{\lambda_s}\lambda+b\right|+|b_s|
 &\leq C\left(\Xi(s)+\int|\nabla I_{N(s)\lambda(s)}\varepsilon(s)|^2
                 +\int|\varepsilon(s)|^2e^{-|y|}\right)\\
 &\quad+\Gamma_{b(s)}^{1-C\eta}+F(s),
\end{aligned}
\end{equation}
\begin{equation}\label{eq:8}
\begin{aligned}
 &\left|\widetilde\gamma_s-\frac{(\varepsilon_1,L_+\Lambda^2Q)}{|\Lambda Q|_2^2}\right|
  +\left|\frac{x_s}\lambda\right|\\
 &\quad\leq\delta_0\left[\left(\int|\varepsilon(s)|^2e^{-|y|}\right)^{1/2}
                +\left\lVert \nabla I_{N(s)\lambda(s)}\varepsilon(s)\right\rVert_{L^2}\right]
       +\Gamma_{b(s)}^{1-C\eta}\\
 &\qquad+C\left(\Xi(s)+\int|\nabla I_{N(s)\lambda(s)}\varepsilon(s)|^2\right)+F(s),
\end{aligned}
\end{equation}
Here $\delta_0$ is a small enough universal constant $0<\delta_0\ll1$ and
$F(s)\geq0$ satisfies the uniform estimate: $\forall\,s_1\in[s_0,s^+]$,
\begin{equation}\label{eq:9}
 \int_{s_1}^{s^+}F(s)\,ds\lesssim\lambda(s_1)^{\alpha_2}
\end{equation}
for some $\alpha_2>0$.
\end{lemma}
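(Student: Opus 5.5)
The plan follows the $H^1$ argument of \cite{MR03GAFA,MR06JAMS}, in the form adapted to the $I$-method in \cite[Lemma~3.1]{CR09}. The one new point is that we work with the regularized remainder $\tilde\varepsilon:=I_{N(s)\lambda(s)}\varepsilon$ and not with $\varepsilon$ itself. By the scaling identity \eqref{equ:scaio},
\[
\lambda(t)^{\frac d2}e^{i\gamma(t)}\,(I_{N(t)}u)\big(t,\lambda(t)y+x(t)\big)
=I_{N\lambda}Q_b+\tilde\varepsilon .
\]
By \eqref{equ:profileapprox} and the exponential decay \eqref{equ:qbclsq}, we have $\|(1-I_{N\lambda})Q_b\|_{H^{2}}\lesssim (N\lambda)^{-2}$. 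Since $N\lambda=\lambda^{-(1-\beta)/\beta}$ and \eqref{equ:ltass} holds, this is smaller than $\Gamma_b^{10}$. Likewise \eqref{equ:L2tailI} and \eqref{equ:vartass} give
\[
\|\varepsilon-\tilde\varepsilon\|_{L^2}\lesssim (N\lambda)^{-1}\|\langle\nabla\rangle\tilde\varepsilon\|_{L^2}\leq\Gamma_b^{10}.
\]
Hence every inner product of $\varepsilon$ against a well-localized profile ($\Sigma,\Theta,\Lambda\Sigma,\Psi_b$, and so on) can be replaced by the same inner product with $\tilde\varepsilon$, at cost $\Gamma_b^{10}$.

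For \eqref{eq:5}, I expand the rescaled modified energy $\lambda^2E(I_Nu)=E(I_{N\lambda}Q_b+\tilde\varepsilon)$ around $Q_b$. The zeroth-order term $E(Q_b)$ is at most $e^{-C/|b|}$ by \eqref{equ:eqb}. For the linear term, I integrate by parts and use the profile equation \eqref{def:psib}; this produces exactly the bracket $2(\varepsilon_1,\Sigma+b\Lambda\Theta-\operatorname{Re}\Psi_b)+2(\varepsilon_2,\Theta-b\Lambda\Sigma-\operatorname{Im}\Psi_b)$, after the mass term is subtracted using mass conservation and the supercritical mass relation \eqref{equ:qbcrimass}. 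The quadratic term, after replacing $Q_b$ by $Q$ with $O(b)$ errors absorbed into $\delta_0$, gives $\int|\nabla\tilde\varepsilon|^2-(1+\frac4d)\int Q^{4/d}\tilde\varepsilon_1^2-\int Q^{4/d}\tilde\varepsilon_2^2$. The higher-order terms are controlled using Lemma~\ref{lem:nonltermest} together with Gagliardo--Nirenberg and the smallness of $\|\tilde\varepsilon\|_{L^2}$. They are bounded by $\delta_0\big(\int|\nabla\tilde\varepsilon|^2+\int|\varepsilon|^2e^{-|y|}\big)$. Finally, Proposition~\ref{prop:almostcons} gives $|\lambda^2E(I_Nu)+\Xi|\leq\Gamma_b^{10}$, which produces the $2\Xi$ term. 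The estimate \eqref{eq:6} follows in the same way from \eqref{equ:lammonmod}. The linear term of $\lambda P(I_Nu)$ is $2(\tilde\varepsilon_2,\nabla\Sigma)-2(\tilde\varepsilon_1,\nabla\Theta)$, which equals $2(\varepsilon_2,\nabla Q)$ up to $O(b)$ errors. The quadratic term is bounded by $\|\tilde\varepsilon\|_{L^2}\|\nabla\tilde\varepsilon\|_{L^2}$, which is at most $\delta_0\|\nabla\tilde\varepsilon\|_{L^2}$ by \eqref{equ:btvart}.

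For (ii), I differentiate the four orthogonality conditions \eqref{equ:cy2}--\eqref{equ:cth2} in $s$ and substitute the equations \eqref{eq:2}--\eqref{eq:3}. This yields a linear system in $(\frac{\lambda_s}{\lambda}+b,\,b_s,\,\widetilde\gamma_s,\,\frac{x_s}{\lambda})$ whose matrix is a small perturbation of a diagonal, invertible one; this is the standard computation in \cite{MR03GAFA}. The resulting coefficients involve the terms below.
\begin{itemize}
\item Linear terms $(\varepsilon,L_\pm f)$ with $f$ exponentially localized. These are bounded by $(\int|\varepsilon|^2e^{-|y|})^{1/2}$.
\item The $\Psi_b$ contributions. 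These are bounded by $\Gamma_b^{1-C\eta}$ by Lemma~\ref{lem:linoutrad}.
\item The nonlinear terms $R_1,R_2$. These are bounded by $\int|\varepsilon|^2e^{-|y|}$ up to smaller powers, using \eqref{equ:Rpoint} and the localization of the test functions.
\end{itemize}
For $\frac{\lambda_s}{\lambda}+b$ and $b_s$, the linear term is paired with $(\varepsilon_1,\Sigma+\cdots)$. I substitute \eqref{eq:5} for this pairing, which converts it into $\Xi$ plus quadratic quantities and gives \eqref{eq:7}. For $x_s/\lambda$, I use \eqref{eq:6}.

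The main obstacle is that the modulation equations are driven by the equation for the rough $\varepsilon$, whereas the bootstrap quantities involve $\tilde\varepsilon$. Inner products with smooth localized functions cause no difficulty. The difficulty is the mismatch between the nonlinear terms, i.e. $I_{N\lambda}$ applied to $F(Q_b+\varepsilon)$ versus $F$ applied to the regularized profile. I therefore plan to write the time derivative of each orthogonality condition using the equation satisfied by $\tilde\varepsilon$, which is \eqref{eq:2}--\eqref{eq:3} with the extra forcing given by the rescaled commutator $\lambda^{2+d/2}\,\mathcal C_{N}(u)(\lambda y+x)$. Pairing this forcing against a Schwartz function and applying \eqref{equ:commzero} (through Lemma~\ref{lem:xianchafa} and Remark~\ref{remark2}) bounds its contribution by $[N\lambda]^{-(1+\nu)}$ in $L^2_t$ on each local interval. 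This defines $F(s)$. Integrating in rescaled time, on each doubling interval $[t_k,t_{k+1}]$ the $ds$-length is at most $k$ by \eqref{equ:dotime}, and the number of local intervals is at most $k$ by Lemma~\ref{lem:numberintervals}. Cauchy--Schwarz then gives
\[
\int_{s_1}^{s^+}F\,ds\lesssim\sum_{k\geq k(s_1)}k\,(N(t_k)\lambda(t_k))^{-(1+\nu)}\lesssim\lambda(s_1)^{\alpha_2}
\]
for some $\alpha_2>0$, because $N\lambda=\lambda^{-(1-\beta)/\beta}$ decays geometrically in $k$. Checking that the commutator enters only through such pairings, with no derivative loss on $\varepsilon$, is the step I expect to require the most care.
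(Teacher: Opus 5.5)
Your derivation of \eqref{eq:5}--\eqref{eq:6} and the overall architecture of part (ii) match the paper. You rescale $E(I_Nu)$ and $P(I_Nu)$, replace $I_{N\lambda}Q_b$ by $Q_b$ at cost $(N\lambda)^{-2}\ll\Gamma_b^{10}$, feed \eqref{eq:5} into the $b_s$ identity and \eqref{eq:6} into the $x_s$ equation, and build $F$ from Lemma~\ref{lem:xianchafa} on local intervals, summed with $J_k\lesssim k$. One slip: mass conservation plays no role in \eqref{eq:5}. The $-Q_b$ term in \eqref{def:psib} already produces $(\varepsilon_1,\Sigma)+(\varepsilon_2,\Theta)$ inside the linear term of the energy, and the mass only enters later, through $\mathcal{J}$.

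The gap is in passing from the rough equation to the regularized one. The orthogonality conditions hold for $\varepsilon$, not for $I_{N\lambda}\varepsilon$. For a test function $\phi$ you only have $(I_{N\lambda}\varepsilon,\phi)=-(\varepsilon,(1-I_{N\lambda})\phi)$, which is small but not identically zero. So pairing the regularized equation with $\phi$ does not compute the derivative of an orthogonality condition. Differentiating the defect brings back $(\partial_s\varepsilon,(1-I_{N\lambda})\phi)$, and with it the term $\big(|Q_b+\varepsilon|^{4/d}(Q_b+\varepsilon),(1-I_{N\lambda})\phi\big)$. (Your forcing also omits the harmless term coming from $\partial_s I_{N(s)\lambda(s)}$.) This nonlinear pairing is not a commutator, so Lemma~\ref{lem:xianchafa} does not control it. Your claim that pairings with smooth localized functions cost nothing fails exactly here: for $d=3$ and $s<3/14$ one has $H^s\not\subset L^{7/3}_{\mathrm{loc}}$, so the pairing is not even bounded pointwise in time by the bootstrap quantities.

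The paper closes this with a second space-time input. On each $[\tau_k^j,\tau_k^{j+1}]$, Strichartz gives $\||u|^{4/d}u\|_{L^2_tL^{2d/(d+2)}_x}\lesssim1$, while $\|(1-I_N)\phi_\lambda\|_{L^2_tL^{2d/(d-2)}_x}\lesssim[N(t_k)\lambda(t_k)]^{-2}$. This produces the extra $[N(t_k)\lambda(t_k)]^{-2}$ in \eqref{eq:17}, which sums just like the commutator term.

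Alternatively, you can keep your route by writing each condition exactly as $(I_{N\lambda}\varepsilon,I_{N\lambda}^{-1}\phi)=0$. You then test the regularized equation against the Schwartz function $I_{N\lambda}^{-1}\phi$, which is $O((N\lambda)^{-2})$-close to $\phi$ in every Sobolev norm. The $\partial_s I$ contributions then cancel identically. What remains is $F(I_{N\lambda}(Q_b+\varepsilon))$, bounded pointwise in time because $I_{N\lambda}(Q_b+\varepsilon)$ is bounded in $H^1$, together with the commutator. Either way, this step has to be supplied.
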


\begin{remark}\label{rem:4.2}In the low regularity setting, since $u_0\in H^s$ with $s\in(0,1)$,  various terms arises from the use of the $I$-operator, which will be absorbed into $F$ and controlled in time
averaging sense using Strichartz estimate, i.e.  Lemma~\ref{commutor} and
Remark~\ref{remark2}. In the energy-space,  these terms will
vanish.
\end{remark}

\begin{proof}[Proof of Lemma~\ref{lem:4.1}]
The proof follows \cite[Lemma~4.1]{CR09} and uses the orthogonality
conditions and the almost conservation laws.

Let us begin with the proof of \eqref{eq:5}. Observe that
\[
 I_Nu(t,x)=\frac1{\lambda(t)^{d/2}}[I_{N\lambda}(Q_b+\varepsilon)]
 \left(\frac{x-x(t)}\lambda\right)e^{-i\gamma(t)}
\]
and thus \eqref{equ:lamenermod} and \eqref{equ:ltass} imply:
\begin{equation}\label{eq:10}
 \left|E(I_{N\lambda}(Q_b+\varepsilon))+\Xi(s)\right|\leq\Gamma_b^{10}.
\end{equation}
We expand this relation. Since $Q$ is a Schwartz function,  from
\eqref{equ:profileapprox}, we have
\begin{equation}\label{eq:11}
 \left\lVert I_{N(t)\lambda(t)}Q-Q\right\rVert_{H^p}
 \lesssim (N(t)\lambda(t))^{-2}\left\lVert Q\right\rVert_{H^{p+2}}
 \lesssim\Gamma_b^{10}.
\end{equation}
Consequently,  this kind of error is negligible. The expansion of the almost conservation
 yields:
\begin{equation}\label{eq:12}
\begin{aligned}
 &2(I_{N\lambda}\varepsilon_1,\Sigma+b\Lambda\Theta-\operatorname{Re}(\Psi_b))
   +2(I_{N\lambda}\varepsilon_2,\Theta-b\Lambda\Sigma-\operatorname{Im}(\Psi_b))\\
 =&-2\lambda^2E(I_Nu)+2E(Q_b)+\int|\nabla I_{N\lambda}\varepsilon|^2\\
 &-\int\left(1+\frac{4\Sigma^2}{d|Q_b|^2}\right)
       |Q_b|^{4/d}(I_{N\lambda}\varepsilon_1)^2
       -\int\left(1+\frac{4\Theta^2}{d|Q_b|^2}\right)
       |Q_b|^{4/d}(I_{N\lambda}\varepsilon_2)^2\\
 &-\frac8d\int|Q_b|^{\frac4d-2}\Sigma\Theta
       (I_{N\lambda}\varepsilon_1)(I_{N\lambda}\varepsilon_2)\\
 &-\frac d{d+2}\int\Big[
       |Q_b+I_{N\lambda}\varepsilon|^{2+\frac4d}-|Q_b|^{2+\frac4d}\\
 &\hspace{20mm}-\left(2+\frac4d\right)|Q_b|^{4/d}
       (\Sigma I_{N\lambda}\varepsilon_1+\Theta I_{N\lambda}\varepsilon_2)\\
 &\hspace{20mm}-\left(1+\frac2d\right)|Q_b|^{4/d}|I_{N\lambda}\varepsilon|^2\\
 &\hspace{20mm}-\frac4d\left(1+\frac2d\right)|Q_b|^{\frac4d-2}
       (\Sigma I_{N\lambda}\varepsilon_1+\Theta I_{N\lambda}\varepsilon_2)^2\Big]\\
 &+2E(I_{N\lambda}(Q_b+\varepsilon))-2E(Q_b+I_{N\lambda}\varepsilon).
\end{aligned}
\end{equation}
The nonlinear terms are estimated using Sobolev embedding and the
$L^2$ smallness in \eqref{equ:btvart}, as in
\cite[Lemma~3.2]{SunZheng}:
\begin{align*}
 &\left|\int\Big[
 |Q_b+I_{N\lambda}\varepsilon|^{2+\frac4d}-|Q_b|^{2+\frac4d}
 -\left(2+\frac4d\right)|Q_b|^{4/d}
       (\Sigma I_{N\lambda}\varepsilon_1+\Theta I_{N\lambda}\varepsilon_2)\right.\\
 &\qquad\left.-\left(1+\frac2d\right)|Q_b|^{4/d}|I_{N\lambda}\varepsilon|^2
 -\frac4d\left(1+\frac2d\right)|Q_b|^{\frac4d-2}
       (\Sigma I_{N\lambda}\varepsilon_1+\Theta I_{N\lambda}\varepsilon_2)^2
 \Big]\right|\\
 &\quad\leq\delta_0\left(\int|\nabla I_{N\lambda}\varepsilon|^2
                         +\int|\varepsilon|^2e^{-|y|}\right)+\Gamma_b^{10}.
\end{align*}
By \eqref{equ:profileapprox} and \eqref{equ:L2tailI},
\[
 \left|E(I_{N\lambda}(Q_b+\varepsilon))-E(Q_b+I_{N\lambda}\varepsilon)\right|
 \lesssim\Gamma_b^{10}.
\]
The error in the linear term from \eqref{eq:12} to \eqref{eq:5} is estimated using
the regularity of $Q_b$ and \eqref{eq:11}, and the almost conservation law
\eqref{equ:lamenermod} now implies \eqref{eq:5}.

The proof of \eqref{eq:6} follows similarly by expanding the almost conservation
of the momentum \eqref{equ:lammonmod}, using \eqref{equ:profileapprox}, and is left
to the reader.

We now turn to the proof of \eqref{eq:7} which amounts to taking the inner product
of the equation with respect to the directions of the orthogonality conditions and
estimating the various nonlinear terms. Note that this computation makes sense
thanks to the smoothness of $Q$. Let us for example compute $b_s$ following the proof
of Lemma~5 in~\cite{MR03GAFA}. From \eqref{equ:cth}, we have:
\begin{align*}
 &b_s\big[(\partial_b\Theta,\Lambda\Sigma)-(\partial_b\Sigma,\Lambda\Theta)
           +(\varepsilon_1,\partial_b\Lambda\Theta)
           -(\varepsilon_2,\partial_b\Lambda\Sigma)\big]\\
 =&-(M_+(\varepsilon),\Lambda\Sigma)-(M_-(\varepsilon),\Lambda\Theta)
       -\frac{\lambda_s}{\lambda}
          \{(\varepsilon_2,\Lambda^2\Sigma)-(\varepsilon_1,\Lambda^2\Theta)\}\\
 &-\widetilde\gamma_s
          \{(\varepsilon_1,\Lambda\Sigma)+(\varepsilon_2,\Lambda\Theta)\}
       -\frac{x_s}{\lambda}\cdot
          \{(\varepsilon_2,\nabla\Lambda\Sigma)-(\varepsilon_1,\nabla\Lambda\Theta)\}\\
 &+(R_1(\varepsilon),\Lambda\Sigma)+(R_2(\varepsilon),\Lambda\Theta)
       -(\operatorname{Re}(\Psi_b),\Lambda\Sigma)
       -(\operatorname{Im}(\Psi_b),\Lambda\Theta).
\end{align*}
We integrate by parts and compute:
\begin{equation}\label{eq:13}
\begin{aligned}
 &-(M_+(\varepsilon),\Lambda\Sigma)-(M_-(\varepsilon),\Lambda\Theta)\\
 =&2(\varepsilon_1,\Sigma+b\Lambda\Theta-\operatorname{Re}(\Psi_b))
       +2(\varepsilon_2,\Theta-b\Lambda\Sigma-\operatorname{Im}(\Psi_b))\\
 &-b\bigl((\varepsilon_2,\Lambda^2\Sigma)-(\varepsilon_1,\Lambda^2\Theta)\bigr)
       -(\varepsilon_1,\operatorname{Re}(\Lambda\Psi_b))
       -(\varepsilon_2,\operatorname{Im}(\Lambda\Psi_b)).
\end{aligned}
\end{equation}
We now inject \eqref{eq:5}, rearrange the terms and arrive to the following:
\begin{equation}\label{eq:14}
\begin{aligned}
 \frac{\left\lVert yQ\right\rVert_{L^2}^2}{4}b_s
 =&2\Xi+H(I_{N\lambda}\varepsilon,I_{N\lambda}\varepsilon)
   -(\varepsilon_1,\operatorname{Re}(\Lambda\Psi_b))-(\varepsilon_2,\operatorname{Im}(\Lambda\Psi_b))\\
 &-\left(\frac{\lambda_s}\lambda+b\right)
       \{(\varepsilon_2,\Lambda^2\Sigma)-(\varepsilon_1,\Lambda^2\Theta)\}\\
 &-\widetilde\gamma_s\{(\varepsilon_1,\Lambda\Sigma)+(\varepsilon_2,\Lambda\Theta)\}
 -\frac{x_s}\lambda\cdot\{(\varepsilon_2,\nabla\Lambda\Sigma)-(\varepsilon_1,\nabla\Lambda\Theta)\}\\
 &+O\!\left(\delta_0\left(\Xi+\int|\nabla I_{N(s)\lambda(s)}\varepsilon(s)|^2
                     +\int|\varepsilon(s)|^2e^{-|y|}\right)+\Gamma_{b(s)}^{1-C\eta}\right)\\
 &+O(F(s))
\end{aligned}
\end{equation}
where $H$ is the quadratic form defined in Assumption~\ref{spectral-property},
and $F$ encodes the nonlinear error terms containing $I_{N\lambda}-\mathit{Id}$.
The nonlinear terms in $I_{N\lambda}\varepsilon$ are estimated as in
\cite[proof of Lemma~3.4]{SunZheng}. The remaining nonlinear terms are
\[
 (R_\ell(\varepsilon)-R_\ell(I_{N\lambda}\varepsilon),\phi),
 \qquad \ell=1,2,
\]
for the Schwartz functions $\phi$ appearing in the modulation equations.
The control of these terms requires the use of Strichartz estimates and
we claim that they satisfy \eqref{eq:9}.

Let us assume the estimate \eqref{eq:9}. The remaining orthogonality
conditions and the properties of $L_+,L_-$ give the equations for the
other geometrical parameters, as in \cite{MR03GAFA,MR06JAMS}.
Differentiating \eqref{equ:cy1}, substituting \eqref{eq:2}--\eqref{eq:3},
and integrating by parts gives
\begin{equation}\label{source:translationidentity}
\begin{aligned}
 &\sum_{k=1}^d\mathcal A_{jk}\frac{(x_s)_k}{\lambda}
   =(M_+(\varepsilon),y_j\Theta)-(M_-(\varepsilon),y_j\Sigma)\\
 &\quad+\widetilde\gamma_s
     \{(\varepsilon_1,y_j\Theta)-(\varepsilon_2,y_j\Sigma)\}
   -b_s\{(\varepsilon_1,y_j\partial_b\Sigma)
                  +(\varepsilon_2,y_j\partial_b\Theta)\}\\
 &\quad+\frac{\lambda_s}{\lambda}
       \{(\varepsilon_1,\Lambda(y_j\Sigma))+(\varepsilon_2,\Lambda(y_j\Theta))\}\\
 &\quad-(R_1(\varepsilon),y_j\Theta)+(R_2(\varepsilon),y_j\Sigma)
       -(\operatorname{Im}\Psi_b,y_j\Sigma)+(\operatorname{Re}\Psi_b,y_j\Theta),
\end{aligned}
\end{equation}
where $1\leq j\leq d$ and
\[
 \mathcal A_{jk}=-\frac12\left\lVert Q_b\right\rVert_{L^2}^2\delta_{jk}
       -(\varepsilon_1,\partial_k(y_j\Sigma))
       -(\varepsilon_2,\partial_k(y_j\Theta)).
\]
The matrix $\mathcal A$ is invertible by the smallness of $\varepsilon$ in $L^2$.
At $b=0$, we have
\[
 -(L_-\varepsilon_2,y_jQ)=2(\varepsilon_2,\partial_jQ),\qquad
 L_-(y_jQ)=-2\partial_jQ.
\]
Solving the modulation system and estimating the nonlinear terms as in
\cite[proof of Lemma~3.6]{SunZheng} gives
\begin{equation}\label{source:translationmomentuminput}
\begin{aligned}
 \left|\frac{x_s}{\lambda}\right|
 &\leq C|(\varepsilon_2,\nabla Q)|
       +\delta_0\left(\int|\varepsilon|^2e^{-|y|}\right)^{1/2}\\
 &\quad+C\left(\Xi(s)+\int|\nabla I_{N\lambda}\varepsilon|^2
                       +\int|\varepsilon|^2e^{-|y|}\right)
       +\Gamma_b^{1-C\eta}+F(s).
\end{aligned}
\end{equation}
The $I$-operator errors are estimated in
\eqref{eq:17}--\eqref{eq:18}. Applying \eqref{eq:6} to
\eqref{source:translationmomentuminput}, and the phase equation to the
remaining component, yields \eqref{eq:7} and \eqref{eq:8}, after reducing
the small constants.

\noindent\emph{Proof of \eqref{eq:9}.}
Let us introduce the decomposition:
\begin{equation}\label{eq:15}
 u(t,x)=(Q_b)_{\mathrm{sing}}(t,x)+\widetilde u(t,x)
\end{equation}
with
\[
 \begin{aligned}
 (Q_b)_{\mathrm{sing}}(t,x)&=\frac1{\lambda^{d/2}}
 Q_{b(t)}\!\left(\frac{x-x(t)}\lambda\right)e^{-i\gamma(t)},\\
 \widetilde u(t,x)&=\frac1{\lambda(t)^{d/2}}
 \varepsilon\!\left(t,\frac{x-x(t)}{\lambda(t)}\right)e^{-i\gamma(t)}.
 \end{aligned}
\]
Let $s_1\in[s_0,s^+]$ corresponding to some time $t_1\in[0,T^+]$.
Let $t_k$ be the doubling time intervals \eqref{equ:ltk} and $k_1$ such that
$t_1\in[t_{k_1},t_{k_1+1}]$. We first split the time interval $[t_{k_1},T^+]$
into doubling time intervals. Let
\[
 \phi_\lambda(t,x)=\frac1{\lambda(t)^{d/2}}
 \phi\!\left(\frac{x-x(t)}{\lambda(t)}\right)e^{-i\gamma(t)}.
\]
We now split the doubling time intervals $[t_k,t_{k+1}]$ into LWP time intervals
$[\tau_k^j,\tau_k^{j+1}]$, $1\leq j\leq J_k\lesssim k$ from \eqref{equ:jk}.
Let $J_{k,j}$ be the corresponding interval in rescaled time. Lemma~\ref{lem:numberintervals},
\eqref{equ:multcompare} and Remark~\ref{remark2} give for $|J_{k,j}|\lesssim1$
\begin{equation}\label{eq:16}
\begin{aligned}
 &\left\lVert Q_b+\varepsilon\right\rVert_{L_s^\infty L_y^2(J_{k,j})}
 +\left\lVert\langle\nabla\rangle I_{N\lambda}(Q_b+\varepsilon)\right\rVert_
 {L_s^\infty L_y^2(J_{k,j})}\\
 &\qquad+\left\lVert\langle\nabla\rangle I_{N\lambda}(Q_b+\varepsilon)\right\rVert_
 {L_s^2L_y^{\frac{2d}{d-2}}(J_{k,j})}\lesssim1.
\end{aligned}
\end{equation}
We begin with the terms in which $I_{N\lambda}-\mathit{Id}$ acts on a
Schwartz function $\phi$. From \eqref{equ:profileapprox} and Sobolev,
\begin{align*}
 \left\lVert(\mathit{Id}-I_N)\phi_\lambda\right\rVert_
 {L_t^2L_x^{\frac{2d}{d-2}}([\tau_k^j,\tau_k^{j+1}])}^2
 =&\int_{J_{k,j}}
 \left\lVert(\mathit{Id}-I_{N\lambda})\phi\right\rVert_
 {L_y^{\frac{2d}{d-2}}}^2\,ds\\
 \lesssim& |J_{k,j}|[N(t_k)\lambda(t_k)]^{-4}\lesssim [N(t_k)\lambda(t_k)]^{-4}.
\end{align*}
Next from Strichartz control on $[\tau_k^j,\tau_k^{j+1}]$,
\[
 \left\lVert |u|^{4/d}u\right\rVert_
 {L_t^2L_x^{\frac{2d}{d+2}}([\tau_k^j,\tau_k^{j+1}])}
 =\left\lVert u\right\rVert_
 {L_t^{\frac{2(d+4)}d}L_x^{\frac{2(d+4)}{d+2}}([\tau_k^j,\tau_k^{j+1}])}^{1+4/d}
 \lesssim1.
\]
Hence
\begin{align*}
 &\int_{J_{k,j}}\left|\int |Q_b+\varepsilon|^{4/d}(Q_b+\varepsilon)
       \overline{(\mathit{Id}-I_{N\lambda})\phi}\right|\,ds\\
 =&\int_{\tau_k^j}^{\tau_k^{j+1}}
       \left|\int |u|^{4/d}u\,
       \overline{(\mathit{Id}-I_N)\phi_\lambda}\right|\,dt
       \lesssim [N(t_k)\lambda(t_k)]^{-2}.
\end{align*}
The errors involving $Q_b$ are estimated by \eqref{equ:profileapprox} and
\eqref{equ:L2tailI} in the same way.

We now turn to the commutator terms. By self-adjointness, the definition of
$R_1,R_2$, and the preceding estimates, for each Schwartz function
$\phi$ in these scalar products,
\begin{align*}
 &\int_{J_{k,j}}\sum_{\ell=1}^2
 \bigl|(R_\ell(\varepsilon)-R_\ell(I_{N\lambda}\varepsilon),\phi)\bigr|\,ds\\
 \lesssim& [N(t_k)\lambda(t_k)]^{-2}
\\
 &+\left\lVert\begin{aligned}
 &I_{N(s)\lambda(s)}\big((Q_b+\varepsilon)|Q_b+\varepsilon|^{4/d}\big)\\
 &\quad-I_{N(s)\lambda(s)}(Q_b+\varepsilon)
       |I_{N(s)\lambda(s)}(Q_b+\varepsilon)|^{4/d}
 \end{aligned}\right\rVert_{L_s^2L_y^{\frac{2d}{d+2}}(J_{k,j})}
 \left\lVert\phi\right\rVert_{L_s^2L_y^{\frac{2d}{d-2}}(J_{k,j})}.
\end{align*}
Here we used the fact that
\[
 \left\lVert\phi\right\rVert_{L_s^2L_y^{\frac{2d}{d-2}}(J_{k,j})}^2
 \leq |J_{k,j}|\sup_{s\in J_{k,j}}
 \left\lVert\phi(s)\right\rVert_{L_y^{\frac{2d}{d-2}}}^2\lesssim1.
\]
We obtain
\begin{equation}\label{eq:17}
 \int_{J_{k,j}}\sum_{\ell=1}^2
 \bigl|(R_\ell(\varepsilon)-R_\ell(I_{N\lambda}\varepsilon),\phi)\bigr|\,ds
 \lesssim [N(t_k)\lambda(t_k)]^{-(1+\nu)}+[N(t_k)\lambda(t_k)]^{-2},
\end{equation}
provided that
\begin{equation}\label{eq:18}
\begin{aligned}
 &\left\lVert\begin{aligned}
 &I_{N(s)\lambda(s)}\big((Q_b+\varepsilon)|Q_b+\varepsilon|^{4/d}\big)\\
 &\quad-I_{N(s)\lambda(s)}(Q_b+\varepsilon)
       |I_{N(s)\lambda(s)}(Q_b+\varepsilon)|^{4/d}
 \end{aligned}\right\rVert_{L_s^2L_y^{\frac{2d}{d+2}}(J_{k,j})}\\
 \lesssim& [N(t_k)\lambda(t_k)]^{-(1+\nu)}.
\end{aligned}
\end{equation}
Let us assume \eqref{eq:18} and conclude the proof. From \eqref{eq:17} and
\eqref{equ:jk}, we get:
\begin{align*}
 \int_{s_1}^{s^+}F(s)\,ds
 &\lesssim\sum_{k=k_1}^{k^+}\sum_{j=1}^{J_k}
       \bigl([N(t_k)\lambda(t_k)]^{-(1+\nu)}+[N(t_k)\lambda(t_k)]^{-2}\bigr)\\
 &\lesssim\sum_{k=k_1}^{+\infty}k
       \bigl([N(t_k)\lambda(t_k)]^{-(1+\nu)}+[N(t_k)\lambda(t_k)]^{-2}\bigr)
\\
 &\lesssim\lambda(s_1)^{\alpha_2},
\end{align*}
where $N(t_k)\lambda(t_k)=2^{\frac{2s}{3-2s}k}$ and
$\alpha_2=\alpha_2(s)>0$. This proves \eqref{eq:9}.
Here $F$ is a nonnegative majorant of the errors for all the modulation directions.

\noindent\emph{Proof of \eqref{eq:18}.}
Estimate \eqref{eq:18} follows from Lemma~\ref{lem:xianchafa} and
Remark~\ref{remark2} by scaling. This concludes the proof of
Lemma~\ref{lem:4.1}.
\end{proof}

We will also use two consequences of these estimates. First,
\eqref{eq:5} and \eqref{equ:vartass} imply
$0\leq\Xi(s)\lesssim\Gamma_b^{1/8}$. Hence,
\begin{equation}\label{source:roughmodulation}
 \left|\frac{\lambda_s}{\lambda}+b\right|+|b_s|
       +|\widetilde\gamma_s|+\left|\frac{x_s}{\lambda}\right|
 \lesssim\Gamma_b^{1/8}+F(s).
\end{equation}
Second, for every fixed $m\geq0$, the error majorants constructed above
satisfy
\begin{align}\nonumber
 \int_{s_1}^{s^+}b(s)^{-m}F(s)\,ds
 &\lesssim\sum_{k\geq k_1}k(\log k)^m
       \bigl([N(t_k)\lambda(t_k)]^{-(1+\nu)}+[N(t_k)\lambda(t_k)]^{-2}\bigr)\\\label{source:weightedF}
 &\lesssim\lambda(s_1)^{\alpha_2},
\end{align}
after decreasing $\alpha_2$. Indeed, \eqref{equ:ltass} gives
$b^{-1}\lesssim\log k$ on a doubling time interval. The same conclusion
holds with any fixed factor $A^C$, since $A^C\lesssim k^{C'}$.

\subsection{Virial dispersion}
The virial estimates used below are those of
\cite{MR03GAFA,MR06JAMS}, in the form given in
\cite[Section~4.2]{CR09}. The first is the global virial estimate.

\begin{lemma}[Global virial estimate]\label{lem:4.3}
There holds:
\begin{equation}\label{eq:19}
 b_s\geq c_0\left(\Xi(s)+\int|\nabla I_{N(s)\lambda(s)}\varepsilon(s)|^2
                   +\int|\varepsilon(s)|^2e^{-|y|}\right)
       -\Gamma_{b(s)}^{1-C\eta}-F(s)
\end{equation}
where $F$ satisfies \eqref{eq:9}.
\end{lemma}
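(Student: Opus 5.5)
The plan is to derive \eqref{eq:19} from the $b_s$ identity \eqref{eq:14} already obtained in the proof of Lemma~\ref{lem:4.1}, following \cite[Lemma~4.3]{CR09} and \cite[Lemma~6]{MR03GAFA}. Identity \eqref{eq:14} expresses $\frac{\|yQ\|_{L^2}^2}{4}b_s$ as
\[
2\Xi+H(I_{N\lambda}\varepsilon,I_{N\lambda}\varepsilon)
\]
plus three kinds of terms: the linear terms $(\varepsilon_1,\operatorname{Re}\Lambda\Psi_b)+(\varepsilon_2,\operatorname{Im}\Lambda\Psi_b)$, the modulation terms multiplied by $\frac{\lambda_s}{\lambda}+b$, $\widetilde\gamma_s$ and $\frac{x_s}{\lambda}$, and the errors $O(\delta_0(\cdots)+\Gamma_b^{1-C\eta})+O(F)$. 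Since $\Xi\geq0$ enters with a favorable sign, the main task is a coercive lower bound for $H(I_{N\lambda}\varepsilon,I_{N\lambda}\varepsilon)$.

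\textbf{Step 1 (coercivity).} Set $\tilde\varepsilon=I_{N\lambda}\varepsilon$. I will decompose $\tilde\varepsilon$ along the directions of Assumption~\ref{spectral-property} and apply the spectral property to the orthogonal part.
\begin{itemize}
\item The inner products $(\tilde\varepsilon_1,yQ)$, $(\tilde\varepsilon_2,\Lambda Q)$ and $(\tilde\varepsilon_2,\Lambda^2Q)$ are small, up to $\delta_0$-multiples of the norm. This follows from the orthogonality conditions \eqref{equ:cy1}, \eqref{equ:cth}, \eqref{equ:cth2}, from $Q_b\to Q$ in \eqref{equ:qbclsq}, and from self-adjointness, $(I_{N\lambda}\varepsilon,\phi)=(\varepsilon,I_{N\lambda}\phi)$. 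The error $I_{N\lambda}\phi-\phi$ is $O(\Gamma_b^{10})$ by \eqref{eq:11}.
\item The direction $(\tilde\varepsilon_2,\nabla Q)$ is controlled by \eqref{eq:6}.
\item The directions $(\tilde\varepsilon_1,Q)$ and $(\tilde\varepsilon_1,\Lambda Q)$ are not orthogonal. As in \cite{MR03GAFA}, I use the energy relation \eqref{eq:5}: it bounds $(\varepsilon_1,Q)$ by quadratic quantities $\lesssim\Xi+\int|\nabla\tilde\varepsilon|^2+\int|\varepsilon|^2e^{-|y|}$, up to $\Gamma_b^{1-C\eta}$. The condition \eqref{equ:cy2} handles the $|y|^2Q$ direction, which replaces the $\Lambda Q$ direction via $L_+(|y|^2Q)=-4\Lambda Q$.
\end{itemize}
Projecting these components out and applying Assumption~\ref{spectral-property} will give
\[
H(\tilde\varepsilon,\tilde\varepsilon)\geq\delta_1'\Big(\int|\nabla\tilde\varepsilon|^2+\int|\tilde\varepsilon|^2e^{-|y|}\Big)-C\Big(\Xi+\int|\nabla\tilde\varepsilon|^2\Big)^2-\Gamma_b^{1-C\eta}-F.
\]
Finally, I replace $\int|\tilde\varepsilon|^2e^{-|y|}$ by $\int|\varepsilon|^2e^{-|y|}$. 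The difference is controlled by \eqref{equ:L2tailI} after rescaling, $\|(1-I_{N\lambda})\varepsilon\|_{L^2}\lesssim(N\lambda)^{-1}\|\langle\nabla\rangle I_{N\lambda}\varepsilon\|_{L^2}$, so it is negligible.

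\textbf{Step 2 (remaining terms).}
\begin{itemize}
\item The $\Psi_b$ terms: $\Psi_b$ is supported in $R_b^-\leq|y|\leq R_b$ and is exponentially small there, so Cauchy--Schwarz gives $|(\varepsilon,\Lambda\Psi_b)|\leq\delta_0\int|\varepsilon|^2e^{-|y|}+\Gamma_b^{1-C\eta}$.
\item The modulation terms: I insert \eqref{eq:7} and \eqref{eq:8}. These are linear in the parameters and multiplied by $(\varepsilon,\phi)$, which is $O(\delta_0)$ in the local norm. The products are therefore $\delta_0(\Xi+\int|\nabla\tilde\varepsilon|^2+\int|\varepsilon|^2e^{-|y|})+\Gamma_b^{1-C\eta}+F$. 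The factor $\delta_0\cdot F$ is absorbed into $F$, which still satisfies \eqref{eq:9}.
\item The quadratic term: the bootstrap \eqref{equ:vartass} together with $\Xi\lesssim\Gamma_b^{1/8}$ makes $(\Xi+\int|\nabla\tilde\varepsilon|^2)^2$ absorbable.
\end{itemize}
Collecting everything and choosing $\delta_0\ll\delta_1'$ yields \eqref{eq:19} with $c_0\sim\min(\delta_1',1)/\|yQ\|_2^2$.

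\textbf{Main obstacle.} The hard part is Step~1. The orthogonality conditions are imposed on $\varepsilon$, while coercivity is needed for $I_{N\lambda}\varepsilon$, so every transfer must route through $I_{N\lambda}$ on the profile side to keep the errors $\Gamma_b^{10}$-small. The treatment of the $(\varepsilon_1,Q)$ direction must also keep $\Xi$ with a sign, and not merely as an error, so that it appears on the right-hand side of \eqref{eq:19}.
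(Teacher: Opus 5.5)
There is a genuine gap in how you treat the phase term $-\widetilde\gamma_s\{(\varepsilon_1,\Lambda\Sigma)+(\varepsilon_2,\Lambda\Theta)\}$ in \eqref{eq:14}: it is not a $\delta_0$-error. Write $\|\varepsilon\|_{\mathrm{loc}}^2=\int|\nabla I_{N\lambda}\varepsilon|^2+\int|\varepsilon|^2e^{-|y|}$.

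\begin{itemize}
\item By \eqref{eq:8}, $\widetilde\gamma_s$ equals $(\varepsilon_1,L_+\Lambda^2Q)/\|\Lambda Q\|_{L^2}^2$ up to admissible errors. This is a linear form of size $O(\|\varepsilon\|_{\mathrm{loc}})$ with no small constant.
\item It multiplies $(\varepsilon_1,\Lambda Q)+o_{b\to0}(1)\|\varepsilon\|_{\mathrm{loc}}$. The direction $(\varepsilon_1,\Lambda Q)$ is not constrained by any of \eqref{equ:cy2}--\eqref{equ:cth2}.
\item The product is therefore the order-one quadratic form $-\|\Lambda Q\|_{L^2}^{-2}(\varepsilon_1,L_+\Lambda^2Q)(\varepsilon_1,\Lambda Q)$.
\end{itemize}

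Your argument fails under either reading of ``$(\varepsilon,\phi)=O(\delta_0)$''. An absolute bound only gives $\delta_0\|\varepsilon\|_{\mathrm{loc}}$, which is linear and cannot be absorbed. A relative bound $|(\varepsilon,\phi)|\leq\delta_0\|\varepsilon\|_{\mathrm{loc}}$ is false for $\phi=\Lambda\Sigma$. This term must be kept: it is exactly what turns $H$ into the modified form
$\widetilde H(\varepsilon,\varepsilon)=H(\varepsilon,\varepsilon)-\|\Lambda Q\|_{L^2}^{-2}(\varepsilon_1,L_+\Lambda^2Q)(\varepsilon_1,\Lambda Q)$,
which is what the paper's proof uses. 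The rest of your plan matches the paper. That includes \eqref{eq:5} for $(\varepsilon_1,Q)$, \eqref{eq:6} for $(\varepsilon_2,\nabla Q)$, and moving $I_{N\lambda}$ onto Schwartz functions. The $x_s/\lambda$ term is harmless because the small factor $\delta_0$ already sits in \eqref{eq:8}.

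Correspondingly, Step 1 targets the wrong form. Orthogonality to $|y|^2Q$ gives no smallness of $(\varepsilon_1,\Lambda Q)$. The identity you cite is actually $L_-(|y|^2Q)=-4\Lambda Q$, which concerns $\varepsilon_2$. Assumption~\ref{spectral-property} also does not yield coercivity of $H$ itself under \eqref{equ:cy2}. The mechanism that works is as follows.
\begin{itemize}
\item From $L_1=\frac12[L_+,\Lambda]$ and $L_+\Lambda Q=-2Q$ you get $L_1\Lambda Q=\frac12L_+\Lambda^2Q+\Lambda Q$ and $(L_1\Lambda Q,\Lambda Q)=2\|\Lambda Q\|_{L^2}^2$.
\item Set $a=(\varepsilon_1,\Lambda Q)/\|\Lambda Q\|_{L^2}^2$, $\hat\varepsilon_1=\varepsilon_1-a\Lambda Q$ and $\hat\varepsilon=\hat\varepsilon_1+i\varepsilon_2$. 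The cross terms then cancel exactly, giving $\widetilde H(\varepsilon,\varepsilon)=H(\hat\varepsilon,\hat\varepsilon)$. By contrast, $H(\varepsilon,\varepsilon)=H(\hat\varepsilon,\hat\varepsilon)+a(\hat\varepsilon_1,L_+\Lambda^2Q)+2a^2\|\Lambda Q\|_{L^2}^2$ keeps an indefinite cross term.
\item Since $(\Lambda Q,Q)=(\Lambda Q,yQ)=0$, Assumption~\ref{spectral-property} applies to $\hat\varepsilon$ with unchanged $(\varepsilon_1,Q)$ and $(\varepsilon_1,yQ)$.
\item The coefficient $a$ is recovered from \eqref{equ:cy2}, because $(\Lambda Q,|y|^2Q)=-\int|y|^2Q^2\neq0$.
\end{itemize}
This gives coercivity of $\widetilde H$ modulo $(\varepsilon_1,Q)$, $(\varepsilon_1,|y|^2Q)$, $(\varepsilon_1,yQ)$, $(\varepsilon_2,\Lambda Q)$, $(\varepsilon_2,\Lambda^2Q)$, $(\varepsilon_2,\nabla Q)$. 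You then apply it to $I_{N\lambda}\varepsilon$ as you intended.

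A smaller point concerns the $\Psi_b$ terms. Cauchy--Schwarz against $e^{-|y|}$ costs a factor $e^{R_b/2}\approx\Gamma_b^{-1/\pi}$ on $\operatorname{supp}\Psi_b$, where $|\Psi_b|\lesssim\Gamma_b^{1/2-C\eta}$. That leaves an error of order $\Gamma_b^{1-2/\pi-C\eta}$, which is too weak for Steps~2--3 of the bootstrap. Instead, move $I_{N\lambda}$ onto $\Lambda\Psi_b$ at cost $(N\lambda)^{-2}$. Then use Sobolev embedding ($d\geq3$) with $\|\nabla I_{N\lambda}\varepsilon\|_{L^2}$.
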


\begin{proof}
The proof uses the orthogonality conditions and
Assumption~\ref{spectral-property}, as in \cite[Lemma~4.3]{CR09}.
In \eqref{eq:14}, the vector paired with $x_s/\lambda$ is bounded by
$C(\int|\varepsilon|^2e^{-|y|})^{1/2}$. The additional gradient contribution
in \eqref{eq:8} therefore satisfies
\[
 C\delta_0\left\lVert \nabla I_{N\lambda}\varepsilon\right\rVert_{L^2}
              \left(\int|\varepsilon|^2e^{-|y|}\right)^{1/2}
 \leq C\delta_0\left(\int|\nabla I_{N\lambda}\varepsilon|^2
                         +\int|\varepsilon|^2e^{-|y|}\right).
\]
It is absorbed by reducing $\delta_0$. We then insert
\eqref{eq:7} and \eqref{eq:8} to derive:
\begin{align*}
 \frac{\left\lVert yQ\right\rVert_{L^2}^2}{4}b_s
 &\geq2\Xi(s)+\widetilde H(I_{N\lambda}\varepsilon,I_{N\lambda}\varepsilon)\\
 &\quad-\delta_0\left(\Xi(s)+\int|\nabla I_{N(s)\lambda(s)}\varepsilon(s)|^2
                          +\int|\varepsilon(s)|^2e^{-|y|}\right)
 -\Gamma_{b(s)}^{1-C\eta}-F(s)
\end{align*}
with $F(s)$ satisfying \eqref{eq:9}, and  $\widetilde H$ is the modified
quadratic form:
\[
 \widetilde H(\varepsilon,\varepsilon)=H(\varepsilon,\varepsilon)
 -\frac1{|\Lambda Q|_2^2}(\varepsilon_1,L_+\Lambda^2Q)(\varepsilon_1,\Lambda Q).
\]
Assumption~\ref{spectral-property} and the argument in
\cite[proof of Lemma~4.3]{CR09} give, for every $\varepsilon\in H^1$,
\begin{align*}
 \widetilde H(\varepsilon,\varepsilon)
 &\geq c_1\left(\int|\nabla\varepsilon|^2+\int|\varepsilon|^2e^{-|y|}\right)\\
 &\quad-\frac1{\widetilde\delta_1}\bigl\{(\varepsilon_1,Q)^2+(\varepsilon_1,|y|^2Q)^2
 +(\varepsilon_1,yQ)^2+(\varepsilon_2,\Lambda Q)^2+(\varepsilon_2,\Lambda^2Q)^2+(\varepsilon_2,\nabla Q)^2\bigr\},
\end{align*}
for some universal constant $c_1>0$. Applying the coercivity estimates to
$I_{N\lambda}\varepsilon$, it now remains to observe that the negative directions of
$\widetilde H$ are controlled using the orthogonality conditions and the degeneracy
estimates \eqref{eq:5}, \eqref{eq:6}, and \eqref{eq:19} follows.
This concludes the proof of Lemma~\ref{lem:4.3}.
\end{proof}

The next estimate uses the observation of \cite[Section~4.2]{CR09} concerning the term
$-(\varepsilon_1,\operatorname{Re}(\Lambda\Psi_b))
-(\varepsilon_2,\operatorname{Im}(\Lambda\Psi_b))$ in \eqref{eq:14}.
This term comes from the error $\Psi_b$ in \eqref{def:psib}, since $Q_b$
is not an exact self-similar solution. The outgoing radiation $\zeta_b$
of Lemma~\ref{lem:linoutrad} is introduced to estimate this term at the
order $\Gamma_b$.

As explained in \cite{CR09,MR06JAMS}, the profile $Q_b+\zeta_b$ gives a
better approximation in the radiative region. Since $\zeta_b\notin L^2$
by \eqref{equ:gammab}, the radiation is first cut off in space. The cutoff
parameter is
\begin{equation}\label{eq:20} A(t)=e^{\frac{2a}{b(t)}}\end{equation}
for some small parameter $0<a\ll1$. The parameters are chosen with $a$
sufficiently small and then $\eta\ll a$, so that
\[
 \Gamma_b^{-a/2}\leq A\leq\Gamma_b^{-3a/2},\qquad
 3a<\frac1{48},\qquad 1-Ca>\frac{17}{24},\qquad \frac a2>C\eta.
\]
Define
\[
 \widetilde\zeta=\chi\!\left(\frac rA\right)\zeta_b,
\]
where $\chi(r)$ is a smooth nonnegative radial cutoff with $\chi(r)=1$
for $0\leq r\leq\frac32$ and $\chi(r)=0$ for $r\geq2$. Thus
$\widetilde\zeta$ is a Schwartz function. Set
\begin{equation}\label{eq:21} \widetilde\varepsilon=\varepsilon-\widetilde\zeta.\end{equation}
The calculation leading to \eqref{eq:14} is then repeated with
$Q_b+\widetilde\zeta$, as in \cite[Lemma~4.4]{CR09}. The scalar products
involve Schwartz functions constructed from $Q_b$ and $\widetilde\zeta$.
The terms containing $I_{N\lambda}-\mathit{Id}$ are estimated using
\eqref{eq:17}--\eqref{eq:18}. The remaining calculation is the one in
\cite[Lemma~6]{MR06JAMS}.

\begin{lemma}[Virial dispersion in the radiative regime]\label{lem:4.4}
For some universal constants $c_1>0$ and $s\in[s_0,s^+]$, it holds:
\begin{equation}\label{eq:22}
\begin{aligned}
 \{f_1(s)\}_s
 &\geq c_1\left(\Xi(s)+\int|\nabla I_{N(s)\lambda(s)}\widetilde\varepsilon(s)|^2
                  +\int|\varepsilon(s)|^2e^{-|y|}+\Gamma_b\right)\\
 &\quad-\frac1{\delta_1}\int_A^{2A}|\varepsilon|^2-F(s),
\end{aligned}
\end{equation}
with $F(s)$ satisfying \eqref{eq:9} and
\begin{equation}\label{eq:23}
 f_1(s)=\frac b4|yQ_b|_2^2
 +\frac12\operatorname{Im}\!\left(\int(y\cdot\nabla\widetilde\zeta)\overline{\widetilde\zeta}\right)
 +(\varepsilon_2,\Lambda\widetilde\zeta_{\mathrm{re}})
 -(\varepsilon_1,\Lambda\widetilde\zeta_{\mathrm{im}}).
\end{equation}
\end{lemma}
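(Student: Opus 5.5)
We follow the computation of \cite[Lemma~6]{MR06JAMS} in the form of \cite[Lemma~4.4]{CR09}. We replace $Q_b$ by $Q_b+\widetilde\zeta$ in the derivation of \eqref{eq:14} and keep track of the $I$-operator errors through the majorant $F$. Concretely, we differentiate $f_1(s)$ in rescaled time. The first term gives $\frac14|yQ_b|_2^2\,b_s$ plus $O(|b_s|\,b\,\|y\,\partial_bQ_b\|^2)$, which is absorbed using \eqref{source:roughmodulation}. For the cross terms $(\varepsilon_2,\Lambda\widetilde\zeta_{\rm re})-(\varepsilon_1,\Lambda\widetilde\zeta_{\rm im})$ we substitute \eqref{eq:2}--\eqref{eq:3}. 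The quadratic radiation term $\frac12\operatorname{Im}\int(y\cdot\nabla\widetilde\zeta)\overline{\widetilde\zeta}$ is differentiated using $\partial_s\widetilde\zeta=b_s\partial_b\widetilde\zeta$ together with the bound on $\partial_b\zeta_b$ from Lemma~\ref{lem:linoutrad}. Its leading part is the flux term.

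Next we combine these pieces with \eqref{eq:14}, which is rewritten with the linear energy identity \eqref{eq:5} and the approximate profile $Q_b+\widetilde\zeta$. The equation $\Delta\zeta_b-\zeta_b+ib\Lambda\zeta_b=\Psi_b$ makes the dangerous linear term $-(\varepsilon_1,\operatorname{Re}\Lambda\Psi_b)-(\varepsilon_2,\operatorname{Im}\Lambda\Psi_b)$ cancel against the corresponding terms produced by $\widetilde\zeta$. The cancellation is exact up to cutoff errors supported in $A\le|y|\le2A$. By Cauchy--Schwarz these errors are bounded by $\delta_1\Gamma_b+\frac1{\delta_1}\int_A^{2A}|\varepsilon|^2$, using the decay $|y|^{d/2}|\zeta_b|\le\Gamma_b^{1/2-C\eta}$ and the choice $\frac a2>C\eta$. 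The flux at the outgoing boundary of the cutoff produces the positive contribution $c\,\Gamma_b$ via \eqref{equ:gammab}, exactly as in \cite{MR06JAMS}. For the quadratic form we write $I_{N\lambda}\varepsilon=I_{N\lambda}\widetilde\varepsilon+I_{N\lambda}\widetilde\zeta$. We then note that $I_{N\lambda}\widetilde\zeta-\widetilde\zeta$ is negligible by \eqref{equ:profileapprox}: $\widetilde\zeta$ is smooth at scale $A\ll N\lambda$, so this difference is $O((N\lambda)^{-2}A^C)$, which goes into $F$ by \eqref{source:weightedF}. We then apply the coercivity of $\widetilde H$ from the proof of Lemma~\ref{lem:4.3} to $I_{N\lambda}\widetilde\varepsilon$. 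The negative directions are controlled by the orthogonality conditions, \eqref{eq:5}, and \eqref{eq:6}; in these, $\widetilde\zeta$ only contributes $\Gamma_b^{1/2+\sigma/10}$-small scalar products against the localized directions, by the interior bound of Lemma~\ref{lem:linoutrad}. The $\Xi$ term is kept with its favorable sign $2\Xi$.

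The remaining terms are handled as follows. The modulation terms ($\frac{\lambda_s}{\lambda}+b$, $\widetilde\gamma_s$, $x_s/\lambda$) paired with $\widetilde\zeta$-dependent Schwartz functions are bounded by \eqref{eq:7}--\eqref{eq:8} times $\Gamma_b^{1/2-C\eta}A^C$, and are absorbed into $\delta_0(\cdots)$ and $\Gamma_b^{1+c}$. The nonlinear terms $(R_\ell(\varepsilon),\Lambda\widetilde\zeta)$ and those with $I_{N\lambda}-\mathit{Id}$ are treated by \eqref{eq:17}--\eqref{eq:18} applied with the Schwartz function $\phi=\Lambda\widetilde\zeta$.

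\textbf{Main obstacle.} The main difficulty is the last step. The test functions built from $\widetilde\zeta$ are not uniformly localized: they live on the scale $A=e^{2a/b}$. Consequently the Strichartz-based bounds \eqref{eq:17} acquire factors $A^C$, and the $L^{2d/(d-2)}$ norm of $\phi$ grows polynomially in $A$. To handle this, we first verify that the proof of \eqref{eq:17} only loses a power $A^C\lesssim k^{C'}$ on each doubling interval. Then \eqref{source:weightedF} still gives $\int_{s_1}^{s^+}F\lesssim\lambda(s_1)^{\alpha_2}$, which shows that $F$ still satisfies \eqref{eq:9} as required by \eqref{eq:22}.
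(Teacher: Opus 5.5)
\paragraph{Gap: radiation-level control of the momentum direction and of $x_s/\lambda$.}
You propose to control the negative direction $(\widetilde\varepsilon_2,\nabla Q)$ of $\widetilde H(I_{N\lambda}\widetilde\varepsilon,I_{N\lambda}\widetilde\varepsilon)$ with \eqref{eq:6}, and the modulation terms with \eqref{eq:7}--\eqref{eq:8}. This does not work at the precision the lemma needs.

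\begin{itemize}
\item The interior bound of Lemma~\ref{lem:linoutrad} does make the $\widetilde\zeta$-contribution to the left-hand side $(\varepsilon_2,\nabla Q)$ negligible; by radiality it even vanishes.
\item The problem is the right-hand sides. Both \eqref{eq:6} and \eqref{eq:8} contain $\delta_0\|\nabla I_{N\lambda}\varepsilon\|_{L^2}$, and $\|\nabla I_{N\lambda}\varepsilon\|_{L^2}\leq\|\nabla I_{N\lambda}\widetilde\varepsilon\|_{L^2}+\|\nabla\widetilde\zeta\|_{L^2}$. The radiation is not localized, and Lemma~\ref{lem:linoutrad} only gives $\|\nabla\widetilde\zeta\|_{L^2}\leq\Gamma_b^{1/2-C\eta}$.
\item Consequently $\frac1{\widetilde\delta_1}(\widetilde\varepsilon_2,\nabla Q)^2$ leaves an error of size $\delta_0^2\Gamma_b^{1-2C\eta}$.
\item You also do not treat separately the $\widetilde\zeta$-independent translation term $\frac{x_s}{\lambda}\cdot\{(\widetilde\varepsilon_2,\nabla\Lambda\Sigma)-(\widetilde\varepsilon_1,\nabla\Lambda\Theta)\}$ coming from \eqref{eq:14}. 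In Lemma~\ref{lem:4.3} it was absorbed at the cost of a $\Gamma_b^{1-C\eta}$ loss. Here, with \eqref{eq:8}, it gives $\delta_0\Gamma_b^{1/2-C\eta}(\int|\widetilde\varepsilon|^2e^{-|y|})^{1/2}$, hence again $\delta_0\Gamma_b^{1-2C\eta}$ after Cauchy--Schwarz.
\end{itemize}

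Since $\Gamma_b^{-C\eta}\to\infty$, none of these errors can be absorbed by $c_1\Gamma_b$, so the gain $+c_1\Gamma_b$ is lost. The same defect breaks your claim that the $\widetilde\zeta$-paired modulation terms go into $\Gamma_b^{1+c}$: the product of $\delta_0\Gamma_b^{1/2-C\eta}$ with $\Gamma_b^{1/2-C\eta}A^C$ is $\Gamma_b^{1-C(\eta+a)}$. That step only works if the pairing is written with $\widetilde\varepsilon$ and keeps a factor $\|\nabla I_{N\lambda}\widetilde\varepsilon\|_{L^2}$.

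\paragraph{The missing ingredient.}
This is the one substantive new computation in the paper's proof: a radiality-based control stated with $\widetilde\varepsilon$ in place of $\varepsilon$.
\begin{enumerate}
\item Re-expand the modified momentum around $I_{N\lambda}(Q_b+\widetilde\zeta)$ as in \eqref{source:radiativemomentumidentity}. Use that $P(I_{N\lambda}(Q_b+\widetilde\zeta))=0$ by radiality, and that $\|I_{N\lambda}(Q_b+\widetilde\zeta)-Q\|_{L^2}=o(1)$; the latter relies on $\|\widetilde\zeta\|_{L^2}\lesssim A\|\nabla\zeta_b\|_{L^2}$, which uses $d\geq3$. This gives \eqref{source:radiativemomentum}.
\item Substitute $\varepsilon=\widetilde\varepsilon+\widetilde\zeta$ in the translation identity \eqref{source:translationidentity}. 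Every pairing of the radial objects $\widetilde\zeta$ and $\Psi_b$ (and the terms they generate) against the odd functions $y_j\Sigma,y_j\Theta$ vanishes. The result is \eqref{source:radiativetranslation}, which contains no $\Gamma_b^{1-C\eta}$ term and no $\|\nabla I_{N\lambda}\varepsilon\|$.
\item Only with this bound is \eqref{source:radiativetranslationterm} controlled by $\delta_0(\int|\nabla I_{N\lambda}\widetilde\varepsilon|^2+\int|\widetilde\varepsilon|^2e^{-|y|})+C\Gamma_b^{1+c_*}+CF(s)$.
\end{enumerate}
By contrast, the $A^C$ loss you single out as the main obstacle is handled in one line, via $A^C\lesssim k^{C'}$ and \eqref{source:weightedF}.

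\paragraph{A smaller slip.}
You split off $\frac b4 b_s\partial_b|yQ_b|_2^2$ and absorb it with \eqref{source:roughmodulation}. That leaves an error $O(b\Gamma_b^{1/8})\gg\Gamma_b$. Instead, note that $\frac b4|yQ_b|_2^2=(\Theta,\Lambda\Sigma)$ exactly, because $Q_be^{ib|y|^2/4}$ is real. Its $s$-derivative is then precisely the $b_s$-coefficient in the virial identity, and no such error appears.
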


\begin{proof}
We argue as in \cite{SunZheng}.
For the terms containing
$I_{N\lambda}-\mathit{Id}$, we use \eqref{equ:profileapprox}, \eqref{equ:L2tailI},
\eqref{eq:16} and \eqref{eq:18}.
Choose $a$ sufficiently small and then $\eta\ll a$. The radiation bounds give,
for each fixed $m$,
\begin{equation}\label{source:highprofiles}
 \left\lVert \widetilde\zeta\right\rVert_{H^m}
 +\left\lVert \chi(y/A)\partial_b\zeta_b\right\rVert_{H^m}\lesssim A^{C_m}.
\end{equation}
Consequently,
\begin{equation}\label{source:radiationI}
\begin{aligned}
 \left\lVert (I_{N\lambda}-\mathit{Id})\widetilde\zeta\right\rVert_{H^1}
 &\lesssim A^C(N\lambda)^{-2}\lesssim\Gamma_b^{10},\\
 \left\lVert (I_{N\lambda}-\mathit{Id})\varepsilon\right\rVert_{L^2}
 &\lesssim(N\lambda)^{-1}
 \bigl(\left\lVert \nabla I_{N\lambda}\varepsilon\right\rVert_{L^2}+\left\lVert \varepsilon\right\rVert_{L^2}\bigr).
\end{aligned}
\end{equation}
The energy terms are treated as in \cite[Lemma~4.4]{CR09}, using
\eqref{equ:lamenermod}. The expansion of the
modified momentum gives
\begin{equation}\label{source:radiativemomentumidentity}
\begin{aligned}
 \lambda P(I_Nu)
 &=P\big(I_{N\lambda}(Q_b+\widetilde\zeta)
                     +I_{N\lambda}\widetilde\varepsilon\big)\\
 &=-2(I_{N\lambda}\widetilde\varepsilon_2,\nabla Q)
       +P(I_{N\lambda}\widetilde\varepsilon)\\
 &\quad+2\operatorname{Im}\int\nabla I_{N\lambda}\widetilde\varepsilon\,
                  \overline{I_{N\lambda}(Q_b+\widetilde\zeta)-Q}.
\end{aligned}
\end{equation}
Here $P(I_{N\lambda}(Q_b+\widetilde\zeta))=0$ by radiality. Thus
\eqref{equ:lammonmod} implies
\begin{equation}\label{source:radiativemomentumbound}
\begin{aligned}
 2|(I_{N\lambda}\widetilde\varepsilon_2,\nabla Q)|
 &\leq\Gamma_b^{10}
  +\Big(\left\lVert I_{N\lambda}\widetilde\varepsilon\right\rVert_{L^2}
       +2\left\lVert I_{N\lambda}(Q_b+\widetilde\zeta)-Q\right\rVert_{L^2}\Big)
       \left\lVert\nabla I_{N\lambda}\widetilde\varepsilon\right\rVert_{L^2}.
\end{aligned}
\end{equation}
The coefficient of $\left\lVert\nabla I_{N\lambda}\widetilde\varepsilon
\right\rVert_{L^2}$ is small. Indeed, $Q_b\to Q$ in $L^2$, and Sobolev's
inequality and Lemma~\ref{lem:linoutrad} give, for $d\geq3$,
\[
 \left\lVert\widetilde\zeta\right\rVert_{L^2}
 \lesssim A\left\lVert\nabla\zeta_b\right\rVert_{L^2}
 \lesssim A\Gamma_b^{1/2-C\eta}=o(1).
\]
The same estimates give $\left\lVert\Lambda\widetilde\zeta\right\rVert_{L^2}=o(1)$.
By self-adjointness and \eqref{equ:profileapprox},
\[
 \begin{aligned}
 |(I_{N\lambda}\widetilde\varepsilon_2-\widetilde\varepsilon_2,\nabla Q)|
 &=|(\widetilde\varepsilon_2,(I_{N\lambda}-\mathit{Id})\nabla Q)|\\
 &\lesssim(N\lambda)^{-2}
       \left\lVert\widetilde\varepsilon\right\rVert_{L^2}
       \left\lVert\nabla Q\right\rVert_{H^2}
 \lesssim\Gamma_b^{10}.
 \end{aligned}
\]
It follows that
\begin{equation}\label{source:radiativemomentum}
 |(\widetilde\varepsilon_2,\nabla Q)|
 \leq\delta_0\left\lVert\nabla I_{N\lambda}\widetilde\varepsilon\right\rVert_{L^2}
       +C\Gamma_b^{10}.
\end{equation}

Substitute $\varepsilon=\widetilde\varepsilon+\widetilde\zeta$ in
\eqref{source:translationidentity}. Since $\widetilde\zeta$ is radial,
its scalar products with the odd functions in that identity vanish.
The coefficient matrix remains invertible. In the nonlinear terms,
radiality gives
\[
 (R_1(I_{N\lambda}\widetilde\zeta),y_j\Theta)
 =(R_2(I_{N\lambda}\widetilde\zeta),y_j\Sigma)=0,
 \qquad 1\leq j\leq d.
\]
The definitions of $R_1,R_2$ give
\begin{align*}
 &R_1(I_{N\lambda}\varepsilon)-R_1(I_{N\lambda}\widetilde\zeta)\\
 &\quad=(\Sigma+I_{N\lambda}\varepsilon_1)
             |Q_b+I_{N\lambda}\varepsilon|^{4/d}
       -(\Sigma+I_{N\lambda}\widetilde\zeta_{\mathrm{re}})
             |Q_b+I_{N\lambda}\widetilde\zeta|^{4/d}\\
 &\qquad-\left(1+\frac{4\Sigma^2}{d|Q_b|^2}\right)|Q_b|^{4/d}
                 I_{N\lambda}\widetilde\varepsilon_1
       -\frac4d\Sigma\Theta|Q_b|^{\frac4d-2}
                 I_{N\lambda}\widetilde\varepsilon_2,\\[1mm]
 &R_2(I_{N\lambda}\varepsilon)-R_2(I_{N\lambda}\widetilde\zeta)\\
 &\quad=(\Theta+I_{N\lambda}\varepsilon_2)
             |Q_b+I_{N\lambda}\varepsilon|^{4/d}
       -(\Theta+I_{N\lambda}\widetilde\zeta_{\mathrm{im}})
             |Q_b+I_{N\lambda}\widetilde\zeta|^{4/d}\\
 &\qquad-\left(1+\frac{4\Theta^2}{d|Q_b|^2}\right)|Q_b|^{4/d}
                 I_{N\lambda}\widetilde\varepsilon_2
       -\frac4d\Sigma\Theta|Q_b|^{\frac4d-2}
                 I_{N\lambda}\widetilde\varepsilon_1.
\end{align*}
By \eqref{equ:DFholderall} and \eqref{equ:Rpoint},
\begin{align*}
 &|R_1(I_{N\lambda}\varepsilon)-R_1(I_{N\lambda}\widetilde\zeta)|
       +|R_2(I_{N\lambda}\varepsilon)-R_2(I_{N\lambda}\widetilde\zeta)|\\
 &\quad\lesssim
 \big(|Q_b|+|I_{N\lambda}\widetilde\zeta|
                    +|I_{N\lambda}\widetilde\varepsilon|\big)^{\frac4d-\nu}
 \big(|I_{N\lambda}\widetilde\zeta|^\nu
                    +|I_{N\lambda}\widetilde\varepsilon|^\nu\big)
 |I_{N\lambda}\widetilde\varepsilon|.
\end{align*}
Sobolev's inequality, the bootstrap bounds and
Lemma~\ref{lem:linoutrad} therefore imply
\begin{align*}
 &|(R_1(I_{N\lambda}\varepsilon)-R_1(I_{N\lambda}\widetilde\zeta),y_j\Theta)|
       +|(R_2(I_{N\lambda}\varepsilon)-R_2(I_{N\lambda}\widetilde\zeta),y_j\Sigma)|\\
 &\quad\leq\delta_0\left(
       \left\lVert\nabla I_{N\lambda}\widetilde\varepsilon\right\rVert_{L^2}
       +\left(\int|\widetilde\varepsilon|^2e^{-|y|}\right)^{1/2}\right)
       +C\Gamma_b^{10}.
\end{align*}
The differences $R_1(\varepsilon)-R_1(I_{N\lambda}\varepsilon)$ and
$R_2(\varepsilon)-R_2(I_{N\lambda}\varepsilon)$ are estimated by
\eqref{eq:17}. The errors from the linear terms are controlled by
\eqref{equ:profileapprox}, and the remaining modulation terms by
\eqref{source:roughmodulation}. Together with
\eqref{source:radiativemomentum}, these estimates give
\begin{equation}\label{source:radiativetranslation}
 \left|\frac{x_s}{\lambda}\right|
 \leq\delta_0\left(\left\lVert\nabla I_{N\lambda}\widetilde\varepsilon\right\rVert_{L^2}
       +\left(\int|\widetilde\varepsilon|^2e^{-|y|}\right)^{1/2}\right)
       +C\Gamma_b^{10}+F(s).
\end{equation}

The translation term in the derivative of \eqref{eq:23} is
\begin{equation}\label{source:radiativetranslationterm}
 -\frac{x_s}{\lambda}\cdot
 \big\{(\widetilde\varepsilon_2,\nabla\Lambda\Sigma)
       -(\widetilde\varepsilon_1,\nabla\Lambda\Theta)
       +(\widetilde\varepsilon_2,\nabla\Lambda\widetilde\zeta_{\mathrm{re}})
       -(\widetilde\varepsilon_1,\nabla\Lambda\widetilde\zeta_{\mathrm{im}})\big\}.
\end{equation}
For the first two scalar products,
\[
 |(\widetilde\varepsilon_2,\nabla\Lambda\Sigma)|
       +|(\widetilde\varepsilon_1,\nabla\Lambda\Theta)|
 \lesssim\left(\int|\widetilde\varepsilon|^2e^{-|y|}\right)^{1/2}.
\]
For the other two, integration by parts and \eqref{equ:profileapprox} give
\begin{align*}
 |(\widetilde\varepsilon_2,\nabla\Lambda\widetilde\zeta_{\mathrm{re}})|
 &\leq\left\lVert\nabla I_{N\lambda}\widetilde\varepsilon_2\right\rVert_{L^2}
       \left\lVert\Lambda\widetilde\zeta_{\mathrm{re}}\right\rVert_{L^2}
       +CA^C(N\lambda)^{-2},\\
 |(\widetilde\varepsilon_1,\nabla\Lambda\widetilde\zeta_{\mathrm{im}})|
 &\leq\left\lVert\nabla I_{N\lambda}\widetilde\varepsilon_1\right\rVert_{L^2}
       \left\lVert\Lambda\widetilde\zeta_{\mathrm{im}}\right\rVert_{L^2}
       +CA^C(N\lambda)^{-2}.
\end{align*}
Combining these bounds with \eqref{source:radiativetranslation} yields
\begin{equation}\label{source:radiativetranslationabsorb}
\begin{aligned}
 &\left|\frac{x_s}{\lambda}\cdot
 \big\{(\widetilde\varepsilon_2,\nabla\Lambda\Sigma)
       -(\widetilde\varepsilon_1,\nabla\Lambda\Theta)\right.\left.
       +(\widetilde\varepsilon_2,\nabla\Lambda\widetilde\zeta_{\mathrm{re}})
       -(\widetilde\varepsilon_1,\nabla\Lambda\widetilde\zeta_{\mathrm{im}})
       \big\}\right|\\
 &\quad\leq\delta_0\left(\int|\nabla I_{N\lambda}\widetilde\varepsilon|^2
                         +\int|\widetilde\varepsilon|^2e^{-|y|}\right)
             +C\Gamma_b^{1+c_*}+CF(s)
\end{aligned}
\end{equation}
for some fixed $c_*>0$. Moreover, Lemma~\ref{lem:linoutrad} gives
$\int|\widetilde\zeta|^2e^{-|y|}\lesssim\Gamma_b^{1+c_*}$.

All the additional errors involving the profiles and the nonlinear commutator satisfy, on $J_{k,j}$,
\[
 \int_{J_{k,j}}F(s)\,ds
 \lesssim\sup_{t_k\leq t\leq t_{k+1}}A(t)^C
            \bigl([N(t_k)\lambda(t_k)]^{-2}+[N(t_k)\lambda(t_k)]^{-(1+\nu)}\bigr).
\]
Since $A(t)^C\lesssim k^{C'}$ in the bootstrap regime, the same summation as in
\eqref{eq:9} applies. The remaining estimates are those of
\cite[proof of Lemma~3.8]{SunZheng}. This proves \eqref{eq:22}.
\end{proof}
The term $\int_A^{2A}|\varepsilon|^2$ in \eqref{eq:22} is controlled by
the flux of the $L^2$ norm, as in \cite[Lemma~4.5]{CR09}. Let $\psi(r)$
be a smooth nonnegative radial cutoff
such that $\psi(r)=0$ for $r\leq\frac12$, $\psi(r)=1$ for $r\geq3$,
$\frac14\leq\psi'(r)\leq\frac12$ for $1\leq r\leq2$, $\psi'(r)\geq0$.
We then set
\[
 \psi_A(s,r)=\psi\!\left(\frac r{A(s)}\right),
\]
$A(s)$ given by \eqref{eq:20}. Moreover, we restrict  the choice
of the parameters $(\eta,a)$ by assuming $a>C\eta$ (see Remark~8 in~\cite{MR06JAMS}).

\begin{lemma}[$L^2$ dispersion at infinity in space]\label{lem:4.5}
For some universal constants $C,c_3>0$ and $s$ large enough, it holds
\begin{equation}\label{eq:24}
\begin{aligned}
 \left\{\int\psi_A|\varepsilon|^2\right\}_s
 &\geq c_3b\int_A^{2A}|\varepsilon|^2-\Gamma_b^{1+Ca}-\Gamma_b^{a/2}\int|\nabla I_{N\lambda}\varepsilon|^2-F(s)
              -\mathcal H_s(s),
\end{aligned}
\end{equation}
where $F(s)$ satisfies \eqref{eq:9}, and
\begin{equation}\label{source:Hsmall}
 |\mathcal H(s)|\lesssim\lambda(s)^{\alpha_3}
 \qquad\text{for some }\alpha_3>0.
\end{equation}
\end{lemma}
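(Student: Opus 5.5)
We follow the $L^2$ flux computation of \cite[Lemma~4.5]{CR09} and \cite[Lemma~7]{MR06JAMS}, and we run it on the regularized solution wherever a derivative appears. The starting point is the conservation of mass in the original variables. Set
\[
\psi_{A,\lambda}(t,x)=\psi\!\left(\frac{|x-x(t)|}{A(t)\lambda(t)}\right).
\]
By \eqref{equ:decass} and the decay of $Q_b$ on the support of $\psi_A$ (which lies beyond $A/2\gg R_b$), we have
\[
\int\psi_A|\varepsilon|^2\,dy=\int\psi_{A,\lambda}|u|^2\,dx+O(\Gamma_b^{10}).
\]
Differentiating in $t$ and using the equation yields
\[
\frac{d}{dt}\int\psi_{A,\lambda}|u|^2=2\,\Im\int\nabla\psi_{A,\lambda}\cdot\nabla u\,\overline u+\int\partial_t\psi_{A,\lambda}|u|^2 .
\]
The first term on the right contains $\nabla u$, which we cannot control at $H^s$ regularity. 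We therefore split $u=I_Nu+(1-I_N)u$ and write the flux as
\[
2\,\Im\int\nabla\psi_{A,\lambda}\cdot\nabla I_Nu\,\overline{I_Nu}
\]
plus cross terms containing $(1-I_N)u$. In the cross terms we integrate by parts so that the derivative falls on $\psi_{A,\lambda}$ or on $I_Nu$. We then estimate them using \eqref{equ:L2tailI}, \eqref{equ:IMSMdiff}, and the local Strichartz bound \eqref{equ:dius} on each interval $[\tau_k^j,\tau_k^{j+1}]$. After rescaling, these contributions carry a factor $(N\lambda)^{-1}A^{-1}$ on each local interval. The number of local intervals is controlled by \eqref{equ:jk}, so the summation of \eqref{eq:9} and \eqref{source:weightedF} places them into $F(s)$.

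The non-integrable pieces, which come from $\partial_t$ hitting the difference between $|u|^2$ and $|I_Nu|^2$, are handled by recasting them as an exact derivative $\mathcal H_s$. We take
\[
\mathcal H(s)=\int\psi_A\bigl(|\varepsilon|^2-|I_{N\lambda}\varepsilon|^2\bigr),
\]
corrected if needed by the term arising from the time dependence of $N(t)$ through $\dot I_N$ and \eqref{equ:Idot}. Bounding $\mathcal H$ by \eqref{equ:L2tailI} and \eqref{equ:vartass} gives
\[
|\mathcal H|\lesssim (N\lambda)^{-1}\Gamma_b^{1/8}\lesssim\lambda^{\alpha_3},
\]
which is \eqref{source:Hsmall}. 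We then have to evaluate $\frac{d}{ds}\int\psi_A|I_{N\lambda}\varepsilon|^2$. Using \eqref{equ:weq}, the commutator contribution is bounded by $\|\mathcal C_N(u)\|_{L^2_tL^{2d/(d+2)}_x}$ paired with $\|\psi_{A,\lambda}I_Nu\|_{L^2_tL^{2d/(d-2)}_x}$. By \eqref{equ:xianchafa}, this is summable into $F$. We stress that this is where the improved commutator estimate of Lemma~\ref{commutor} removes the restriction $s>1/2$ that appears in \cite{SunZheng}.

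For the main flux term in rescaled variables, we use the modulation equations together with \eqref{source:roughmodulation}, $|A_s/A|\lesssim|b_s|/b^2$, and $\frac{\lambda_s}{\lambda}\approx-b$. The scaling contribution $-\frac{\lambda_s}{\lambda}\int y\cdot\nabla\psi_A|I_{N\lambda}\varepsilon|^2$ is nonnegative on $A\le|y|\le2A$, since $\psi'\ge\frac14$ there, and it produces the term $c_3b\int_A^{2A}|\varepsilon|^2$ after we replace $I_{N\lambda}\varepsilon$ by $\varepsilon$ at the cost of an $F$-type error. The remaining term is
\[
\frac{2}{A}\,\Im\int\psi'\,\nabla I_{N\lambda}\varepsilon\,\overline{I_{N\lambda}\varepsilon}.
\]
We bound it by $A^{-1}\|\nabla I_{N\lambda}\varepsilon\|_{L^2}\|\varepsilon\|_{L^2(A/2\le|y|\le3A)}$. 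Since $A^{-1}\le\Gamma_b^{a/2}$, Young's inequality gives
\[
\Gamma_b^{a/2}\int|\nabla I_{N\lambda}\varepsilon|^2+\Gamma_b^{a/2}\int_{A/2}^{3A}|\varepsilon|^2 .
\]
The second piece is absorbed into $c_3b\int_A^{2A}|\varepsilon|^2$ using the region where $\psi'\sim1$; if the supports do not match exactly, we adjust the cutoff regions as in \cite{MR06JAMS}. Interactions with $Q_b$, $\widetilde\zeta$ and the terms from $A_s$ are bounded by $\Gamma_b^{1+Ca}$, using the decay in Lemma~\ref{lem:linoutrad} and $b_s\gtrsim-\Gamma_b^{1-C\eta}-F$.

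We expect the main obstacle to be the time derivative of the $N(t)$-dependence together with the cross terms involving $(1-I_N)u$ and $\nabla u$. At $H^s$ regularity these are not pointwise small in $s$. They must be organized either as the exact derivative $\mathcal H_s$ or as Strichartz-averaged errors summed over the doubling intervals via \eqref{equ:jk}.
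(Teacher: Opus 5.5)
Your first paragraph contains a step that fails. If you differentiate $\int\psi_{A,\lambda}|u|^2$ and split $u=I_Nu+r$ with $r=(1-I_N)u$, the flux contains the high--high term $2\Im\int\nabla\psi_{A,\lambda}\cdot\nabla r\,\overline{r}$. Integration by parts cannot move the derivative onto $\psi_{A,\lambda}$ or onto $I_Nu$: it returns the same expression, because $\Im(\overline{r}\,\nabla r)$ is a momentum density. Controlling this term needs $\dot H^{1/2}$ bounds on $r$. The bootstrap quantity $\|\langle\nabla\rangle I_{N\lambda}\varepsilon\|_{L^2}$ supplies these only when $s>\frac12$, and this localized momentum flux is exactly what restricted \cite{SunZheng} to $s>\frac12$.

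The step is also unnecessary. The paper sets $\mathcal H=\int\psi_A\big(|I_{N\lambda}(Q_b+\varepsilon)|^2-|\varepsilon|^2\big)$. Then $\int\psi_A|\varepsilon|^2+\mathcal H=\int\psi\big(\frac{x-x(t)}{A\lambda}\big)|I_Nu|^2$ holds exactly, and this sign is what produces $-\mathcal H_s$ in \eqref{eq:24}. One differentiates only this regularized localized mass, using \eqref{eq:25}. The localized mass of the rough solution is never differentiated, and $\mathcal H_s$ is never estimated: $\mathcal H$ enters $\mathcal J$, and only the bound $|\mathcal H|\lesssim(N\lambda)^{-1}$ is used. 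Your second half is essentially this argument. It is cleaner to run it on $I_{N\lambda}(Q_b+\varepsilon)$, the rescaling of $I_Nu$, rather than on $I_{N\lambda}\varepsilon$, since the equation for $I_Nu$ is the one available. So delete the first paragraph rather than patch it.

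Within that argument, a few items need fixing.
\begin{itemize}
\item The term $\widetilde I_Nu$, coming from the time dependence of $N(t)$, is not an exact derivative, so it cannot be folded into a ``correction'' of $\mathcal H$. It must be estimated, $\lambda^2\|\widetilde I_Nu\|_{L^2}\lesssim(1+F)(N\lambda)^{-1}$ as in \eqref{source:movingI}, and then summed over the local intervals using \eqref{equ:jk}.
\item Your unweighted Young inequality produces $A^{-1}\int_{A/2}^{3A}|\varepsilon|^2$. This cannot be absorbed by $b\int_A^{2A}|\varepsilon|^2$, since $\psi'$ may vanish on parts of $[\frac12,1]\cup[2,3]$. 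Keep $\psi'$ as a weight and use $|y|\geq A/2$ on its support. This bounds the flux by $\frac{b}{100}\int y\cdot\nabla\psi_A|I_{N\lambda}(Q_b+\varepsilon)|^2+\frac{C}{bA^2}\int|\nabla I_{N\lambda}\varepsilon|^2$. Only at the very end is the good term bounded below by $\frac18\int_A^{2A}|\varepsilon|^2$.
\item The term with $A_s/A=-2ab_s/b^2$ is not $O(\Gamma_b^{1+Ca})$, because of the factor $b^{-2}$. It is absorbed, together with the $x_s/\lambda$ term, into the same weighted quantity using \eqref{source:roughmodulation}. What remains is $Cb^{-2}F$, which \eqref{source:weightedF} controls.
\item The bound is $|\mathcal H|\lesssim(N\lambda)^{-1}$, without your extra factor $\Gamma_b^{1/8}$: the $\langle\nabla\rangle$ in \eqref{equ:L2tailI} brings in $\|\varepsilon\|_{L^2}$. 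This does not affect \eqref{source:Hsmall}.
\end{itemize}
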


\begin{proof}
We set
\begin{equation}\label{source:Hdefinition}
 \mathcal H(s)=\int\psi_A(y)
 \left(|I_{N(s)\lambda(s)}(Q_b+\varepsilon)(s,y)|^2
                            -|\varepsilon(s,y)|^2\right)\,dy;
\end{equation}
as defined in \cite[Lemma~6.7]{FanMendelson2024}. Since $\psi_AQ_b=0$ (as $b$ is sufficient small), similar to \cite{FanMendelson2024} we have
\begin{align*}
 \mathcal H(s)
 &=\int\psi_A\left(|I_{N\lambda}\varepsilon|^2-|\varepsilon|^2\right)\int\psi_A\left(|I_{N\lambda}(Q_b+\varepsilon)|^2
                            -|Q_b+I_{N\lambda}\varepsilon|^2\right).
\end{align*}
By \eqref{equ:profileapprox}, \eqref{equ:L2tailI} and the bootstrap bounds,
\begin{align*}
 |\mathcal H(s)|
 &\lesssim(N\lambda)^{-1}
       \big(\left\lVert\nabla I_{N\lambda}\varepsilon\right\rVert_{L^2}
           +\left\lVert\varepsilon\right\rVert_{L^2}\big)
            \left\lVert\varepsilon\right\rVert_{L^2}+(N\lambda)^{-2}\lesssim(N\lambda)^{-1}=\lambda(s)^{(1-\beta)/\beta}.
\end{align*}
This proves \eqref{source:Hsmall}.

Recalling that $I_{N(t)}u(t)=I_Nu$ satisfies the equation
\begin{equation}\label{eq:25}
 i\partial_t(I_Nu)+\Delta(I_Nu)
 =i\widetilde I_Nu-I_N[|u|^{4/d}u],
\end{equation}
where
\begin{equation}\label{eq:26}
 I_N=\frac{N_t}{N}\dot I_N,\quad\dot I_N\text{ is defined in }\eqref{equ:Idot}.
\end{equation}
The scaling identity and \eqref{source:Hdefinition} give
\begin{equation}\label{source:masscorrectionidentity}
\begin{aligned}
 \int\psi_A|\varepsilon|^2+\mathcal H(s)
 &=\int\psi_A|I_{N\lambda}(Q_b+\varepsilon)|^2\\
 &=\int\psi\!\left(\frac{x-x(t)}{A\lambda(t)}\right)|I_Nu(t,x)|^2\,dx.
\end{aligned}
\end{equation}
Using \eqref{eq:25}, the derivative is
\begin{equation}\label{source:massderivativeexact}
\begin{aligned}
 &\frac12\frac d{ds}\int
        \psi\!\left(\frac{x-x(t)}{A\lambda(t)}\right)|I_Nu|^2\\
 &\quad=\frac12\int\frac{\partial\psi_A}{\partial s}
                      |I_{N\lambda}(Q_b+\varepsilon)|^2
       +\frac b2\int y\cdot\nabla\psi_A
                      |I_{N\lambda}(Q_b+\varepsilon)|^2\\
 &\qquad+\operatorname{Im}\int\nabla\psi_A\cdot
            \nabla I_{N\lambda}(Q_b+\varepsilon)\,
            \overline{I_{N\lambda}(Q_b+\varepsilon)}\\
 &\qquad-\frac12\left(\frac{\lambda_s}{\lambda}+b\right)
           \int y\cdot\nabla\psi_A|I_{N\lambda}(Q_b+\varepsilon)|^2\\
 &\qquad-\frac12\frac{x_s}{\lambda}\cdot
           \int\nabla\psi_A|I_{N\lambda}(Q_b+\varepsilon)|^2\\
 &\qquad+\lambda^2\operatorname{Re}\int
        \psi\!\left(\frac{x-x(t)}{A\lambda(t)}\right)
              \widetilde I_Nu\,\overline{I_Nu}\\
 &\qquad+\lambda^2\operatorname{Im}\int
        \psi\!\left(\frac{x-x(t)}{A\lambda(t)}\right)
        \big[I_Nu|I_Nu|^{4/d}-I_N(u|u|^{4/d})\big]\overline{I_Nu}.
\end{aligned}
\end{equation}
The terms involving $Q_b$ are controlled by its regularity and support.
Since $\psi'\geq0$ and $\psi'$ is supported in $[1/2,3]$,
\[
 \left|\int\nabla\psi_A|I_{N\lambda}(Q_b+\varepsilon)|^2\right|
 \leq\frac2A\int y\cdot\nabla\psi_A|I_{N\lambda}(Q_b+\varepsilon)|^2.
\]
Estimate \eqref{source:roughmodulation} therefore gives
\begin{equation}\label{source:fluxtranslation}
\begin{aligned}
 \left|\frac{x_s}{\lambda}\cdot
       \int\nabla\psi_A|I_{N\lambda}(Q_b+\varepsilon)|^2\right|
 &\leq\frac b{100}\int y\cdot\nabla\psi_A
                    |I_{N\lambda}(Q_b+\varepsilon)|^2+CF(s).
\end{aligned}
\end{equation}
Moreover,
\[
 \frac{\partial\psi_A}{\partial s}
 =-\frac{A_s}{A}y\cdot\nabla\psi_A,
 \qquad \frac{A_s}{A}=-\frac{2ab_s}{b^2}.
\]
Together with \eqref{source:roughmodulation}, this yields
\begin{align*}
 &\left|\frac{\lambda_s}{\lambda}+b+\frac{A_s}{A}\right|
       \int y\cdot\nabla\psi_A|I_{N\lambda}(Q_b+\varepsilon)|^2\\
 &\quad\leq\frac b{100}\int y\cdot\nabla\psi_A
                    |I_{N\lambda}(Q_b+\varepsilon)|^2+Cb^{-2}F(s).
\end{align*}
For the term containing the spatial derivative, Cauchy--Schwarz gives
\begin{align*}
 &2\left|\operatorname{Im}\int\nabla\psi_A\cdot
       \nabla I_{N\lambda}(Q_b+\varepsilon)\,
       \overline{I_{N\lambda}(Q_b+\varepsilon)}\right|\\
 &\quad\leq\frac b{100}\int y\cdot\nabla\psi_A
                    |I_{N\lambda}(Q_b+\varepsilon)|^2
       +\frac{C}{bA^2}\int|\nabla I_{N\lambda}\varepsilon|^2+\Gamma_b^{10}\\
 &\quad\leq\frac b{100}\int y\cdot\nabla\psi_A
                    |I_{N\lambda}(Q_b+\varepsilon)|^2
       +\Gamma_b^{a/2}\int|\nabla I_{N\lambda}\varepsilon|^2+\Gamma_b^{10}.
\end{align*}
Finally, \eqref{equ:L2tailI} and the support of $Q_b$ imply
\[
 \int y\cdot\nabla\psi_A|I_{N\lambda}(Q_b+\varepsilon)|^2
 \geq\frac18\int_A^{2A}|\varepsilon|^2
       -C(N\lambda)^{-2}\int|\nabla I_{N\lambda}\varepsilon|^2
       -C\Gamma_b^{10}.
\]
It follows that
\begin{equation}\label{eq:27}
\begin{aligned}
 &\frac d{ds}\int
       \psi\!\left(\frac{x-x(t)}{A\lambda(t)}\right)|I_Nu|^2\\
 &\quad\geq c_3b\int_A^{2A}|\varepsilon|^2-\Gamma_b^{1+Ca}
       -\Gamma_b^{a/2}\int|I_{N\lambda}\nabla\varepsilon|^2-Cb^{-2}F(s)\\
 &\qquad-2\lambda^2\left|\operatorname{Im}\int
       \psi\!\left(\frac{x-x(t)}{A\lambda(t)}\right)\overline{I_Nu}
       \big[I_N(u|u|^{4/d})-I_Nu|I_Nu|^{4/d}\big]\right|\\
 &\qquad-2\lambda^2\left|\int
       \psi\!\left(\frac{x-x(t)}{A\lambda(t)}\right)
                      \widetilde I_Nu\,\overline{I_Nu}\right|.
\end{aligned}
\end{equation}
It remains to prove that the last two terms satisfy \eqref{eq:9}.

For the term containing $\widetilde I_N$, estimates \eqref{eq:7},
\eqref{equ:ntass} and \eqref{equ:Idot} give
\begin{equation}\label{source:movingI}
\begin{aligned}
 \lambda^2\left\lVert\widetilde I_Nu\right\rVert_{L^2}
 &\lesssim\left|\frac{N_s}{N}\right|(N\lambda)^{-1}
       \left\lVert\langle\nabla\rangle I_{N\lambda}(Q_b+\varepsilon)
                                      \right\rVert_{L^2}\\
 &\lesssim(1+F(s))(N\lambda)^{-1}.
\end{aligned}
\end{equation}
By conservation of the $L^2$ norm,
\begin{align*}
 &\int_{s_1}^{s^+}\lambda^2\left|\int
       \psi\!\left(\frac{x-x(t)}{A\lambda(t)}\right)
                    \widetilde I_Nu\,\overline{I_Nu}\right|\,ds\\
 &\quad\lesssim\int_{s_1}^{s^+}F(s)\,ds
       +\sum_{k=k_1}^{k^+}\sum_{j=1}^{J_k}
         \int_{\tau_k^j}^{\tau_k^{j+1}}
          \frac1{N(t)\lambda(t)}\frac{dt}{\lambda(t)^2}\\
 &\quad\lesssim\lambda(s_1)^{\alpha_2}
       +\sum_{k=k_1}^{k^+}\sum_{j=1}^{J_k}
         \frac{\tau_k^{j+1}-\tau_k^j}{\lambda(t_k)^2}
         \frac1{N(t_k)\lambda(t_k)}\\
 &\quad\lesssim\lambda(s_1)^{\alpha_2}
       +\sum_{k=k_1}^{+\infty}k[N(t_k)\lambda(t_k)]^{-1}
       \lesssim\lambda(s_1)^{\alpha_2},
\end{align*}
using \eqref{eq:9}, \eqref{equ:ntass}, Lemma~\ref{lem:numberintervals}
and \eqref{eq:16}, after decreasing $\alpha_2$.

For the nonlinear term, Lemma~\ref{lem:xianchafa} and
Remark~\ref{remark2} imply
\begin{align*}
 &\int_{s_1}^{s^+}\lambda^2\left|\operatorname{Im}\int
       \psi\!\left(\frac{x-x(t)}{A\lambda(t)}\right)\overline{I_Nu}
       \big[I_N(u|u|^{4/d})-I_Nu|I_Nu|^{4/d}\big]\right|\,ds\\
 &\quad=\int_{t_1}^{T^+}\left|\operatorname{Im}\int
       \psi\!\left(\frac{x-x(t)}{A\lambda(t)}\right)\overline{I_Nu}
       \big[I_N(u|u|^{4/d})-I_Nu|I_Nu|^{4/d}\big]\right|\,dt\\
 &\quad\lesssim\sum_{k=k_1}^{k^+}\sum_{j=1}^{J_k}
       \left\lVert I_{N(t)}(u|u|^{4/d})
                 -I_{N(t)}u|I_{N(t)}u|^{4/d}\right\rVert_
             {L_t^2L_x^{\frac{2d}{d+2}}([\tau_k^j,\tau_k^{j+1}])}\\
 &\hspace{38mm}\times
       \left\lVert\psi\!\left(\frac{x-x(t)}{A\lambda(t)}\right)I_{N(t)}u
                                  \right\rVert_
             {L_t^2L_x^{\frac{2d}{d-2}}([\tau_k^j,\tau_k^{j+1}])}\\
 &\quad\lesssim\sum_{k=k_1}^{k^+}\sum_{j=1}^{J_k}
                       [N(t_k)\lambda(t_k)]^{-(1+\nu)}\\
 &\quad\lesssim\sum_{k=k_1}^{+\infty}k[N(t_k)\lambda(t_k)]^{-(1+\nu)}
       \lesssim\lambda(s_1)^{\alpha_2}.
\end{align*}
The term $b^{-2}F$ satisfies the same estimate by \eqref{source:weightedF}.
Enlarging $F$ and using \eqref{source:masscorrectionidentity} proves
\eqref{eq:24}.
\end{proof}

\subsection{Proof of the Bootstrap Lemma \ref{lem:bootstrap}}
The bootstrap argument follows \cite[Section~4.3]{CR09}, using the
virial estimates and the flux of the $L^2$ norm established above.

\begin{proof}[Proof of Lemma \ref{lem:bootstrap}]
\par\medskip\noindent\textbf{Step 1:} Lyapounov control of the dynamics.\par\smallskip
As in \cite[Proposition~4]{MR06JAMS} and \cite[Section~4.3]{CR09},
combining \eqref{eq:19}, \eqref{eq:22}, \eqref{eq:24} and conservation
of the $L^2$ norm gives
\begin{equation}\label{eq:28}
\begin{aligned}
 \{\mathcal{J}\}_s
 &\leq-Cb\left[\Gamma_b+\Xi(s)+\int|\nabla I_{N(s)\lambda(s)}\widetilde\varepsilon(s)|^2
    +\int|\varepsilon(s)|^2e^{-|y|}+\int_A^{2A}|\varepsilon(s)|^2\right]\\
 &\quad+F(s)
\end{aligned}
\end{equation}
with $F(s)$ satisfying \eqref{eq:9} and:
\begin{equation}\label{eq:29}
\begin{aligned}
 \mathcal{J}(s)&=\left(\int|Q_b|^2-\int Q^2\right)
 +2(\varepsilon_1,\Sigma)+2(\varepsilon_2,\Theta)+\int(1-\psi_A)|\varepsilon|^2
 -\mathcal H(s)\\
 &\quad-c\left(b\widetilde f_1(b)-\int_0^b\widetilde f_1(v)\,dv
       +b\{(\varepsilon_2,\Lambda\widetilde\zeta_{\mathrm{re}})
          -(\varepsilon_1,\Lambda\widetilde\zeta_{\mathrm{im}})\}\right).
\end{aligned}
\end{equation}
Here $c=\delta_1/800>0$ denotes some small enough universal constant and:
\begin{equation}\label{eq:30}
 \widetilde f_1(b)=\frac b4|yQ_b|_2^2
 +\frac12\operatorname{Im}\!\left(\int(y\cdot\nabla\widetilde\zeta)\overline{\widetilde\zeta}\right).
\end{equation}
By conservation of mass and \eqref{source:masscorrectionidentity},
\begin{equation}\label{source:Jderivative}
\begin{aligned}
 \mathcal J_s
 &=-\frac d{ds}\left(\int\psi_A|\varepsilon|^2+\mathcal H(s)\right)
       -cb(f_1)_s\\
 &\quad-cb_s\big\{(\varepsilon_2,\Lambda\widetilde\zeta_{\mathrm{re}})
                 -(\varepsilon_1,\Lambda\widetilde\zeta_{\mathrm{im}})\big\}.
\end{aligned}
\end{equation}
The last term is estimated as in \cite[Proposition~4]{MR06JAMS}, using
\[
 \big|(\varepsilon_2,\Lambda\widetilde\zeta_{\mathrm{re}})
        -(\varepsilon_1,\Lambda\widetilde\zeta_{\mathrm{im}})\big|
 \lesssim\Gamma_b^{1/2-C\eta}
\]
and \eqref{eq:7}. For the gradient term in \eqref{eq:24},
\begin{equation}\label{source:weightedradiationgradient}
\begin{aligned}
 \Gamma_b^{a/2}\int|\nabla I_{N\lambda}\varepsilon|^2
 &\leq2\Gamma_b^{a/2}\int|\nabla I_{N\lambda}\widetilde\varepsilon|^2
          +C\Gamma_b^{1+a/2-C\eta}.
\end{aligned}
\end{equation}
Since $a/2>C\eta$, the right hand side is absorbed into the
positive $b$-weighted terms, after taking $b(0)$ sufficiently small.
This proves \eqref{eq:28}, using \eqref{source:weightedF} for the remaining
weighted error terms.

Moreover, \eqref{source:Hsmall} and \eqref{equ:ltass} give
\begin{equation}\label{source:masscomparison}
 |\mathcal H(s)|\lesssim\lambda(s)^{\alpha_3}
                       \lesssim\Gamma_b^{10}.
\end{equation}
Now as a consequence of the control of the modified energy \eqref{eq:5} and the
coercivity of the linearized energy under the chosen set of orthogonality conditions,
together with \eqref{source:masscomparison}, we have the bounds:
\begin{equation}\label{eq:31}
\begin{aligned}
 \mathcal{J}(s)-f_2(b(s))
 &\geq-\Gamma_b^{1-Ca}+\frac1C\left(\Xi(s)+\int|I_{N\lambda}\nabla\varepsilon|^2
                                          +\int|\varepsilon|^2e^{-|y|}\right),\\
 \mathcal{J}(s)-f_2(b(s))
 &\leq CA^2\left(\Xi(s)+\int|I_{N\lambda}\nabla\varepsilon|^2+\int|\varepsilon|^2e^{-|y|}\right)
       +\Gamma_b^{1-Ca},
\end{aligned}
\end{equation}
where $f_2$ is given by
\[
 f_2(b)=\left(\int|Q_b|^2-\int Q^2\right)
 -\frac{\delta_1}{800}\left(b\widetilde f_1(b)-\int_0^b\widetilde f_1(v)\,dv\right)
\]
and satisfies
\begin{equation}\label{eq:32}
 \frac{d_0}C<\left.\frac{df_2}{db^2}\right|_{b^2=0}<Cd_0,
\end{equation}
with $d_0$ given by \eqref{equ:qbcrimass}.

\par\medskip\noindent\textbf{Step 2:} Pointwise bound on $\varepsilon$.\par\smallskip

The proof of the pointwise bound \eqref{equ:lamtimpse} follows
\cite[Section~4.3, Step~2]{CR09}. It remains to show that
\begin{equation}\label{eq:33}
 \Xi(s)+\int|I_{N\lambda}\nabla\varepsilon(s)|^2+\int|\varepsilon(s)|^2e^{-|y|}
 \lesssim\Gamma_{b(s)}^{2/3}.
\end{equation}
We may assume that the same function $F$ satisfying \eqref{eq:9} appears on the
right hand side of \eqref{eq:19} and \eqref{eq:28}. Let $s_2\in[s_0,s^+]$ and
consider the function $\widetilde b(s)=b(s)+\int_{s_0}^sF(\sigma)\,d\sigma$.
If $\widetilde b_s(s_2)\leq0$, then \eqref{eq:33} follows directly from \eqref{eq:19}.
If $\widetilde b_s(s_2)>0$, let $s_1\in[s_0,s^+]$ be the first time backwards from
$s_2$ such that $\widetilde b_s(s_1)=0$, then either $s_1$ is attained or $s_1=s_0$.
In both cases, we have using \eqref{equ:ncsma} and \eqref{equ:thetadef}:
\[
 \Xi(s_1)+\int|I_{N(s_1)\lambda(s_1)}\nabla\varepsilon(s_1)|^2
 +\int|\varepsilon(s_1)|^2e^{-|y|}\leq\Gamma_{b(s_1)}^{35/48}
\]
and thus
\begin{equation}\label{eq:34}
 \mathcal{J}(s_1)-f_2(b(s_1))\leq\Gamma_{b(s_1)}^{17/24}
\end{equation}
from \eqref{eq:31} and for $a>0$ in \eqref{eq:20} small enough. Moreover,
$\widetilde b_s\geq0$ on $[s_1,s_2]$ and thus:
\[
 b(s_2)+\int_{s_1}^{s_2}F(s)\,ds
 \geq b(s_1)+\int_{s_1}^{s_1}F(s)\,ds.
\]
This implies from \eqref{eq:9}:
\begin{equation}\label{eq:35}
 b(s_2)\geq b(s_1)-\int_{s_1}^{s_2}F(s)\,ds
 \geq b(s_1)-\lambda^{\alpha_2}(s_1)\geq b(s_1)-\Gamma_{b(s_1)}
\end{equation}
where we used \eqref{equ:ltass} in the last step. We now use the Lyapounov control
\eqref{eq:28} to derive:
\[
 \mathcal{J}(s_2)-\int_{s_1}^{s_2}F(s)\,ds\leq\mathcal{J}(s_1)
\]
and thus
\[
 \mathcal{J}(s_2)\leq\mathcal{J}(s_1)+\int_{s_1}^{s^+}F(s)\,ds
 \leq\mathcal{J}(s_1)+\lambda^{\alpha_2}(s_1)\leq\mathcal{J}(s_1)+\Gamma_{b(s_1)}.
\]
We then inject \eqref{eq:31}, \eqref{eq:35} and \eqref{eq:34} to conclude:
\begin{align*}
 &f_2(b(s_2))+\frac1C\left(\Xi(s_2)+\int|I_{N\lambda}\nabla\varepsilon(s_2)|^2
                                      +\int|\varepsilon(s_2)|^2e^{-|y|}\right)\\
 &\quad\leq\mathcal{J}(s_2)+\Gamma_{b(s_2)}^{1-Ca}
 \leq\mathcal{J}(s_1)+\Gamma_{b(s_2)}^{1-Ca}+\Gamma_{b(s_1)}
 \leq f_2(b(s_1))+\Gamma_{b(s_2)}^{2/3}.
\end{align*}
The monotonicity \eqref{eq:32} of $f_2$ in $b$ and \eqref{eq:35} now imply:
\[
 \Xi(s_2)+\int|I_{N\lambda}\nabla\varepsilon(s_2)|^2+\int|\varepsilon(s_2)|^2e^{-|y|}
 \lesssim\Gamma_{b(s_2)}^{2/3}
\]
which implies \eqref{eq:33} at $s_2$. This concludes the proof of \eqref{equ:lamtimpse}.

\par\medskip\noindent\textbf{Step 3:} Upper bound on blowup rate.\par\smallskip
Let
\[
 \widehat b(s)=b(s)-\int_s^{s^+}F(s)\,ds
\]
then from \eqref{eq:9} and \eqref{equ:ltass},
\[
 \frac9{10}b(s)\leq\widehat b(s)\leq\frac{10}9b(s)
\]
and thus \eqref{eq:19} implies:
\begin{equation}\label{eq:36}
 \widehat b_s\geq-C\Gamma_{\widehat b(s)}^{1-C\eta}.
\end{equation}
This implies:
\[
 \left(e^{\frac\pi{2\widehat b(s)}}\right)_s
 \leq e^{\frac\pi{2\widehat b(s)}}
       \frac{\pi\Gamma_{\widehat b}^{1-C\eta}}{2\widehat b^2}\leq1
\]
as $\Gamma_b\sim e^{-\pi/b}$, and therefore
\[
 e^{\frac\pi{2\widehat b(s)}}\leq e^{\frac\pi{2\widehat b(s_0)}}+s-s_0\leq s
\]
from \eqref{eq:1} and \eqref{eq:36}. Finally, $\forall\,s\in[s_0,s^+)$,
\begin{equation}\label{eq:37}
 \widehat b(s)\geq\frac\pi{2\log s}\quad\text{and thus}\quad
 b(s)\geq\frac{9\pi}{20\log s}.
\end{equation}
We now rewrite the estimate \eqref{eq:7} using \eqref{eq:33}:
\begin{equation}\label{eq:38}
 \left|\frac{\lambda_s}\lambda+b\right|\leq\Gamma_b^{1/4}+F(s).
\end{equation}
We integrate this in time and get: $\forall\,s\in[s_0,s^+]$,
\begin{align*}
 -\log\lambda(s)
 &\geq-\log\lambda(s_0)+\frac23\int_{s_0}^sb-\int_{s_0}^sF(s)\,ds\\
 &\geq-\log\lambda(s_0)+\frac\pi4\left(\frac s{\log s}-\frac{s_0}{\log s_0}\right)-1
\end{align*}
where we used \eqref{eq:9} in the last step. Now from \eqref{equ:b0l0} and \eqref{eq:1},
\[
 -\log\lambda(0)\geq e^{\frac{2\pi}{3b(s_0)}}=s_0^{6/5}
\]
and thus
\begin{equation}\label{eq:39}
 -\log\lambda(s)\geq-\frac23\log\lambda(0)+\frac\pi4\frac s{\log s},
 \quad\text{i.e.}\quad
 \lambda(s)\leq\lambda^{2/3}(0)e^{-\frac\pi4\frac s{\log s}}.
\end{equation}
This also implies: $\forall\,s\in[s_0,s^+)$,
\begin{equation}\label{eq:40}
 -\log\lambda(s)\geq\frac\pi4\frac s{\log s}\geq\sqrt s
\end{equation}
and taking the log of this inequality yields
\begin{equation}\label{eq:41}
 \log|\log\lambda(s)|\geq\frac12\log s,\quad\text{i.e.}\quad
 b\geq\frac\pi{10\log|\log\lambda(s)|}
\end{equation}
using \eqref{eq:37}. Therefore \eqref{equ:lamtimp} is proved.

\par\medskip\noindent\textbf{Step 4:} Monotonicity of $\lambda$.\par\smallskip
We now turn to the proof of \eqref{equ:lmt2imp}, \eqref{equ:tktk-1}. From \eqref{equ:lamtimp}, \eqref{eq:7},
\eqref{eq:33}, there holds:
\[
 -\frac{\lambda_s}\lambda\geq\frac b2-F(s)
\]
and thus: $\forall\,s_1,s_2\in[s_0,s^+]$,
\begin{equation}\label{eq:42}
 -\log\!\left(\frac{\lambda(s_2)}{\lambda(s_1)}\right)
 \geq\frac12\int_{s_1}^{s_2}b-\int_{s_1}^{s_2}F(s)\,ds
 \geq\frac9{20}\int_{s_1}^{s_2}\frac{ds}{\log(s)}-\frac1{1000}
\end{equation}
from \eqref{eq:9} and \eqref{equ:lmt2imp} follows. To prove \eqref{equ:tktk-1}, we let $[t_k,t_{k+1}]$
be a doubling time interval, then from \eqref{eq:42}:
\[
 \log(2)=-\log\!\left(\frac{\lambda(t_{k+1})}{\lambda(t_k)}\right)
 \geq\frac9{20}\int_{t_k}^{t_{k+1}}\frac{dt}{\lambda^2(t)\log(s(t))}-\frac1{1000}
\]
and thus
\[
 1\geq\frac{C(t_{k+1}-t_k)}{\lambda^2(t_k)\log(s(t_{k+1}))}
 \geq\frac{C(t_{k+1}-t_k)}{\lambda^2(t_k)\log|\log(\lambda(t_{k+1}))|}
 \geq\frac{C(t_{k+1}-t_k)}{\lambda^2(t_k)\log k}
\]
and \eqref{equ:tktk-1} follows.

\par\medskip\noindent\textbf{Step 5:} Control of $b$ and $\|\varepsilon(t)\|_{L^2}$.\par\smallskip
The upper bound \eqref{equ:btimp} on $b$ and $\|\varepsilon(t)\|_{L^2}$ is a direct consequence of the
conservation of the $L^2$ norm and \eqref{equ:qbcrimass}, and is left to the reader. The lower bound
$b>0$ follows from \eqref{eq:41}.
This concludes the proof of the bootstrap Lemma~\ref{lem:bootstrap}.
\end{proof}

\subsection{Proof of Proposition \ref{prop:main}}
The proof of Proposition~\ref{prop:main} is completed as in
\cite[Section~4.4]{CR09}.

\par\medskip\noindent\textbf{Step 1:} Finite time blow-up.\par\smallskip
From Lemma~\ref{lem:bootstrap}, the bootstrap estimates hold for all $t\in[0,T^+)$,
$T^+=T\in[0,+\infty]$. Now observe from \eqref{eq:1} and \eqref{eq:40} that
\[
 T=\int_{s_0}^{+\infty}\lambda^2(s)\,ds
 \leq\lambda^{4/3}(0)\int_2^{+\infty}e^{-\frac\pi4\frac s{\log s}}\,ds<+\infty
\]
and hence blowup occurs in finite time. Moreover, from \eqref{equ:lamtimpse},
\[
 \left\lVert u(t)\right\rVert_{H^s}\sim\frac1{\lambda^s(t)}
\]
and thus the local well-posedness theory in $H^s$ ensures $\lambda(t)\to0$ as
$t\to T$. Let us recall the standard scaling lower bound on blowup rate:
\[
 \left\lVert u(t)\right\rVert_{H^s}\geq\frac C{(T-t)^{s/2}}
\]
which implies
\[
 \lambda^2(t)\lesssim(T-t)
\]
for $t$ close enough to $T$, and thus:
\[
 s(t)=\int_0^t\frac{d\tau}{\lambda^2(\tau)}+s_0\to+\infty\quad\text{as }t\to T.
\]

\par\medskip\noindent\textbf{Step 2:} Convergence of the concentration point.\par\smallskip
The convergence of the concentration point is a consequence of \eqref{eq:8},
\eqref{eq:9}, \eqref{eq:40}, \eqref{eq:33}. In particular, \eqref{eq:8} gives
\begin{equation}\label{source:translationfinal}
 \left|\frac{x_s}{\lambda}\right|
        \lesssim\Gamma_b^{1/3}+F(s),\qquad
 x_t=\frac{x_s}{\lambda^2}
        =\frac1\lambda\frac{x_s}{\lambda}.
\end{equation}
These imply:
\[
 \int_{s_0}^{+\infty}|x_s|\,ds
 \leq C\int_{s_0}^{+\infty}\lambda(1+F(s))\,ds<+\infty.
\]

\par\medskip\noindent\textbf{Step 3:} Lower bound on the blow-up rate.\par\smallskip
Observe from \eqref{eq:31}, \eqref{eq:9}, \eqref{eq:33} and \eqref{equ:lamtimp} that:
\[
 \widetilde{\mathcal{J}}(s)=\mathcal{J}(s)+\int_s^{+\infty}F(s)\,ds
\]
satisfies
\[
 \frac{b^2(s)}C\leq\widetilde{\mathcal{J}}(s)\leq Cb^2(s).
\]
Together with \eqref{eq:28}, this implies:
\[
 \{\widetilde{\mathcal{J}}\}_s\leq-e^{-\frac C{\sqrt{\widetilde{\mathcal{J}}}}}.
\]
Integrating this in time yields:
\[
 b(s)\leq C\sqrt{\widetilde{\mathcal{J}}}\leq\frac C{\log s}
\]
for $s$ large enough. Integrating now \eqref{eq:38} in time, we conclude
\[
 -\log\lambda(s)\leq C\int_{s_0}^sb+C\leq C\frac s{\log s}
\]
for $s$ large enough, and thus together with \eqref{eq:41}:
\begin{equation}\label{eq:43}
 \frac1C\leq b\log|\log\lambda|\leq C.
\end{equation}
Next, we compute
\begin{equation}\label{eq:44}
\begin{aligned}
 -\left(\lambda^2\log|\log\lambda|\right)_t
 &=-\lambda\lambda_t\log|\log(\lambda)|
       \left(2-\frac1{|\log\lambda|\log|\log\lambda|}\right)\\
 &=-\left(\frac{\lambda_s}\lambda+b\right)\log|\log\lambda|
       \left(2-\frac1{|\log\lambda|\log|\log\lambda|}\right)\\
 &\quad+b\log|\log\lambda|
       \left(2-\frac1{|\log\lambda|\log|\log\lambda|}\right).
\end{aligned}
\end{equation}
From \eqref{eq:38},
\begin{align*}
 &\int_t^T\left|\left(\frac{\lambda_s}\lambda+b\right)\log|\log\lambda|\right|\\
 &\quad\lesssim\int_t^T\bigl[b^2\log|\log\lambda|+F\log|\log\lambda|\bigr]\,dt\\
 &\quad\lesssim\int_t^Tb^2\log|\log\lambda|
       +\int_s^{+\infty}\lambda^2(\sigma)\log|\log\lambda(\sigma)|F(\sigma)\,d\sigma\\
 &\quad\lesssim\int_t^Tb^2\log|\log\lambda|
       +\lambda^2(s)\log|\log\lambda(s)|\int_s^{+\infty}F(\sigma)\,d\sigma\\
 &\quad\lesssim\int_t^Tb^2\log|\log\lambda|+\lambda^2(t)
\end{align*}
where we used the almost monotonicity of $\lambda$ \eqref{equ:lmt2imp} and \eqref{eq:9} in the
last two steps. Injecting this into \eqref{eq:44} integrated from $t$ to $T$ and
using \eqref{eq:43}, we conclude:
\[
 \frac{T-t}C\leq\lambda^2(t)\log|\log\lambda(t)|\leq C(T-t)
\]
from which
\begin{equation}\label{eq:45}
 \frac1C\left(\frac{T-t}{\log|\log(T-t)|}\right)^{1/2}
 \leq\lambda(t)\leq C\left(\frac{T-t}{\log|\log(T-t)|}\right)^{1/2}
\end{equation}
for $t$ close enough to $T$. The exact convergence \eqref{equ:lamdts}
follows as in \cite[Proposition~6]{MR06JAMS}.

\par\medskip\noindent\textbf{Step 4:} Strong $L^2$ convergence of excess mass outside the blowup point.\par\smallskip
The proof follows \cite{MR05CMP} and \cite[Section~4.4, Step~4]{CR09}.
Let $R>0$ and $x(T)=\lim\limits_{t\to T}x(t)$ as in \eqref{equ:xtconx}.
There exists $u^*\in L^2$ such that
\begin{equation}\label{eq:46}
 u(t)\to u^*\quad\text{in }L^2(\mathbb{R}^d\setminus\{|x-x(T)|\leq R\})
 \quad\text{as }t\to T.
\end{equation}
Indeed, we introduce a smooth radial cut-off function $\chi(x)=0$ for $|x|\leq1$
and $\chi(x)=1$ for $|x|\geq2$, and $\chi_R(x)=\chi(\frac{x-x(T)}R)$.
Let us rewrite the geometrical decomposition \eqref{equ:decass} as:
\begin{equation}\label{eq:47}
\begin{aligned}
 u(t,x)&=\frac1{\lambda(t)^{d/2}}(Q_{b(t)}+\widetilde\zeta)
         \left(\frac{x-x(t)}{\lambda(t)}\right)e^{-i\gamma(t)}+v(t,x),\\
 v(t,x)&=\frac1{\lambda(t)^{d/2}}\widetilde\varepsilon\!\left(t,\frac{x-x(t)}{\lambda(t)}\right)e^{-i\gamma(t)}.
\end{aligned}
\end{equation}
Observe that \eqref{eq:28}, applied to $\sqrt{\widetilde{\mathcal{J}}}$ with
$\widetilde{\mathcal{J}}\sim b^2$, implies the crucial dispersive estimate on the excess of mass:
\begin{equation}\label{eq:48}
 \int_0^T\left\lVert \nabla I_{N(t)}v(t)\right\rVert_{L^2}^2\,dt
 =\int_{s_0}^{+\infty}\left\lVert \nabla I_{N(\sigma)\lambda(\sigma)}\widetilde\varepsilon(\sigma)\right\rVert_{L^2}^2
 \,d\sigma<+\infty.
\end{equation}
We first claim the same dispersive control on $u$ outside of $|x-x(T)|<R$:
\begin{equation}\label{eq:49}
 \forall R>0,\quad\int_0^T\left\lVert \nabla[\chi_R I_{N(\tau)}u(\tau)]\right\rVert_{L^2}^2\,d\tau
 \leq C(R).
\end{equation}

In fact, from the space localization of $Q_b$, $\tilde{\zeta}$ and \eqref{equ:lamtimp}, there is a time $t(R)$ such that
\[
 \forall t\in[t(R),T),\quad u(t,x)=v(t,x)\quad\text{for }|x-x(T)|>R.
\]
We then estimate: $\forall t\in[t(R),T)$,
\begin{align*}
 &\left\lVert \chi_R\nabla I_{N(t)}
 \left[\lambda^{-d/2}(Q_b+\widetilde\zeta)
       \left(\frac{x-x(t)}\lambda\right)e^{-i\gamma(t)}\right]\right\rVert_{L^2}^2\\
 &\qquad\lesssim\lambda^{-2}
      \left\lVert (I_{N\lambda}-\mathit{Id})(Q_b+\widetilde\zeta)\right\rVert_{H^1}^2
 \lesssim\lambda^{-2}A^C(N\lambda)^{-4}
\end{align*}
by \eqref{equ:profileapprox} and \eqref{source:highprofiles}.
This implies from Lemma~\ref{lem:numberintervals}:
\begin{align*}
 &\int_{t(R)}^T\left\lVert \chi_R\nabla I_{N(t)}
 \left[\lambda^{-d/2}(Q_b+\widetilde\zeta)
       \left(\frac{x-x(t)}\lambda\right)e^{-i\gamma(t)}\right]\right\rVert_{L^2}^2\,dt\\
 &\qquad\lesssim\sum_{k\geq k_0}k^{C'}[N(t_k)\lambda(t_k)]^{-4}<+\infty.
\end{align*}
\eqref{eq:49} now directly follows from \eqref{eq:48}, the conservation of the
$L^2$ norm and the definition \eqref{eq:21} of $\widetilde\varepsilon$.

We introduce $w_R(t,x)=\chi_R(x)[I_{N(t)}u(t,x)]$, we compute the equation for $w_R$:
\begin{equation}\label{eq:50}
 i\partial_tw_R+\Delta w_R
 =i\chi_R\widetilde I_Nu+2\nabla\chi_R\cdot\nabla I_Nu
    +\Delta\chi_R I_Nu-\chi_R I_N(u|u|^{4/d})
\end{equation}
where $\widetilde I_N$ is defined as in \eqref{eq:26}.

We now write down Duhamel's formula on $[0,t]$ and estimate each term of the right
hand side in $L_{[0,t]}^\infty L_x^2$. For the first term, we have from the energy
estimate and \eqref{equ:Idot}:
\begin{equation}\label{eq:52}
\begin{aligned}
 &\left\lVert \int_0^te^{i(t-\tau)\Delta}\chi_R\widetilde I_Nu\,d\tau\right\rVert_{L_{[0,t]}^\infty L_x^2}\\
 &\quad\lesssim\left\lVert \chi_R\widetilde I_Nu\right\rVert_{L_{[0,t]}^1L_x^2}
 \lesssim\left\lVert \widetilde I_Nu\right\rVert_{L_{[0,t]}^1L_x^2}\\
 &\quad\lesssim\int_{s_0}^{+\infty}\frac{1+F(s)}{N(s)\lambda(s)}\,ds
 \lesssim\int_{s_0}^{+\infty}F(s)\,ds+
             \sum_{k\geq k_0}k[N(t_k)\lambda(t_k)]^{-1}<+\infty.
\end{aligned}
\end{equation}
where we used \eqref{equ:ntass}, \eqref{eq:38}, \eqref{eq:9} and \eqref{eq:39}.
For the second term, we use \eqref{eq:49} which implies:
\begin{equation}\label{eq:53}
 \left\lVert \int_0^te^{i(t-\tau)\Delta}\nabla\chi_R\cdot\nabla I_Nu\,d\tau\right\rVert_{L_{[0,t]}^\infty L_x^2}
 \lesssim\left\lVert \nabla\chi_R\cdot\nabla I_Nu\right\rVert_{L_{[0,t]}^1L_x^2}<+\infty.
\end{equation}
The third term is even easier to estimate and left to the reader. We now estimate
the nonlinear term and first estimate using Strichartz estimate:
\[
 \left\lVert \int_0^te^{i(t-\tau)\Delta}\chi_R I_N(u|u|^{4/d})\,d\tau\right\rVert_{L_{[0,t]}^\infty L_x^2}
 \lesssim\left\lVert \chi_R I_N(u|u|^{4/d})\right\rVert_{L_{[0,t]}^2L_x^{\frac{2d}{d+2}}}.
\]
We now claim:
\begin{equation}\label{eq:54}
 \left\lVert \chi_R I_N(u|u|^{4/d})\right\rVert_{L_{[0,T]}^2L_x^{\frac{2d}{d+2}}}<+\infty.
\end{equation}

\noindent\emph{Proof of \eqref{eq:54}.}
We first have:
\begin{align}\label{eq:55}
 &\left\lVert \chi_R I_N(u|u|^{4/d})\right\rVert_{L_{[0,T]}^2L_x^{\frac{2d}{d+2}}}\\\nonumber
\lesssim&\left\lVert  I_{N(t)}(u|u|^{4/d})-I_{N(t)}u|I_{N(t)}u|^{4/d}\right\rVert_{L_{[0,T]}^2L_x^{\frac{2d}{d+2}}}
  +\left\lVert \chi_R I_Nu|I_Nu|^{4/d}\right\rVert_{L_{[0,T]}^2L_x^{\frac{2d}{d+2}}}.
\end{align}
The first term is estimated using Lemma~\ref{commutor} and
Remark~\ref{remark2}. Let $(\tau_k^j)_{1\leq j\leq J_k}$
be local well-posedness intervals given by Lemma~\ref{lem:hslwp}, then:
\begin{align*}
 &\left\lVert I_{N(t)}(u|u|^{4/d})-I_{N(t)}u|I_{N(t)}u|^{4/d}\right\rVert_{L_{[0,T]}^2L_x^{\frac{2d}{d+2}}}^2\\
 &\qquad\lesssim1+\sum_{k\geq k_0}k[N(t_k)\lambda(t_k)]^{-2(1+\nu)}<+\infty.
\end{align*}
The second term in \eqref{eq:55} is controlled using the dispersive estimate
\eqref{eq:49}:
\[
 \left\lVert \chi_R I_Nu|I_Nu|^{4/d}\right\rVert_{L_x^{\frac{2d}{d+2}}}
 \lesssim_{R,M(u)}\left\lVert \nabla[\chi_{R/2}I_Nu]\right\rVert_{L^2}.
\]
This concludes the proof of \eqref{eq:54}.

From \eqref{eq:52}, \eqref{eq:53} and \eqref{eq:54}, we conclude from standard
argument that $w_R=\chi_R I_Nu$ has a strong $L^2$ limit as $t\to T$.
We now have from rescaling:
\begin{equation}\label{eq:56}
\begin{aligned}
 \left\lVert w_R-\chi_Ru\right\rVert_{L^2}
 &\lesssim\left\lVert (\mathit{Id}-I_N)u\right\rVert_{L^2}\\
 &\lesssim(N\lambda)^{-1}
     \left\lVert \langle\nabla\rangle I_{N\lambda}(Q_b+\varepsilon)\right\rVert_{L^2}
       \longrightarrow0\quad\text{as }t\to T
\end{aligned}
\end{equation}
where we used $\left\lVert \varepsilon(t)\right\rVert_{H^s}\lesssim1$ from \eqref{equcont} and \eqref{equ:vartass}, and
$N(t)\lambda(t)\to+\infty$ as $t\to T$ from \eqref{equ:nlamd}. This concludes the proof of
the existence of an $L^2$ limit $u^*$ outside the blowup point. Note that
$u^*\in L^2(\mathbb{R}^d)$ from the conservation of the $L^2$ norm.
This concludes the proof of \eqref{eq:46}.

\par\medskip\noindent\textbf{Step 5:} Nonconcentration of the $L^2$ norm at the blowup point.\par\smallskip
The nonconcentration argument follows
\cite[Section~4.4, Step~5]{CR09}. The conclusion is in $L^2$; the
nonsmoothness statement in Theorem~\ref{thm:main} gives
$u^*\notin L^p$ for every $p>2$, as in \cite{MR05CMP}.

Let $u^*$ be the $L^2$ limit of $u$ (and hence $\widetilde u$) outside the blowup
point given by \eqref{eq:46}, then $\widetilde u\rightharpoonup u^*$ in $L^2(\mathbb{R}^d)$
as $t\to T$, and we thus need to prove:
\[
 \left\lVert \widetilde u(t)\right\rVert_{L^2}\to\left\lVert u^*\right\rVert_{L^2}\quad\text{as }t\to T.
\]
Now arguing as for the proof of \eqref{eq:56}, we see that it suffices to prove:
\begin{equation}\label{eq:57}
 \left\lVert I_{N(t)}\widetilde u(t)\right\rVert_{L^2}\to\left\lVert u^*\right\rVert_{L^2}\quad\text{as }t\to T.
\end{equation}
Let us recall the smooth radial cut-off function $\chi(x)=0$ for $|x|\leq1$,
$\chi(x)=1$ for $|x|\geq2$. Let
\begin{equation}\label{eq:58} R(t)=10A(t)\lambda(t)\end{equation}
with $A$ given by \eqref{eq:20}. We first claim that:
\begin{equation}\label{eq:59}
 \int|u^*|^2=\lim_{t\to T}\int\chi\!\left(\frac{x-x(t)}{R(t)}\right)|I_Nu(t)|^2.
\end{equation}
Let us assume \eqref{eq:59} and conclude the proof of \eqref{eq:57}.
From the space localization \eqref{eq:58}, \eqref{eq:59} easily implies as before:
\begin{equation}\label{eq:60}
 \int|u^*|^2=\lim_{t\to T}\int\chi\!\left(\frac{x-x(t)}{R(t)}\right)|I_N\widetilde u(t)|^2.
\end{equation}
Observe now that:
\[
 \int_{|x-x(t)|\leq2R(t)}|I_N\widetilde u(t)|^2
 \lesssim\int_{|y|\leq20A(t)}|I_{N\lambda}\varepsilon|^2.
\]
Since $d\geq3$, H\"older's inequality and Sobolev embedding give
\begin{equation}\label{eq:61}
\begin{aligned}
 \int_{|y|\leq20A(t)}|I_{N\lambda}\varepsilon|^2
 &\lesssim_d A^2
 \left\|I_{N\lambda}\varepsilon\right\|_{L^{\frac{2d}{d-2}}}^2\\
 &\lesssim_d A^2\int|\nabla I_{N\lambda}\varepsilon|^2\\
 &\lesssim_d \Gamma_b^{\frac23-3a}\longrightarrow0
 \qquad\text{as }t\to T,
\end{aligned}
\end{equation}
where we used \eqref{eq:20}, \eqref{equ:lamtimpse}, and
$3a<\frac1{48}$. We conclude that
\[
 \int_{|x-x(t)|\leq2R(t)}|I_N\widetilde u(t)|^2
 \longrightarrow0
 \qquad\text{as }t\to T,
\]
and \eqref{eq:60} then implies \eqref{eq:57}.

It therefore only remains to prove \eqref{eq:59}.

\noindent\emph{Proof of \eqref{eq:59}.}
We apply the operator $I_{N(t)}$ to \eqref{equ:nls} and compute:
\begin{equation}\label{eq:62}
 i\partial_t(I_{N(t)}u)+\Delta(I_{N(t)}u)=i\widetilde I_{N(t)}u-I_{N(t)}[u|u|^{4/d}],
\end{equation}
where $\tilde{I}_{N(t)}$ is defined as in \eqref{eq:26}. we then compute the flux of $L^2$ norm at the fixed time $t$:
\begin{align*}
 &\frac12\frac d{d\tau}\int\chi\!\left(\frac{x-x(\tau)}{R(t)}\right)|I_{N(\tau)}u(\tau)|^2\\
 &\quad=\frac1{R(t)}\operatorname{Im}\!\left(\int\nabla\chi\!\left(\frac{x-x(\tau)}{R(t)}\right)
           \cdot\nabla I_{N(\tau)}u\,\overline{I_{N(\tau)}u}\right)\\
 &\qquad-\frac{x^{\prime}(\tau)}{2R(t)}\cdot\int\nabla\chi\!\left(\frac{x-x(\tau)}{R(t)}\right)|I_{N(\tau)}u(\tau)|^2\\
 &\qquad+\operatorname{Re}\!\left(\int\chi\!\left(\frac{x-x(\tau)}{R(t)}\right)
          \widetilde I_{N(\tau)}u\,\overline{I_{N(\tau)}u}\right)\\
 &\qquad+\operatorname{Im}\!\left(\int\chi\!\left(\frac{x-x(\tau)}{R(t)}\right)
          \bigl(I_{N(\tau)}u|I_{N(\tau)}u|^{4/d}-I_{N(\tau)}(u|u|^{4/d})\bigr)\overline{I_{N(\tau)}u}\right)
\end{align*}
and integrate from $t$ to $T$. Using also \eqref{eq:56}, we obtain that
\begin{equation}\label{eq:63}
\begin{aligned}
 &\left|\int\chi\!\left(\frac{x-x(T)}{R(t)}\right)|u^*|^2
       -\int\chi\!\left(\frac{x-x(t)}{R(t)}\right)|I_Nu(t)|^2\right|\\
 &\quad\lesssim\frac1{A(t)\lambda(t)}\int_t^T\left\lVert \nabla I_{N(\tau)}u(\tau)\right\rVert_{L^2}\,d\tau\\
 &\qquad+\frac1{A(t)\lambda(t)}\int_t^T\left|\frac{x_s}\lambda\right|\frac{d\tau}{\lambda(\tau)}\\
 &\qquad+\int_t^T\left|\int\chi\!\left(\frac{x-x(\tau)}{R(t)}\right)
                          \widetilde I_Nu\,\overline{I_Nu}\,dx\right|\,d\tau\\
 &\qquad+\int_t^T\int\left|I_Nu\bigl(I_Nu|I_Nu|^{4/d}-I_N(u|u|^{4/d})\bigr)\right|\,dx\,d\tau.
\end{aligned}
\end{equation}
We now claim that all terms on the right hand side of \eqref{eq:63} go to zero as
$t\to T$. For the first one, we first observe from \eqref{eq:43} and \eqref{eq:20} that
\[
 A(t)\geq|\log(T-t)|^{Ca}
\]
and thus from \eqref{equ:vartass}:
\begin{equation}\label{eq:64}
\begin{aligned}
 &\frac1{A(t)\lambda(t)}\int_t^T\left\lVert \nabla I_{N(\tau)}u(\tau)\right\rVert_{L^2}\,d\tau
 \lesssim\frac1{\lambda(t)A(t)}\int_t^T\frac{d\tau}{\lambda(\tau)}\\
 &\quad\lesssim\frac1{|\log(T-t)|^{Ca}}
 \sqrt{\frac{\log|\log(T-t)|}{T-t}}
 \int_t^T\sqrt{\frac{\log|\log(T-\tau)|}{T-\tau}}\,d\tau\\
 &\quad\to0\quad\text{as }t\to T.
\end{aligned}
\end{equation}
For the second term, we have from \eqref{source:translationfinal}:
\[
 \frac1{A(t)\lambda(t)}\int_t^T\left|\frac{x_s}\lambda\right|\frac{d\tau}{\lambda(\tau)}
 \lesssim\frac1{A(t)\lambda(t)}\int_t^T\frac{1+F(s(\tau))}{\lambda(\tau)}\,d\tau.
\]
The first term is estimated as for the proof of \eqref{eq:64}, and for the second
one we use \eqref{eq:9} and \eqref{equ:almons}:
\begin{align*}
 \frac1{A(t)\lambda(t)}\int_t^T\frac{F(s(\tau))}{\lambda(\tau)}\,d\tau
 &=\frac1{A(t)\lambda(t)}\int_s^{+\infty}\lambda(s)F(s)\,ds\leq\frac{\lambda^{\alpha_2}(t)}{A(t)}\to0\quad\text{as }t\to T.
\end{align*}
For the third term, we use \eqref{source:movingI} and \eqref{eq:52}:
\[
 \int_t^T\left\lVert \widetilde I_{N(\tau)}u(\tau)\right\rVert_{L^2}\,d\tau
 \lesssim\sum_{k\geq k_t}k[N(t_k)\lambda(t_k)]^{-1}
       +\int_{s(t)}^{+\infty}F(s)\,ds\longrightarrow0.
\]
For the nonlinear term, we let $k_t$ such that $\lambda(t)\sim\frac1{2^{k_t}}$
and split $[t,T)$ into LWP intervals $(\tau_k^j)_{1\leq j\leq J_k}$ given by
Lemma~\ref{lem:hslwp}. We then estimate from Lemma~\ref{commutor} and Remark~\ref{remark2}:
\begin{align*}
 &\int_t^T\int\left|
 \big[I_{N(\tau)}(u|u|^{4/d})-I_{N(\tau)}u|I_{N(\tau)}u|^{4/d}\big]
 I_{N(\tau)}u\right|\,dx\,d\tau\\
 &\quad\lesssim\sum_{k\geq k_t}\sum_{j=1}^{J_k}
 \left\lVert I_{N(\tau)}(u|u|^{4/d})
                -I_{N(\tau)}u|I_{N(\tau)}u|^{4/d}\right\rVert_
        {L_\tau^2L_x^{\frac{2d}{d+2}}([\tau_k^j,\tau_k^{j+1}])}\\
 &\hspace{32mm}\times
 \left\lVert I_{N(\tau)}u\right\rVert_
        {L_\tau^2L_x^{\frac{2d}{d-2}}([\tau_k^j,\tau_k^{j+1}])}\\
 &\quad\lesssim\sum_{k\geq k_t}k[N(t_k)\lambda(t_k)]^{-(1+\nu)}\longrightarrow0
       \quad\text{as }k_t\to+\infty.
\end{align*}
We now pass to the limit $t\to T$ in \eqref{eq:63} and \eqref{eq:59} follows.
The nonsmoothness of $u^*$ follows as in~\cite{MR05CMP}: the errors
involving $I_N-\mathit{Id}$ are smaller than any fixed power of $b$,
so that the radiative mass estimate gives
\[
 \int_{|x-x(T)|<R}|u^*(x)|^2\,dx
 \gtrsim(\log|\log R|)^{-2}
\]
for $R$ sufficiently small, which excludes $u^*\in L^p$ for $p>2$.
This concludes the proof of Proposition~\ref{prop:main} and so Theorem~\ref{thm:main} follows.

\begin{center}

\end{center}

\end{document}